\def\bm{\boldsymbol}
\newcommand{\comment}[1]{}
\newcommand{\BEA}{\begin{eqnarray}}
\newcommand{\EEA}{\end{eqnarray}}
\newtheorem{thm}{Theorem}[section]
\newtheorem{prop}[thm]{Proposition}
\newtheorem{example}[thm]{Example}
\newtheorem{lem}[thm]{Lemma}
\newtheorem{defn}[thm]{Definition}
\newtheorem{rem}[thm]{Remark}
\newtheorem{algm}[thm]{Algorithm}
\newcommand{\PreserveBackslash}[1]{\let\temp=\\#1\let\\=\temp}
\newcolumntype{C}[1]{>{\PreserveBackslash\centering}p{#1}}
\newcolumntype{R}[1]{>{\PreserveBackslash\raggedleft}p{#1}}
\newcolumntype{L}[1]{>{\PreserveBackslash\raggedright}p{#1}}
\begin{document}

\title{Ghost Point Diffusion Maps for solving elliptic PDE's on
Manifolds with Classical Boundary Conditions}
\lhead{Ghost Point Diffusion Maps}
\author{ Shixiao Willing Jiang \\
Institute of Mathematical Sciences \\
ShanghaiTech University, Shanghai 201210, China\\
\texttt{{jiangshx@shanghaitech.edu.cn}} \\
\And John Harlim \\
Department of Mathematics, Department of Meteorology and Atmospheric
Science, \\
Institute for Computational and Data Sciences \\
The Pennsylvania State University, University Park, PA 16802, USA\\
\texttt{jharlim@psu.edu} }
\maketitle

\begin{abstract}
In this paper, we extend the class of kernel methods, the so-called diffusion maps (DM), and its local kernel variants, to approximate second-order differential operators defined on smooth manifolds with boundaries that naturally arise in elliptic PDE models. To achieve this goal, we introduce the Ghost Point Diffusion Maps (GPDM) estimator on an extended manifold, identified by the set of point clouds on the unknown original manifold together with a set of ghost points, specified along the estimated tangential direction at the sampled points at the boundary. The resulting GPDM estimator restricts the standard DM matrix to a set of extrapolation equations that estimates the function values at the ghost points. This adjustment is analogous to the classical ghost point method in finite-difference scheme for solving PDEs on flat domain. As opposed to the classical DM which diverges near the boundary, the proposed GPDM estimator converges pointwise even near the boundary. Applying the consistent GPDM estimator to solve the well-posed elliptic PDEs with classical boundary conditions (Dirichlet, Neumann, and Robin), we establish the convergence of the approximate solution under appropriate smoothness assumptions. We numerically validate the proposed mesh-free PDE solver on various problems defined on simple sub-manifolds embedded in Euclidean spaces as well as on an unknown manifold. Numerically, we also found that the GPDM is more accurate compared to DM in solving elliptic eigenvalue problems  on bounded smooth manifolds.
 \end{abstract}

\keywords{Ghost Point Diffusion Maps \and Diffusion Maps \and Elliptic PDEs on Manifolds \and Mesh-free PDE solvers}

\section{Introduction}\label{intro}

Elliptic Partial Differential Equations (PDEs) \cite{han2011elliptic} arise naturally in the modeling of physical phenomena, including groundwater flow \cite{mclaughlin1996reassessment}, heat conduction \cite{Feynman}, neutron diffusion \cite{Neutron}, and probability theory \cite{MoPer}. In the manifold setting, {solving the PDE formulation arises in modeling of granular flow \cite{rauter2018finite}, liquid crystal \cite{virga2018variational}, biomembranes \cite{elliott2010modeling}. In computer graphics \cite{bertalmio2001variational}, PDEs on surfaces have been used to restore damaged patterns on a surface \cite{macdonald2010implicit}, brain imaging \cite{memoli2004implicit}, among other applications.}

Many numerical methods have been proposed to approximate the solution of PDE on the manifold setting, especially for two-dimensional surfaces. Most of these methods, however, require a parameterization of the surface, which is subsequently used to approximate the tangential derivatives along the surface. For example, the finite element method (FEM) represents the surface \cite{dziuk2013finite,camacho,bonito2016high} using triangular meshes. {\color{black}Subsequently, the PDE is solved by a Galerkin truncation on the finite-element space of functions defined on the triangular meshes. While this classical approach is popular and has been widely used in applications, it relies on the accuracy of the generated meshes. In addition to the computational task in the mesh generation, given an arbitrary set of point cloud data that lie on the manifold, constructing meshes that avoid inconsistent tangential triangulation \cite{boissonnat2010triangulating} can be challenging.

An alternative approach is to embed the surface PDE problem to the ambient space $\mathbb{R}^n$ such that the solution of the embedded PDE problem is consistent with the original problem when restricted to $M$. One realization of such an approach is to use a level set representation \cite{bertalmio2001variational} for the surface, and subsequently, solve the embedded PDE equation in $\mathbb{R}^n$ using any standard method that works in the Euclidean domain. The level set representation, unfortunately, can lead to degenerate diffusion equations, in addition to many other limitations pointed out in \cite{ruuth2008simple}. To combat the limitations of the level set representation, the authors in \cite{ruuth2008simple} introduced the closest point representation of the surface $M$. We should point out that it is unclear how this method will perform if we are only given randomly sampled point cloud data since these points may not be the closest point. In their papers \cite{ruuth2008simple,petras2018rbf}, they tested their scheme on examples where either the analytical formula for the closest point is given or the surface has a triangular representation. Besides this minor technical issue, a bigger problem with this class of approaches is that the computational cost scales with respect to the ambient dimension-$n$. This is because the embedded PDE is solved in the ambient space $\mathbb{R}^n$, which is at least one dimension more than, for example, the two-dimensional surface $M$.

Another class of approaches is the mesh-free radial basis function (RBF) method. While several versions of RBF solvers have been proposed \cite{piret2012orthogonal,fuselier2013high}, the key idea is to identify normal vectors at each point cloud and approximate the tangential derivative at each point cloud using the radial basis function interpolation method. In \cite{fuselier2013high}, the tangential derivatives are defined by projecting the gradient in $\mathbb{R}^n$ to the tangent space. One of the key issues with this approach is that the shape parameter of the radial basis function can be difficult to tune for high co-dimensional problems as pointed out in \cite{fuselier2013high}. Another issue that is directly related to the work in the present paper is the erratic behavior near the boundary. As far as our knowledge, the issue near boundary has only been studied on flat domains in $\mathbb{R}^n$ \cite{bayona2017role}. That work concluded that one can achieve highly accurate solutions by appropriate choice of radial basis functions with sufficiently large data. However, it is unclear how to extend their approach in the context of unknown manifolds since we cannot sample more data, let alone controlling the size of the data. In the same paper \cite{bayona2017role}, the authors also numerically demonstrated that their approach can be as effective as using the ghost points extension. While the ghost point method is computationally straightforward on flat domains, an extension to unknown non-flat geometry is a non-trivial task. The present work will introduce a numerical scheme to realize this non-trivial task and study the convergence of the approximation when it is used with the following PDE solver.
}

In this paper, we consider approximating the intrinsic second-order elliptic differential operators directly on the point clouds that lie on the manifold. Our approach rests on the fact that away from the boundary, these differential operators can be approximated by integral operators defined with appropriate Gaussian kernels, which is the theoretical underpinning of the popular nonlinear manifold learning algorithm known as the Diffusion Maps \cite{coifman2006diffusion} and its local kernel variants \cite{berry2016local}. The main advantages of this approach are that it is a consistent estimator of the intrinsic PDE problem even for sub-manifold of arbitrary co-dimension and it can naturally handle randomly distributed point cloud data. Computationally, this mesh-free algorithm does not require a parameterization of the manifold and/or an estimation of the normal vectors at each point cloud, one of which is essential in the existing approaches discussed in the previous paragraphs. As of the authors' knowledge, the idea of using such kind of integral operator for solving PDEs was first numerically realized by the Point Integral Method (PIM) for solving Poisson problems \cite{li2017point} and isotropic elliptic equations \cite{li2016convergent}. In  separate works, the same idea was realized with the Diffusion Maps (DM) algorithm \cite{berry2016local} for solving elliptic PDEs with non-symmetric advection-diffusion (Kolmogorov) operators associated with It\^o diffusion \cite{gh2019} and anisotropic diffusion \cite{harlim2019kernel}. We should point out that despite having the same vein, DM and PIM approaches are not identical as pointed out in \cite{gh2019}.

On manifolds with boundary, however, the Neumann problem is the only natural boundary condition for the Gaussian kernel integral approximation as noted in \cite{coifman2006diffusion}. Furthermore, as we shall see in this paper, even if the function satisfies the Neumann boundary condition, the diffusion maps integral approximation does not converge in the pointwise sense at interior points close to the boundary. For other types of boundary conditions, there are several approaches have been proposed. For example, the PIM approximates the Dirichlet problem with an artificial Robin boundary condition with a small first-order derivative term \cite{li2017point}. Another approach is to use a volume constraint \cite{shi2015enforce}, which is a simple version of the ghost point method that is proposed in the present paper, by setting the function values at the ghost points to be zero. In \cite{thiede2019galerkin}, they proposed an empirical approach for the Dirichlet problem by appending the discrete representation of the integral approximation at the interior points with a discrete representation of the Dirichlet boundary condition. Recent work in \cite{vaughn2019diffusion} suggests that the diffusion maps asymptotic expansion is a consistent estimator of Laplacian of bounded manifold in a weak sense and they devised a boundary integral estimator to specify the desired boundary conditions. All of these approaches, however, do not improve the integral approximation on the interior points near the boundary in the pointwise sense, and it is unclear whether they can be extended to the Robin boundary condition.

In this paper, we introduce the Ghost Point Diffusion Maps (GPDM) as a consistent estimator in the sense of pointwise, complementary to the weak sense result in \cite{vaughn2019diffusion}. The GPDM modifies the DM algorithm by a novel ghost points extension scheme, generalizing the classical ghost point method on flat domains to unknown submanifolds of $\mathbb{R}^n$. For readers convenience, let us recall the basic idea of the ghost point method in the finite difference setting for solving the Neumann boundary value problem: $u"(x)=f(x), x\in(0,1), u'(0)=u'(1)=g$. Suppose the domain is discretized as follows, $\{x_j = jh\,:\,j=0,\ldots,N, \, h=1/N\}$. Let $U_j$ denotes the finite-difference approximation to the solution, $u(x_j)$. Instead of using the one-side first-order finite-difference, consider a center-difference approximation for the boundary condition,
\BEA
u'(0) \approx \frac{U_1 -U_{-1}}{h}= g,\nonumber
\EEA
where we have introduced a new unknown, $U_{-1}\approx u(x_{-1})$ at a ghost point, $x_{-1}:=-h \notin [0,1]$. The standard ghost point method (see e.g. \cite{leveque2007finite}) specifies this function value by an additional equation that effectively imposes the PDE at the boundary point,
\BEA
\frac{1}{h^2}(U_{-1}-2U_{0}+U_1) \approx u''(x_0) = f(x_0).\label{extrapolation}
\EEA
Notice that the two key steps in this method, the specification of the ghost point $x_{-1}$ and the extrapolation of the function value $U_{-1}$, are not immediately trivial when the manifold is not a flat geometry and unknown. In the present work, we devise an algorithm to estimate the normal vectors, which in turn, allows one to carry the two key steps above along the estimated normal vectors on each point at the boundary. The proposed method uses no information of the geometry other than the available point cloud data that are possibly randomly distributed. We show that the proposed GPDM is a pointwise convergent estimator even for points close to the boundary when the function values at the ghost points are extrapolated with a set of equations that resemble matching the second-order derivatives in addition to an equation that resembles the condition in \eqref{extrapolation}. Subsequently, we apply the GPDM to solve elliptic PDEs with Dirichlet, Neumann, and Robin boundary conditions. Through theoretical analysis and numerical studies, we show that the proposed solver is a uniform convergent scheme. We also numerically show that GPDM is more accurate compared to DM in terms of solving eigenvalue problems.

The paper will be organized as follows. In Section~\ref{sec:dm}, we provide a short review of diffusion maps and its local kernel variants to approximate various types of linear second-order elliptic differential operators defined on smooth manifolds embedded in $\mathbb{R}^n$. We end the section with an example, illustrating the problem of DM near the boundary. In Section~\ref{sec:GPDM}, we present the GPDM method which overcomes the issue near the boundary. We close this section with numerical examples to support the theoretical results. In Section~\ref{sec:PDE}, we discuss the application of GPDM for solving elliptic PDEs with various boundaries. In Section~\ref{sec:EIGS}, we discuss the application of GPDM for solving eigenvalue problems corresponding to the elliptic PDEs. We close the paper with a summary and a list of open problems in Section~\ref{sec:summary}.
To improve the readability, we report the detailed proofs in several appendices.

\section{Diffusion maps and its extension with local kernels}\label{sec:dm}

In this section, we provide a short review of the diffusion maps algorithm
\cite{coifman2006diffusion} as a method to approximate the Laplacian, a class of
second-order, self-adjoint, positive-definite, differential operators that
acts on functions defined on smooth compact Riemannian manifolds. In addition,
we also review the variant of diffusion maps to approximate the
second-order elliptic diffusion operator with a given diffusion coefficient
\cite{harlim2019kernel} and the non-symmetric drifted diffusions via the local
kernels \cite{berry2016local}.

Let $M$ be a {\color{black}$C^\infty$}, $d-$dimensional compact Riemannian manifold embedded in $%
\mathbb{R}^{n}$, possibly with boundary $\partial M$. Let $u\in C^{3}(M)$
and $\epsilon >0$, for all $x\in M$ whose distance from the
boundary is larger than $\epsilon ^{r}$, where $0<r<1/2$, the integral
operator,
\begin{equation}
G_{\epsilon }u(x):=\epsilon ^{-d/2}\int_{M}\exp\Big(-\frac{|x-y|^{2}}{4\epsilon }%
\Big)u(y)dV(y)=\epsilon ^{-d/2}\int_{M_{\epsilon ,x}}\exp\Big(-\frac{|x-y|^{2}}{%
4\epsilon }\Big)u(y)dV(y)+\mathcal{O}(\epsilon ^{2})  \label{integralop1}
\end{equation}%
is effectively a local integral operator over the $\epsilon ^{r}$-ball
around $x$, $M_{\epsilon ,x}:=\{y\in M,|x-y|<\epsilon ^{r}\}$. In %
\eqref{integralop1}. The notation $|\cdot |$ denotes the standard Euclidean
norm for vectors in $\mathbb{R}^{n}$. The key idea of the diffusion maps
algorithm lies on the following asymptotic expansion. For any points $x\in
M$ whose distance from the boundary is larger than $\epsilon ^{r}$%
, where $0<r<1/2$,
\begin{equation}
G_{\epsilon }u(x)=m_{0}u(x)+\epsilon m_{2}\big(\omega (x)u(x)+\Delta _{g}u(x)%
\big)+\mathcal{O}(\epsilon ^{2}),  \label{asymp}
\end{equation}%
where $m_{0}$ and $m_{2}$ are constants that depend on {\color{black}the kernel}, $\omega $ depends also on the geometry of $M$, and $\Delta
_{g}$ denotes the negative-definite Laplace-Beltrami operator defined with
respect to the Riemannian metric $g$ inherited by $M$ from $\mathbb{R}^{n}$.
We should point out that with our choice of the exponential kernel, one can verify that $m_0=m_2$.
Based on this asymptotic expansion, one can approximate the Laplace-Beltrami
operator as,
\begin{equation}
L_{1,\epsilon }u(x):=\frac{(G_{\epsilon }1(x))^{-1}G_{\epsilon }u(x)-u(x)}{%
\epsilon }=\Delta _{g}u(x)+\mathcal{O}(\epsilon ):=\mathcal{L}_{1}u(x)+%
\mathcal{O}(\epsilon ),  \label{L1}
\end{equation}%
for all $x\in M$ whose distance from the boundary is larger than $%
\epsilon ^{r}$, where $0<r<1/2$. If one is given a strictly positive, smooth,
diffusion coefficient $\kappa :M\rightarrow (0,\infty )$, one can also
approximate the anisotropic diffusion operator,
\begin{equation}
L_{2,\epsilon }u(x):=\kappa(x)\frac{\big(G_{\epsilon }\sqrt{\kappa (x)}\big)%
^{-1}G_{\epsilon }(\sqrt{\kappa (x)}u(x))-u(x)}{\epsilon }=\mbox{div}_{g}%
\big(\kappa (x)\nabla _{g}u(x)\big)+\mathcal{O}(\epsilon ):=\mathcal{L}%
_{2}u(x)+\mathcal{O}(\epsilon ),  \label{L2}
\end{equation}%
where we have used the notations $\mbox{div}_{g}$ and $\nabla _{g}$ for the
divergence and gradient operators, respectively, defined with respect to the
Riemannian metric $g$. One can also apply the equivalent diffusion operator
using the symmetric version as reported in \cite{harlim2019kernel}.

Beyond these two self-adjoint operators, one can also approximate the
backward Kolmogorov operator,
\begin{eqnarray}
\mathcal{L}_{3}u:=b \cdot \nabla_g u+\frac{1}{2}c^{ij}\nabla _{i}\nabla
_{j}u,  \label{Eqn:L3}
\end{eqnarray}
where $\nabla _{i}$ is the covariant derivative in the $i$th direction, and $%
\nabla _{i}\nabla _{j}$ is the component of the Hessian operator. Here, the
differential operators and the dot product are defined with respect to the
Riemannian metric inherited by $M$ from $\mathbb{R}^{n}$. The vector field $%
b:M\rightarrow \mathbb{R}^{d}$ is the drift and the symmetric positive
definite diffusion tensor $c:M\rightarrow \mathbb{R}^{d}\times \mathbb{R}%
^{d} $ is a $d\times d$\ diffusion matrix, where $d$ is the dimension of
manifold $M$.

The operator in \eqref{Eqn:L3} can be accessed by employing the integral
operator in \eqref{integralop1} with the following prototypical kernel \cite%
{berry2016local}:
\begin{eqnarray}
K(\epsilon,x,y):= \exp \left(-\frac{(x+\epsilon B(x) - y)^\top
C(x)^{-1}(x+\epsilon B(x) - y)}{2\epsilon}\right)  \label{Eqn:localK}
\end{eqnarray}
where $B:$ $M\rightarrow \mathbb{R}^{n}$ and $C:M\rightarrow \mathbb{R}%
^{n}\times \mathbb{R}^{n}$ are related to $b$ and $c$, respectively, through
{\color{black}a local parameterization $\iota:U\subseteq \mathbb{R}^d\rightarrow M\subseteq\mathbb{R}^n$ of the manifold $M$} as follows:
\begin{equation}
B\left( x\right) =D\iota \left( x\right) b\left( x\right) ,\text{ \ \ }%
C(x)^{-1}=\left( D\iota \left( x\right) c\left( x\right) D\iota \left(
x\right) ^{{\top }}\right) ^{\dag }.  \label{Eqn:BC}
\end{equation}%
{\color{black}Here, the set $U\subseteq \mathbb{R}^d$ denotes a domain that contains $\iota^{-1}(x)$.}
Here, the notation $^{\dag }$\ denotes the pseudo-inverse and the
{\color{black}differential map $D\iota \left( x\right): T_{\iota^{-1}(x)}M \subseteq \mathbb{R}^{d} \to T_x\mathbb{R}^{n} \subseteq \mathbb{R}^{n}$ is an $n\times d$ matrix that is usually known as the Jacobian (or pushforward) corresponding to the map $\iota$.}
Applying the
integral operator in \eqref{integralop1} with the prototypical kernel $%
K(\epsilon,x,y)$ on manifold without boundary, we obtain,
\begin{eqnarray}
G_{K,\epsilon }u\left( x\right) :=\epsilon ^{-d/2}\int_{M}K\left(
\epsilon ,x,y\right) u\left( y\right) dV_{y}=m\left( x\right) u\left(
x\right) +\epsilon \left( \omega \left( x\right) u\left( x\right) + \mathcal{L}_{3}u\left( x\right) \right) +O\left( \epsilon
^{2}\right) ,  \label{Eqn:Gu}
\end{eqnarray}%
where $m\left( x\right) =\left( 2\pi \right) ^{d/2}\det \left( c\left(
x\right) \right) ^{1/2}$ can be approximated by $G_{K,\epsilon} 1(x) =
m(x) +\mathcal{O}(\epsilon)$. Employing the same algebraic manipulation as
in \eqref{L1}, we obtain
\begin{eqnarray}
L_{3,\epsilon} u(x) := \frac{(G_{K,\epsilon}1(x))^{-1}G_{K,\epsilon} u(x) -
u(x)}{\epsilon} = \mathcal{L}_3 u(x) + \mathcal{O}(\epsilon).  \label{L3}
\end{eqnarray}
We note that the evaluation of the prototypical kernel in \eqref{Eqn:localK}
requires the knowledge of either the intrinsic representation $b$ and $c$
together with the embedding function $\iota $ or the ambient representation $%
B$ and $C$ in \eqref{Eqn:BC}. 

Numerically, given a set of points in ambient coordinate $\{x_i\in M)\}_{i=1}^N$, which is also referred as the point cloud data, one can approximate the integrals in $L_{1,\epsilon}$ or $%
L_{2,\epsilon}$ or $L_{3,\epsilon}$ via a Monte-Carlo average, accounting for
the sampling density of the data $x_i\sim q(x)$ that are not necessarily
uniformly distributed. In particular, the function $G_{\epsilon,q}u :=G_\epsilon uq$, where $G_\epsilon$ is given in \eqref{integralop1}, can be approximated by the following Monte-Carlo average,
\BEA
G_{\epsilon,q} u(x_i) = \epsilon^{-d/2} \int_{M}\exp\Big(-\frac{|x_i-y|^{2}}{4\epsilon }%
\Big)u(y)q(y)dV(y) \approx \frac{ \epsilon^{-d/2}}{N} \sum_{j=1}^N \exp\Big(-\frac{|x_i-x_j|^{2}}{4\epsilon }
\Big)u(x_j).\nonumber
\EEA
Define also $q_\epsilon = G_{\epsilon,q}1$ as an estimator for the unknown sampling density $q$. Based on the asymptotic expansion in \eqref{asymp}, one can deduce,
\BEA
\frac{G_{\epsilon,q}(q_\epsilon^{-1}) G_{\epsilon,q}(uq_\epsilon^{-1}) - u}{\epsilon} = \Delta_g u + \mathcal{O}(\epsilon).\nonumber
\EEA
Compare to \eqref{L1}, the algebraic expression above involves a ``right-normalization'' to overcome the biases induced by non-uniform sampling density $q$ (see \cite{coifman2006diffusion,berry2016local,harlim2018} for the detailed discussion). For the non-symmetric operator, $\mathcal{L}_3$, one can repeat the same procedure as above using the non-symmetric kernel in \eqref{Eqn:localK} but estimate the sampling density $q$ using the symmetric Gaussian kernel to avoid estimating the normalization factor $m(x)$ in \eqref{Eqn:Gu} (see \cite{gh2019} for the detailed discussion).

Now we discuss the discrete estimator for $\mathcal{L}_2$, which involves an
importance sampling to debias the effect of the sampling density of the data. To
compute $G_\epsilon \sqrt{\kappa(x)}$, we first construct an $N\times N$
matrix with entry $\mathbf{K}_{ij} = \exp\big(-\frac{|x_i-x_j|^2}{4\epsilon}\big)$. Then, the estimated
unnormalized density evaluated at $x_i$ can be estimated by the $i$th
component of vector $\mathbf{q}$, that is, $q(x_i)\approx \mathbf{q}_i=\epsilon^{-d/2}N^{-1}
\sum_{j=1}^N \mathbf{K}_{ij}$. Subsequently, we define
\begin{eqnarray}
\epsilon^{d/2} G_{\epsilon} \sqrt{\kappa(x_i)} &=& \int_{M} \exp\Big(-\frac{%
|x_i-y|^2}{4\epsilon}\Big) \sqrt{\kappa}(y) dV(y) \approx \frac{1}{N}%
\sum_{j=1}^N \mathbf{K}_{ij} \frac{\sqrt{\kappa(x_j)}}{{\mathbf{q}}_j},
\label{Gerootkappa} \\
\epsilon^{d/2} G_{\epsilon} (u(x_i)\sqrt{\kappa(x_i)}) &=& \int_{M} \exp\Big(-
\frac{|x_i-y|^2}{4\epsilon}\Big) u(y)\sqrt{\kappa}(y) dV(y) \approx \frac{1}{%
N}\sum_{j=1}^N \mathbf{K}_{ij} \frac{\sqrt{\kappa(x_j)}u(x_j)}{{\mathbf{q%
}}_j}.  \notag
\end{eqnarray}
Defining $\mathbf{W}$ as an $N\times N$ matrix with entries $%
\mathbf{W}_{ij} = \mathbf{K}_{ij} \frac{\sqrt{\kappa(x_j)}}{{\mathbf{q}}%
_j}$, let $\mathbf{D}$ be a diagonal matrix with diagonal entries $%
\mathbf{D}_{ii} = \sum_{j=1}^N \mathbf{W}_{ij}$ and $\mathbf{S}$ be a diagonal matrix with diagonal entries $\mathbf{S}_{ii}=\kappa(x_i)$, then the discrete estimator
for $\mathcal{L}_{2}$ is given by,
\begin{eqnarray}
L_{2,\epsilon} \approx \mathbf{L}_2 =\frac{1}{\epsilon}\mathbf{S}\Big({\mathbf{D}}^{-1}{%
\mathbf{W}} - {\mathbf{I}}\Big).\label{discreteL2}
\end{eqnarray}
We should point out that the discrete estimator converges pointwisely,  $L_{j,\epsilon} \to \mathcal{L}_{j}$ (for each $%
j=1,2,3$) with high probability \cite{SingerEstimate,bh:16vb,gh2019}. For convenience, we state this result in Lemma~\ref{lem:old}. For the symmetric cases, $\mathcal{L}_1$ and $\mathcal{L}_2$, the spectral convergence results are also available for closed manifolds \cite{trillos2019error,bs2019consistent,calder2019improved} in $L^2$-sense and \cite{dunson2019spectral,calder2020lipschitz} in $L^{\infty}$-sense, all of which are valid in high probability.

{\color{black}
{\bf Parameter Specification:\label{paraspec}} To achieve accurate estimations, one needs to specify the appropriate bandwidth parameter, $\epsilon$. For efficient implementation, we also use $k-$nearest neighbor algorithm to avoid computing the distances of pair of points that are sufficiently large.

Our choice of $\epsilon$ follows the method that was originally proposed in \cite{coifman2008TuningEpsilon}. Basically, the idea relies on the following observation,
\BEA
S(\epsilon):=\frac{1}{Vol(M)^2}\int_M \int_{T_xM} \exp\Big(-\frac{|x-y|^2}{4\epsilon}\Big) dy\,dV(x) = \frac{1}{Vol(M)^2}\int_M (4\pi\epsilon)^{d/2} dV(x) = \frac{(4\pi\epsilon)^{d/2}}{Vol(M)}.\label{scalingS}
\EEA
Since $S$ can be approximated by a Monte-Carlo integral, for a fixed $k$, we approximate,
\BEA
S(\epsilon) \approx \frac{1}{Nk}\sum_{i,j=1}^{N,k} \exp\Big(-\frac{|x_i-x_j|^2}{4\epsilon}\Big),\nonumber
\EEA
where $\{x_j\}_{j=1}^k$ is the $k$-nearest neighbors of each  $x_i$. We choose $\epsilon$ from a domain (e.g., $[2^{-14},10]$ in our numerical implementation) such that $\frac{d\log(S)}{d\log\epsilon} \approx \frac{d}{2}$. Numerically, we found that the maximum slope of $\log(S)$ often coincides with $d/2$, which allows one to use the maximum value as an estimate for the intrinsic dimension $d$ when it is not available and choose the corresponding $\epsilon$.

For well-sampled data, we choose $k<N$ to be large enough (usually between $50-200$, depending on the size of the data) such that $\epsilon$ is smaller than the distance between $x_i$ and its $k$-neighbor, for all $i=1,\ldots, N$. With this choice, we numerically obtain $\epsilon = \mathcal{O}(N^{-\frac{2}{d}})$, for $d=1,2$, which shows accurate estimates that converge. For randomly distributed data, we set $k = \mathcal{O}(N^{1/2})$ and obtain $\epsilon =\mathcal{O} \big(\frac{k}{N}\big)^{2/d} =\mathcal{O} (N^{-1/d})$, which yields a much larger $\epsilon$ compared to the choice in the well-sampled data, that is, $N^{-1/d}> N^{-2/d}$ for $N\gg1$ and $d\geq 1$. It is worthwhile to point out that the scaling $\epsilon =\mathcal{O} \big(\frac{k}{N}\big)^{2/d}$, which we empirically found to produce convergence solutions in randomly distributed data (as we shall show later), has also been documented as a condition for the pointwise convergence estimate (see Theorem 3.6 of \cite{calder2019improved}).

}

Now, let us illustrate the problem
near the boundary of the asymptotic approximation of the weighted Laplacian in \eqref{L2} with a simple example.
\begin{example}\label{1Dexample}
In this example, we compare the {DM and GPDM estimates} of the differential
operator $\mathcal{L}_2$ on a one-dimensional ellipse {$x=(x_1,x_2)=(\cos \theta, a\sin \theta)$,}
defined with the Riemannian metric,
\begin{equation}
{g =\sin ^{2}\theta +a^{2}\cos ^{2}\theta ,\text{ \ \
for\ }0\leq \theta \leq \pi ,}  \label{Eqn:ellipse_metr}
\end{equation}%
where $a=3>1$. The diffusion coefficient in the weighted Laplacian \eqref{L2} is chosen to be
{$\kappa:=1.1+x_2/a =1.1+\sin \theta $}. In local coordinates, the
diffusion operator acting on function $u$ is given as,
\begin{eqnarray}
\mathcal{L}_{2}u&:=&\mathrm{div}\left( \kappa \nabla u\right) =\frac{1}{%
\sqrt{\vert g\vert}}\frac{\partial }{\partial \theta }\left( \sqrt{\left\vert
g\right\vert }\kappa g^{-1}\frac{\partial u}{\partial \theta }\right).
\label{explicitLs}
\end{eqnarray}%
In Fig.~\ref%
{fig1_ellip_noghost}, we plot the explicit equation in \eqref{explicitLs}
acting on a test function {$u(x)=\cos (3\theta/2 - \pi /4 )$, defined on a
semi-ellipse with $a=3$ and  $\theta \in
 [0,\pi ]$ being the intrinsic coordinate.} The discrete estimator, $\mathbf{L}_2$, of $\mathcal{L}_2$ is constructed using $N=400$
data points distributed at equal angle. Notice the agreement between the
DM estimate and the truth except near the boundaries. In the same
figure, we also show the improved estimate using the Ghost Point Diffusion
Maps (GPDM) near the boundaries that we will explain in the next section.

\end{example}

\begin{figure*}[tbp]
{\scriptsize \centering
\begin{tabular}{C{7cm}C{8cm}}
\normalsize (a) $\mathcal{L}_2 u$  & \normalsize (b) $\mathrm{FE}$ of DM=13 (upper), ${\mathrm{FE}} $ of GPDM=0.04 (bottom) \\
\includegraphics[width=.44\textwidth]{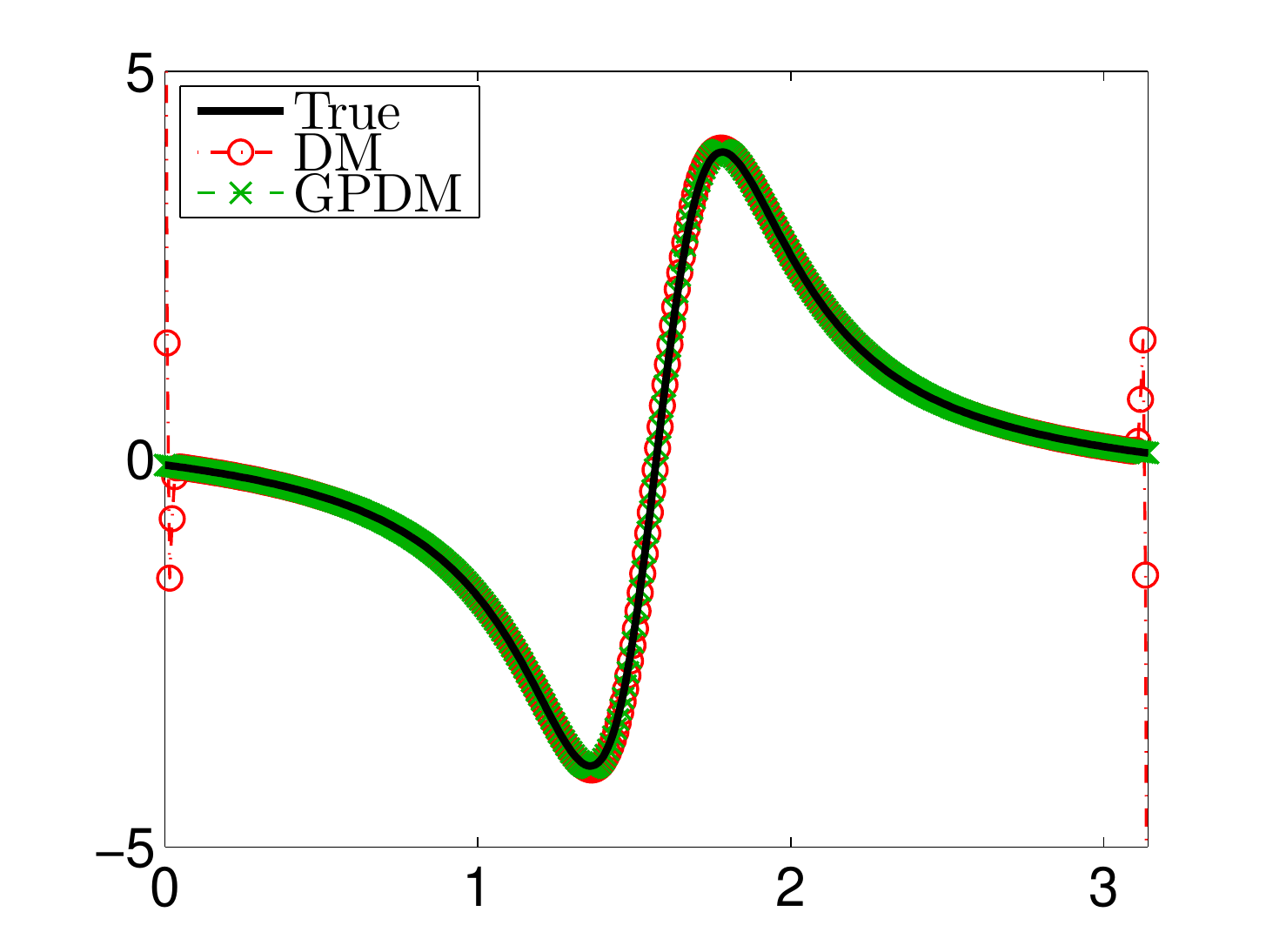}
&
\includegraphics[width=.44\textwidth, height=.35\textwidth]{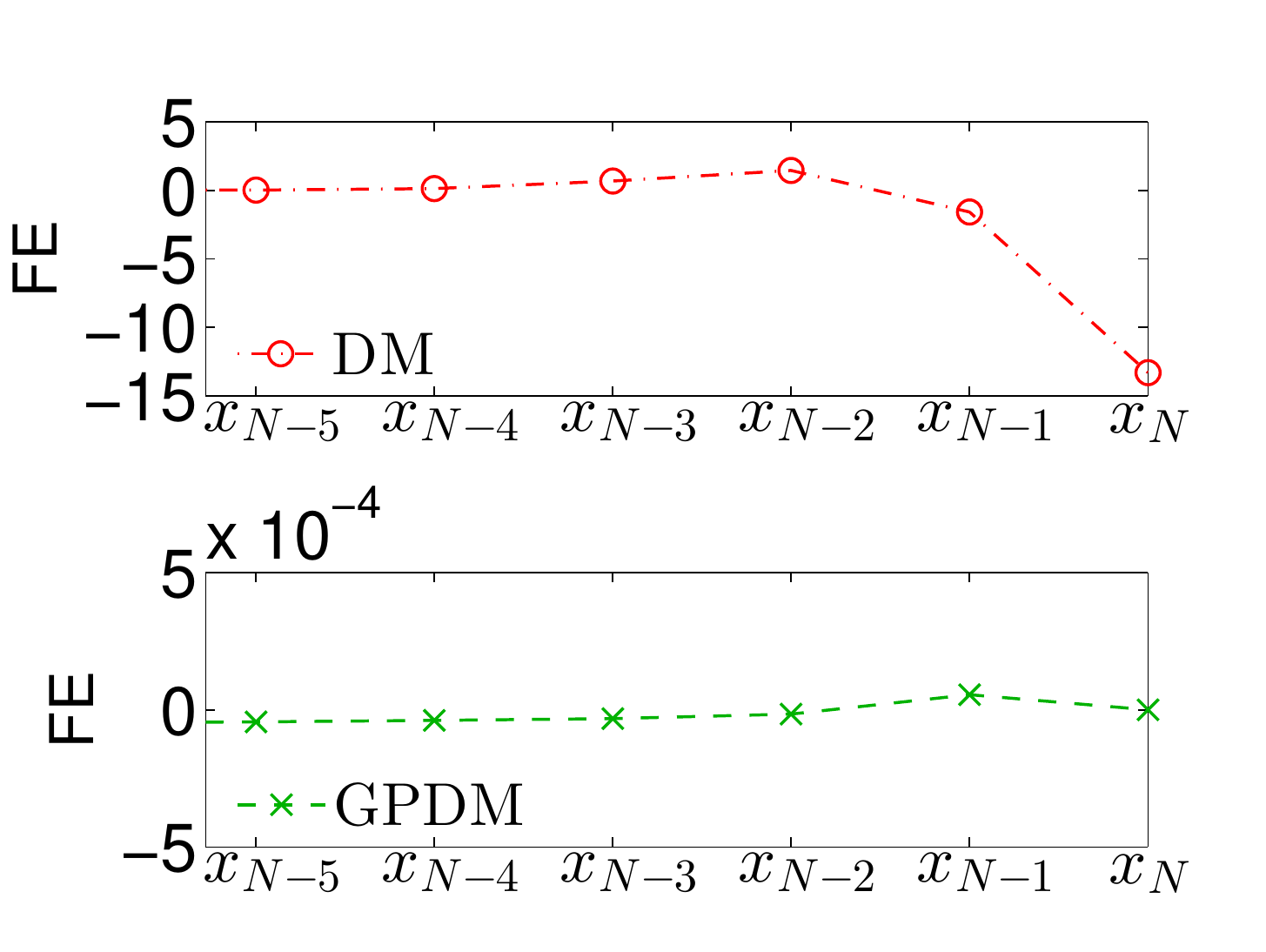}
\end{tabular}
}
\caption{(Color online) Numerical approximations of $\mathcal{L}_2 u$ (the weighted Laplacian in \eqref{L2}) on a semi-ellipse example {\color{black}with $N=400$}. (a) Comparison of {the true $%
\mathcal{L}_2 u$ and its DM and GPDM estimates.} (b) Absolute error of DM (upper panel) and GPDM (bottom panel) near the boundary. One can see that the Forward Error (FE), {defined as $\left\Vert \mathcal{L}_2 u - \mathbf{L}_2 u\right\Vert_\infty$ with the uniform norm,} using the standard DM is relatively large up to 13 near the boundary [red circles in upper panel of (b)]. However, by applying the
GPDM, the FE reduces to 0.04 [green crosses in bottom
panel of (b)]. Note that for GPDM, the FE does not reach its maximum near the boundary but in the interior of the domain instead. {\color{black}
In the bottom of (b), one can see that the FE is very small near the boundary for GPDM.}}
\label{fig1_ellip_noghost}
\end{figure*}

\section{Ghost Point Diffusion Maps for 1D and 2D manifolds}\label{sec:GPDM}

In this section, we introduce an improved method, the ghost point diffusion maps, for approximating differential operators in \eqref{L1}, \eqref{L2}, \eqref{Eqn:L3} defined on one and two-dimensional manifolds with boundaries. To facilitate the discussion, we use the conventional notations $\partial M$ and $M^o$ to denote the boundary and interior sets of
manifold $M$, respectively, that satisfy $M=M^o\cup \partial M$ and $M^o \cap \partial M=\emptyset$. We assume that $M$ is a $C^\infty$-smooth, compact domain such that the closed
subset $\partial M$ is also a compact set. For two-dimensional problems, we also assume that the boundary $\partial M$ is a smooth regular curve with additional conditions (which will be clarified in Section~\ref{ghost_ptm}) such that it is extendable along the boundary by a normal collar with radius $R=\mathcal{O}(\epsilon^r)$, for $0<r<1/2$.

The basic idea here is to follow the classical ghost point method \cite{leveque2007finite} for solving the Neumann boundary condition with the finite-difference method on flat domain, as reviewed in Section~\ref{intro}. In our configuration, we supplement ghost points near the boundary such that the diffusion maps asymptotic expansion for the estimation of the diffusion operator is valid even for points near the boundary, {where the second-order differential operator is approximated with an appropriate affine linear operator.} In this work, we assume that we have sample points at the boundary. For problems with unknown boundary points, one can use the tools developed in \cite{berry2017density} to estimate points at the boundary.

We now describe the proposed algorithm, which we refer as the Ghost Point Diffusion Maps  (GPDM). Particularly, the construction of the GPDM requires the following technical tools. In Section.~\ref{bound_estimate}, we estimate the exterior normal vector $\boldsymbol{\nu }$ to the boundary. In Section~\ref%
{est_directional_deri}, we estimate the normal derivative $\partial _{%
\boldsymbol{\nu }}u$ at ${x\in\partial M}$, which will be used for specifying the boundary conditions. In Section~\ref{ghost_ptm},\ we describe the construction of the ghost points along the normal direction $\boldsymbol{\nu }$ from
boundary points. In Section~\ref{artBC_ghost}, we discuss how to extrapolate the unknown function values at the ghost points. Here, we introduce a set of algebraic conditions on the ghost points, which ensure the consistency of the affine estimator of $\mathcal{L}_j$ in the limit of $\epsilon\to 0$ after $N\to\infty$, as reported in Section~\ref{augmentm}. Finally, we give numerical examples to validate the theory in Section~\ref{numericforward}.

\subsection{Estimation of the exterior normal direction at the boundaries}

In this section, we provide numerical methods to estimate the exterior normal direction using the point cloud data, assuming that the boundary points are given. {\color{black}We split the discussion into two subsections, concerning the well-sampled and randomly sampled data, as they require different algorithms.}

\label{bound_estimate}

\subsubsection{Well-sampled data}

{\color{black}We start our discussion on 1D manifolds. By well-sampled data, we mean that the data points are well-ordered and every consecutive points have equal (intrinsic) distance. For example, Fig. \ref{fig22_1dcurve}(a) shows the dataset $\{x_{i}\}_{i=1,\ldots
,N}$, well-ordered on a 1D semi-ellipse with $x_{1}$ and $x_{N}$ as the boundary points.} Suppose that $\gamma:\mathbb{R}\to M\subseteq\mathbb{R}^n$ is a geodesic parameterization of the one-dimensional manifold $M$ with based point $\gamma(0)=x_1\in\partial M$ and $\gamma(s)=x_2$ (see Fig.~\ref{fig22_1dcurve}(b)). The arc-length parameterization $s=\int_{0}^{s}|\gamma ^{\prime }(t)|dt,$ such that $|\gamma ^{\prime }(t)|=1$ for all $t\in \lbrack 0,s]$. Then, the inward unit normal direction to the boundary is given by the unit tangent vector $-\boldsymbol{\nu }_1 = \gamma'(0) \in\mathbb{R}^n$. When the parameterization $%
\gamma$ is unknown, we can use the secant line (see Fig.~\ref{fig22_1dcurve}(c))
to estimate this normal direction $\boldsymbol{\nu }_1$ to the boundary. {\color{black}Specifically, the secant line approximation for $\boldsymbol{\nu}_1$ is given by,
\begin{equation}
\boldsymbol{\tilde{\nu}}_{1}= \frac{x_{1}-x_{2}}{\left\vert
x_{1}-x_{2}\right\vert }.%
\text{\ }  \label{Eqn:bbnnm1}
\end{equation}
Likewise, one can approximate  $\boldsymbol{\nu }_N$ at the other boundary point, $x_N$, with $\boldsymbol{\tilde{\nu}}_{N} =\frac{x_{N}-x_{N-1}}{\left\vert x_{N}-x_{N-1}\right\vert }$.

Then, the error estimate for the normal direction $\boldsymbol{\nu}_1$ to the boundary can be formalized as follows. Here, we will focus on $\boldsymbol{\nu}_1$ but this result is also valid for the secant line approximation of the tangent vectors at any $x_i\in M$, including at $x_N$, with appropriately defined arc-length parameterization.}

\begin{figure}[tbp]
\begin{multicols}{3}
     \begin{center}
     {\normalsize (a) secant line extension for ghost points for 1D manifold} \\
    \includegraphics[width=2in]{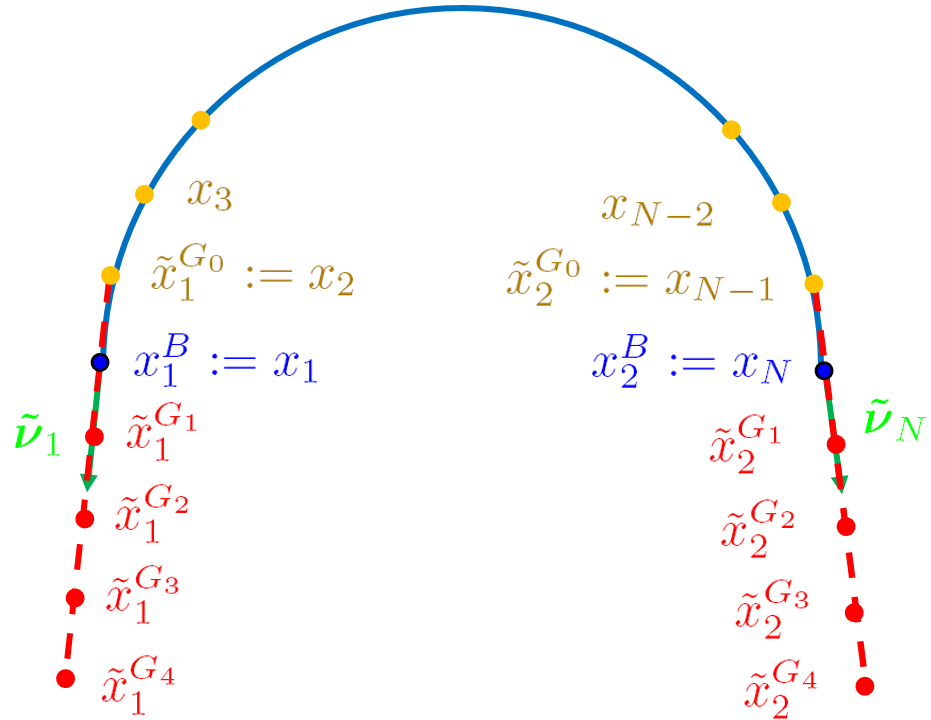}
  \end{center}
  \begin{center}
  {\normalsize (b) ideal construction: ghost point extension along true $\boldsymbol{\nu}_1$} \\
    \includegraphics[width=1.8in]{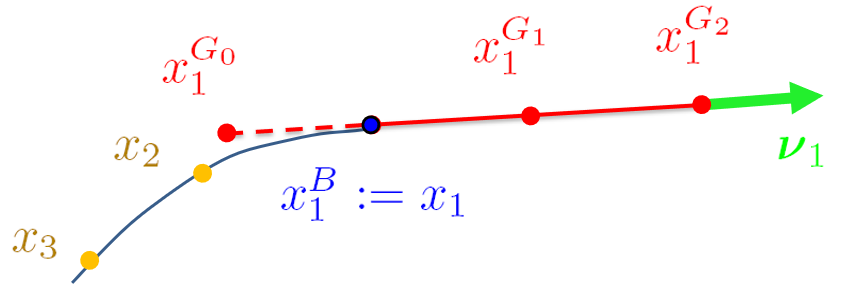}
  \end{center}
  \begin{center}
  {\normalsize (c) secant line approximation $\boldsymbol{\tilde{\nu}}_1$}\\
    \includegraphics[width=1.8in]{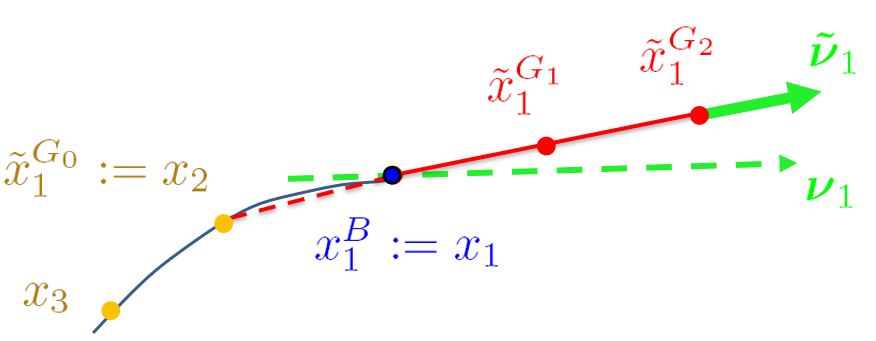}
  \end{center}
  \begin{center}
  {\normalsize (d) exterior normal direction $\boldsymbol{\tilde{\nu}}$ for well-sampled data }\\
    \includegraphics[width=2.1in]{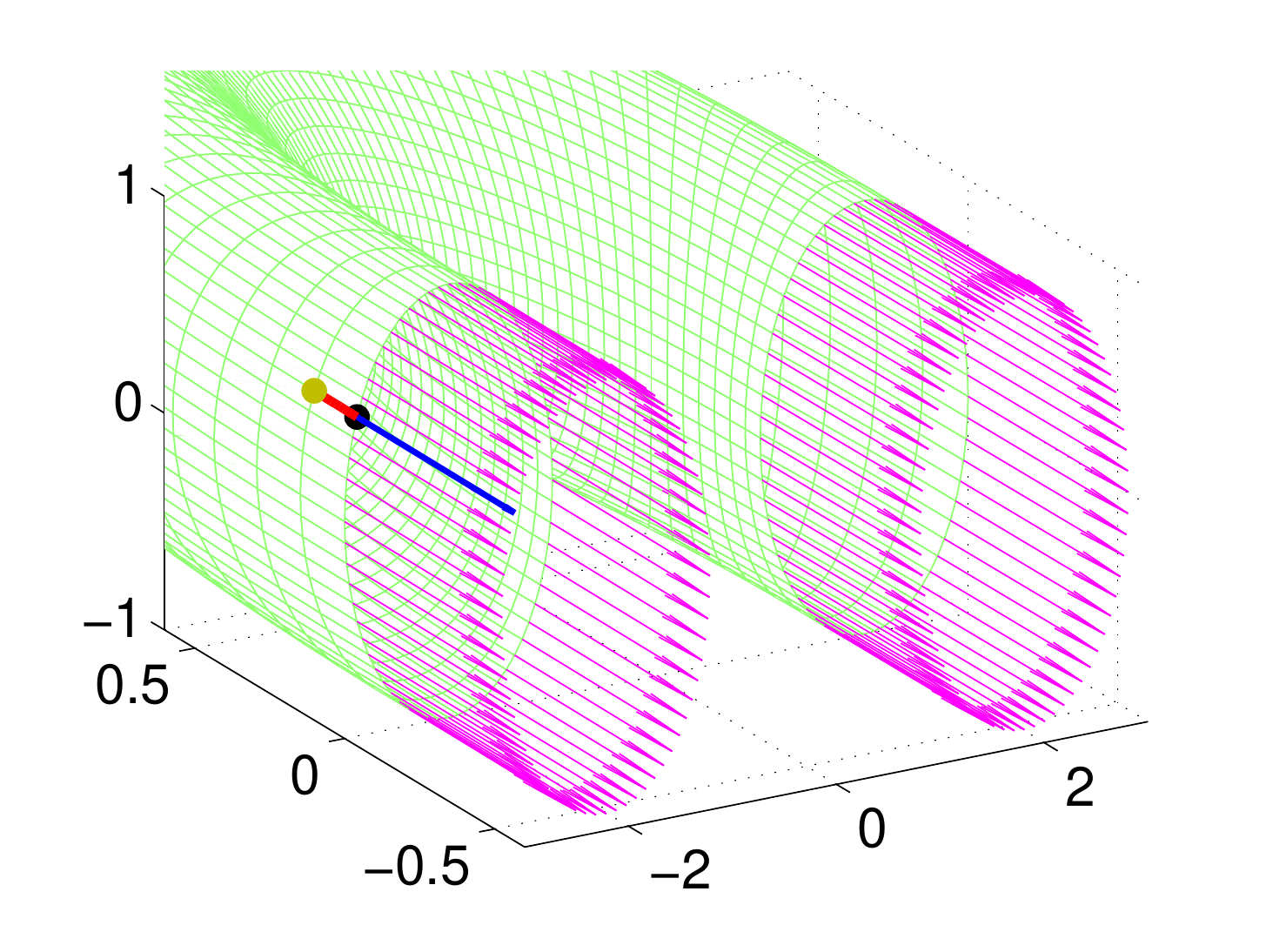}
  \end{center}
\end{multicols}
\caption{(Color online) (a) Sketch of a specification of ghost points $\{x_j^{G_k}\}$ starting from the boundary point $x_j^B$, along the secant line, on a 1D manifold.
Here, $\bm{\tilde{\nu}}_1$ and $\bm{\tilde{\nu}}_N$ are two estimated exterior normal directions which are along the secant lines connecting the boundary points and their nearest neighbors on the manifold.
(b) Ideal construction: ghost point extension for $x_1^{G_0},x_1^{G_1},x_1^{G_2}$ along true $\boldsymbol{\nu}_1$. Here, $x_2$ and $x_3$ are points on the manifold $M$, $x_1^B:=x_1$ is a point on the boundary $\partial M$.
(c) Secant line extension for ghost points $\tilde{x}_1^{G_1}, \tilde{x}_1^{G_2}$ along the estimated $\boldsymbol{\tilde{\nu}}_1$. Here, $\boldsymbol{\tilde{\nu}}_1$ is along the secant line connecting $x_2$ and $x_1$.
(d) Secant line extension for well-sampled data for the torus example. Blue line is the extension of the secant line, connecting the black boundary point and the yellow manifold point. Similarly for the other magenta lines.}
\label{fig22_1dcurve}
\end{figure}

\begin{prop}
\label{prop:diren} Let $\gamma (s)$ be a geodesic curve parameterized with the
 arc-length $s$, connecting discrete points $x_1\in \partial M$ with $x_2\in M$ (see Fig.~\ref{fig22_1dcurve}).
such that $|x_1-x_2|=\mathcal{O}(h)$, where $|\cdot |$ denotes the Euclidean $\mathbb{R}^{n}$%
-norm. Then, the
unit tangent vector ${\boldsymbol{\nu}}_1=-\gamma ^{\prime }(0)$ at point $%
x_1=\gamma (0)$\ can be estimated by ${\tilde{\boldsymbol{\nu}}}_1$ in \eqref{Eqn:bbnnm1}
with error $|{{\boldsymbol{\nu}}}_1-{\tilde{\boldsymbol{\nu}}}_1|= \mathcal{O}(h)$, where the constant in the error bound depends on the local
curvature $\omega=|\gamma''(0)|$ of the curve at $x_1=\gamma ( 0) $.
\end{prop}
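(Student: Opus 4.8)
The plan is to Taylor-expand the geodesic curve $\gamma$ at $s=0$ and compare the secant direction $x_1-x_2 = \gamma(0)-\gamma(s)$ with the tangent vector $\gamma'(0)$. Since $\gamma$ is parameterized by arc-length we have $|\gamma'(t)|\equiv 1$, hence $|x_1-x_2|=\mathcal{O}(h)$ forces $s=\mathcal{O}(h)$; this is the elementary but essential first observation. Writing the second-order Taylor expansion with remainder,
\begin{equation}
x_2=\gamma(s)=\gamma(0)+s\,\gamma'(0)+\tfrac{1}{2}s^2\,\gamma''(\xi)\nonumber
\end{equation}
for some $\xi\in[0,s]$, we get $x_1-x_2 = -s\,\gamma'(0)-\tfrac12 s^2\gamma''(\xi)$. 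Using $-\gamma'(0)={\boldsymbol\nu}_1$ and $|\gamma''(\xi)|=\omega+\mathcal{O}(s)$ (by continuity of $\gamma''$, as $M$ is $C^\infty$), this reads $x_1-x_2 = s\,{\boldsymbol\nu}_1 + \mathcal{O}(s^2)$, with the hidden constant governed by the curvature $\omega=|\gamma''(0)|$.

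Next I would normalize. Since ${\boldsymbol\nu}_1$ is a unit vector and the correction is $\mathcal{O}(s^2)$, we have $|x_1-x_2| = s\big(1+\mathcal{O}(s)\big)$, so
\begin{equation}
\tilde{\boldsymbol\nu}_1 = \frac{x_1-x_2}{|x_1-x_2|} = \frac{s\,{\boldsymbol\nu}_1+\mathcal{O}(s^2)}{s\,(1+\mathcal{O}(s))} = {\boldsymbol\nu}_1 + \mathcal{O}(s).\nonumber
\end{equation}
A clean way to make the last step rigorous without grinding is the general bound $\big|\tfrac{a}{|a|}-\tfrac{b}{|b|}\big|\le \tfrac{2|a-b|}{\max(|a|,|b|)}$ for nonzero vectors $a,b$: apply it with $a=x_1-x_2$ and $b=s\,{\boldsymbol\nu}_1$, so that $|a-b|=\mathcal{O}(s^2)$ and $|b|=s$, yielding $|\tilde{\boldsymbol\nu}_1-{\boldsymbol\nu}_1|=\mathcal{O}(s)=\mathcal{O}(h)$. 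Finally, re-expressing $s=\mathcal{O}(h)$ in terms of the original hypothesis $|x_1-x_2|=\mathcal{O}(h)$ closes the estimate, and tracking the constant through the remainder term shows it is proportional to $\omega=|\gamma''(0)|$ (plus higher-order corrections).

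I do not anticipate a serious obstacle here: this is a standard second-order Taylor argument for a $C^\infty$ curve, and the only mild subtlety is bookkeeping — making sure the normalization step does not lose a power of $h$ and that the constant is correctly attributed to the curvature rather than, say, to $|\gamma'''|$. If one wanted the sharpest constant one could carry the expansion to third order and identify the $\mathcal{O}(h)$ coefficient explicitly as $\tfrac12\,\omega$ times a unit vector orthogonal to ${\boldsymbol\nu}_1$, but for the stated $\mathcal{O}(h)$ claim the second-order expansion suffices. The same argument applies verbatim to the secant approximation of the tangent vector at any interior $x_i$ (using an arc-length parameterization based at $x_i$), and to $\tilde{\boldsymbol\nu}_N$, as claimed in the remark preceding the proposition.
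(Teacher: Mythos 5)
Your proof is correct and reaches the stated bound, but it takes a slightly different technical route from the paper's. The paper carries the Taylor expansion to third order and explicitly invokes the geodesic property $\gamma''(s)\perp T_{\gamma(s)}M$: this orthogonality kills the cross term $\langle\gamma'(0),\gamma''(0)\rangle$ in the expansion of $|\gamma(s)-\gamma(0)|^2$, so that $|\gamma(s)-\gamma(0)|=s+\mathcal{O}(s^3)$, and the leading-order error emerges cleanly as $\tfrac{s}{2}|\gamma''(0)|+\mathcal{O}(s^2)$. You instead stop at second order with a Lagrange remainder and absorb the normalization into the general inequality $\bigl|\tfrac{a}{|a|}-\tfrac{b}{|b|}\bigr|\le 2|a-b|/\max(|a|,|b|)$; this never uses the geodesic assumption at all. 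What this buys you is a shorter argument that holds for any $C^2$ curve, not just geodesics; what it costs is the sharp constant — your route yields a bound of roughly $\omega s$ rather than the exact leading coefficient $\tfrac{1}{2}\omega s$ that the paper extracts. Since the proposition only asserts $\mathcal{O}(h)$ with a constant depending on $\omega$, your version suffices, and you correctly flag in your closing remark that recovering the $\tfrac{1}{2}\omega$ coefficient would require the third-order expansion. One small bookkeeping note: the normalization inequality you quote is right (a one-line proof: write $\tfrac{a}{|a|}-\tfrac{b}{|b|}=\tfrac{a-b}{|a|}+b\bigl(\tfrac{1}{|a|}-\tfrac{1}{|b|}\bigr)$ and use $\bigl||a|-|b|\bigr|\le|a-b|$), so that step is sound.
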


\begin{proof}
For small $s$, applying Taylor's expansion on $%
\gamma $, we get
\begin{equation*}
\gamma (s)=\gamma (0)+s\gamma ^{\prime }(0)+\frac{s^{2}}{2}\gamma ^{\prime
\prime }(0)+\frac{s^{3}}{6}\gamma ^{\prime \prime \prime }(0)+\mathcal{O}%
(s^{4}).
\end{equation*}%
Since $\gamma ^{\prime \prime }(s)\perp T_{x}M$ for any $x\in M$ (by
geodesic curve), we obtain:
\begin{equation*}
|\gamma (s)-\gamma (0)|^{2}=s^{2}+s^{4}\left( \frac{1}{4}|\gamma "(0)|^{2}+%
\frac{1}{3}\langle \gamma ^{\prime }(0),\gamma ^{\prime \prime \prime
}(0)\rangle \right) +\mathcal{O}(s^{5}).
\end{equation*}%
This also means that,
\BEA
|\gamma (s)-\gamma (0)| &=& s+s^{3}\left( \frac{1}{8}|\gamma "(0)|^{2}+\frac{1}{6}\langle \gamma^{\prime }(0),\gamma ^{\prime \prime \prime }(0)\rangle \right) +\mathcal{O}(s^{4}).\nonumber
\EEA
Then, we have,
\BEA
\frac{\gamma(s)-\gamma(0)}{|\gamma(s)-\gamma(0)|} = (s+\mathcal{O}(s^3))^{-1}(s\gamma'(0)+ \frac{s^2}{2}\gamma''(0)+ \mathcal{O}(s^3))  = \gamma'(0)+ \frac{s}{2}\gamma''(0)+ \mathcal{O}(s^2).\nonumber
\EEA
By the definitions of $\boldsymbol{\nu}_1$ and $\tilde{\boldsymbol{\nu}}_1$ and after some algebra, we have
\begin{eqnarray*}
\left\vert {\boldsymbol{\nu}}-{\tilde{\boldsymbol{\nu}}}\right\vert
&=&\left\vert \gamma ^{\prime }(0)-\frac{\gamma (s)-\gamma (0)}{|\gamma
(s)-\gamma (0)|}\right\vert
=\frac{s}{2}|\gamma "(0)|+\mathcal{O}(s^{2}).
\end{eqnarray*}%
Since $s =\mathcal{O}(h)$, it is clear that $\left\vert {\boldsymbol{\nu}}_1-{%
\tilde{\boldsymbol{\nu}}}_1\right\vert =\mathcal{O}(h)$ with constant that
depends on the curvature $\omega =|\gamma "(0)|$.
\end{proof}

{\color{black}In higher-dimensions, one can use the same approximation method as above for well-sampled data. In the following example, we illustrate the secant line extension on a 2D semi-torus, embedded in $\mathbb{R}^3$}

\begin{example}\label{2Dnuwell}
{Fig.~\ref{fig22_1dcurve}(d)} displays the secant line extension along $\boldsymbol{\tilde{\nu}}$ [magenta lines] for the well-sampled data on a semi-torus. In this example, the semi-torus is defined with the
standard {parameterization:}
\begin{equation}
x=\iota ( \theta ,\phi ) :=\left(
\begin{array}{c}
( a+\cos \theta ) \cos \phi \\
( a+\cos \theta ) \sin \phi \\
\sin \theta%
\end{array}%
\right) ,\text{ \ for }%
\begin{array}{c}
0\leq \theta \leq 2\pi , \\
0\leq \phi \leq \pi , \\
a=2,%
\end{array}
\label{Eqn:torus_g}
\end{equation}%
where $(\theta ,\phi) $ are the two intrinsic coordinates and $a$ is the radius of
the semi-torus. The induced Riemannian metric is given by,
\begin{equation}
g_{(\theta ,\phi ) }( u,v) =u^{{\top }}\left(
\begin{array}{cc}
1 & 0 \\
0 & \sin ^{2}\theta%
\end{array}%
\right) v,\quad\quad \forall u,v\in T_{( \theta ,\phi ) }M%
\text{.}  \label{Eqn:torus_gg}
\end{equation}%
For well-sampled data, we notice that the two bases $\frac{\partial x}{\partial \theta}$
and $\frac{\partial x}{\partial \phi}$ are perpendicular to each other. As shown in Fig. \ref{fig22_1dcurve}(d), we can extend the secant line (red), connecting the yellow and black dots to the blue line along this estimated $\boldsymbol{\tilde{\nu} }$. We apply the similar secant line extension to the other magenta lines. Then, we will add ghost points along these magenta secant lines starting from the boundary points, which will be discussed in Section \ref{ghost_ptm}.
\end{example}

{\color{black}Unfortunately, this method is not extendable for randomly distributed data on problems of dimension $d\geq 2$ since for each boundary point, we do not always sample the corresponding interior point that allows us to construct a secant line perpendicular to the boundary.}

\subsubsection{Randomly sampled data}\label{sec312}

{\color{black}For randomly sampled point clouds, $\{x_i\}$, that lie on a $d-$dimensional manifold, our basic idea here is to estimate the tangent vectors $\boldsymbol{\tilde{t}}_{1},
\boldsymbol{\tilde{t}}_{2}, \ldots, \boldsymbol{\tilde{t}}_{d}$\  that span the tangent space at each boundary point, and also estimate the
tangent vectors, $\boldsymbol{\tilde{t}}^b_{1},\ldots,\boldsymbol{\tilde{t}}^b_{d-1}$ along the $d-1$ dimensional boundary $\partial M$.
Then, we compute the normal direction $\boldsymbol{\tilde{\nu} }$\ using the
Gram--Schmidt process or QR decomposition from these directions $\{\boldsymbol{\tilde{t}}_{1}$, $%
\boldsymbol{\tilde{t}}_{2}, \ldots \boldsymbol{\tilde{t}}_{d}\}$, and $\{\boldsymbol{\tilde{t}}^b_1,\ldots,\boldsymbol{\tilde{t}}^b_{d-1}\}$. Finally, we can determine the sign of $%
\boldsymbol{\tilde{\nu} }$ from the orientation of the
manifold $M$.}

To estimate these tangent vectors, we used a
kernel-based nonlinear regression method as introduced in Corollary 3.2. of
\cite{Berry2016IDM}. Here, we give a quick review of the algorithm for
estimating the tangent vectors for an arbitrary point $x$ on a $d$-dimensional manifold $M$ embedded in $\mathbb{R}^{n}$. For a point $x$, one
defines $\mathbf{X}$ to be the $n\times K_n$ matrix with columns $\mathbf{X}%
_{j}=D(x)^{-1/2}\exp \left( -|x-x_{j}|^{2}/4\epsilon \right)
(x_{j}-x)$ where $D(x)=\sum_{j=1}^{K_n}\exp \left( -|
x-x_{j}|^{2}/2\epsilon \right) $ with $x_{j}$  ($j=1,\ldots,K_n$) being $K_n>d$
nearest neighbors of the point $x$. Then, the leading largest $d$ singular
values of matrix $\mathbf{X}$ will be of order-$\sqrt{\epsilon }$ with the
associated singular vectors parallel to the tangent space of $M$. The remaining $\min\{n,K_n\}-d
$  smaller singular values will be of order-$\epsilon $ with the
singular vectors orthogonal to the tangent space of $M$.

{\color{black}To simplify the discussion below, let us focus on 2D problems (while the same algorithm is applicable for any $d$-dimensional problems with appropriate choice of $\epsilon$ and number of boundary points, which we shall discuss in the Summary section). In the 2D case, we first estimate the two tangent vectors $\boldsymbol{\tilde{t}}_{1}$\ and $\boldsymbol{\tilde{t}}_{2}$\ for a boundary point $x \in \partial M$ using the
kernel-based nonlinear regression method. We empirically choose $K_{n_1}>d=2$
and find $K_{n_1}$ nearest neighbors of $x$ from points on the 2D manifold\ $M$%
.\ Using these $K_{n_1}$\ points, we then specify the bandwidth of the
kernel $\epsilon _{1}$ using the auto-tuned method discussed in %
Section~\ref{paraspec}. The error estimates of the two leading singular vectors $\boldsymbol{\tilde{t}}_{1}$\ and $\boldsymbol{\tilde{t}}_{2}$ for
approximating the two tangent vectors are on order-$\sqrt{\epsilon _{1}}$
(see Appendix A in \cite{Berry2016IDM} for detailed discussion).
Since there are infinitely many two linearly independent vectors that can span the 2D tangent space of $M$ at $x$, numerically we can only guarantee that $\text{Span}\{\boldsymbol{\tilde{t}}_{1}, \boldsymbol{\tilde{t}}_{2}\} = \text{Span}\{\frac{\partial x} {\partial \theta},\frac{\partial x} {\partial \phi}\}$, where  the parameterization $x =\iota (\theta ,\phi)$ with $\theta$ and $\phi$ being two intrinsic coordinates. This pair of linearly independent vectors $\boldsymbol{\tilde{t}}_{1}$\ and $\boldsymbol{\tilde{t}}_{2}$ can be different from the local bases $\frac{\partial x} {\partial \theta}$ and $\frac{\partial x} {\partial \phi}$ up to an orthonormal matrix (or a rotation).

Similarly, we apply the nonlinear regression method to estimate the tangent direction $
\boldsymbol{\tilde{t}}:=\boldsymbol{\tilde{t}}^b_1$ that is parallel to the boundary $\partial M$ for each boundary point $x\in \partial M$.
We empirically choose $K_{n_2}>d=2$ and find $K_{n_2}$ nearest neighbors of $x$
only from boundary points of the one-dimensional $\partial M$. Using these $K_{n_2}$\ points, we
auto tune the bandwidth of the kernel $\epsilon _{2}$. We can compute $\boldsymbol{\tilde{t}}$ from the first singular value of this $\mathbf{X}$ and the error estimate of
$\boldsymbol{\tilde{t}}$ is order-$\sqrt{\epsilon _{2}}$. Next, the normal
direction $\boldsymbol{\tilde{\nu} }$ can be approximated by subtracting the
orthogonal projection of $\boldsymbol{\tilde{t}}_{1}$\ (or $\boldsymbol{\tilde{t}}_{2}$)\
onto $\boldsymbol{\tilde{t}}$\ from the tangent vector $\boldsymbol{\tilde{t}}_{1}$\ (or $%
\boldsymbol{\tilde{t}}_{2}$)\ using the Gram--Schmidt process or QR decomposition,
\begin{equation}
\boldsymbol{\tilde{\nu} } = \boldsymbol{\tilde{t}}_{1} - \left\langle \boldsymbol{\tilde{t}}_{1},\boldsymbol{\tilde{t}}\right\rangle
\boldsymbol{\tilde{t}},  \notag
\end{equation}
where $\left\langle \boldsymbol{\tilde{t}}_{1},\boldsymbol{\tilde{t}}\right\rangle$ denotes the inner product of vectors $\boldsymbol{\tilde{t}}_{1}, \boldsymbol{\tilde{t}}\in\mathbb{R}^n$ and we notice that $|\boldsymbol{\tilde{t}}|=1$ for a singular vector from SVD.
Finally, the sign of $\boldsymbol{\tilde{\nu} }$ can be determined by comparing with the $k$-nearest neighbors of $x$.
The error estimate
for the normal direction $\boldsymbol{\tilde{\nu} }$ is thereafter $\mathcal{O}(%
\sqrt{\epsilon _{1}},\sqrt{\epsilon _{2}})$, that is, $\left\vert
\boldsymbol{\nu }-\boldsymbol{\tilde{\nu}}\right\vert =\mathcal{O}(\sqrt{%
\epsilon _{1}},\sqrt{\epsilon _{2}})$.

Applying the $\epsilon$ auto-tuning algorithm discussed in Section~\ref{sec:dm}, we obtain an error of order-$N^{-1/d}$, which we have verified for problems of dimensions $d=1,2$. For 2D problems, if the number of points at the boundary is $J=\mathcal{O}(N^{1/2})$, then $\epsilon_2 \sim J^{-1} \sim N^{-1/2}$ and this error rate balances with the rate $\epsilon_1 \sim N^{-1/2}$, which is also the rate of the error for the overall GPDM algorithm, as we show in the following example.

\begin{example}\label{2Dnuexample}
Fig.~\ref{fig2_normal_dire}(b) displays a comparison of the true $\boldsymbol{\nu}$ and estimated $\boldsymbol{\tilde{\nu}}$ for random data on a semi-torus. The embedding function is given by \eqref{Eqn:torus_g} and the Riemannian metric is given by \eqref{Eqn:torus_gg}.
It can be seen from Fig. \ref{fig2_normal_dire}(c) that the error rate for  $|\boldsymbol{\tilde{\nu}}-\boldsymbol{\nu }|$ is as expected to be $\mathcal{O}(\epsilon^{1/2})$, where $\epsilon=\epsilon_1$ is chosen to be the same as that in DM or GPDM method in Example \ref{torusrandom}.
\end{example}
}

\begin{figure*}[tbp]
\centering
\begin{tabular}{ccc}
(a) sketch for extension along & (b) exterior normal direction $\boldsymbol{\nu}$ & (c) error rate of $|\boldsymbol{\tilde{\nu}}-\boldsymbol{\nu}|$ \\
 estimated $\boldsymbol{\tilde{\nu}}$ for random data & and estimated $\boldsymbol{\tilde{\nu}}$ for random data &  for random data  \\
\includegraphics[width=.3\textwidth]{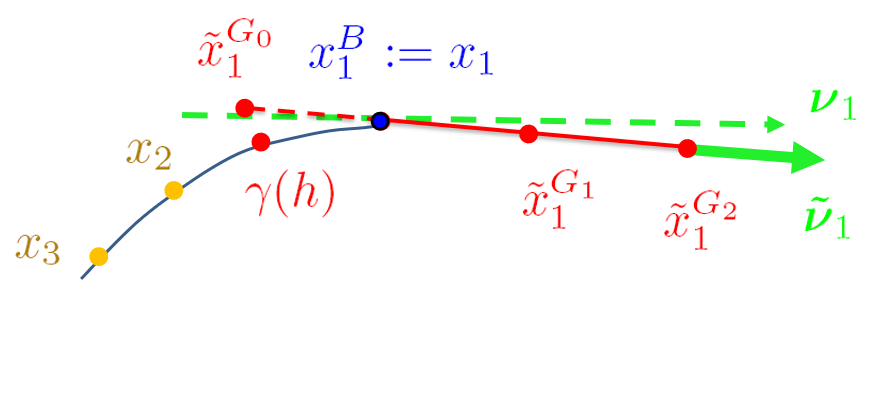}
& \includegraphics[width=.3\textwidth]{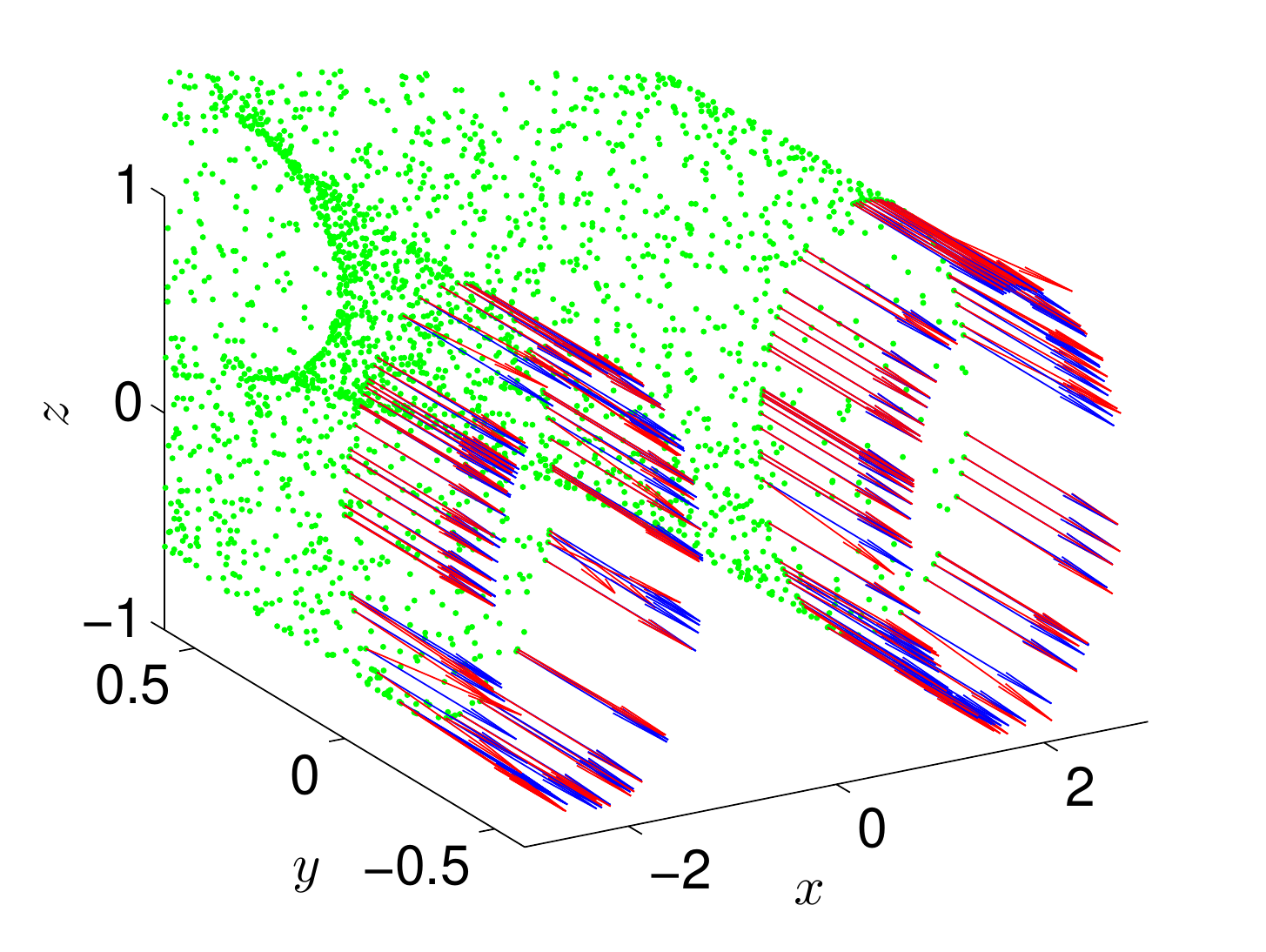}
& \includegraphics[width=.3\textwidth]{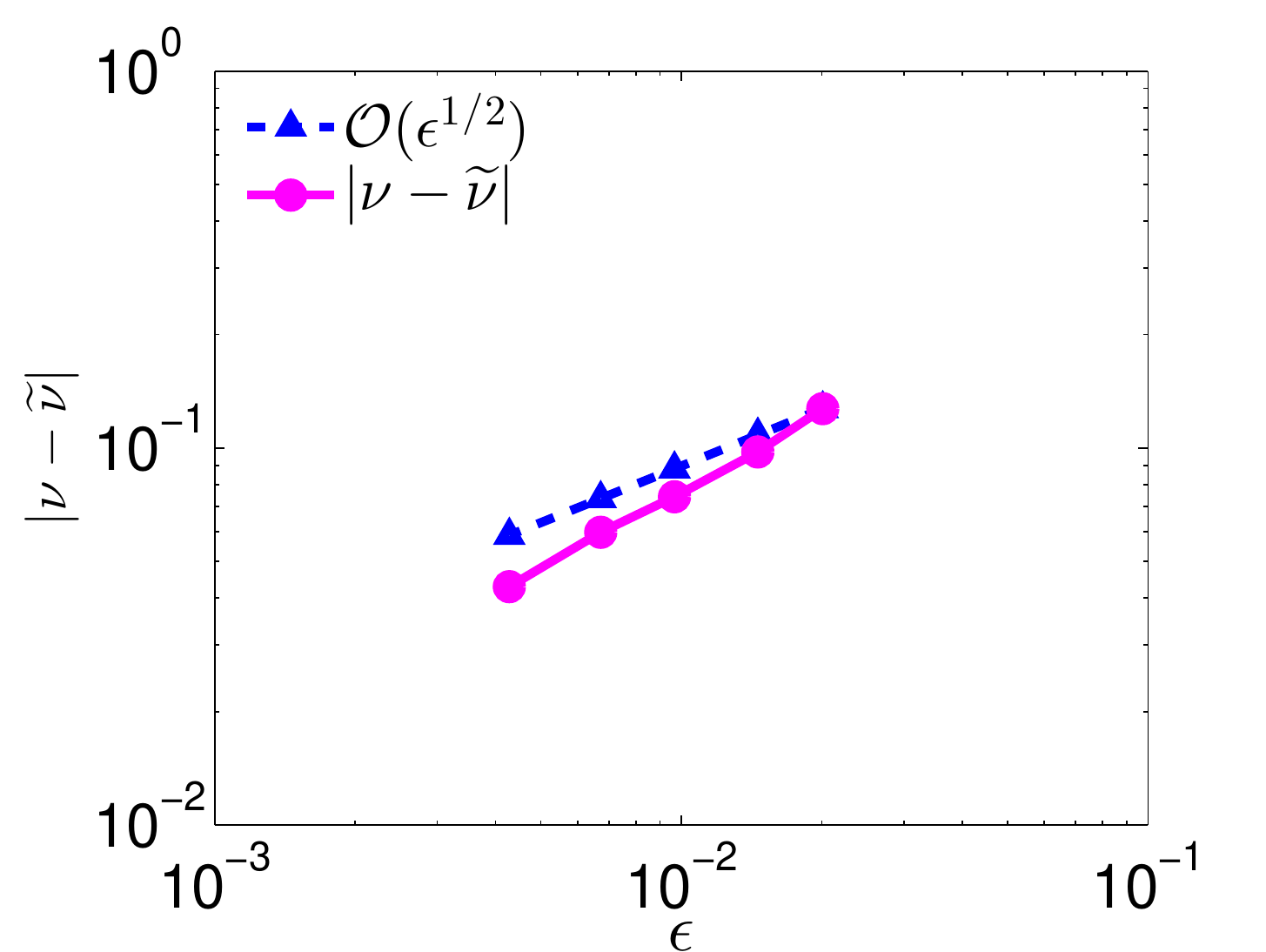}
\end{tabular}%
\caption{(Color online) (a) Sketch for a ghost point extension along the estimated $\boldsymbol{\tilde{\nu}}$ for random data. Here, $x_2$ and $x_3$ are points on the manifold $M$, $x_1^B$ is the point on the boundary $\partial M$, $\boldsymbol{\nu}_1$ is the true exterior normal direction, $\boldsymbol{\tilde{\nu}}_1$ is estimated normal direction, $\tilde{x}_1^{G_0}$ is the interior ghost point and $\gamma(h):=\exp_{x_1^B}(-h\boldsymbol{\nu}_1)$ is a projected point on the manifold $M$, and $\tilde{x}_1^{G_1}$ and $\tilde{x}_1^{G_2}$ are ghost points along $\boldsymbol{\tilde{\nu}}_1$. (b) Comparison between exact exterior normal direction $\boldsymbol{\nu}$ (blue arrows) and estimated exterior normal direction $\boldsymbol{\tilde{\nu}}$ (red arrows) for given random points on the semi-torus (\ref{Eqn:torus_g}) with unknown parameterization {for one trial when $N=64^2$}.  (c)
The expectation of the error $|\boldsymbol{\nu} - \boldsymbol{\tilde{\nu}}|$ as a function of $\epsilon$, where $\epsilon=\epsilon_1$ is chosen the same as that in DM and GPDM methods in Example \ref{torusrandom}. {The five points correspond to $N=32^2,45^2,64^2,90^2,128^2$, and larger $N$ corresponds to smaller auto-tuned $\epsilon$. For each $N$, we run 16 independent trials and then calculate the mean of $|\boldsymbol{\nu} - \boldsymbol{\tilde{\nu}}|$ versus the mean of auto-tuned $\epsilon$'s as one point in panel (c). } }
\label{fig2_normal_dire}
\end{figure*}

\subsection{Estimation of the normal derivatives on the boundaries and {\color{black}distance $h$ among neighboring ghost points}}\label{est_directional_deri}

{\color{black}
For each point $x^B$ at the boundary, we denote $\boldsymbol{\nu}:=\boldsymbol{\nu}_{x^B}\in \mathbb{R}^n$ as the corresponding normal unit vector that is pointing outward from the manifold $M$. We approximate the directional derivative of $\partial_{\boldsymbol{\nu}} u(x^B)$ with the following finite-difference method,
\BEA
\frac{\partial u}{\partial \boldsymbol{\nu }}( x^B) \approx \frac{u( x^{B}) -u( x^{G_0}) }{\left\vert
x^{B}-x^{G_0}\right\vert}, \label{finitedifference}
\EEA
where we have defined a ghost point along $-\boldsymbol{\nu}$ (see Fig.~\ref{fig22_1dcurve}(b)) as,
\BEA
x^{G_0} := x^B - h \boldsymbol{\nu},\label{xG0}
\EEA
{where $h$ characterizes the distance between neighboring ghost points as will be specified below after Definition \ref{epsball}}.
Let $\gamma:\mathbb{R}\to M\subseteq\mathbb{R}^n$ be a geodesic, parameterized with arc-length $h$, such that $\gamma(0)= x^B$ and $\gamma'(0) = -\boldsymbol{\nu}$, one can see that $\gamma(h) - x^{G_0} =  \gamma(h) - (\gamma(0) + h\gamma'(0)) = \mathcal{O}(h^2)$ (see Fig.~\ref{fig2_normal_dire}(a) for a geometric illustration of the point, $\gamma(h)$). For $u\in C^1$ on a straight line connecting $x^{G_0}$ and $\gamma(h):=\exp_{x^B}(-h\boldsymbol{\nu})\in\mathbb{R}^n$, then $u(\gamma(h)) - u(x^{G_0}) = \mathcal{O}(h^2)$. This yields the following error estimate,
\BEA
\frac{u( x^{B}) -u( x^{G_0}) }{\left\vert x^{B}- x^{G_0}\right\vert} = \frac{1}{h} \big(u(x^B) - u(\gamma(h)) \big) + \frac{1}{h}\big(u(\gamma(h)) - u(x^{G_0})\big) = -\nabla_g u(x^B) \cdot\gamma'(0) + \mathcal{O}(h) = \nabla_g u(x^B) \cdot\boldsymbol{\nu} + \mathcal{O}(h).\label{finitedifferenceerror}
\EEA

Since $\boldsymbol{\nu}$ is numerically estimated by $\boldsymbol{\tilde{\nu}}$ with {error of order-$\sqrt{\epsilon}$ and \eqref{xG0} is estimated by
\BEA
\tilde{x}^{G_0} := x^B - h \boldsymbol{\tilde{\nu}},\label{xG0tilde}
\EEA
then } it is immediately clear that, $|\tilde{x}^{G_0} - x^{G_0}| = h |\boldsymbol{\tilde{\nu}}-\boldsymbol{\nu}| = \mathcal{O}(h\sqrt{\epsilon})$. If $u\in C^1$ on a straight line connecting $\tilde{x}^{G_0}$ and $x^{G_0}$, we have $u(x^{G_0})-u( \tilde{x}^{G_0})=\mathcal{O}(h\sqrt{\epsilon})$, and
\BEA
\frac{u( x^{B}) -u( \tilde{x}^{G_0}) }{\left\vert x^{B}-\tilde{x}^{G_0}\right\vert} = \frac{1}{h}(u( x^{B}) -u( x^{G_0}))+ \frac{1}{h}\big(u(x^{G_0})-u( \tilde{x}^{G_0})\big) = {\nabla_g u(x^B) \cdot}\boldsymbol{\nu} + \mathcal{O}(h,\sqrt{\epsilon}) {= \frac{\partial u}{\partial \boldsymbol{\nu }}( x^B) + \mathcal{O}(h,\sqrt{\epsilon})},\label{finitedifferenceerror2}
\EEA
where the first term follows directly from \eqref{finitedifferenceerror}. Based on this observation, we assume that $u\in C^1$ (in fact $C^3$ for the extrapolation scheme in Section~\ref{artBC_ghost}) on the set:
\begin{defn}\label{epsball}
$B_{\epsilon^r}(\partial M) := \cup_{x\in\partial M} B_{\epsilon^r}(x)$, where $B_{\epsilon^r}(x) = \{y\in\mathbb{R}^n:|x-y|\leq \epsilon^r\}$ is an $\epsilon^r$-ball in $\mathbb{R}^n$.
\end{defn}
With this assumption, for $h\lesssim\mathcal{O}(\epsilon^r)$, it is clear that $x^{G_0},\tilde{x}^{G_0}, \gamma(h) {\in} B_{\epsilon^r}(x^B)\subset\mathbb{R}^n$, which justifies the use of the Taylor expansions along straight paths between these points.

{\bf Well-sampled data:} In this case, since $\boldsymbol{\tilde{\nu}}$ is a secant-line approximation, the estimated point $\tilde{x}^{G_0}$ coincides with the interior point adjacent to the corresponding boundary point (e.g., in Fig.~\ref{fig22_1dcurve}(c), $\tilde{x}^{G_0}$ corresponds to $x_2$ when the boundary point $x^B=x_1$). In such a case, one can immediately set $h := |\tilde{x}^{G_0} - x^B|$,
which is also used in the first equality in \eqref{finitedifferenceerror2}. This specification scales as $h=\mathcal{O}(N^{-1/d})$.

{\bf Randomly-sampled data:} In such a case, generally, $\tilde{x}^{G_0}$ does not coincide with any other randomly sampled data (see Fig.~\ref{fig2_normal_dire}(a)). To use the estimator in \eqref{xG0tilde}, one has to specify $h$. In our implementation, $h$ is estimated by the mean distance from $x^{B}$ to its $P$ (around $10$ in our numerical examples) nearest neighbors. Let $x_{p}^B \in M$ denotes the $p$th nearest neighbor of $x^B$ for $p=1,\ldots,P$. Since the distance to the nearest neighbor is a density estimator \cite{loftsgaarden1965nonparametric}, that is, $|x^B - x_{p}^B| \propto q(x^B)^{-1/d}$, where
$q$ denotes the sampling density and $d$ denotes the dimension of the manifold $M$, then the distance
 {\color{black}
\BEA
h  = \frac{1}{P} \sum_{p=1}^P \Big| x^B - x_{p}^B\Big| =\mathcal{O}\Big(q(x^B)^{-\frac{1}{2}}\Big),\notag
\EEA
for two-dimensional manifolds.}
}

\subsection{Ghost points}

\label{ghost_ptm}

\begin{figure}[tbp]
\centering
\includegraphics[width=3.0
in, height=2 in]{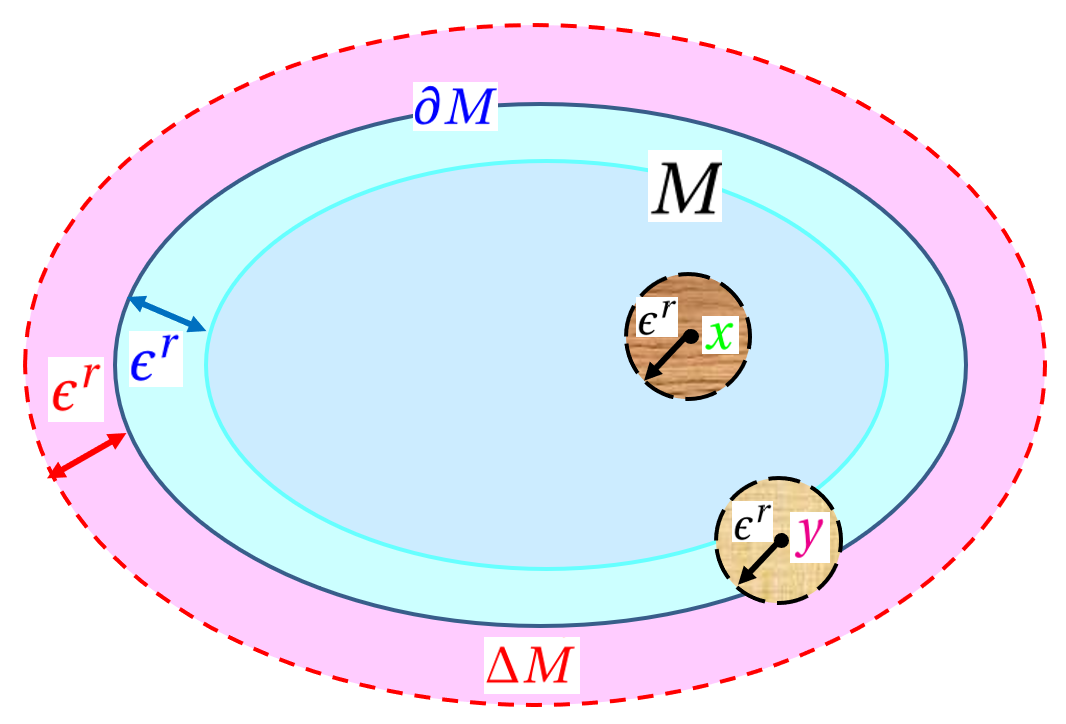}
\caption{(Color online) Sketch of the extended manifold $M \cup \Delta M$.}
\label{fig42_deltaM}
\end{figure}

It is well known that the diffusion operators defined in \eqref{L1}-\eqref{Eqn:L3} cannot be approximated accurately near the boundary of the manifold using the standard diffusion maps algorithm. The reason is that the
asymptotic expansion \eqref{asymp} is valid only for points $x\in M$ whose Euclidean distance from the boundary $\partial M$ is larger than $\epsilon ^{r}$, for $0<r<1/2$.
For points $y\in M$ whose distance from $\partial M$ is smaller than $\epsilon ^{r}$, an order-$\sqrt{\epsilon}$ term appears in the asymptotic expansion \eqref{asymp}. Geometrically, the local integral is inaccessible if there are no available data beyond $M$ (see Fig. \ref{fig42_deltaM}).

To address this issue, our idea here is to supplement the original data points on $M$
with a set of ghost points. Since the integral in the diffusion maps asymptotic expansion is effectively a local integral over a ball of radius $\epsilon ^{r}$ as discussed in \eqref{integralop1}, we
will devise a numerical scheme to specify these new points such that one can approximate the local integral over the ball of radius $\epsilon^r$ even when the integral operator is evaluated at points in $M$ whose distances are less than $\epsilon^r$ from the closest point on the boundary, $\partial M$ (e.g., $y$ in Fig.~\ref{fig42_deltaM}).
Specifically, the ghost points will be sampled from the outer normal collar that can be attached at the boundary such that the extended manifold can be isometrically embedded in $\mathbb{R}^n$ without changing the embedding function of the original $M$. In the following lemma, we provide the specific condition for such requirement to hold for two-dimensional manifolds.

{\color{black}
\begin{lem}\label{ghmanifold}
Let $M$ be a two-dimensional Riemannian manifold with nonempty, smooth boundary $\partial M$, isometrically embedded in $\mathbb{R}^n$. Suppose that $\partial M$ is a regular curve with maximum curvature of $1/R$ and any two points $x,y \in \partial M$, whose geodesic distance $d_g(x,y)> \pi R$, have Euclidean distance $|x-y|>2R$. Then there exists a submanifold, $\Delta M$ (an outer normal collar of radius $R$), such that the adjunction space, $M \cup_{id} \Delta M$, defined by attaching $M$ and $\Delta M$ along the boundary with an identity ``gluing" function $id:\partial M\to \partial (\Delta M)$, can be isometrically embedded in $\mathbb{R}^n$ with an embedding function that is consistent with the original embedding function when it is restricted to $M$.
\end{lem}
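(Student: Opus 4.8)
The plan is to construct the outer normal collar $\Delta M$ explicitly as an abstract Riemannian manifold, glue it to $M$ along $\partial M$, and then exhibit a concrete embedding of the adjunction space into $\mathbb{R}^n$ by extending the original embedding $\iota$ of $M$ via the normal exponential map in the ambient space. First I would set up the collar: using the given regularity of $\partial M$, parameterize the boundary by arclength $s\mapsto c(s)$, let $\boldsymbol{\nu}(s)$ be the outward unit conormal (the unit vector in $T_{c(s)}M$ orthogonal to $T_{c(s)}\partial M$, pointing out of $M$), and define $\Delta M$ as the image of the map $\Phi(s,t) = \exp^{\mathbb{R}^n}_{c(s)}\big(t\,\boldsymbol{\nu}(s)\big)$ for $t\in[0,R]$ — equivalently, in the abstract picture, $\Delta M$ is $\partial M \times [0,R]$ equipped with the pullback metric $\Phi^*g_{\mathbb{R}^n}$. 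Because $M$ is smooth up to $\partial M$, the conormal field $\boldsymbol{\nu}$ is smooth, so $\Phi$ is smooth; the adjunction space $M\cup_{id}\Delta M$ is then a genuine $C^\infty$ two-dimensional manifold with the gluing being $C^\infty$ across $\partial M$ provided the metrics match to all orders there, which they do because $\Phi$ extends the normal geodesics of $M$ itself (the first fundamental form and all its normal derivatives at $t=0$ agree with those coming from $M$'s collar neighborhood theorem on the $M$ side).

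Next I would verify that $\Phi$ is an embedding, not merely an immersion, for $R$ small enough relative to the stated hypotheses. Local injectivity and immersivity follow from the fact that $\partial_t\Phi = $ the parallel-ish transported conormal has unit length and $\partial_s\Phi$ stays close to $c'(s)$ for small $t$, so the Jacobian has full rank; quantitatively this needs $t\le R$ with $R$ controlled by the curvature bound $1/R$ on $\partial M$ and the second fundamental form of $M$ in $\mathbb{R}^n$. Global injectivity is where the two explicit hypotheses enter: the curvature bound $\le 1/R$ on $\partial M$ prevents the collar from self-intersecting "locally" (nearby normal rays fan out rather than cross within distance $R$), and the separation hypothesis — points of $\partial M$ at geodesic distance $>\pi R$ have Euclidean distance $>2R$ — prevents "far" parts of the collar from colliding, since each collar point lies within Euclidean distance $R$ of its base point on $\partial M$. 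One also checks the collar does not re-enter $M$: a normal ray from $c(s)$ pointing outward leaves $M$ immediately and, again by the curvature/separation bounds, cannot return within length $R$.

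Finally, I would assemble the global embedding $\bar\iota : M\cup_{id}\Delta M \to \mathbb{R}^n$ by $\bar\iota|_M = \iota$ and $\bar\iota|_{\Delta M} = \Phi$; these agree on $\partial M$ by construction ($\Phi(s,0)=c(s)=\iota(\text{preimage})$), the result is $C^\infty$ by the matching-to-all-orders argument above, it is an immersion (each piece is, and the derivatives match along the seam), and it is injective because $M$ and $\Delta M\setminus\partial M$ have disjoint images — the $\Delta M$ side lies in the ambient tube of radius $R$ around $\partial M$ on the outward side, which meets $M$ only in $\partial M$, using once more the local curvature bound and the global separation bound to rule out the tube wrapping around and hitting $M$ or itself. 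Since the metric on $\Delta M$ was defined as the pullback $\Phi^*g_{\mathbb{R}^n}$ and on $M$ it is the inherited metric, $\bar\iota$ is isometric, and $\bar\iota|_M = \iota$ as required. I expect the main obstacle to be the quantitative global-injectivity step: turning the two clean hypotheses (curvature $\le 1/R$, and the $\pi R$/$2R$ geodesic-vs-Euclidean separation) into a rigorous proof that the ambient normal tube of radius $R$ over $\partial M$ is embedded and disjoint from $M^o$ — essentially a reach/tubular-neighborhood estimate for a curve sitting inside a surface inside $\mathbb{R}^n$, where one must carefully separate the "local" fanning-out argument from the "global" non-return argument, and confirm the constants are exactly the ones stated rather than merely "sufficiently small $R$."
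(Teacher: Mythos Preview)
Your proposal is correct and takes essentially the same approach as the paper: both construct $\Delta M$ via the map $(x,t)\mapsto x + t\boldsymbol{\nu}_x$ (your $\Phi$ is exactly the paper's $\tilde\iota$, since $\exp^{\mathbb{R}^n}$ is straight-line addition), equip it with the pullback metric, and glue. The only packaging difference is that the paper handles the injectivity step by invoking the tubular neighborhood theorem for $\partial M$ in $\mathbb{R}^n$ as a black box and then reading off that the stated curvature and separation hypotheses yield radius $R$, whereas you unpack that argument by hand into the local (curvature-controlled fanning) and global ($\pi R$/$2R$ separation) pieces; your identification of the quantitative reach estimate as the crux is exactly right, and your additional check that the outward collar does not re-enter $M^o$ is a point the paper leaves implicit.
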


\begin{proof}
Our construction is to extend $M$ with an exterior collar of radius $R$ along the boundary. By the collar neighborhood theorem (Theorem~9.25 in \cite{lee2013smooth}), there exists a normal collar neighbor $W\subseteq M$, which is defined as the range of the following map $\phi:[0,R)\times \partial M \to W\subseteq M$,
\BEA
\phi(t,x) = \exp_x(-t\boldsymbol{\nu}_x), \quad t\in [0,R), \nonumber
\EEA
for some $R>0$. Here, $\boldsymbol{\nu}_x $ denotes the normal vector at $x\in \partial M$ that is pointing outward from the manifold $M$, so $\phi$ maps the points in the inward normal collar to the collar neighbor $W$. Define a manifold corresponding to the  pre-image of the collar neighbor that points outward as,
\BEA
\Delta M = \big\{(-t,x): t\in[0,R), x\in\partial M, \phi(t,x) \in W  \big\}, \label{DeltaM}
\EEA
which is the outer normal collar of radius $R$.

Next, we attach $M$ and $\Delta M$ along the boundary by identifying $x\in\partial M$ with the identity map $id(x)\in \{0\}\times \partial M \subset \Delta M$. Let $M \cup_{id} \Delta M:= (M\sqcup \Delta M)/\sim$ be the adjunction set defined as the quotient space of the disjoint sum induced by attaching $\Delta M$ to $M$ along the identity map, $id$. Since $\Delta M$ is the outward normal collar and $\partial M$ is a smooth boundary with bounded curvature, the adjunction set is smooth along the attached boundary $\partial M$. To finish the proof, we need to show that the adjunction set can be isometrically embedded in $\mathbb{R}^n$ with an embedding function that is consistent when restricted to the original manifold $M$.

Let $\mathcal{E}\subseteq T\mathbb{R}^n$ denotes the domain of the exponential map of $\mathbb{R}^n$ and $N(\partial M)$ denotes the normal bundle of $\partial M$ in $\mathbb{R}^n$. Then we can define $E: \mathcal{E}\cap N(\partial M) \to \mathbb{R}^n$ to be the normal exponential map of $\partial M$ in $\mathbb{R}^n$. By the tubular neighborhood theorem (see Theorem 5.25 in \cite{lee2018introduction}), $\partial M$ has a uniform tubular neighbor in $\mathbb{R}^n$. Specifically, there exists a normal neighborhood of $\partial M$, $U\subset \mathbb{R}^n$, that is diffeomorphic under $E$ to an open subset $V\subseteq \mathcal{E}\cap N(\partial M)$. Since $\boldsymbol{\nu}_x\in N_x(\partial M)$, and $\partial M$ has maximum curvature $1/R$ and any two points  with geodesic distance larger than one half of the circumference of the osculating circle of radius $R$, that is, $d_g(x,y)>  \pi R$, have Euclidean distance $|x-y|>2R$, then the open neighbor $U$ is a tubular neighborhood of radius $R$ that is homeomorphic to $\Delta M \subset V$. So, the tubular neighbor theorem ensures that $\Delta M$ as defined in \eqref{DeltaM} can be smoothly embedded in $\mathbb{R}^n$. In fact, one can define a Riemannian metric for $\Delta M$ to be the pullback of the following embedding function,
\BEA
\tilde\iota(-t,x):= x + t \boldsymbol{\nu}_x, \label{extendedembedding}
\EEA
for any $(-t,x)\in \Delta M$. Since the induced metric of $\Delta M$ is consistent with that of $M$ at the attached boundary, that is,
$\tilde\iota(0,x) = x \in \mathbb{R}^n$, then the extended manifold $M \cup_{id} \Delta M$ is isometrically embedded in $\mathbb{R}^n$.
\end{proof}
}

{\color{black}
In the remainder of this paper, we will refer to the extended manifold $M \cup_{id}\Delta M$ as the set $M \cup \Delta M$ to simplify the notation. We should also point out that for 1D manifold, since the boundary consists of only two points (e.g., as shown in Fig. \ref{fig22_1dcurve}), the assumption for $\partial M$ is in Lemma~\ref{ghmanifold} is slightly different. In this case, the extended manifold can be isometrically embedded as long as the two exterior normal lines of length $R>0$ from the boundary do not intersect. Next, will use the embedding function in \eqref{extendedembedding} to specify the ghost points with $R=\mathcal{O}(\epsilon^r)$.

Numerically, for each boundary point $x^B\in \partial M\subset\mathbb{R}^n$, let $\boldsymbol{\tilde{\nu}}\in\mathbb{R}^n$ denotes the numerical estimate of the corresponding normal vector $\boldsymbol{\nu}\in\mathbb{R}^n$ at $x^B$. Then, the ghost points,
\BEA
x^{G_k} := x^B + k h \boldsymbol{\nu},\label{ghostpontsembedding}
\EEA
are approximated by
\BEA
\tilde{x}^{G_k} := x^B + k h \boldsymbol{\tilde{\nu}},\label{approxghostpontsembedding}
\EEA
 for $k=1,\ldots,K$, where $K=\mathcal{O}(\epsilon^r h^{-1})$. Numerically, however, we specify $K$ empirically (usually $K\leq 10$). See Figs.~\ref{fig22_1dcurve}(b),(c) and \ref{fig2_normal_dire}(a) for a geometric illustration.

}

By the construction above, the Euclidean distance between any point {$x\in M^o$} and $\partial(M\cup\Delta M)$ is at least of order $\epsilon^r$. This ensures the validity of the asymptotic expansion in \eqref{asymp} for all points on $M^o$ (including points that are close to the boundary $\partial M$). {\color{black}It is worthwhile to point out that the ghost points $\tilde{x}^{G_k}$ do not exactly lie on $\Delta M$ since the true normal vectors, $\boldsymbol{\nu}$, are not available (ideal case as illustrated in Fig.~\ref{fig22_1dcurve}(b)). In the next section, we will show how this error affects the overall algorithm, especially when the data are randomly distributed.}

\subsection{Extrapolation of functions on the ghost points}

\label{artBC_ghost}

We now address the extrapolation problem on the estimated ghost points. In particular,
we need to extrapolate the solution $u$ on the estimated ghost points.
Popular extrapolation techniques include the linear and quadratic extrapolation methods, the level set
method, the ghost fluid method \cite{aslam2014static}. One idea is to extend the
function of interest with a set of artificial boundary conditions, imposed on the ghost points.
This leads us to the problem of specifying the boundary conditions on the ghost points. In particular, we will consider a discrete analog of matching the second-order derivatives of the functions evaluated at the ghost points as the extrapolation condition, which mimics the cubic spline condition as proposed in \cite{fornberg2002observations}. In addition, we also include a condition that mimics the classical finite-difference solution of Neumann (or Robin) boundary value problems with ghost points.

{\color{black}
Let $u\in C^3(M\cup B_{\epsilon^r}(\partial M))$, where the set $B_{\epsilon^r}(\partial M)\subset\mathbb{R}^n$
is stated in the Definition~\ref{epsball}. We note that the numerically estimated ghost points are components of this set, $\{\tilde{x}_{j}^{G_0}\}_{j=1}^{J}\cup \{\tilde{x}_{j}^{G_k}\}_{j,k=1}^{J,K}  \subset B_{\epsilon^r}(\partial M)$. Given the function values $u(x_i)$ at $x_i\in M$ and $u(\tilde{x}^{G_0}_j)$, our goal is to extrapolate  $u$ onto the set of ghost points, $\{\tilde{x}_{j}^{G_k}\}_{j=1,\ldots,J}^{k=1,\ldots,K}$. In the PDE applications, the function values at $\{\tilde{x}_j^{G_0}\}$ will be estimated in the same manner as the other data $\{x_i\}$ that lie on the manifold, that is, by inverting the discrete approximation of the diffusion operators. In particular, we define the matrix $\mathbf{L}^h$ as a discrete approximation to one of the diffusion operators in \eqref{L1}-\eqref{Eqn:L3} with the following important modification. We construct the matrix $\mathbf{L}^h$ by evaluating the kernel on  $\{x_i\}_{i=1}^N\cup\{\tilde{x}_{j}^{G_0}\}_{j=1}^{J} \cup \{\tilde{x}_{j}^{G_k}\}_{j,k=1}^{J,K}$. In the case of well-sampled data, the normal vector, $\boldsymbol{\nu}$, is estimated by a secant line and, therefore, some of these ghost points coincide with some interior points, that is,  $\{x_i\}_{i=1}^N \supset \{\tilde{x}_j^{G_0}\}_{j=1}^J$. In the case of randomly sampled data, {\color{black}we define $\{x_i\}_{i=1}^{N+J}:= \{x_i\}_{i=1}^N\cup\{\tilde{x}_{j}^{G_0}\}_{j=1}^{J}$ for convenience of notation, even if these interior ghost points do not lie on $M$.}

Since the argument below does not change whether the set $\{x_i\}$ has $N$ or $N+J$ points, instead of using different notations for the well-sampled and randomly sampled cases, we use the same set $\{x_i\}_{i=1}^N$ of $N$-points to denote points on the manifold as well as the interior ghost points $\{\tilde{x}_j^{G_0}\}_{j=1}^J$.}

{\color{black}
Here, the discrete extrapolation problem is to extend the function $u$ identified by the function values only on $x_i, x_j^B$ to estimate the function values  $u(\tilde{x}_j^{G_k}) $ for $k=1,\ldots,K$ by the estimated quantities $ \tilde{u}^{G_k}_{\epsilon,j}$. With these notations, we define a vector $\vec{u}_\epsilon$ by
\BEA
\vec{u}_\epsilon=(u(x_1),\ldots, u(x_N),\tilde{u}_{\epsilon,1}^{G_1},\ldots,\tilde{u}_{\epsilon,1}^{G_K},\ldots,\tilde{u}_{\epsilon,J}^{G_1},\ldots, \tilde{u}_{\epsilon,J}^{G_K})\in\mathbb{R}^{\bar{N}}, \label{longvectorue}
\EEA
where $\bar{N}=N+JK$. Since $\{\tilde{x}^{G_0}_j\}_{j=1}^J\subset \{x_i\}_{i=1}^N$, the first $N$-components include the function values $u(\tilde{x}^{G_0}_j)$.
Then, we estimate $\{\tilde{u}^{G_k}_{\epsilon,j}\}_{j,k=1}^{J,K}$, by solving the following $JK$ algebraic equations,
\BEA
\begin{aligned}\label{Eq:Uvvv}
\left(\mathbf{L}^h \vec{u}_\epsilon\right)_{B_j} & =f(x_{j}^{B}),   \\
\tilde{u}_{\epsilon ,j}^{G_{2}}-2\tilde{u}_{\epsilon ,j}^{G_{1}}+u
(x^{B}_j)& =\tilde{u}_{\epsilon ,j}^{G_{1}}- 2u(x_{j}^{B})+u(\tilde{x}_j^{G_0}), \\
\tilde{u}_{\epsilon ,j}^{G_{3}}-2\tilde{u}_{\epsilon ,j}^{G_{2}}+\tilde{u}%
_{\epsilon ,j}^{G_{1}}& =\tilde{u}_{\epsilon ,j}^{G_{2}}-2\tilde{u}%
_{\epsilon ,j}^{G_{1}}+u(x_{j}^{B}),  \\
\tilde{u}_{\epsilon ,j}^{G_{k}}-2\tilde{u}_{\epsilon ,j}^{G_{k-1}}+\tilde{u}%
_{\epsilon ,j}^{G_{k-2}}& =\tilde{u}_{\epsilon ,j}^{G_{k-1}}-2\tilde{u}%
_{\epsilon ,j}^{G_{k-2}}+\tilde{u}_{\epsilon ,j}^{G_{k-3}},\quad \quad
k=4,\ldots K,
\end{aligned}%
\EEA
for $j=1,\ldots,J$. Here, we have used the subscript-$B_j$ to denote the component corresponding to the boundary point $x^B_j$. The first equation in \eqref{Eq:Uvvv} is motivated by the classical finite-difference approach for solving the Neumann  problems in \eqref{extrapolation}, which imposes the discrete approximation of the elliptic PDE to be consistent at the boundary. The last three equations in \eqref{Eq:Uvvv} are the discrete analog of matching the second-order derivatives along $\boldsymbol{\tilde{\nu}}_j$ at the ghost points and the corresponding boundary point $x_j^B$.}

Now we report the error in approximating the function values $u(x_j^{G_k})$ with $\tilde{u}_{\epsilon,j}^{G_k}$, obtained from solving the algebraic conditions in \eqref{Eq:Uvvv}.

\begin{prop}
\label{prop:extrapolationofu} (Extrapolation error rate for $u$) Let $u\in C^{3}\left( M\cup B_{\epsilon^r}(\partial M)\right) $, where
 the extended manifold $M\cup\Delta M$ is a submanifold of $\mathbb{R}^n$, constructed by Lemma~\ref{ghmanifold} with $R=\mathcal{O}(\epsilon^r)$, where $0<r<1/2$. For each $x_j^{G_k} \in \Delta M$, let $\tilde{u}_{\epsilon,j}^{G_k}$ be the extrapolated function value at the estimated ghost point, $\tilde{x}_j^{G_k}$, obtained by solving \eqref{Eq:Uvvv}.
For any fixed $j=1,\ldots, J$,
\BEA
\left| u(x_j^{G_k}) - \tilde{u}_{\epsilon,j}^{G_k} \right| = \mathcal{O}\left({\color{black}h^3,h^2 \epsilon^{-1/2}},\bar{N}^{-1/2}\epsilon ^{-(1+d/4)},\bar{N}^{-1/2}\epsilon ^{(1/2-d/4)}\right),\label{extrapolationerror}
\EEA
in high probability, where $h$ depends on $\epsilon$ such that the first two terms vanishes as $\epsilon\to 0$ after $\bar{N}\to\infty$.
\end{prop}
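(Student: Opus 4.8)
The plan is to show that the \emph{true} ghost values $u(x_j^{G_k})$ nearly satisfy the linear system~\eqref{Eq:Uvvv} and then to bound the discrepancy with a stability estimate for that system, so that the residual sizes together with the conditioning of the solve produce the four terms in~\eqref{extrapolationerror}. First I would reduce to one dimension along the normal: fix $j$, drop it, and set $\psi(t):=u(x^B+t\boldsymbol{\nu})$ and $\tilde{\psi}(t):=u(x^B+t\boldsymbol{\tilde{\nu}})$, both $C^3$ on an interval containing $[-h,Kh]$ since $Kh=\mathcal{O}(\epsilon^r)$ and $u\in C^3(M\cup B_{\epsilon^r}(\partial M))$. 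Then $u(x^{G_k})=\psi(kh)$, the quantity $\tilde{u}_{\epsilon}^{G_k}$ targets $\tilde{\psi}(kh)$, and $|\psi(t)-\tilde{\psi}(t)|\le |t|\,\|\nabla u\|_\infty|\boldsymbol{\nu}-\boldsymbol{\tilde{\nu}}|+\mathcal{O}(t^2|\boldsymbol{\nu}-\boldsymbol{\tilde{\nu}}|)=\mathcal{O}(h\sqrt{\epsilon})$ for $|t|\le Kh$, using $|\boldsymbol{\nu}-\boldsymbol{\tilde{\nu}}|=\mathcal{O}(\sqrt{\epsilon})$ (or $\mathcal{O}(h)$ for well-sampled data) from Sections~\ref{bound_estimate}--\ref{est_directional_deri}; note in particular that $w_0:=u(\tilde{x}^{G_0})$ equals $\tilde{\psi}(-h)$ exactly.

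Next I would substitute the true profile values $w_0=\tilde{\psi}(-h)$, $w_1=u(x^B)=\tilde{\psi}(0)$, $w_{k+1}=\tilde{\psi}(kh)$ into~\eqref{Eq:Uvvv} and bound each residual. The second-difference rows (the last three families) state that consecutive discrete second differences of $(w_m)$ are equal; by the Taylor identity $\tilde{\psi}(t+h)-2\tilde{\psi}(t)+\tilde{\psi}(t-h)=h^2\tilde{\psi}''(t)+\mathcal{O}(h^3)$ the mismatch between two such second differences is $h^2(\tilde{\psi}''(t_m)-\tilde{\psi}''(t_{m-1}))+\mathcal{O}(h^3)=\mathcal{O}(h^3)$, so these rows hold with residual $\mathcal{O}(h^3)$. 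For the first row, Lemma~\ref{ghmanifold} places $x^B$ at distance $\ge\mathcal{O}(\epsilon^r)$ from $\partial(M\cup\Delta M)$ in the extended manifold, so the pointwise diffusion-maps consistency (Lemma~\ref{lem:old}) gives $(\mathbf{L}^h\vec{u}_{\mathrm{true}})_{B}=\mathcal{L}u(x^B)+\mathcal{O}(\epsilon)+\mathcal{O}(\bar{N}^{-1/2}\epsilon^{-(1+d/4)})$ in high probability, with a further perturbation because the kernel is evaluated at the estimated ghost points rather than on $\Delta M$ (the mislocation $|\tilde{x}^{G_k}-x^{G_k}|=\mathcal{O}(kh\sqrt{\epsilon})$ enters here, after the $\epsilon^{-1}$ normalization, at order $h^2\epsilon^{-1/2}$); since $\mathcal{L}u(x^B)=f(x^B)$, the first-row residual is of this order.

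Now I would solve for the error $e$ between the computed and true ghost vectors, which satisfies $Ae=\rho$ with $A$ the coefficient matrix of~\eqref{Eq:Uvvv} and $\rho$ the residual just estimated. Eliminating the recursion rows shows $\tilde{u}_{\epsilon}^{G_k}=w_0+(k+1)(w_1-w_0)+\binom{k+1}{2}\big(\tilde{u}_{\epsilon}^{G_1}-2w_1+w_0\big)$ --- the unique quadratic-in-$k$ extrapolant through $w_0,w_1$ --- so the first row collapses to a single scalar equation $c\,\tilde{u}_{\epsilon}^{G_1}=(\text{known})$ with $c=\sum_{k=1}^K(\mathbf{L}^h)_{B,G_k}\binom{k+1}{2}$. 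The quantitative heart is a uniform, high-probability lower bound on $|c|$: the Gaussian decay of the diffusion-maps weights gives
\[
c\ \sim\ \epsilon^{-1}\,(N\epsilon^{d/2})^{-1}\,h^{-3}\!\int_0^\infty\! s^2 e^{-s^2/4\epsilon}\,ds\ \sim\ N^{-1}h^{-3}\epsilon^{1/2-d/2},
\]
with a concentration bound on its fluctuation, which is the source of the $\bar{N}^{-1/2}\epsilon^{1/2-d/4}$ term. Dividing the first-row residual by $c$, back-substituting through the bounded coefficients $\binom{k+1}{2}$ with $k\le K$, and adding the $\mathcal{O}(h\sqrt{\epsilon})$ gap $|\psi(kh)-\tilde{\psi}(kh)|$ between the two ghost-point families yields~\eqref{extrapolationerror}; in the iterated limit $\bar{N}\to\infty$ then $\epsilon\to0$ with $h=h(\epsilon)\to0$ chosen so that $h^2=o(\epsilon^{1/2})$ (e.g. $h=\mathcal{O}(\epsilon^r)$, $1/4<r<1/2$), the bound vanishes.

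The main obstacle is this stability estimate: a uniform-in-$j$, high-probability lower bound on $|c|$, equivalently a condition-number bound for the $K\times K$ block of $A$ against the residual scale. It requires combining the deterministic Gaussian-tail structure of the diffusion-maps weights with a concentration inequality for the random Monte-Carlo sums, while tracking how the $\epsilon^{-1}$ inside $\mathbf{L}^h$ competes with the $\binom{k+1}{2}$ amplification of the quadratic extrapolant. Two secondary nuisances are: keeping the ghost mislocation $|\tilde{x}^{G_k}-x^{G_k}|=\mathcal{O}(kh\sqrt{\epsilon})$ consistently controlled both inside the kernel evaluations that build $\mathbf{L}^h$ and inside the Taylor residuals, so it only ever enters at the stated order; and, to obtain the clean fixed-$j$ statement, decoupling the overlapping ghost fans of boundary points that lie within $\epsilon^r$ of one another, which I would treat as a perturbation of the single-$j$ analysis.
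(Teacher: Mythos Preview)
Your overall architecture---substitute the true ghost values into \eqref{Eq:Uvvv}, bound the residuals, and pass through a stability estimate---is exactly the paper's strategy, and your residual bounds for the second-difference rows ($\mathcal{O}(h^3)$) and for the first row (the $\mathbf{L}^h$-consistency error plus the ghost-mislocation perturbation) match the paper's Lemma~\ref{prop:exterpo_dis4}.

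The divergence is in the stability step, and your version is both harder and contains an error. You keep the $\epsilon^{-1}$ inside $c=\sum_k(\mathbf{L}^h)_{B,G_k}\binom{k+1}{2}$ and try to lower-bound $|c|$ by approximating the sum as $h^{-3}\int_0^\infty s^2e^{-s^2/4\epsilon}\,ds$. That integral approximation treats $K$ as going to infinity (so that $kh$ sweeps out a continuous half-line), but the paper explicitly fixes $K\le 10$ independently of $N$ and $\epsilon$; with $K$ fixed the sum has $K$ terms and no $h^{-3}$ arises. Your asymptotic $c\sim N^{-1}h^{-3}\epsilon^{1/2-d/2}$ is therefore not correct, and with it the concentration argument you sketch does not establish the needed bound.

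The paper avoids this entirely by first multiplying the first equation in \eqref{Eq:Uvvv} by $\epsilon$, so that the coefficients in that row become the bounded row-stochastic entries $\tilde D_{B_j,N+k}\in(0,1)$ rather than $(\mathbf{L}^h)_{B_j,G_k}=\epsilon^{-1}\tilde D_{B_j,N+k}$. The resulting $K\times K$ matrix $\mathbf{E}$ is then a rank-one update of an explicit lower-triangular matrix $\mathbf{E}_0$ (with entries $1,-3,3,-1$ on the subdiagonals) whose inverse has closed-form entries $\binom{k}{2}$-type growth bounded by $K(K+1)(K+2)/6$. Sherman--Morrison gives $\|\mathbf{E}^{-1}\|_\infty<C$ provided $1+\mathbf{v}_2^\top\mathbf{E}_0^{-1}\mathbf{v}_1=\sum_{k=1}^K\tfrac{k(k+1)}{2}\tilde D_{B_j,N+k}\neq 0$, which is immediate from positivity of the $\tilde D$ entries. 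This is deterministic and $O(1)$: no lower bound on a random sum, no competition between $\epsilon^{-1}$ and $\binom{k+1}{2}$, and the residual in the first row is automatically $\epsilon$ times the $\mathbf{L}^h$-error, which is how the $h^2\epsilon^{-1/2}$ (rather than $h^2\epsilon^{-3/2}$) appears. Your elimination to a single scalar equation is algebraically equivalent to Sherman--Morrison, so the fix is simply to rescale the first row by $\epsilon$ before inverting; then the ``main obstacle'' you flag dissolves into the positivity observation above.
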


\begin{proof}
See Appendix~\ref{App:C}.
\end{proof}

{\color{black}Throughout the paper, we use the notation $\mathcal{O}(f,g,w)$ as a shorthand for $\mathcal{O}(f)+\mathcal{O}(g)+\mathcal{O}(w) $ as $f,g,w \to 0$. The third and fourth error bounds in the "big-oh" notation above in \eqref{extrapolationerror} correspond to the Monte-Carlo estimates of the integral operator for fixed $\epsilon>0$ so they are defined as $\bar{N}\to\infty$. Also, since $h$ can be described as a function of $\epsilon$, this means the ``big-oh'' notation is defined as $\epsilon\to 0$ after $\bar{N}\to\infty$.}

{\color{black}\begin{rem}[Randomly sampled data] \label{rem:rs}  In this case, the leading error term in \eqref{extrapolationerror} is of order-$h^2\epsilon^{-1/2}$. This error rate is contributed by the estimated interior ghost points that do not lie on $M$ and the exterior ghost points that do not lie on $\Delta M$. In Appendix~\ref{App:C}, we shall see how the distances between the estimated ghost points and the points on the extended manifold,
\BEA
|\gamma_j(h)- \tilde{x}_j^{G_0}| = \mathcal{O}(h\sqrt{\epsilon}),\quad\quad
|x_j^{G_k}- \tilde{x}_j^{G_k}| = \mathcal{O}(h\sqrt{\epsilon}), \nonumber
\EEA
where $\gamma_j(h):=\exp_{x_j^B}(-h\boldsymbol{\nu}_{x_j^B})\in M$ and $\{x^{G_k}_j\}_{j,k=1}^{J,K} \subset \Delta M$,
contribute to this error rate.
\end{rem}

\begin{rem}[Well-sampled data]\label{rem:ws}
In this case, since the secant line approximation is used to approximate $\boldsymbol{\nu}$, the estimated ghost points $\{\tilde{x}_j^{G_0}\}$ coincide with some components of $\{x_i\in M\}$. Since these interior ghost points lie on the manifold, they do not contribute to the error rate-$h^2 \epsilon^{-1/2}$. While the estimated exterior ghost points, $\{\tilde{x}_j^{G_k}\}_{j,k=1}^{J,K}$, do not exactly lie on $\Delta M$, we numerically also found that they do not contribute to the error of order-$h^2 \epsilon^{-1/2}$. We suspect that this is because the diffusion maps algorithm, applied on the extended data, $\{x_i\in M\}_{i=1}^N\cup\{\tilde{x}^{G_k}_j\}_{j,k=1}^{J,K}$, is approximating the differential operator on a different smooth extended domain $\widetilde{M\cup\Delta M}$ that contain these points and the error rate in Lemma~\ref{lem:old} is still valid for the matrix $\boldsymbol{L}^h$ with the assumption that $u\in C^{3}\left( M\cup B_{\epsilon^r}(\partial M)\right)$. In light of this, for well-sampled data, the leading error is the first error term of order-$h^3$ in \eqref{extrapolationerror}.
\end{rem}}

\subsection{The ghost point diffusion maps estimator} \label{augmentm}

{\color{black}Here, we continue using the notation $\vec{u}_{\epsilon }$ as defined in \eqref{longvectorue}, where the first $N$-components contain the function value at the estimated interior ghost points $\tilde{x}^{G_0}_j$ that may or may not lie exactly on $M$, depending on the distribution of the data. For the discussion below, we also define the column vectors,
\BEA
\vec{u}_{\epsilon }^{M}&=&(u(x_{1}),\ldots ,u(\tilde{x}^{G_0}_1),\ldots,u(\tilde{x}^{G_0}_J),\ldots, u(x_N)), \notag\\
\vec{u}_{\epsilon }^{G}&=&(\tilde{u}_{\epsilon,1}^{G_{1}},\ldots ,\tilde{u}_{\epsilon,J}^{G_{K}}) \label{vectoru}\\
\vec{u}_\epsilon &=& (\vec{u}_{\epsilon }^{M},\vec{u}_{\epsilon }^{G}),\notag
\EEA
where we emphasized that some of the components of $\vec{u}_{\epsilon }^{M}\in\mathbb{R}^N$ are $u(\tilde{x}^{G_0}_j)$ in the definition above.
Similarly, we also define
\BEA
\vec{u}^{M}&=&(u(x_{1}),\ldots ,u(\gamma_1(h)),\ldots,u(\gamma_J(h)), \ldots, u(x_N)), \notag\\
\vec{u}^{G}&=&(u(x^{G_{1}}_1),\ldots,u(x^{G_{K}}_J)), \label{truevectoru}\\
\vec{u} &=& (\vec{u}^M,\vec{u}^G), \notag
\EEA
where $\vec{u}^M \in\mathbb{R}^N$ contains $u(\gamma_j(h))$, replacing each component $u(\tilde{x}^{G_0}_j)$ of $\vec{u}_{\epsilon }^{M}$. These definitions imply that,
\BEA
\vec{u}_{\epsilon }^{M} =\begin{cases} \vec{u}^{M} +\mathcal{O}(h\sqrt{\epsilon}), & \mbox{if $x_i$ are randomly sampled,}\\
 \vec{u}^{M}, & \mbox{if $x_i$ are well-sampled.}\end{cases}\label{vectoru2}
\EEA

}

Here, Eq.~\eqref{Eq:Uvvv} consists of a system of $JK$ equations and it has a unique solution that can be written in compact form as,
\BEA
\vec{u}_{\epsilon}^{G} = \mathbf{A} \vec{u}^{M}_\epsilon + \vec{b},\label{vecUGK}
\EEA
where one can see the detailed expression of $\mathbf{A} \in \mathbb{R}^{JK\times N}$ and $\vec{b}\in\mathbb{R}^{JK}$ for the 1D case in Appendix~\ref{app:D}.  Here, components of $\vec{b}$ depend on $f(x^B_j)$.
To this end, we denote the discrete approximation with a non-square matrix $\mathbf{L}^h=(\mathbf{L}^{(1)}, \mathbf{L}^{(2)}) \in\mathbb{R}^{N\times \bar{N}}$ that maps vectors $\vec{u}_\epsilon\in\mathbb{R}^{\bar{N}}$ into $\mathbf{L}^h\vec{u}_\epsilon\in\mathbb{R}^N$, where the matrix $\mathbf{L}^h$ is constructed as discussed in Section~\ref{artBC_ghost}. For the discussion below, we define the matrix $\mathbf{L}\in \mathbb{R}^{N\times\bar{N}}$, as a discrete estimator of $\mathcal{L}$ that is constructed in analogous to $\mathbf{L}^h$, except that the kernel is evaluated on $\gamma_j(h)\in M$ (and $\{x^{G_k}_j\in \Delta M\}_{j,k=1}^{J,K}$) in placed of $\tilde{x}^{G_0}_j$ (and $\{\tilde{x}^{G_k}_j\}_{jk,=1}^{J,K}$), in addition to the evaluation at all sampled points of $M$ in the construction of $\mathbf{L}^h$.
 We should point out that each row of the non-square matrices $\mathbf{L}$ and $\mathbf{L}^h$ corresponds to the kernel evaluation at the components of $\{x_i\}_{i=1}^N$, where the former includes $\{\gamma_j(h)\}$ and the latter includes $\{\tilde{x}^{G_0}_j\}$.

{\color{black}
Since we are interested in approximating $\vec{u}^M\in\mathbb{R}^N$ with the constraint that $\vec{u}^G$ is not available, we define the GPDM estimator $\mathbf{L}^g:\mathbb{R}^N\to\mathbb{R}^N$ as the following affine operator,
\BEA
\mathbf{L}^g(\vec{u}^M) := \big(\mathbf{L}^{(1)} + \mathbf{L}^{(2)}\mathbf{A}\big)\vec{u}^{M} +\mathbf{L}^{(2)} \vec{b}.\label{GPDM}
\EEA
With this definition, we should point out that $\mathbf{L}^g(\vec{u}^M_\epsilon) =  \mathbf{L}^{(1)} \vec{u}^{M}_\epsilon  + \mathbf{L}^{(2)}\big(\mathbf{A}\vec{u}^{M}_\epsilon + \vec{b}\big) =  \mathbf{L}^{(1)} \vec{u}_\epsilon^{M}  + \mathbf{L}^{(2)} \vec{u}^{G}_\epsilon = \mathbf{L}^h\vec{u}_\epsilon$, where we have used \eqref{vecUGK}. We should also point out that clearly $\mathbf{L}^g(\vec{u}^M)\neq \mathbf{L}^h\vec{u}$ since \eqref{vecUGK} is not valid for the pair of $\vec{u}^G$ and $\vec{u}^M$, that is, $\vec{u}^G\neq \mathbf{A}\vec{u}^M + \vec{b}$. With all these definitions, we now state the consistency of the GPDM estimator in \eqref{GPDM}.

\begin{thm}\label{theorem1} (Consistency of the GPDM)
Let $u\in C^{3}\left( M\cup B_{\epsilon^r}(\partial M)\right) $, where
 the extended manifold $M\cup\Delta M$ is a submanifold of $\mathbb{R}^n$, constructed by Lemma~\ref{ghmanifold} with $R=\mathcal{O}(\epsilon^r)$, where $0<r<1/2$, such that $\Delta M\subset B_{\epsilon^r}(\partial M)$. For each $x_i\in M$, where $\{x_i\}_{i=1}^N \supset \{\gamma_j(h)\}_{j=1}^J$,
\BEA
\left| \big(\mathbf{L}^g(\vec{u}^M)\big)_{i}-\mathcal{L}u(x_{i})\right| = O\left(h^3\epsilon ^{-1},h^2\epsilon^{-3/2},\bar{N}^{-1/2}\epsilon ^{-(2+d/4)},\bar{N}^{-1/2}\epsilon ^{-(1/2+d/4)} \right),\nonumber
\EEA
in high probability, where $h$ depends on $\epsilon$ such that the first two terms vanishes as $\epsilon\to 0$ after $\bar{N}\to\infty$.
\end{thm}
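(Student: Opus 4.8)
The plan is to combine the extrapolation error estimate from Proposition~\ref{prop:extrapolationofu} with the classical pointwise convergence of the diffusion maps estimator on the extended manifold. The key algebraic identity is $\mathbf{L}^g(\vec{u}^M_\epsilon) = \mathbf{L}^h\vec{u}_\epsilon$, which follows directly from \eqref{vecUGK} and the definition \eqref{GPDM}. So the first step is to write the triangle-inequality decomposition
\BEA
\left| \big(\mathbf{L}^g(\vec{u}^M)\big)_{i}-\mathcal{L}u(x_{i})\right| \leq \underbrace{\left| \big(\mathbf{L}^g(\vec{u}^M)\big)_{i}-\big(\mathbf{L}^g(\vec{u}^M_\epsilon)\big)_{i}\right|}_{\text{(I)}} + \underbrace{\left| \big(\mathbf{L}^h\vec{u}_\epsilon\big)_{i}-\big(\mathbf{L}^h\vec{u}\big)_{i}\right|}_{\text{(II)}} + \underbrace{\left| \big(\mathbf{L}^h\vec{u}\big)_{i}-\mathcal{L}u(x_{i})\right|}_{\text{(III)}},\nonumber
\EEA
using $\mathbf{L}^g(\vec{u}^M_\epsilon) = \mathbf{L}^h\vec{u}_\epsilon$ to bridge (I) and (II). Term (III) is controlled by the standard pointwise convergence result for the diffusion maps estimator (Lemma~\ref{lem:old}) applied on the extended point cloud $\{x_i\}\cup\{\gamma_j(h)\}\cup\{x_j^{G_k}\}$, which lies on (or close to, in the randomly sampled case — see Remark~\ref{rem:ws}) the smooth extended manifold $M\cup\Delta M$; this gives the Monte-Carlo error terms $\bar{N}^{-1/2}\epsilon^{-(2+d/4)}$ and $\bar{N}^{-1/2}\epsilon^{-(1/2+d/4)}$ together with the $\mathcal{O}(\epsilon)$ bias, which is dominated by the other terms.

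The heart of the argument is terms (I) and (II), where the factor $\epsilon^{-1}$ appears from the $1/\epsilon$ prefactor in the diffusion maps operator $\mathbf{L}^h = \frac{1}{\epsilon}\mathbf{S}(\mathbf{D}^{-1}\mathbf{W}-\mathbf{I})$. For term (II), I would write $\vec{u}_\epsilon - \vec{u} = (\vec{u}^M_\epsilon - \vec{u}^M, \vec{u}^G_\epsilon - \vec{u}^G)$. By \eqref{vectoru2} the first block is $\mathcal{O}(h\sqrt{\epsilon})$ (or exactly zero for well-sampled data), and by Proposition~\ref{prop:extrapolationofu} each component of the second block is bounded by $\mathcal{O}(h^3, h^2\epsilon^{-1/2}, \bar{N}^{-1/2}\epsilon^{-(1+d/4)}, \bar{N}^{-1/2}\epsilon^{(1/2-d/4)})$. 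Applying $\mathbf{L}^h$, whose entries scale like $\epsilon^{-1}$ times an $\mathcal{O}(1)$ row-stochastic-type matrix (so that $\mathbf{L}^h$ acting on a vector of $\ell^\infty$-norm $\delta$ produces a vector of $\ell^\infty$-norm $\mathcal{O}(\epsilon^{-1}\delta)$, using that the $\mathbf{D}^{-1}\mathbf{W}$ rows sum to one and the kernel is effectively supported on an $\epsilon^r$-ball), I pick up an extra $\epsilon^{-1}$: the block $\vec{u}^G_\epsilon - \vec{u}^G$ contributes $\mathcal{O}(h^3\epsilon^{-1}, h^2\epsilon^{-3/2}, \bar{N}^{-1/2}\epsilon^{-(2+d/4)}, \bar{N}^{-1/2}\epsilon^{-(1/2+d/4)})$, and the block $\vec{u}^M_\epsilon - \vec{u}^M$ contributes at most $\mathcal{O}(h\sqrt{\epsilon}\cdot\epsilon^{-1}) = \mathcal{O}(h\epsilon^{-1/2})$, which is dominated by $h^2\epsilon^{-3/2}$ when $h\lesssim\epsilon^r$ (since then $h\epsilon^{-1/2} = h^2\epsilon^{-3/2}\cdot(\epsilon/h)\leq h^2\epsilon^{-3/2}\cdot\epsilon^{1-r}$, and one checks this is subdominant). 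Term (I) is handled identically: $\mathbf{L}^g(\vec{u}^M) - \mathbf{L}^g(\vec{u}^M_\epsilon) = (\mathbf{L}^{(1)}+\mathbf{L}^{(2)}\mathbf{A})(\vec{u}^M - \vec{u}^M_\epsilon)$, so I need an $\ell^\infty$ bound on the operator norm of $\mathbf{L}^{(1)}+\mathbf{L}^{(2)}\mathbf{A} = \mathbf{L}^g$ (the linear part) — here one uses that $\mathbf{A}$ has $\mathcal{O}(1)$ row sums (it is built from the telescoping second-difference relations in \eqref{Eq:Uvvv}, which are stable) so that $\mathbf{L}^{(2)}\mathbf{A}$ inherits the $\epsilon^{-1}$ scaling of $\mathbf{L}^{(2)}$ — giving again a factor $\epsilon^{-1}$ times $\mathcal{O}(h\sqrt{\epsilon})$, subdominant as above.

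The main obstacle I anticipate is making precise the claim that $\|\mathbf{A}\|_\infty = \mathcal{O}(1)$ and more generally that the affine GPDM operator $\mathbf{L}^g$ has $\ell^\infty\to\ell^\infty$ operator norm of order $\epsilon^{-1}$ (not worse). The matrix $\mathbf{A}$ comes from inverting the $JK\times JK$ system \eqref{Eq:Uvvv}; this system is block-diagonal over the $J$ boundary points, and each $K\times K$ block is lower-triangular with a specific banded structure coming from the second-difference matching conditions, so its inverse has entries that grow at most polynomially in $k$ — and since $K = \mathcal{O}(\epsilon^r h^{-1})$ is taken bounded ($K\leq 10$) in practice, this is genuinely $\mathcal{O}(1)$. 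The explicit form of $\mathbf{A}$ and $\vec{b}$ for the 1D case is worked out in Appendix~\ref{app:D}, and I would lean on that computation to extract the needed bound; the 2D case follows by the same block structure along the boundary curve. Once the operator-norm bounds on $\mathbf{L}^h$ and $\mathbf{L}^g$ are in hand, the rest is just bookkeeping of which error term dominates under the constraint $h\lesssim\epsilon^r$ and the scaling relations between $h$, $\epsilon$, and $\bar{N}$ stipulated in the theorem, and collecting the four surviving terms. Full details are deferred to the appendix.
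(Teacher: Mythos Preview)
Your decomposition and overall strategy match the paper's proof closely: both pivot on the identity $\mathbf{L}^g(\vec{u}^M_\epsilon)=\mathbf{L}^h\vec{u}_\epsilon$, apply Proposition~\ref{prop:extrapolationofu} to the ghost-point block $\vec{u}^G_\epsilon-\vec{u}^G$, and use the $\epsilon^{-1}$ row-stochastic scaling of $\mathbf{L}^{(2)}$ (and of $\mathbf{L}^{(2)}\mathbf{A}$) to convert pointwise errors into operator errors. Your concern about $\|\mathbf{A}\|_\infty$ is legitimate but not a real obstacle: the paper simply uses that $K$ is bounded (at most $10$), so the triangular block system has an $\mathcal{O}(1)$ inverse---this is exactly the $\|\mathbf{E}_0^{-1}\|_\infty=K(K+1)(K+2)/6$ computation already carried out in the proof of Proposition~\ref{prop:extrapolationofu}, and the same structure propagates to $\mathbf{A}$.

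There is one genuine gap in your term (III). The matrix $\mathbf{L}^h$ is built from kernel evaluations at the \emph{estimated} ghost points $\tilde{x}_j^{G_0},\tilde{x}_j^{G_k}$, which in the randomly-sampled case do not lie on $M\cup\Delta M$; Lemma~\ref{lem:old} applies only to the matrix $\mathbf{L}$ built on the true points $\gamma_j(h),x_j^{G_k}\in M\cup\Delta M$. Your parenthetical ``(or close to\ldots\ see Remark~\ref{rem:ws})'' does not close this---that remark is a heuristic for the well-sampled case only. You must insert $\mathbf{L}\vec{u}$ and invoke Lemma~\ref{lem:noisymatrix} (specifically \eqref{errorofLh}) to bound $|(\mathbf{L}^h\vec{u})_i-(\mathbf{L}\vec{u})_i|=\mathcal{O}(h^2\epsilon^{-3/2})$, exactly as the paper does. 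Since this order already appears from your term (II), the final bound is unaffected, but the argument as written is incomplete at this step.
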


\begin{proof}
For each $i=1,\ldots,N$, using the definitions in \eqref{vectoru}-\eqref{vectoru2}, 
\BEA
\left| (\mathbf{L}^g(\vec{u}^M))_{i}-\mathcal{L}u(x_{i})\right|  &=&  \left| \big(\mathbf{L}^g(\vec{u}^M_{\epsilon })\big)_{i}-\mathcal{L}u(x_{i}) + {\big(\mathbf{L}^g(\vec{u}^M)-\mathbf{L}^g(\vec{u}^M_\epsilon)\big)_i}\right| \nonumber
\\
&=& \left| \big(\mathbf{L}^{(1)} \vec{u}^M_\epsilon + \mathbf{L}^{(2)} \vec{u}^G_\epsilon )\big)_i -\mathcal{L}u(x_{i}) + \big((\mathbf{L}^{(1)}+\mathbf{L}^{(2)}\mathbf{A})(\vec{u}^M-\vec{u}^M_\epsilon)\big)_i \right| \nonumber
\\
&=& \left|(\mathbf{L}^{(1)}\vec{u}^{M})_{i}+(\mathbf{L}^{(2)}\vec{u}^{G})_{i}-\mathcal{L}u(x_{i})+\left(\mathbf{L}^{(2)}\mathbf{A}( \vec{u}^{M}-\vec{u}^{M}_\epsilon)\right)_{i} + \left(\mathbf{L}^{(2)}( \vec{u}_{\epsilon }^{G}-\vec{u}^{G}) \right)_{i}\right|\nonumber
\\
&=& \left|(\mathbf{L}^h\vec{u})_{i}-\mathcal{L}u(x_{i})+\left(\mathbf{L}^{(2)}\mathbf{A}( \vec{u}^{M}-\vec{u}^{M}_\epsilon)\right)_{i} + \left(\mathbf{L}^{(2)}( \vec{u}_{\epsilon }^{G}-\vec{u}^{G}) \right)_{i}\right|\nonumber
\\
&\leq& \left| (\mathbf{L}^h\vec{u})_{i}-(\mathbf{L}\vec{u})_{i}\right|  +\left| (\mathbf{L}\vec{u})_{i}-\mathcal{L}u(x_{i})\right|+\left| \left(\mathbf{L}^{(2)}\mathbf{A}( \vec{u}_{\epsilon }^{M}-\vec{u}^{M}) \right)_{i}\right|+\left|\left(\mathbf{L}^{(2)}( \vec{u}_{\epsilon }^{G}-\vec{u}^{G}) \right)_{i}\right|, \nonumber\\
&\leq& \mathcal{O}(h^2\epsilon^{-3/2}) + \mathcal{O}(\epsilon,\bar{N}^{-1/2}\epsilon ^{-(2+d/4)},\bar{N}^{-1/2}\epsilon ^{-(1/2+d/4)}) + \mathcal{O}(h\epsilon^{-1/2}) \nonumber \\ &&+ \mathcal{O}(h^{3}\epsilon^{-1},h^2\epsilon^{-3/2},\bar{N}^{-1/2}\epsilon ^{-(2+d/4)},\bar{N}^{-1/2}\epsilon ^{-(1/2+d/4)}).\nonumber
\EEA

The first error term is a consequence of the diffusion matrices from two different sets of points as seen in Lemma~\ref{lem:noisymatrix}, which is Eq.~\eqref{errorofLh}. The second error term is the pointwise error bound of the standard diffusion maps in Lemma~\ref{lem:old}. The third error term results from the estimated interior ghost points and is bounded by \eqref{vectoru2} multiplied by $\epsilon^{-1}$ from the components of $\mathbf{L}^{(2)}\mathbf{A}$. The last error term results from the estimated exterior ghost points and is given by Prop.~\ref{prop:extrapolationofu}, multiplied by  $\epsilon^{-1}$ from the components of $\mathbf{L}^{(2)}$.

For the well-sampled data, the third error term also vanishes based on \eqref{vectoru2}. The first error bound and the second term $h^2\epsilon^{-3/2}$ in the fourth error bound are, both, not applicable as discussed in Remark~\ref{rem:ws}. Thus, the leading error term is $h^3\epsilon^{-1}$ from the fourth error bound for well-sampled data. For the randomly sampled data, the leading error term is $h^2\epsilon^{-3/2}$. Anticipating $h=\mathcal{O}(\epsilon)$, we neglect the third error bound of order-$h\epsilon^{-1/2}$. {\color{black}Collecting these information, we only report the order-$h^3\epsilon^{-1}$ term that is applicable for well-sampled data and the order-$h^2\epsilon^{-3/2}$ that is applicable for the randomly sampled data.}
\end{proof}

}

{\color{black}We should point out that for the approximation of the operators $\mathcal{L}_2$ and $\mathcal{L}_3$ in \eqref{L2} and \eqref{Eqn:L3}, respectively, we assume that $\kappa, B,$ and $C$ are well-defined functions of the domain $M\cup B_{\epsilon^r}(\partial M)$. This is due to the fact that the associated asymptotic expansion in Eq.~\eqref{L2} or the kernel in Eq.~\eqref{Eqn:localK} require evaluations of the associated kernel at the estimated ghost points $\{\tilde{x}_j^{G_k}\}_{j=1,k=0}^{J,K}\subset B_{\epsilon^r}(\partial M)$. While one can devise an extrapolation method to determine the function values at these ghost points had the functions were only defined on $M$, we neglect it in the present work to avoid the extra complication in the analysis above. In our numerics below, we assume that we are given $\kappa, B, C$ that can be evaluated on any point cloud $x\in M\cup B_{\epsilon^r}(\partial M)\subset \mathbb{R}^n$.}

\subsection{Numerical verification}\label{numericforward}

In this section, we provide supporting numerical results of the GPDM method on the semi-ellipse Example~\ref{1Dexample} and assess the error of the affine operator in \eqref{GPDM} in estimating $\mathcal{L}_2u$ for functions $u$ that satisfy various boundary conditions. For the Robin boundary condition, $\beta_1 \partial_{\boldsymbol{\nu}}u + \beta_2 u = g$, we set $\beta _{1}\left( x\right) =1$, $\beta _{2}\left( x\right)
=3/\left( 2a\right) $ with the homogeneous $g=0$ at both boundary points, $x_{1}$
and $x_{N}$. For this numerical example, we set $\kappa =1.1+\sin \theta $. Choosing the true function
to be $u=\cos (3\theta /2-\pi /4)$, one can check that this function
satisfies the above Robin boundary condition. The analytic $f=\mathcal{L}%
_{2}u$ can be calculated from \eqref{explicitLs}. For the Dirichlet and Neumann boundary conditions, we choose the appropriate $u$ that satisfies the boundary conditions and proceed in the similar fashion.

The components of $u$ are evaluated at equally angle distributed points $\{\theta _{i}=\frac{\left( i-1\right) \pi}{N-1}\}_{i=1,...,N}$. In the following numerical experiment, we set $N=400$ and $k=50$ nearest neighbors (this is the same configuration that produces Fig.~\ref{fig1_ellip_noghost}). Fig.~\ref{Fig6_eps_semiellipse2} shows the Forward Error (FE) defined as $\left\Vert \mathbf{L}^g(\vec{u}^M)-%
\mathcal{L}_{2}u\right\Vert _{\infty }$ as a function of
the bandwidth parameter, $\epsilon$, for various boundary conditions. One can see from Fig.~\ref
{Fig6_eps_semiellipse2} that with the GPDM, the uniform FE reduces
substantially on a wide range of  $\epsilon=10^{-5}-10^{-2}$. This indicates that the solution of the GPDM
becomes much more accurate for the $\epsilon$ tuning compared to the standard
DM, even in the Neumann case. 

In Fig.~\ref{Fig6_eps_semiellipse2}, we also show the results obtained from the auto-tuning algorithm discussed in Section~\ref{sec:dm}. While this automated tuning strategy may not necessarily give the best estimates on the resulting operator estimation (for example, notice that the yellow and blue points in Fig.~\ref{Fig6_eps_semiellipse2} do not correspond to the minimum Forward Error), it often gives a starting point for further tuning and is numerically cheap. For a theoretically justifiable method, yet computationally more elaborate, one can also use the local singular value decomposition technique in \cite{Berry2016IDM}.

\begin{figure}
{\scriptsize \centering
\begin{tabular}{ccc}
{\normalsize Robin} & {\normalsize Dirichlet} & {\normalsize Neumann}  \\
\includegraphics[scale=0.35]{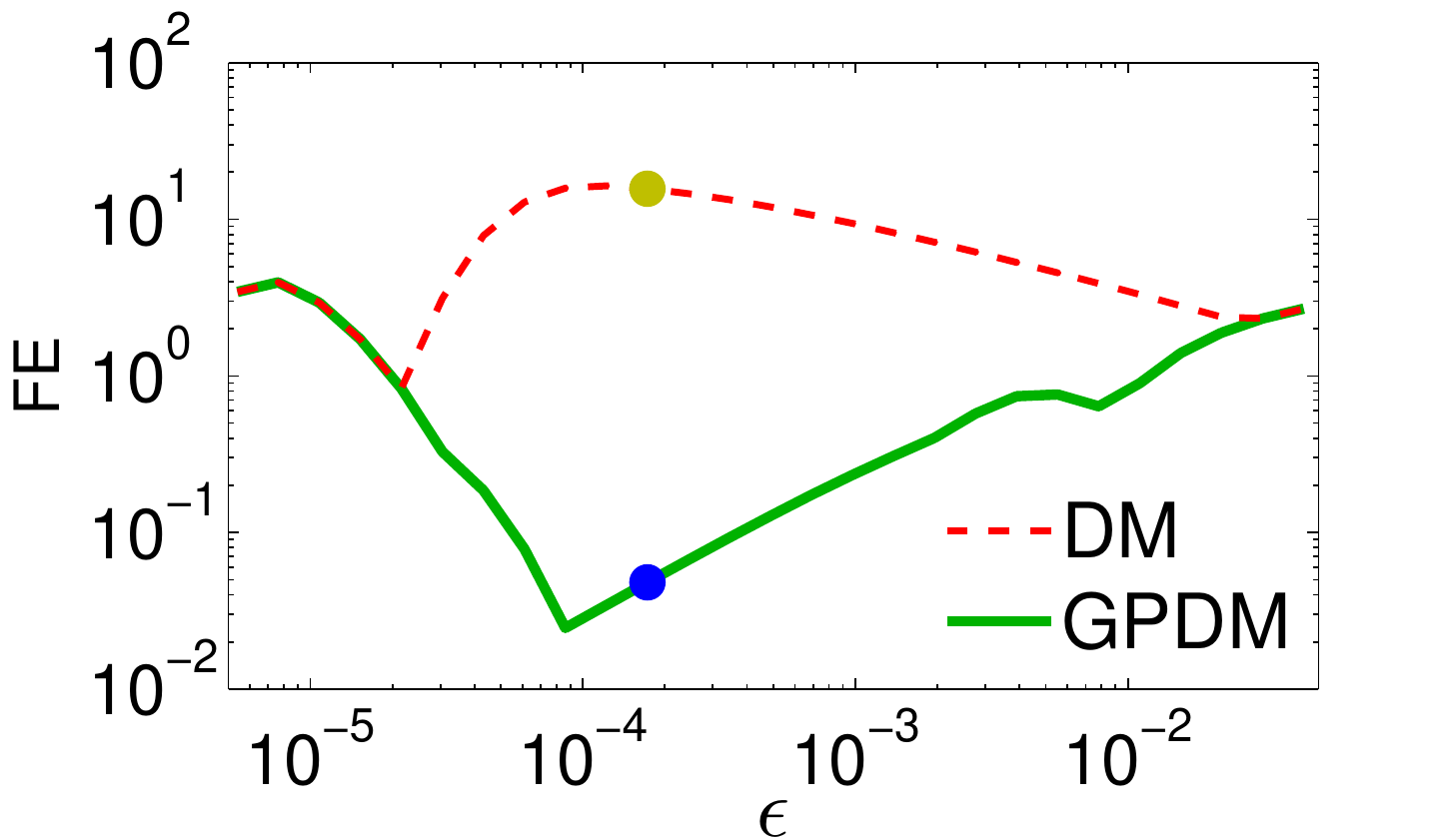}
&
\includegraphics[scale=0.35]{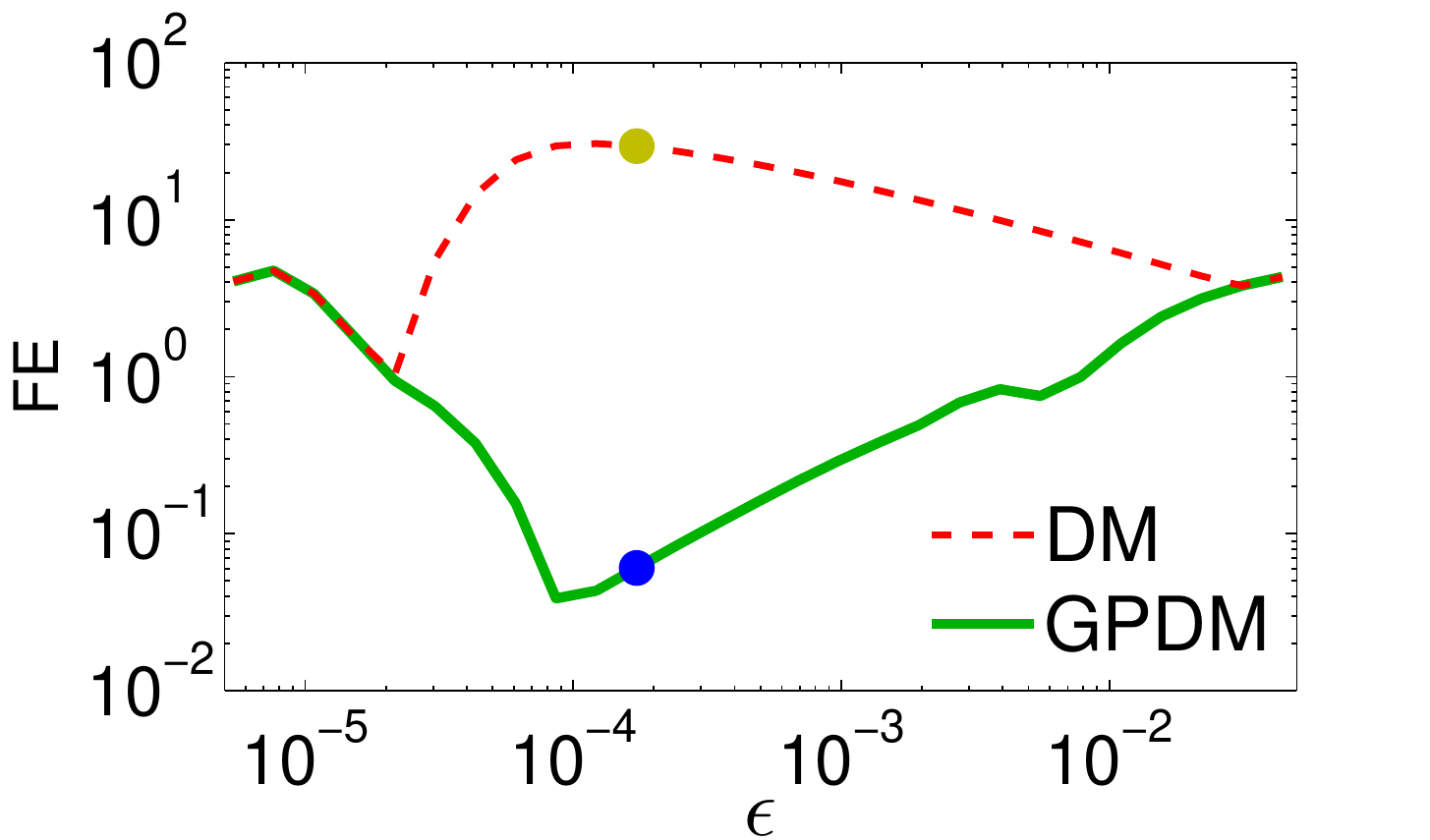}
&
\includegraphics[scale=0.35]{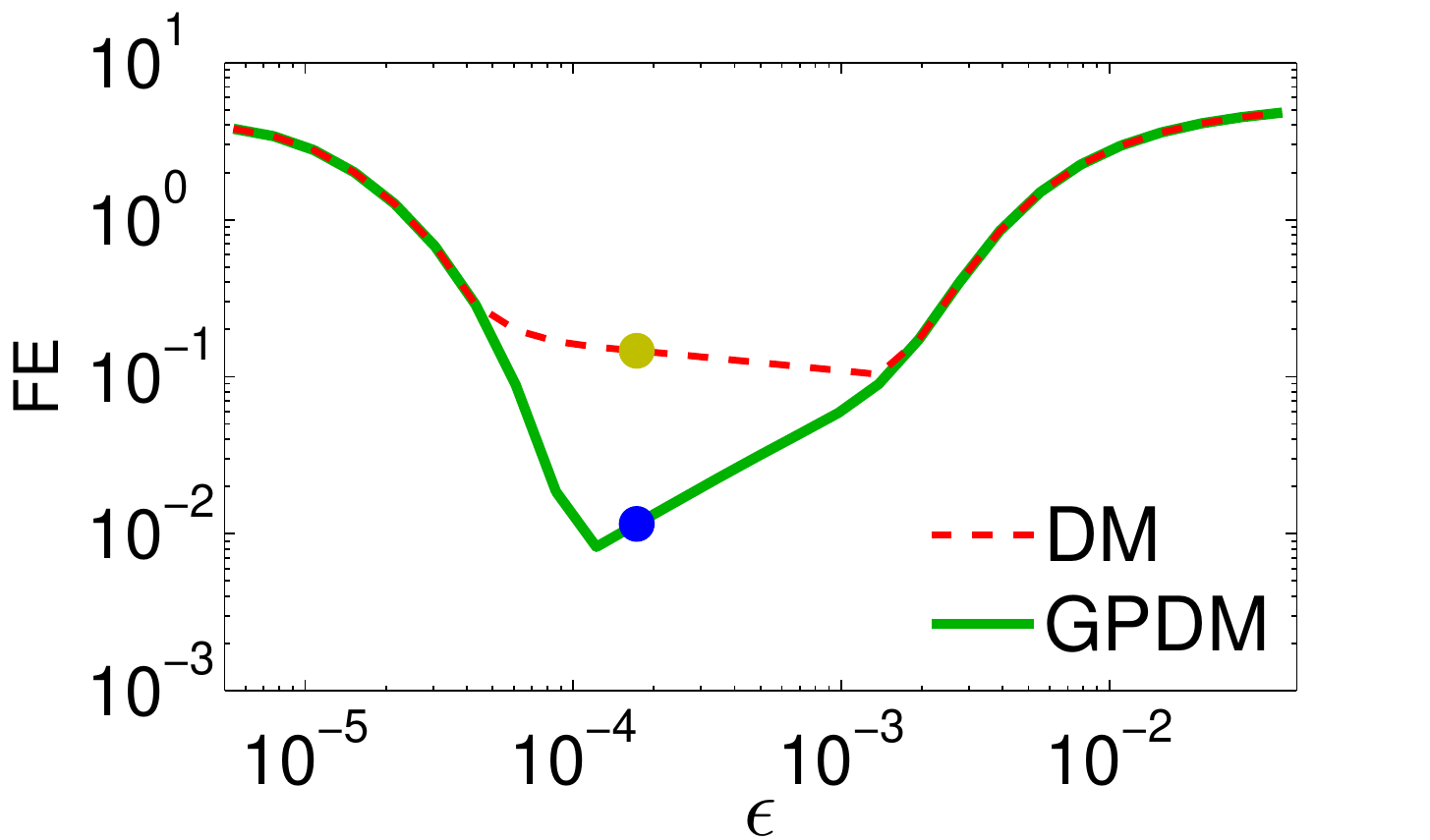}
\end{tabular}
}
\caption{(Color online) Forward Error (FE) of the operator estimation as a function of the bandwidth $\protect\epsilon$ for the semi-ellipse example {with fixed $N=400$ well-sampled data}. The operator acts on a test function satisfying homogeneous: (a) Robin, (b) Dirichlet, and (c) Neumann boundary conditions. The
yellow point and blue point correspond to the auto-tuned $\protect\epsilon$
for DM and GPDM, respectively. }
\label{Fig6_eps_semiellipse2}
\end{figure}

Fig.~\ref{Fig7_N_semiellipse2} shows the FE as a function of the number of points $N$. For comparison, we also show numerical results obtained from the standard DM without adding ghost points. The GPDM FE $\Vert
\mathbf{L}^g(\vec{u}^M)-\mathcal{L}_{2}u\Vert _{\infty }$ (green curve)\ is
a uniform error computed at all $N$ points on manifold $M$. The DM FE $\Vert \mathbf{L}\vec{u}^M-\mathcal{L}_{2}u\Vert _{\infty }$
depicted by the black dashed curve is computed at all $N$ points, whereas
the DM FE depicted by the red dashed curve is computed only at points $%
x_{10}-x_{N-9}$ away from boundary. One can see from Fig. \ref%
{Fig7_N_semiellipse2} that the DM FE on the
interior of $M$ and the GPDM error on all points of $M$ decay on $\mathcal{O}(N^{-2})$ whereas the DM FE on $M$ increases on $%
\mathcal{O}(N^{1})$ for both Robin and Dirichlet BC's and of $\mathcal{O}(1)$ for Neumann BC's.
This indicates that for DM, the increasing FE comes from the boundary when $N$ increases.
Incidentally, we notice that for the case of no boundary for manifold $M$, FE decays as $\mathcal{O}(N^{-2})$\
(see \cite{gh2019}). However, in the presence of boundary conditions, only the GPDM FE decays as $\mathcal{O}(N^{-2})$.

\begin{figure*}[tbp]
{\scriptsize \centering
\begin{tabular}{ccc}
{\normalsize Robin} & {\normalsize Dirichlet} & {\normalsize Neumann}  \\
\includegraphics[scale=0.35]{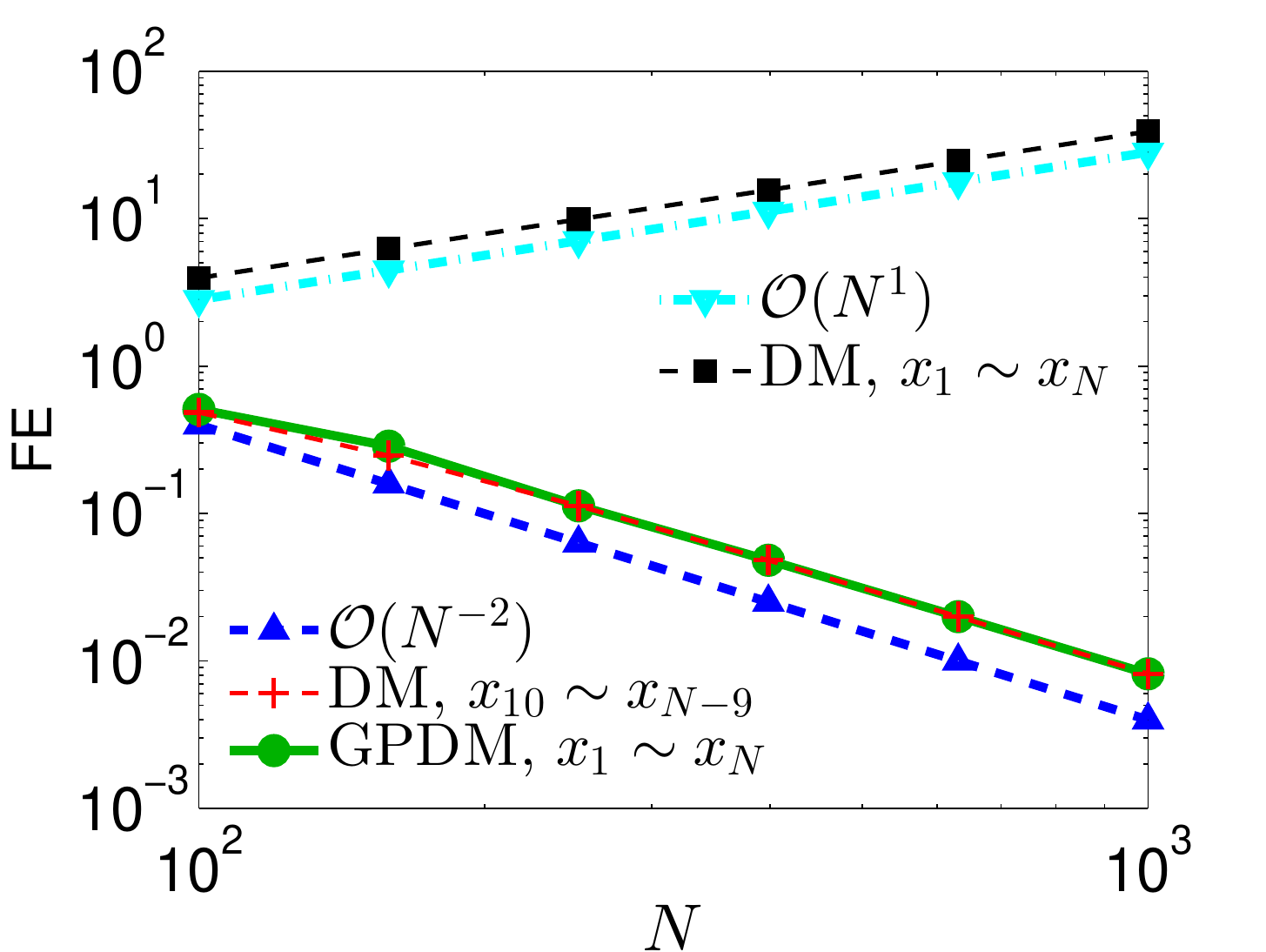}
& %
\includegraphics[scale=0.35]{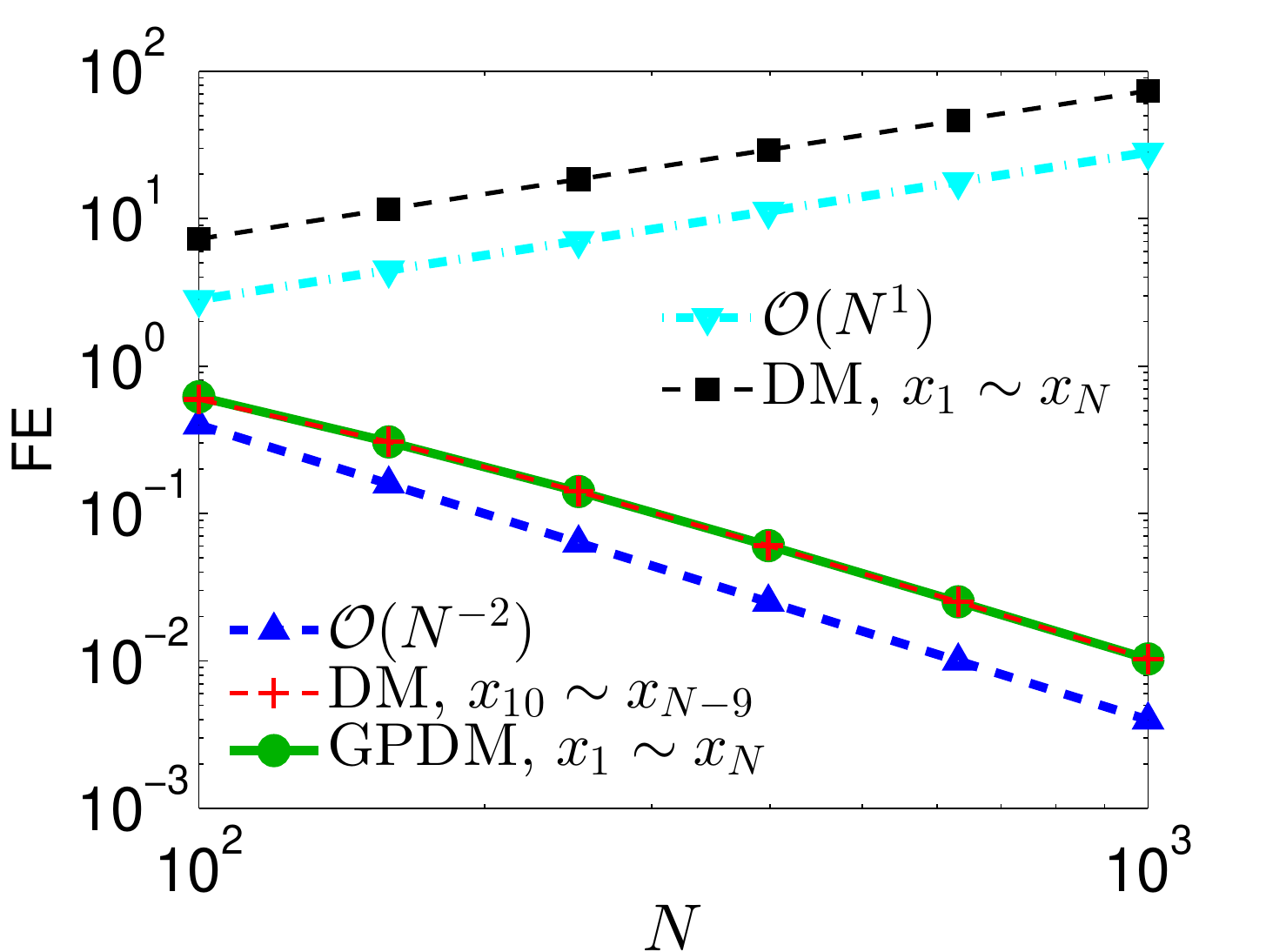}
&
\includegraphics[scale=0.35]{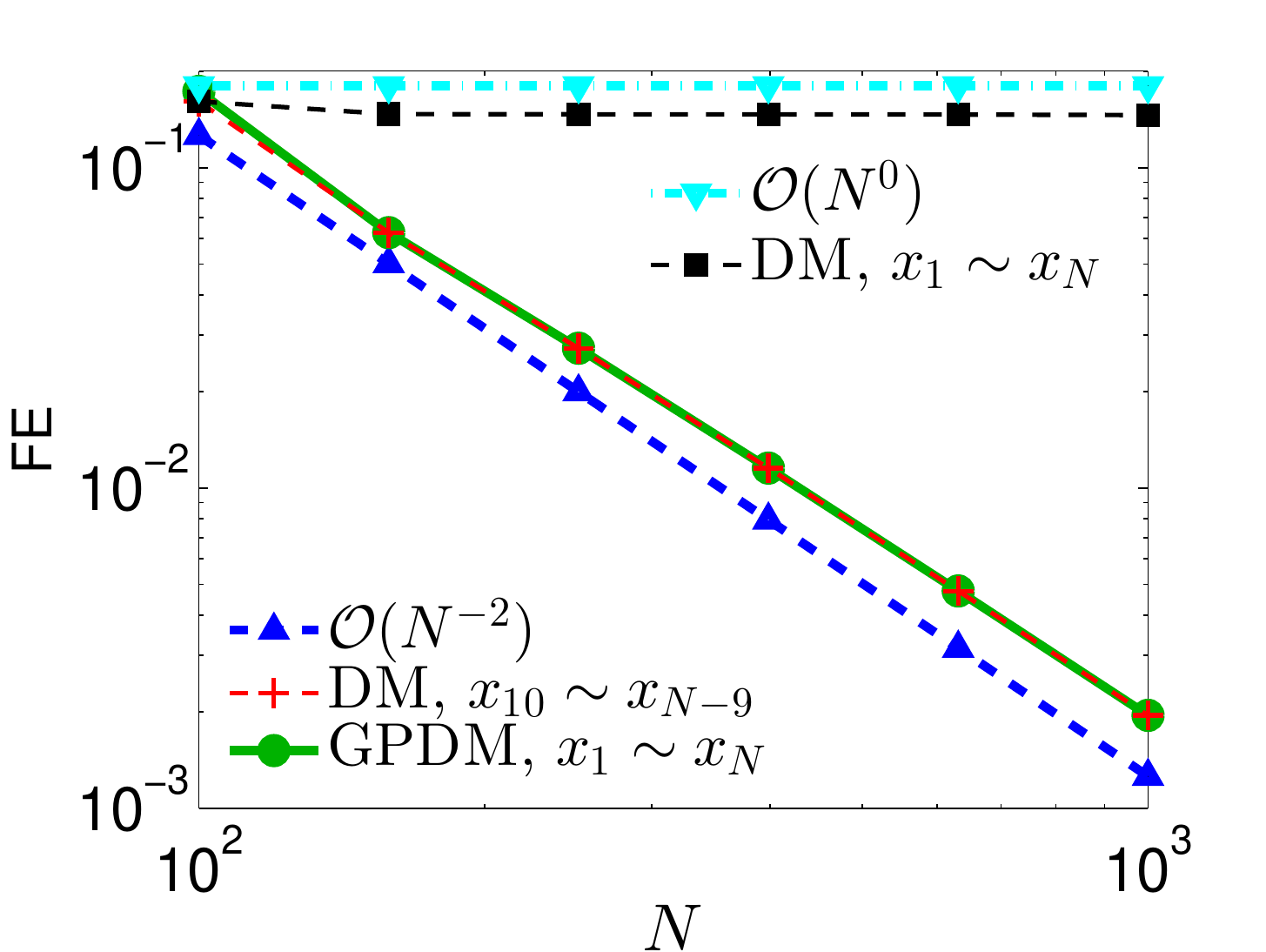}
\end{tabular}
}
\caption{(Color online) Comparisons of Forward Errors (FEs) of the estimated operators as functions of the number of points $N$ for the semi-ellipse example. The operator acts on a test function satisfying homogeneous: (a) Robin, (b) Dirichlet, and (c) Neumann boundary conditions. For GPDM, the
FE $\left\Vert \mathbf{L}^g(\vec{u}^M)-\mathcal{L}_{2}u\right\Vert _{\infty }$ is
computed on all points on the manifold, ${M}$ (green solid line). For DM, The FE $\Vert \mathbf{L}\vec{u}^M-\mathcal{L}_{2}u\Vert _{\infty }$ is computed on all points on $M$ (black dashed
line) and only on interior points $\{x_i\}_{i=10,\ldots,N-9}$ away from the
boundary (red dashed line); i.e., neglecting nine closest points from each boundary point. The bandwidth $\protect\epsilon$ is auto-tuned for each $N$ {number of well-sampled data}. }
\label{Fig7_N_semiellipse2}
\end{figure*}

\section{Applications: Solving linear elliptic PDE's}\label{sec:PDE}

In this section, we consider solving the elliptic PDE's problem on smooth manifold $M$,
\BEA
\begin{aligned}\label{PDE}
\mathcal{L} u = f, \quad\quad x\in M^o, \\
\mathcal{B}u:= (\beta_1 \partial_{\boldsymbol{\nu}} + \beta_2) u = g, \quad\quad x\in \partial M,
\end{aligned}
\EEA
where $\beta_1,\beta_2$ are smooth real-valued functions such that $\beta_1\beta_2>0$ on $\partial M$. Here, the differential operator $\mathcal{L}$ is one of \eqref{L1}-\eqref{Eqn:L3} and is assumed to be uniformly elliptic with smooth coefficients (if any). Here, the smoothness will determine the regularity of the solution. When $\beta_1=0$, we have the Dirichlet boundary condition, when $\beta_2=0$, we have the Neumann boundary condition, and when both are nonzero, we have the Robin boundary condition. For the Neumann boundary condition, we will consider the PDE $(\mathcal{L}-a)u=f$ with $a(x) \geq a_{min}>0$, $\forall x\in M$ for a well-posed problem. For the Robin boundary condition, we also add $-a$ for convenience of the convergence study. {\color{black}For $f \in C^{1,\alpha}(M)$, where $0<\alpha<1$, the PDE problem with appropriate smoothness of the coefficients (if any) admits a unique classical solution $u\in C^{3,\alpha}(M)$, when $g\in C^{3,\alpha}(M)$ for Dirichlet boundary or $g\in C^{2,\alpha}(M)$ for both the Robin and Neumann boundary conditions. We should point out that we impose one-order derivative higher than the usual Schauder estimates ($u\in C^{2,\alpha}$) since the diffusion maps asymptotic expansion (Lemma~\ref{lem:old}) requires a $C^3$-function. For the detailed statement of the Schauder estimates, see Theorem 6.11 of \cite{han2011elliptic} or Theorem 6.25 of \cite{gilbarg2015elliptic} for the Dirichlet problem, Theorem 6.31 of \cite{gilbarg2015elliptic} for the Robin Problem, and \cite{nardi2014} for the Neumann problem.

We should also point out that since the convergence analysis will rely on the consistency of the GPDM estimator, then we require that $u$ is $C^3$ not only on $M$ but on the extended domain $M\cup B_{\epsilon^r}(\partial M)$, which is assumed in Theorem~\ref{theorem1}. The appropriate regularity for $\kappa, B, C$ on the extended domain is also implicitly assumed for the consistent GPDM estimators to, both, the differential operators, $\mathcal{L}_2$ and $\mathcal{L}_3$. Likewise, the function values of $f$ on the estimated interior ghost points $\tilde{x}_j^{G_0}$ are also assumed to be well defined and $f$ is also $C^1$ on the $B_{\epsilon^r}(\partial M)$.
}

In Section~\ref{discretemethods}, we present and report the convergence of the proposed solver, constructed using the GPDM discretization. In Section~\ref{numericsinverse}, we provide supporting numerical examples on simple manifolds. In Section~\ref{sec:face}, we test the PDE solver on problems defined on an ``unknown'' manifold and compare the estimates with the Finite Element Method (FEM) solution.

\subsection{The GPDM discretization method}\label{discretemethods}

Numerically, we will approximate the PDE in \eqref{PDE} with the affine operator in \eqref{GPDM} {for our GPDM method}. To be concise, we define $\hat{u}^{M}=(\hat{u}_1,\ldots ,\hat{u}_N)$, whose components are the numerical solution of the elliptic problem at $\{x_i\}_{i=1}^N$ that also include solutions at the estimated ghost points, $\{\tilde{x}^{G_0}_j\}_{j=1}^J$. Then, the PDE is discretized as,
\BEA
\mathbf{L}^g(\hat{u}^M) = \big(\mathbf{L}^{(1)} + \mathbf{L}^{(2)}\mathbf{A}\big)\hat{u}^M +\mathbf{L}^{(2)} \vec{b} = \vec{f},\label{PDEdiscrete}
\EEA
where $\vec{f}\in\mathbb{R}^{N}$, with components $f_i = f(x_i),\,x_i\in M$ and $x_i=\tilde{x}^{G_0}_j$ for some $i,j$. In the analysis below, we will establish the convergence of the solution $\hat{u}^M$ of the linear problem in \eqref{PDEdiscrete} to the true solution, $\vec{u}^M$, as defined in \eqref{truevectoru}, subjected to boundary conditions.

As for the boundary condition, we discretize the boundary operator for each $x_i\in \partial M$ as,
\BEA\beta_1(x_i) \partial_{\nu} + \beta_2(x_i)  \approx \beta_1(x_i)\left(\frac{\delta( x_i) -\delta(\tilde{x}^{G_0}_i) }{h} \right) + \beta_2(x_i) \delta(x_i): =\mathbf{B}_i,\notag
\EEA
following Eq.~\eqref{finitedifferenceerror2}. The Kronecker delta notation $\delta(x)$, which is equal to $1$ on $x$ and $0$ otherwise, is used to clarify that the row vector $\mathbf{B}_i$ (of size $1\times N$) has nonzero components on entries associated to $\tilde{x}^{G_0}_i$ and $x_i\in\partial M$. With this notation, the estimated boundary condition can be written in a compact form as,
\BEA
\mathbf{B}\hat{u}^M = \vec{g},\label{BCdiscrete}
\EEA
where $\vec{g}\in\mathbb{R}^J$, with components $g_i = g(x_i)$ for all $x_i\in \partial M$. Then we have,
{\color{black}
\BEA
\left(\mathbf{B}\hat{u}^M -\mathbf{B}\vec{u}^M\right)_j  &=& g(x_j^B) -(\mathbf{B}\vec{u}^M )_j =  \mathcal{B}u(x_j^B) -(\mathbf{B}\vec{u}^M )_j =
\mathcal{B}u(x_j^B) -\big(\mathbf{B}(\vec{u}^M_\epsilon + \mathcal{O}(h\epsilon^{1/2}))\big)_j \notag\\ &=& \beta_1(x_j^B) \left(\partial_{\boldsymbol{\nu}}u(x_j^B) - \frac{u(x_j^B)-u(\tilde{x}_j^{G_0})}{h} + \mathcal{O}(\epsilon^{1/2}) \right) = \mathcal{O}(h,\epsilon^{1/2}),\label{BCerror}
\EEA
for $j=1,\ldots,J$. For the third equality in the first line, we have used \eqref{vectoru2} such that the error of order $h\epsilon^{1/2}$ only occurs for the randomly sampled data. As for the equality in the second line, for well-sampled data, $\tilde{x}_j^{G_0}$ coincides with one of the interior points (due to the secant line approximation), and the error bound in the approximation of the directional derivative is of order-$h$. For the randomly sampled data, the error bound in the approximation of the directional derivative is given by \eqref{finitedifferenceerror2}.}

\noindent {\bf Dirichlet Problem:}  Numerically, we consider solving an $(N-J) \times (N-J)$ linear problem that is obtained by asserting \eqref{BCdiscrete} to the first $N-J$ row of \eqref{PDEdiscrete}.
To clarify, let us define the submatrices $\mathbf{L}^I\in\mathbb{R}^{(N-J)\times(N-J)}, \mathbf{L}^B\in\mathbb{R}^{(N-J) \times J}$ that satisfy,
\BEA
\begin{pmatrix} \mathbf{L}^I & | & \mathbf{L}^B \end{pmatrix} = \begin{pmatrix}(\mathbf{L}^{(1)} + \mathbf{L}^{(2)}\mathbf{A})_{1} \\ \ldots \\ (\mathbf{L}^{(1)} + \mathbf{L}^{(2)}\mathbf{A})_{N-J}
\end{pmatrix} \in\mathbb{R}^{(N-J) \times N}.\label{defnL}
\EEA
and decompose {\color{black} the estimated solution $\hat{u}^M = (\hat{u}^I,\hat{u}^B)$ to
\BEA
\notag
\begin{aligned}
\hat{u}^I &= (\hat{u}_1,\ldots ,\hat{u}_{N-J}), \quad \quad \mbox{for the interior components},\\
\hat{u}^B &= (\hat{u}_{N-J+1},\ldots ,\hat{u}_{N}), \quad \mbox{for the boundary components.}
\end{aligned}
\EEA
Similarly, we will decompose the true solution as $\vec{u}^M=(\vec{u}^I,\vec{u}^B)$ with,
\BEA
\notag
\begin{aligned}
\vec{u}^I&=(u(x_1),\ldots,u(\gamma_j(h)), \ldots,u(x_{N-J})), \mbox{ for the the interior components,} \\
\vec{u}^B &= ({u}_{N-J+1},\ldots ,{u}_{N}) = (u(x_1^B),\ldots, u(x_J^B)), \quad \mbox{for the boundary components.}
\end{aligned}
\EEA
}

For the Dirichlet boundary condition,
$u(x_j^B)= g(x_j^B)$ for $j=1,\ldots, J$, then one can directly replace $\hat{u}_{N-J+j} = u(x_j^B)= g(x_j^B)$, applying the decomposition in \eqref{defnL} on the first $N-J$ rows of \eqref{PDEdiscrete}, we arrive at the following reduced system,
\BEA
\mathbf{L}^I \hat{u}^I  = \vec{f}^I - \mathbf{L}^B \vec{g}\label{Dirichletsystem}
\EEA
where we have also defined $\vec{f}^I=(f(x_1)- (\mathbf{L}^{(2)}\vec{b})_1,f(x_2)-(\mathbf{L}^{(2)}\vec{b})_2,\ldots,f(x_{N-J})-(\mathbf{L}^{(2)}\vec{b})_{N-J})$. We can show that the solution of \eqref{Dirichletsystem} converges to the solution of the PDE in \eqref{PDE} with Dirichlet boundary condition.

\begin{thm} (Convergence of the Dirichlet Problem)\label{theorem2}
Let $u$ be the solution of PDE in \eqref{PDE} with Dirichlet boundary condition, $u(x_j^B)= g(x_j^B)$ for $j=1,\ldots, J$. Let $\hat{u}_i$ be the solution of the linear system in \eqref{Dirichletsystem}, where the diffusion operator $\mathcal{L}$ is approximated using the GPDM affine estimator in \eqref{GPDM}, constructed with $N$ grid points on the manifold and ghost points of distance, $h\lesssim\mathcal{O}(\epsilon^{r})$ and $0<r<1/2$. Assume that the differential operator $\mathcal{L}$ satisfies the maximum principle, then for any $x_i\in M^o$,
$\hat{u}_i$ converges to $u(x_i)$ with an error bound given as,
\BEA
 | \hat{u}_i - u(x_i)|  = \mathcal{O}\left( h^3\epsilon^{-1},h^2\epsilon^{-3/2},\bar{N}^{-1/2}\epsilon ^{-(2+d/4)},\bar{N}^{-1/2}\epsilon ^{-(1/2+d/4)} \right),\nonumber
\EEA
\end{thm}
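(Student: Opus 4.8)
The plan is to run a standard consistency-plus-stability argument for the reduced linear system \eqref{Dirichletsystem}, treating the GPDM discretization exactly as one treats a finite-difference scheme for an elliptic boundary value problem. Write $e^I := \hat{u}^I - \vec{u}^I$ for the error on the $N-J$ interior (and interior-ghost) nodes, where $\vec{u}^I$ is the restriction of the true solution defined just before \eqref{Dirichletsystem}. First I would invoke Theorem~\ref{theorem1}: by the regularity assumed in Section~\ref{sec:PDE}, the solution $u$ is $C^3$ on $M\cup B_{\epsilon^r}(\partial M)$, so $\big(\mathbf{L}^g(\vec{u}^M)\big)_i = \mathcal{L}u(x_i) + \tau_i = f(x_i) + \tau_i$ at every interior node, with $\|\tau\|_\infty$ bounded by exactly the four terms appearing in the claim. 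Because the Dirichlet data are imposed exactly, $\vec{u}^B = \vec{g}$ carries no extra error; substituting the block decomposition \eqref{defnL} into the first $N-J$ rows of $\mathbf{L}^g(\vec{u}^M) = \vec{f} + \tau$ and subtracting \eqref{Dirichletsystem} yields the clean error identity $\mathbf{L}^I e^I = -\tau^I$, where $\tau^I$ collects the first $N-J$ components of $\tau$ (the $\mathcal{O}(h\sqrt{\epsilon})$ mismatch between $f$ evaluated at the interior ghost point $\tilde{x}^{G_0}_j$ and at $\gamma_j(h)$ only contributes a lower-order term and is absorbed into $\tau^I$). Hence $\|e^I\|_\infty \le \|(\mathbf{L}^I)^{-1}\|_\infty\,\|\tau\|_\infty$, and it remains only to bound $\|(\mathbf{L}^I)^{-1}\|_\infty$ by a constant independent of $N$ and $\epsilon$.

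That uniform \emph{discrete stability} bound is the core of the proof, and I would establish it by a discrete barrier (maximum principle) argument. The unnormalized diffusion-maps block underlying $\mathbf{L}^{(1)}$ has the form $\tfrac1\epsilon(\mathbf{D}^{-1}\mathbf{W}-\mathbf{I})$, so it has zero row sums, strictly positive off-diagonal entries within the $\epsilon^r$-neighborhood graph, and diagonal entries in $(-\tfrac1\epsilon,0)$; thus $-\mathbf{L}^{(1)}$ is a scaled, irreducible, weakly diagonally dominant M-matrix, and its restriction $-\mathbf{L}^I$ inherits strict diagonal dominance through the rows that see the boundary block. Using the assumed maximum principle for $\mathcal{L}$, choose a fixed smooth barrier $w$ with $\mathcal{L}w \le -1$ on $M$ and $0 \le w \le C_0$ on $M\cup B_{\epsilon^r}(\partial M)$; applying Theorem~\ref{theorem1} once more to $w$ gives $\big(\mathbf{L}^g(\vec{w}^M)\big)_i \le -\tfrac12$ at every interior node once $\epsilon$ is small and $\bar{N}$ large. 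Since the boundary columns of $\mathbf{L}^{(1)} + \mathbf{L}^{(2)}\mathbf{A}$ are nonnegative and $w^B \ge 0$, this reads $(-\mathbf{L}^I)\vec{w}^I \ge \tfrac12\mathbf{1}$, so $0 \le (-\mathbf{L}^I)^{-1}\mathbf{1} \le 2\vec{w}^I \le 2C_0\mathbf{1}$ and $\|(\mathbf{L}^I)^{-1}\|_\infty \le 2C_0$. Plugging this into the previous paragraph gives $|\hat{u}_i - u(x_i)| = \mathcal{O}\big(h^3\epsilon^{-1},\,h^2\epsilon^{-3/2},\,\bar{N}^{-1/2}\epsilon^{-(2+d/4)},\,\bar{N}^{-1/2}\epsilon^{-(1/2+d/4)}\big)$ for $x_i \in M^o$, as claimed (the boundary nodes carry zero error by construction).

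The step I expect to be the real obstacle is verifying that the ghost-point extrapolation does not destroy the monotone (M-matrix / discrete comparison) structure that makes the barrier argument work. The cubic-spline-type relations in \eqref{Eq:Uvvv} have coefficients of mixed sign, so one cannot simply assert nonnegativity of $\mathbf{A}$ or of the off-diagonal entries of $\mathbf{L}^{(2)}\mathbf{A}$; one has to use that \eqref{Eq:Uvvv} is an explicit, essentially triangular recursion propagating the boundary and interior-ghost values outward, giving an $\mathcal{O}(1)$ closed form for $\mathbf{A}$ (Appendix~\ref{app:D}), together with the scaling $h\lesssim\mathcal{O}(\epsilon^r)$, so that the ghost contribution to the interior block is a perturbation of lower order in $\epsilon$ of the clean M-matrix $-\mathbf{L}^{(1)}$ and the irreducible-diagonal-dominance argument survives — or, alternatively, one absorbs that contribution directly into the barrier inequality rather than into the matrix structure. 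A secondary point is that $-\mathbf{L}^I$ is only weakly diagonally dominant on interior rows not adjacent to the boundary, so its invertibility and the sign of its inverse must be extracted from irreducibility plus at least one boundary-touching row, exactly as in the classical analysis of the five-point Laplacian; the hypothesis that $\mathcal{L}$ obeys the maximum principle is precisely what supplies the barrier $w$ and closes the loop.
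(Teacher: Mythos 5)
Your overall strategy (consistency via Theorem~\ref{theorem1}, stability via a discrete maximum principle / comparison function, then triangle inequality) is the same skeleton as the paper's proof in Appendix~\ref{app:D}, and your barrier construction is equivalent to the paper's comparison function $v_s$ up to a sign convention. However, you flag the crux of the argument — that the extrapolation coefficients in \eqref{Eq:Uvvv} have mixed signs, so the nonnegativity of the off-diagonal entries of $\mathbf{L}^I=\big(\mathbf{L}^{(1)}+\mathbf{L}^{(2)}\mathbf{A}\big)$ restricted to interior columns is not obvious — and then you leave it unresolved, offering two routes, neither of which actually works as stated.

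The first route you suggest is to treat $\mathbf{L}^{(2)}\mathbf{A}$ as a perturbation of lower order in $\epsilon$ of $\mathbf{L}^{(1)}$. This is false for rows near the boundary: the entries of $\mathbf{L}^{(2)}$ are normalized kernel weights (scaled by $\epsilon^{-1}$) from $x_i$ to the ghost points, which for $x_i$ within an $\epsilon^r$-collar of $\partial M$ are of exactly the same order as the entries of $\mathbf{L}^{(1)}$, while the closed-form coefficients in $\mathbf{A}$ (the $\tilde{D}_{B_j,\cdot}/c_{1,0}$ factors, the integers $k(k\pm1)/2$, $k^2-1$) are $\mathcal{O}(1)$. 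So the ghost contribution to near-boundary rows is a same-order modification, not a lower-order perturbation, and one cannot get the M-matrix structure back by a perturbation stability argument. The paper instead verifies the sign structure \emph{exactly}: after eliminating the ghost unknowns via \eqref{UG1} and \eqref{UGK}, the coefficient of $u_j$ in row $i$ is $\tilde{D}_{i,j}-\tfrac{c_{i,0}}{c_{1,0}}\tilde{D}_{1,j}-\tfrac{c_{i,N+1}}{c_{N,N+1}}\tilde{D}_{N,j}$ (and analogous expressions near the boundary), and positivity is obtained from the monotonicity of the Gaussian kernel ($\tilde{D}_{i,j}>\tilde{D}_{1,j},\tilde{D}_{N,j}$ when $x_i$ is interior and $x_j$ interior) together with $\tfrac{c_{i,0}}{c_{1,0}}+\tfrac{c_{i,N+1}}{c_{N,N+1}}<1$, while the zero row sum \eqref{sumzero} is inherited from the stochasticity of $\mathbf{\tilde{D}}$ and the fact that $u\equiv 1$ forces $f(x_j^B)=0$ in the first equation of \eqref{Eq:Uvvv}. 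Your second route ("absorb the contribution directly into the barrier inequality") is not worked out and would still require a sign argument of essentially this form for the contribution of $\mathbf{L}^{(2)}\mathbf{A}$ acting on the barrier vector; the paper does not use that alternative. In short, the proposal is structurally correct but omits the one computation that the convergence proof actually hinges on — the explicit verification (done in 1D in Appendix~\ref{app:D}) that the GPDM stencil remains a monotone scheme after ghost elimination.
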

in high probability, where $h$ depends on $\epsilon$ such that the first two terms vanishes as $\epsilon\to 0$ after $\bar{N}\to\infty$.
\begin{proof}
See Appendix~\ref{app:D}.
\end{proof}

Recall that some components of $\{\hat{u}_i\}_{i=1}^{N-J}$ correspond to the numerical solutions at the ghost points $\{\tilde{x}_j^{G_0}\}_{j=1}^J$. For these components, $\hat{u}_i$ converges to the true solution $u$, evaluated at the corresponding point, $\gamma_j(h)\in M$. We will elaborate this case in the proof of the next Theorem~\ref{theorem3} (see the discussion after \eqref{robinconsistency}).

\noindent {\bf Robin and Neumann problems:} Here, we consider
\BEA
\begin{aligned}\label{PDENeumann}
(-a+ \mathcal{L}) u = f, \quad\quad x\in M^o, \\
\mathcal{B}u:= (\beta_1 \partial_{\boldsymbol{\nu}} + \beta_2) u = g, \quad\quad x\in \partial M,
\end{aligned}
\EEA
with $a(x) \geq a_{min}>0$, $\forall x\in M$ such that $-a+ \mathcal{L}$ is strictly negative definite. Here, the additional $-a$ term is to ensure the well-posedness of the Neumann problem and for convenience of the convergence study of the Robin problem.

For the discussion below, we write the discrete approximation of the boundary operator as $\mathbf{B} = (\mathbf{B}^I;\mathbf{B}^B)$, where $\mathbf{B}^I\in\mathbb{R}^{J\times(N-J)}$ and $\mathbf{B}^B\in\mathbb{R}^{J\times J}$. Then, the discrete approximation to the PDE problem in \eqref{PDENeumann} is to solve the following $N\times N$ system,
\BEA
\mathbf{N} \hat{u}^M := \begin{pmatrix} -\mathbf{a}+ \mathbf{L}^I & \mathbf{L}^B \\  \mathbf{B}^I & \mathbf{B}^B\end{pmatrix}\begin{pmatrix} \hat{u}^I \\  \hat{u}^B \\  \end{pmatrix} = \begin{pmatrix} \vec{f}^I  \\ \vec{g} \end{pmatrix},\label{discreteRobin}
\EEA
where $\mathbf{a}$ denotes a diagonal matrix with diagonal components $\{a(x_i)\}$. Numerically, one can also solve the last $J$ rows corresponding to the boundary conditions,
\BEA
\hat{u}^B = (\mathbf{B}^B)^{-1}( \vec{g} - \mathbf{B}^I\hat{u}^I ),\label{Robinub}
\EEA
and insert this solution to the first $(N-J)$-rows in problem \eqref{discreteRobin} to obtain a reduced $(N-J)\times(N-J)$ system.

For the Robin problem, we have the following convergence result.
\begin{thm} (Convergence of the Robin Problem)\label{theorem3}
Let $u$ be the solution of PDE in \eqref{PDENeumann} with Robin boundary condition and $\beta_1, \beta_2>0$.
Let the corresponding GPDM estimator be constructed as in Theorem~\ref{theorem2} and we assume that $a\in C^1(M\cup B_{\epsilon}(\partial M))$.  Then, for any $x_i\in M^o$, the solution $\hat{u}_i$ of the linear system in \eqref{discreteRobin} converges to $u(x_i)$ with error bound given as,
\BEA
 | \hat{u}_i - u(x_i)|  = \mathcal{O}\left(h^{3}\epsilon^{-1},h^2\epsilon^{-3/2},\epsilon^{1/2},\bar{N}^{-1/2}\epsilon ^{-(2+d/4)},\bar{N}^{-1/2}\epsilon ^{-(1/2+d/4)} \right),\nonumber
\EEA
in high probability, where $h$ depends on $\epsilon$ such that the first two terms vanishes as $\epsilon\to 0$ after $\bar{N}\to\infty$. 
\end{thm}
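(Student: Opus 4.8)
The plan is to mirror the structure of the proof of Theorem~\ref{theorem2} (the Dirichlet case) but now handle the extra complications coming from the $-a$ term and from the boundary rows $(\mathbf{B}^I,\mathbf{B}^B)$ which are no longer trivial. First I would invoke the consistency estimate of the GPDM affine operator, Theorem~\ref{theorem1}, applied to the operator $-a+\mathcal{L}$ in place of $\mathcal{L}$: since multiplication by the diagonal matrix $\mathbf{a}$ is an exact (zeroth-order) operation and $a\in C^1(M\cup B_{\epsilon^r}(\partial M))$, one gets
\[
\big((-\mathbf{a}+\mathbf{L}^g)(\vec{u}^M)\big)_i - (-a+\mathcal{L})u(x_i) = \mathcal{O}\!\left(h^3\epsilon^{-1},h^2\epsilon^{-3/2},\bar{N}^{-1/2}\epsilon^{-(2+d/4)},\bar{N}^{-1/2}\epsilon^{-(1/2+d/4)}\right)
\]
in high probability. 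Combined with $(-a+\mathcal{L})u=f$ on $M^o$, this shows the true solution vector $\vec{u}^M$ satisfies the first $N-J$ rows of \eqref{discreteRobin} up to the above consistency residual, call it $\vec{r}^I$. Likewise, from \eqref{BCerror} the true solution satisfies the last $J$ rows up to a residual $\vec{r}^B$ of order $\mathcal{O}(h,\epsilon^{1/2})$ (order $\mathcal{O}(h)$ for well-sampled data).

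Next I would set up the error equation. Let $\vec{e}=\hat{u}^M-\vec{u}^M=(\vec{e}^I,\vec{e}^B)$. Subtracting the linear system satisfied by $\vec{u}^M$ (up to residuals) from \eqref{discreteRobin} gives $\mathbf{N}\vec{e}=(-\vec{r}^I;-\vec{r}^B)$, i.e.
\begin{align*}
(-\mathbf{a}+\mathbf{L}^I)\vec{e}^I + \mathbf{L}^B\vec{e}^B &= -\vec{r}^I,\\
\mathbf{B}^I\vec{e}^I + \mathbf{B}^B\vec{e}^B &= -\vec{r}^B.
\end{align*}
Solve the second line for $\vec{e}^B = (\mathbf{B}^B)^{-1}(-\vec{r}^B-\mathbf{B}^I\vec{e}^I)$ as in \eqref{Robinub}; here $\mathbf{B}^B$ is (up to scaling by $h$) diagonal-dominant because $\beta_1,\beta_2>0$, so $(\mathbf{B}^B)^{-1}$ is well-controlled, and $(\mathbf{B}^B)^{-1}\mathbf{B}^I$ has entries of size $\mathcal{O}(1)$ supported on the single ghost-point column per boundary row (this is exactly the structure exploited in the Dirichlet proof). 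Substituting back yields a reduced $(N-J)\times(N-J)$ system $\widetilde{\mathbf{N}}\,\vec{e}^I = -\vec{r}^I + \mathbf{L}^B(\mathbf{B}^B)^{-1}\vec{r}^B$, where $\widetilde{\mathbf{N}} = -\mathbf{a}+\mathbf{L}^I - \mathbf{L}^B(\mathbf{B}^B)^{-1}\mathbf{B}^I$ is precisely the GPDM discretization of the well-posed operator $-a+\mathcal{L}$ with the Robin condition folded in.

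The crux — and the main obstacle — is the discrete stability estimate: bounding $\|\widetilde{\mathbf{N}}^{-1}\|_\infty$ uniformly in $N$ and $\epsilon$. Because $-a+\mathcal{L}$ is strictly negative definite (thanks to $a\ge a_{\min}>0$), the continuous problem satisfies a maximum principle with the zeroth-order coefficient providing coercivity, and I would argue that the discrete operator $\widetilde{\mathbf{N}}$ inherits a discrete maximum principle: one checks that off the boundary rows the row sums of $\mathbf{L}^I$ (the DM/GPDM stencil) are $\mathcal{O}(\epsilon)$ or nonpositive after normalization, while the $-\mathbf{a}$ term contributes a strictly negative diagonal shift of size $a_{\min}$, and the correction $-\mathbf{L}^B(\mathbf{B}^B)^{-1}\mathbf{B}^I$ is a bounded perturbation concentrated near the boundary. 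This gives an $\ell^\infty$-bound $\|\widetilde{\mathbf{N}}^{-1}\|_\infty \le C/a_{\min}$, uniform in $\epsilon,N$, essentially reproducing the argument of Appendix~\ref{app:D}; the new twist is handling the extra near-boundary block $\mathbf{L}^B(\mathbf{B}^B)^{-1}\mathbf{B}^I$, which requires that $h\le\mathcal{O}(\epsilon^r)$ so those entries do not blow up. Once that stability bound is in hand, we get $\|\vec{e}^I\|_\infty \le C\big(\|\vec{r}^I\|_\infty + \|\mathbf{L}^B(\mathbf{B}^B)^{-1}\|_\infty\|\vec{r}^B\|_\infty\big)$. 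The residual $\vec{r}^I$ contributes the four terms from Theorem~\ref{theorem1}; the boundary residual $\vec{r}^B=\mathcal{O}(h,\epsilon^{1/2})$ multiplied by the $\mathcal{O}(1)$ factor $\mathbf{L}^B(\mathbf{B}^B)^{-1}$ (note $\mathbf{L}^B$ has entries $\mathcal{O}(\epsilon^{-1})$ but $(\mathbf{B}^B)^{-1}$ has entries $\mathcal{O}(h)$, and with $h=\mathcal{O}(\epsilon)$ the product is $\mathcal{O}(1)$) contributes the new $\mathcal{O}(\epsilon^{1/2})$ term, while the $\mathcal{O}(h)$ piece is absorbed into $h^3\epsilon^{-1}$ once $h=\mathcal{O}(\epsilon)$. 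Finally, for the components of $\vec{e}^I$ corresponding to interior ghost points $\tilde{x}_j^{G_0}$, one notes as in Theorem~\ref{theorem2} that $\hat{u}_i$ approximates $u(\gamma_j(h))$; collecting all contributions gives the stated error bound, and reading off $\vec{e}^B$ from the Robin relation shows the boundary values converge at the same rate. I expect the detailed verification of the discrete maximum principle for $\widetilde{\mathbf{N}}$ in the presence of the folded-in Robin block to be the technically heaviest part, and would defer it to an appendix.
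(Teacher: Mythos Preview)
Your overall strategy---consistency from Theorem~\ref{theorem1} for the interior rows, the residual \eqref{BCerror} for the boundary rows, then a stability bound---matches the paper. But the paper's stability argument is considerably simpler than the route you propose, and avoids the difficulty you flag as ``the technically heaviest part.''

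Rather than eliminating $\hat u^B$ via \eqref{Robinub} and analyzing the Schur complement $\widetilde{\mathbf N}=-\mathbf a+\mathbf L^I-\mathbf L^B(\mathbf B^B)^{-1}\mathbf B^I$, the paper works directly with the full $N\times N$ matrix $\mathbf N$ in \eqref{discreteRobin} and shows it is strictly diagonally dominant. For the first $N-J$ rows this follows from the identity \eqref{sumzero} (row sums of $\epsilon(\mathbf L^{(1)}+\mathbf L^{(2)}\mathbf A)$ vanish, off-diagonal entries are positive), so the dominance margin is exactly $a(x_i)\ge a_{\min}$. For the boundary rows the margin is $\beta_2(x_j^B)>0$ since the only nonzero entries are $\beta_1/h+\beta_2$ on the diagonal and $-\beta_1/h$ off it. The Ahlberg--Nilson--Varah bound then gives $\|\mathbf N^{-1}\|_\infty\le 1/\min\{a_{\min},\min_j\beta_2(x_j^B)\}$, and the error bound follows immediately by applying $\mathbf N^{-1}$ to the stacked residual vector; the $\epsilon^{1/2}$ term comes straight from \eqref{BCerror} with no amplification.

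Your reduction introduces the factor $\|\mathbf L^B(\mathbf B^B)^{-1}\|_\infty$ in front of the boundary residual. Your claim that this is $\mathcal O(1)$ hinges on $h=\mathcal O(\epsilon)$, which is the scaling for randomly sampled data; for well-sampled data the paper has $h\sim\epsilon^{1/2}$, so $h/\epsilon\sim\epsilon^{-1/2}$ and your bound on the boundary contribution would not close as written. This is not fatal---one could likely sharpen the estimate---but it is precisely the complication the paper's direct diagonal-dominance argument sidesteps. The moral: for Robin, the $-a$ shift and $\beta_2>0$ make $\mathbf N$ itself strictly diagonally dominant, so there is no need to mimic the Dirichlet reduction of Appendix~\ref{app:D}.
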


\begin{proof}
Using the definition of $\vec{f}^I$ and the decomposition in \eqref{defnL}, one can immediately see the consistency. Multiplying the matrix $\mathbf{N}$ in \eqref{discreteRobin} with a vector consists of the difference between the estimated and the true solutions, we obtain
\BEA
\left(  (-\mathbf{a}+ \mathbf{L}^I )(\hat{u}^I-\vec{u}^I)+  \mathbf{L}^B  (\hat{u}^B-\vec{u}^B)\right)_i
&=& \left(\vec{f}^I - (-\mathbf{a}+ \mathbf{L}^I)\vec{u}_I - \mathbf{L}^B \vec{u}_B\right)_i \nonumber\\
&=& f(x_i) + a(x_i)u(x_i) - \left(\mathbf{L}^{(2)}\vec{b} + \mathbf{L}^I\vec{u}_I  + \mathbf{L}^B \vec{u}_B\right)_i \nonumber\\
&=& \mathcal{L}u(x_i) - \left(\mathbf{L}^g(\vec{u}^M)\right)_i.\label{robinconsistency}
\EEA
for $i=1,\ldots, N-J$. For the randomly sampled case, some of the elements of $\{x_i\}$ are $\tilde{x}_j^{G_0}$ that do not lie on $M$.  For such components, we have,
\BEA
\left(  (-\mathbf{a}+ \mathbf{L}^I )(\hat{u}^I-\vec{u}^I)+  \mathbf{L}^B  (\hat{u}^B-\vec{u}^B)\right)_i
&=& f(\tilde{x}^{G_0}_j)+a(\tilde{x}^{G_0}_j)u(\tilde{x}^{G_0}_j) - \left(\mathbf{L}^g(\vec{u}^M)\right)_i \nonumber \\ &=&  \mathcal{L}u(\gamma_j(h)) - \left(\mathbf{L}^g(\vec{u}^M)\right)_i  \nonumber \\ && + \big( f(\tilde{x}^{G_0}_j)-f(\gamma_j(h)) +a(\tilde{x}^{G_0}_j)u(\tilde{x}^{G_0}_j) -a(\gamma_j(h))u(\gamma_j(h))\big)\nonumber \\
&=&  \mathcal{L}u(\gamma_j(h)) - \left(\mathbf{L}^g(\vec{u}^M)\right)_i  + \mathcal{O}(h\epsilon^{1/2}),\nonumber
\EEA
where the last term is valid under the assumption that $a,f,u \in C^1(B_{\epsilon^r}(\partial M))$.
This additional error of order-$h\epsilon^{1/2}$ will be dominated by the error of order-$h^2\epsilon^{-3/2}$ of Theorem~\ref{theorem1} since $h=\mathcal{O}(\epsilon)$. Consequently, we will just ignore it and refer to the consistency equation in \eqref{robinconsistency}.
The last $J$ rows corresponding to the boundary points are nothing but \eqref{BCerror}.

From Eq.~\eqref{sumzero} in Appendix~\ref{app:D}, the column sum of each row of the matrix $\mathbf{M}= \epsilon(\mathbf{L}^{(1)} + \mathbf{L}^{(2)}\mathbf{A})$ is zero and that $\mathbf{M}_{i,i}<0$ and $\mathbf{M}_{i,j}>0$ for all $j\neq i$. Since the first $N-J$ rows of $\mathbf{N}$ is nothing but $-a(x_i)+(\mathbf{L}^{(1)} + \mathbf{L}^{(2)}\mathbf{A})_{i}$, we have
\BEA
|\mathbf{N}_{i,i}| - \sum_{\stackrel{j=1}{j\neq i}}^N |\mathbf{N}_{i,j}| = |-a(x_i) + \epsilon^{-1}\mathbf{M}_{i,i}| - \epsilon^{-1} \sum_{\stackrel{j=1}{j\neq i}}^N |\mathbf{M}_{i,j}|  = a(x_i) -\epsilon^{-1}\sum_{j=1}^N \mathbf{M}_{i,j}=a(x_i) \geq a_{min}>0,\notag 
\EEA
for $i=1,\ldots,N-J$. Also, the last $J$ rows of the matrix $\mathbf{N}$ are strictly diagonal dominant as long as $\beta_2>0$. For example, in 1D case where $J=2$, the last two rows of \eqref{discreteRobin} is given as,
\BEA
\mathbf{B}^B\hat{u}_B + \mathbf{B}^I\hat{u}_I := \begin{pmatrix}\frac{\beta_1(x_1)}{h}+\beta_2(x_1) & 0 \\ 0 & \frac{\beta_1(x_N)}{h}+\beta_2(x_N) \end{pmatrix} \begin{pmatrix} \hat{u}_1 \\ \hat{u}_N\end{pmatrix} + \begin{pmatrix}-\frac{\beta_1(x_1)}{h} & 0 & \ldots & 0 \\ 0 &\ldots & 0 & -\frac{\beta_1(x_N)}{h} \end{pmatrix} \begin{pmatrix} \hat{u}_2 \\ \hat{u}_3 \\ \vdots \\ \hat{u}_{N-1}\end{pmatrix}=\begin{pmatrix} g(x_1) \\ g(x_N)\end{pmatrix}:=\vec{g}.\nonumber
\EEA
In this case, $|\mathbf{N}_{i,i}| - \sum_{\stackrel{j=1}{j\neq i}}^N |\mathbf{N}_{i,j}| = \beta_2(x_i)>0$ for $i>N-J$. Therefore, the matrix $\mathbf{N}$ is strictly diagonal dominant and nonsingular. By the Ahlberg-Nilson-Varah bound \cite{ahlberg1963,varah1975}, the inverse matrix is uniformly bounded,
\BEA
\|\mathbf{N}^{-1}\|_{\infty} \leq \frac{1}{\min_i (|\mathbf{N}_{ii}|-\sum_{\stackrel{ j=1}{j\neq i}}^{N} |\mathbf{N}_{ij}|)} = \frac{1}{\min\{a_{min},\beta_2\}}.\nonumber 
\EEA
Thus, multiplying $\mathbf{N}^{-1}$ to a vector where the first $N-J$ components consist of \eqref{robinconsistency} and the last $J$ components consist of \eqref{BCerror}, we have
\BEA
|\hat{u}_i-u(x_i)| &\leq& \|\mathbf{N}^{-1}\|_{\infty} \left( \max_{i=1,\ldots,N-J} \left|\left(  (-\mathbf{a}+ \mathbf{L}^I )(\hat{u}^I-\vec{u}^I)+  \mathbf{L}^B  (\hat{u}^B-\vec{u}^B)\right)_i \right|, \max_{j=1,\ldots,J}  \left|\left(\mathbf{B}(\hat{u}^M -\vec{u}^M\right)_j\right| \right)\nonumber\\
&=& \|\mathbf{N}^{-1}\|_{\infty}\left( \max_{i=1,\ldots,N-J}  \left| \mathcal{L}u(x_i) - (\mathbf{L}^g(\vec{u}^M))_i \right|, \max_{j=1,\ldots,J}  \left|\left(\mathbf{B}(\hat{u}^M -\vec{u}^M\right)_j\right| \right),\nonumber
\EEA
for all $x_i\in M$.
Since the GPDM is consistent, $ \left |\mathcal{L}u(x_i) - ((\mathbf{L}^g(\vec{u}^M))_i\right| \to 0$ as $\epsilon\to 0$ after $N\to\infty$ with error rate given in Theorem~\ref{theorem1}, together with the error bound in \eqref{BCerror}, the proof is completed.
\end{proof}

For the Neumann problem, the last $J$ components of $\mathbf{N}$ is not strictly diagonal dominant, since $\beta_2=0$. To achieve the convergence, one can consider (without loss of generality) the homogeneous Neumann problem $g=0$ such that
\eqref{Robinub} simplifies to $\hat{u}_B = -(\mathbf{B}^B)^{-1}\mathbf{B}^I\hat{u}^I $. For example in well-sampled case, the discrete approximation in \eqref{finitedifference} yields $\hat{u}_N= \hat{u}_{N-1}$ and $\hat{u}_1= \hat{u}_{2}$. Substituting these solutions ($J$ equations in general) to the first $N-J$ rows of \eqref{discreteRobin}, one can verify that the reduced $N-J$ problem is nonsingular and it has an inverse that is uniformly bounded by $1/a_{min}$. Thus, the convergence can be achieved using the similar argument as in the proof above.

\subsection{Numerical examples on simple manifolds}\label{numericsinverse}

In this section, we discuss three examples of problems defined on simple manifolds. First, we verify the convergence rate with the 1D example in Example~\ref{1Dexample}. In the second example, we test the solver on a semi-torus embedded in $\mathbb{R}^3$ with a mixed-type Dirichlet-Neumann, boundary condition. In the third example, we verify the effectiveness of the proposed method on the randomly-sampled data for the semi-torus PDE problem.

Numerically, we will compare GPDM with the standard DM. To account for other than homogeneous Neumann boundary conditions, we modified the standard DM as follows. We consider the $N-J$ rows corresponding to the interior points $x_i \in M^{o}$ of the equation, $\mathbf{L}_{\rm DM} \hat{u}^M=\vec{f}$, where $\mathbf{L}_{\rm DM}$ is the standard diffusion maps operator. To approximate boundary conditions that involve normal derivatives, we use the Algorithm in Appendix~\ref{App:A} that requires no interior ghost points. Then the inverse problem consists of solving the reduced linear system (arised from imposing the appropriate boundary conditions), analogous to the reduced linear problem with GPDM.

\subsubsection{Anisotropic diffusion on a semi-ellipse with well-sampled data} First, let us present the results of the 1D problem in Example~\ref{1Dexample} in solving
\BEA
\mathcal{L}_2 u = f,\label{PDEproblem2}
\EEA
with the three boundary conditions. In this numerical experiment, the configuration is the same as in Section~\ref{numericforward}. Particularly, Fig.~\ref{Fig9_eps_semiellipse2} demonstrates the error of the solutions, $\|\hat{u}^M -\vec{u}^M\|_\infty$, which we refer to as the Inverse Error (IE), as a function of $\epsilon$ for fixed $N=400, k=50$. Compared to the standard diffusion maps, notice that GPDM is more robust for the case of Robin and Dirichlet boundary conditions, as expected. The advantage of GPDM over DM on Robin and Dirichlet boundary conditions is more apparent in Fig. \ref{Fig10_N_semiellipse2}. Particularly, for the Robin BC, one can see that the GPDM IE decays on $\mathcal{O}(N^{-1})$ whereas the DM IE does not decay and is nearly constant.  For the Dirichlet BC, GPDM IE decays faster compared to the DM IE. For the Neumann BC, we see comparable IEs as functions of $N$, as expected.

\begin{figure}
{\scriptsize \centering
\begin{tabular}{ccc}
{\normalsize Robin BC} & {\normalsize Dirichlet BC} & {\normalsize Neumann BC}  \\
\includegraphics[scale=0.35]{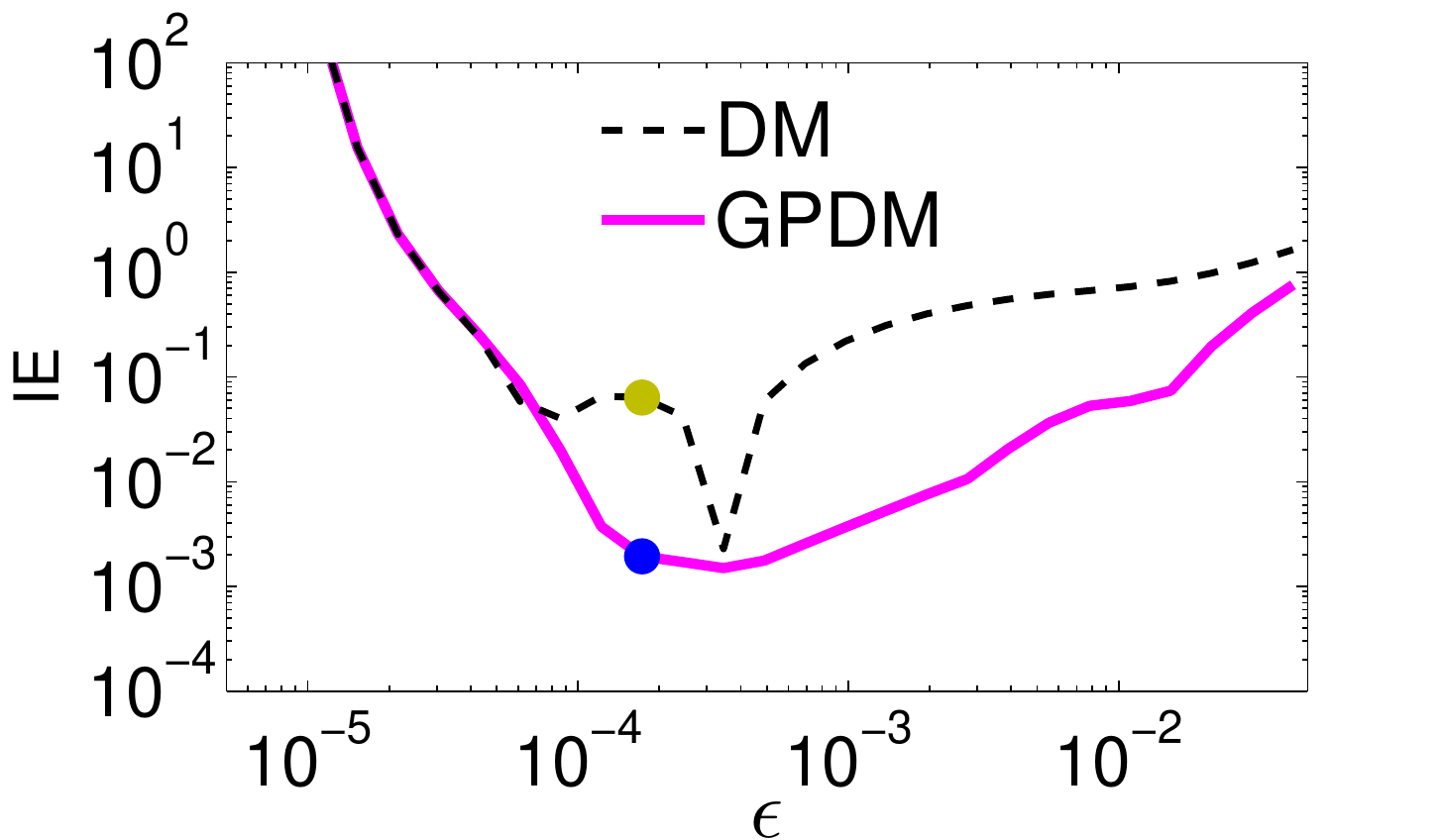}
&
\includegraphics[scale=0.35]{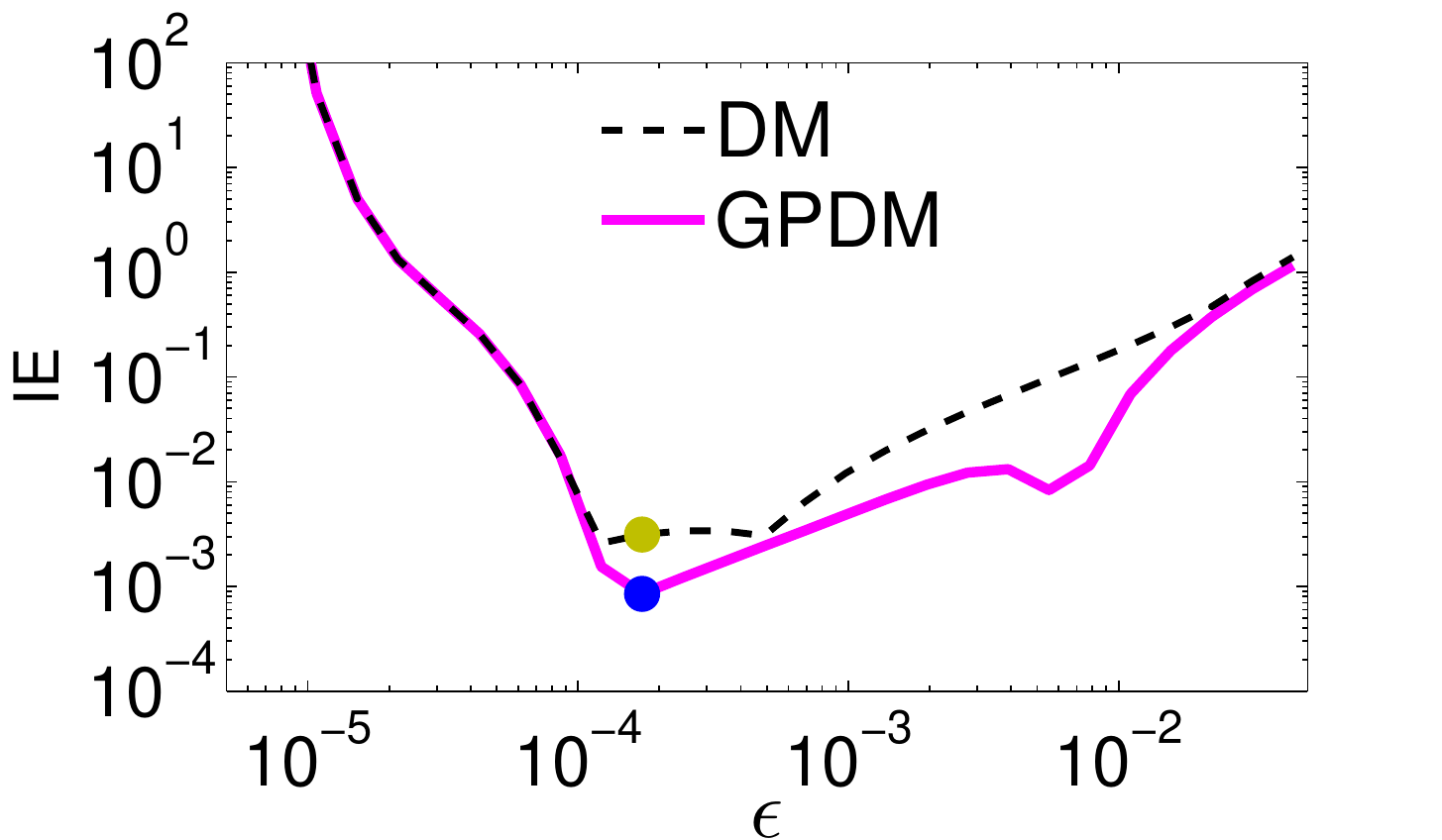}
&
\includegraphics[scale=0.35]{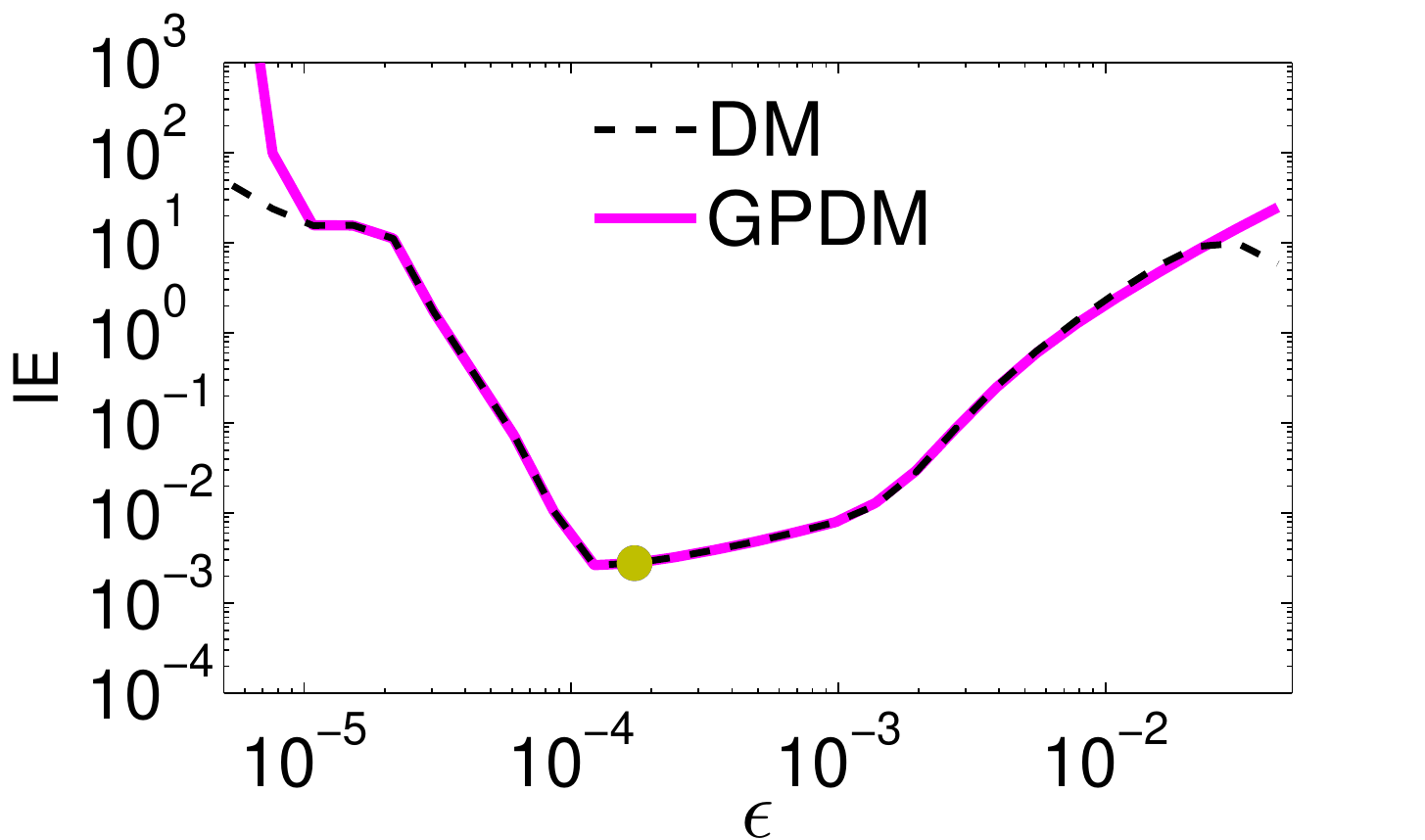}
\end{tabular}
}
\caption{(Color online) Pointwise Inverse Error (IE) of the solution of \eqref{PDEproblem2} as a function of the bandwidth $\protect\epsilon$ {for  the semi-ellipse example with fixed $N=400$ well-sampled data}. The
yellow point and blue point correspond to the auto-tuned $\protect\epsilon$
for DM and GPDM, respectively.}
\label{Fig9_eps_semiellipse2}
\end{figure}

\begin{figure*}[tbp]
{\scriptsize \centering
\begin{tabular}{ccc}
{\normalsize Robin BC} & {\normalsize Dirichlet BC} & {\normalsize Neumann BC}  \\
\includegraphics[scale=0.35]{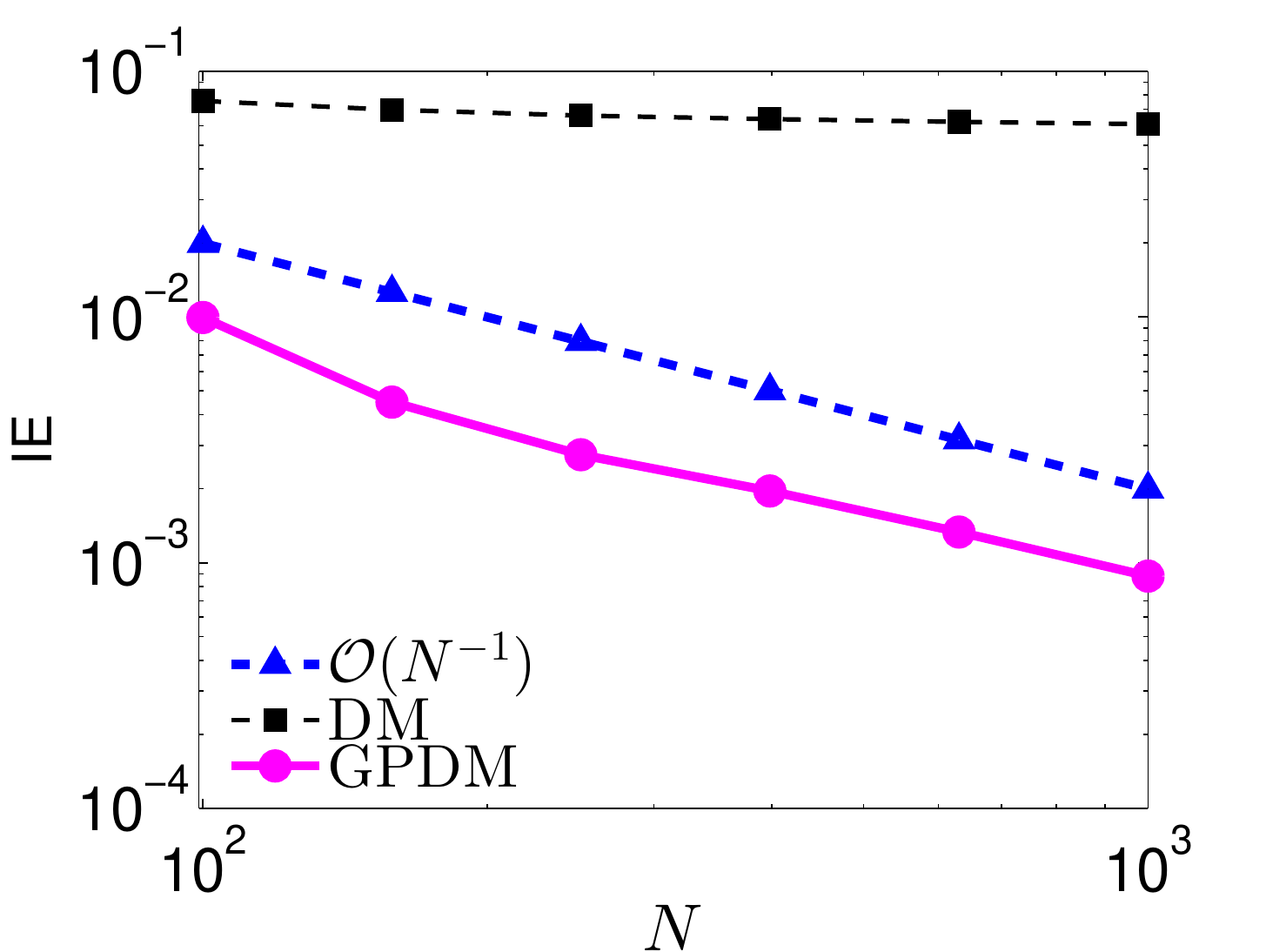}
& %
\includegraphics[scale=0.35]{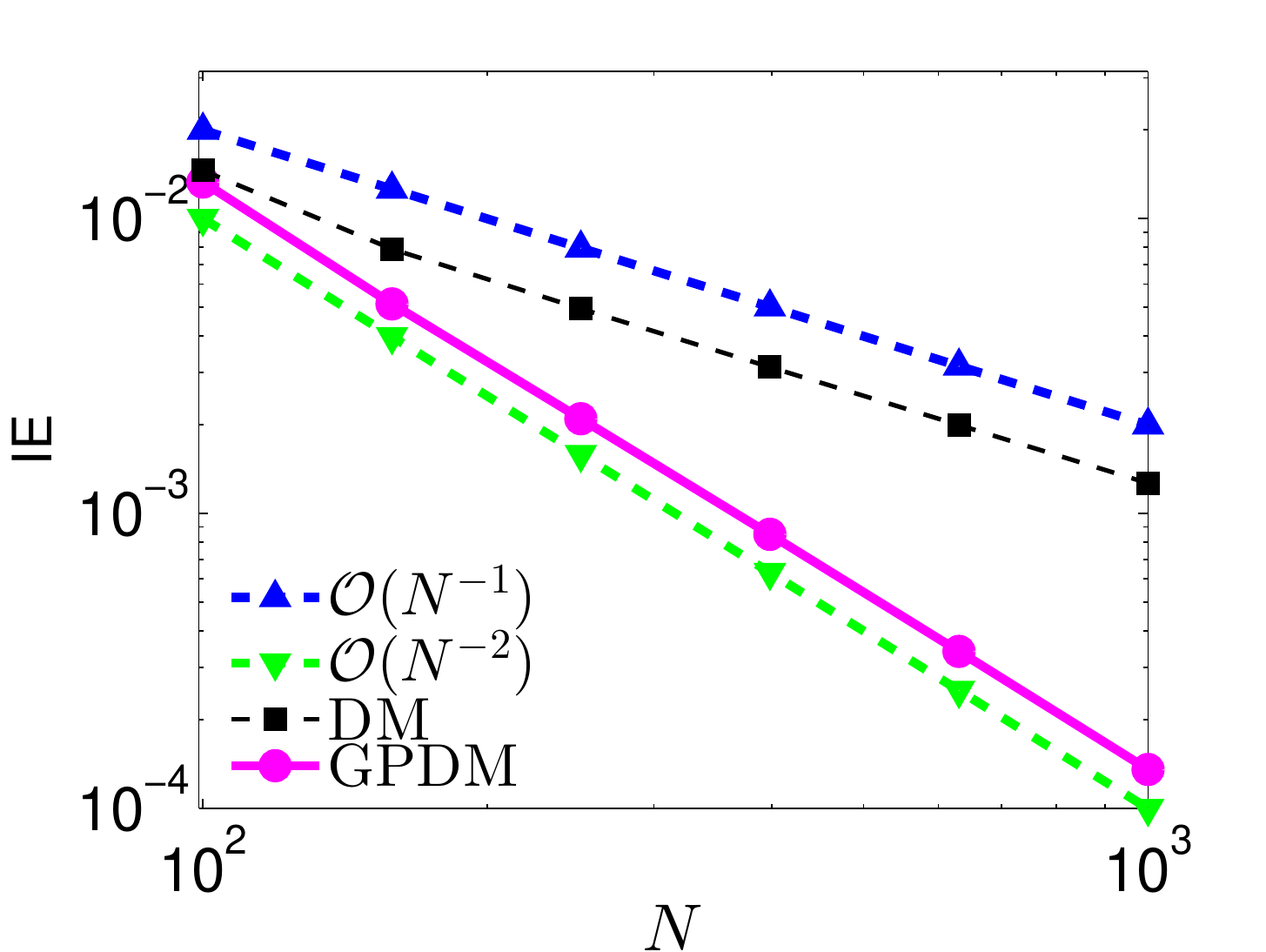}
&
\includegraphics[scale=0.35]{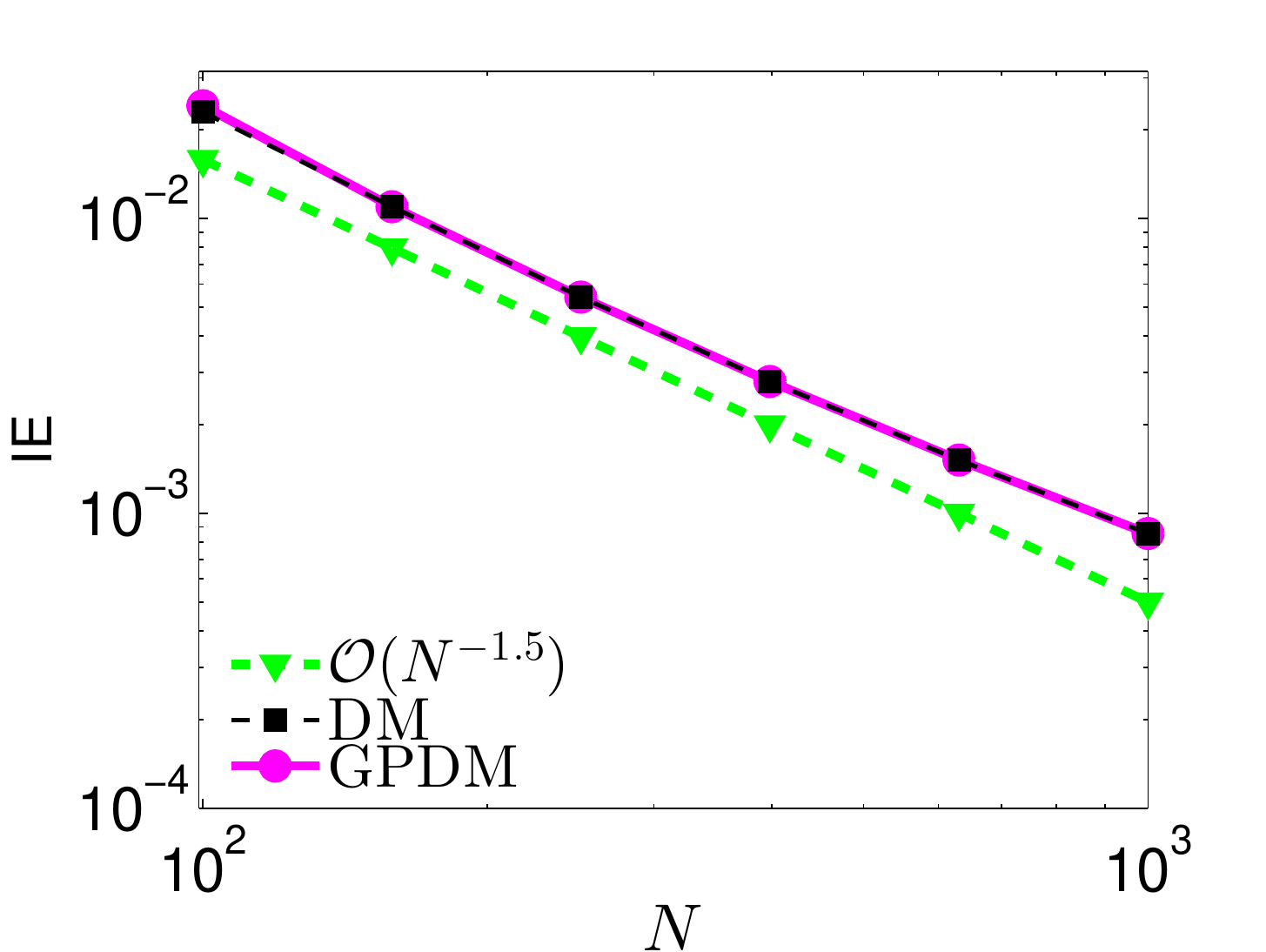}
\end{tabular}
}
\caption{(Color online) Comparisons of the Inverse Errors (IEs) of the solutions of \eqref{PDEproblem2} as functions of $N$ for the semi-ellipse example.  {The bandwidth $\protect\epsilon$ is auto-tuned for each $N$ number of well-sampled data}.}
\label{Fig10_N_semiellipse2}
\end{figure*}

\subsubsection{Non-symmetric backward Kolmogorov elliptic PDE on a semi-torus with well-sampled data \label{L3toruswell}}

\begin{figure}[tbp]
\flushleft
\begin{tabular}{cccc}
{\normalsize DM, $64 \times 64$ } & {\normalsize GPDM, $64 \times 64$} &
{\normalsize DM, $128 \times 128$} & {\normalsize GPDM, $128 \times 128$} \\
\includegraphics[scale=0.24]{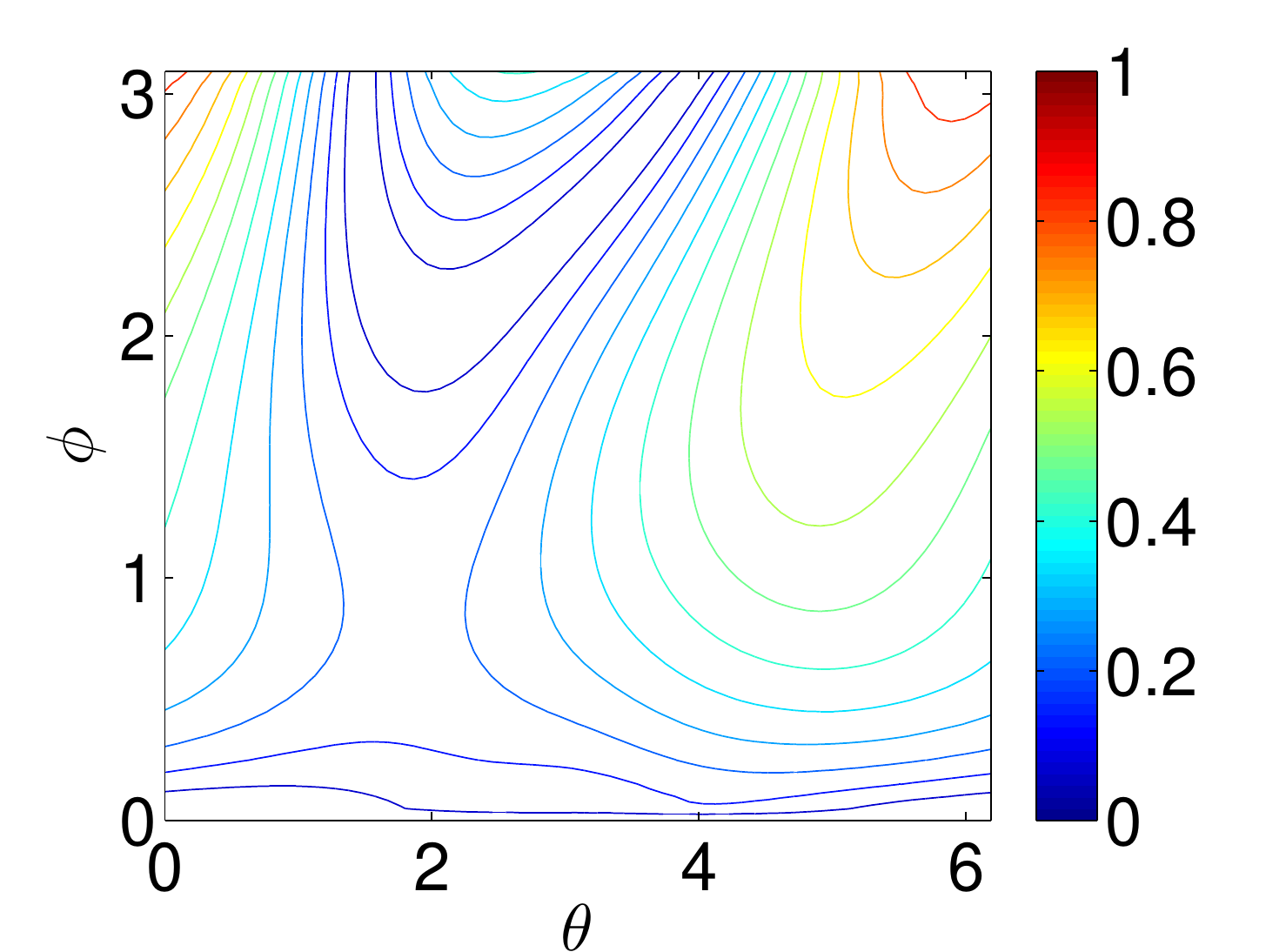}
& %
\includegraphics[scale=0.24]{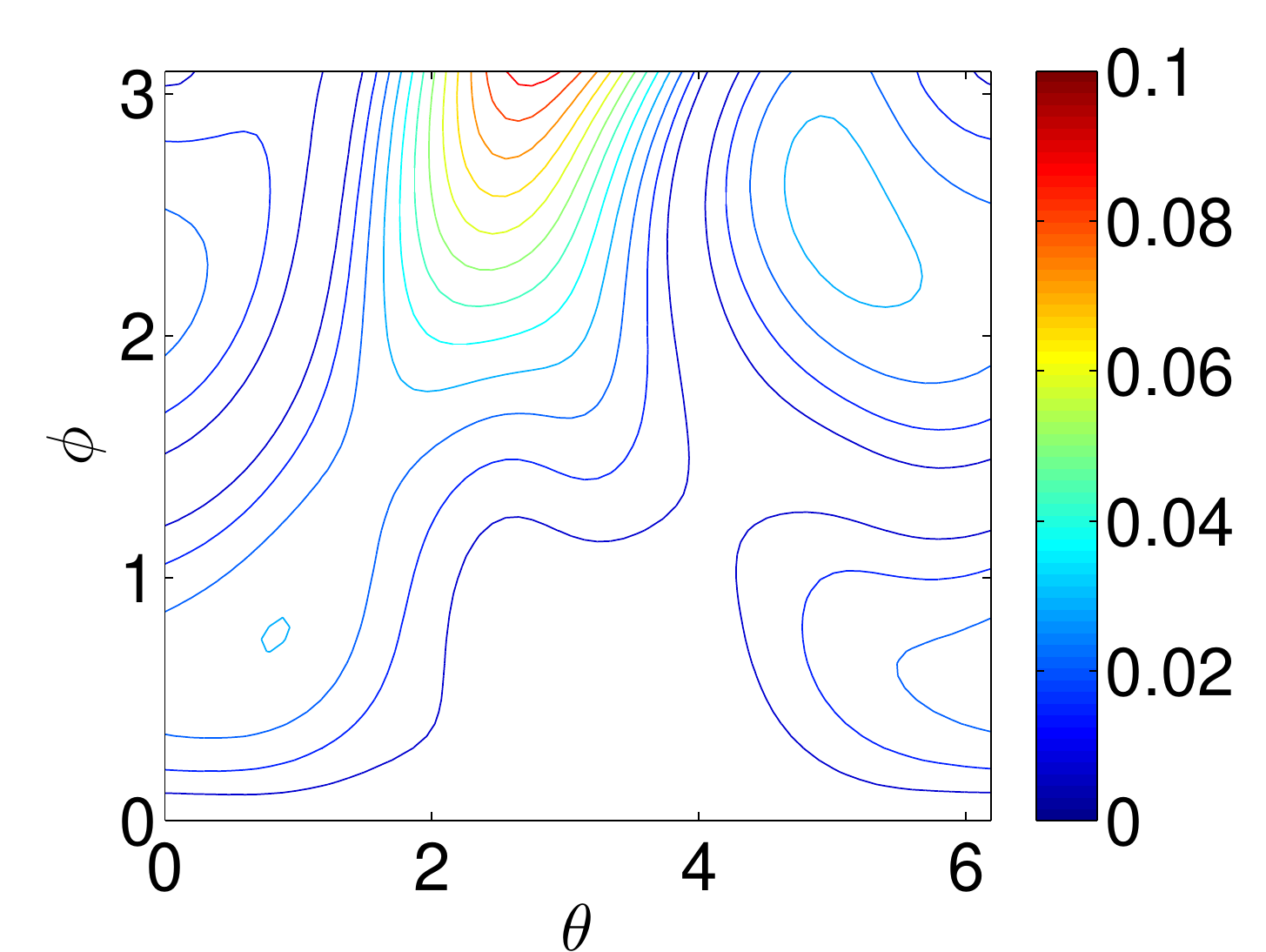}
& %
\includegraphics[scale=0.24]{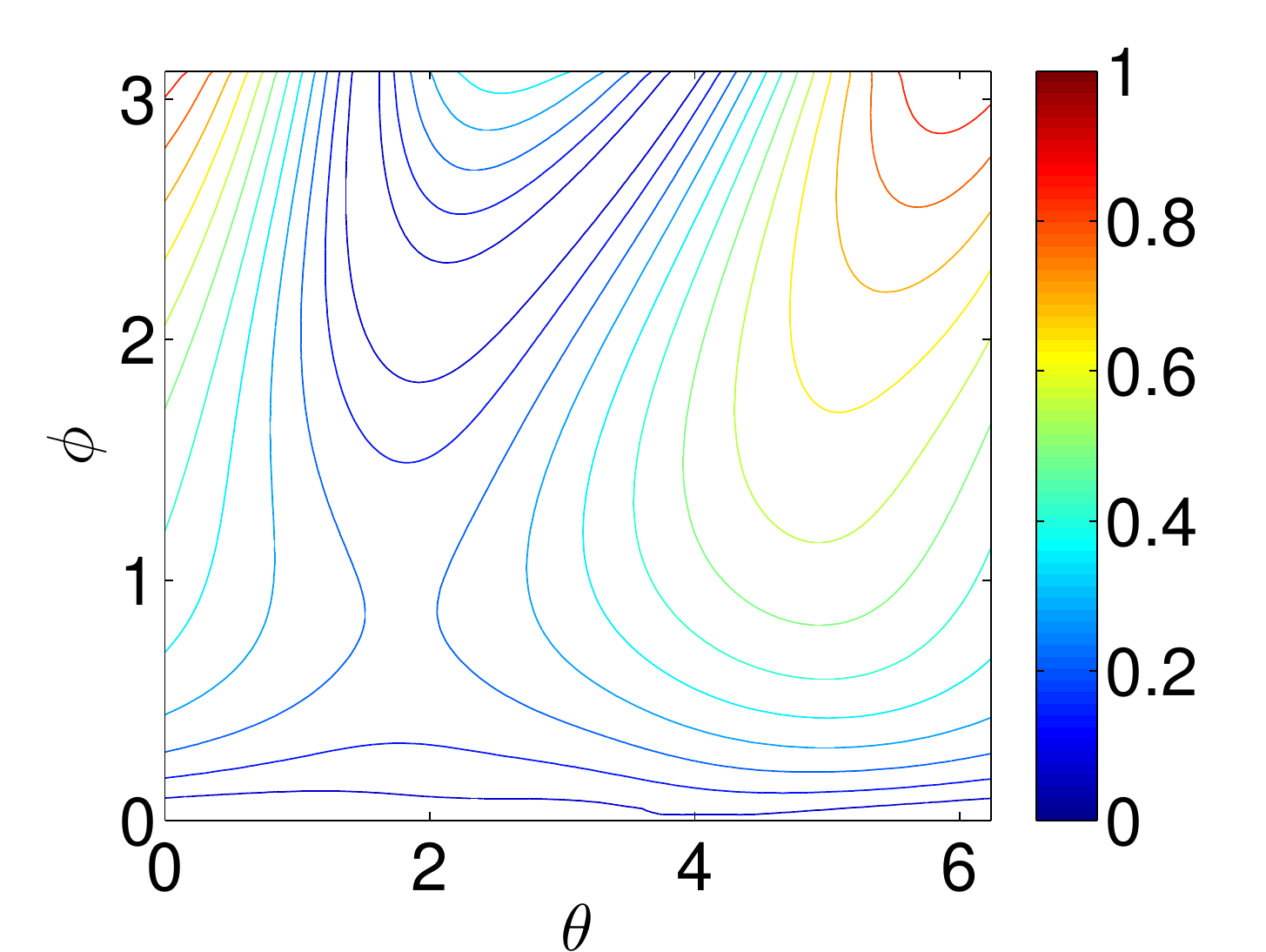}
& %
\includegraphics[scale=0.24]{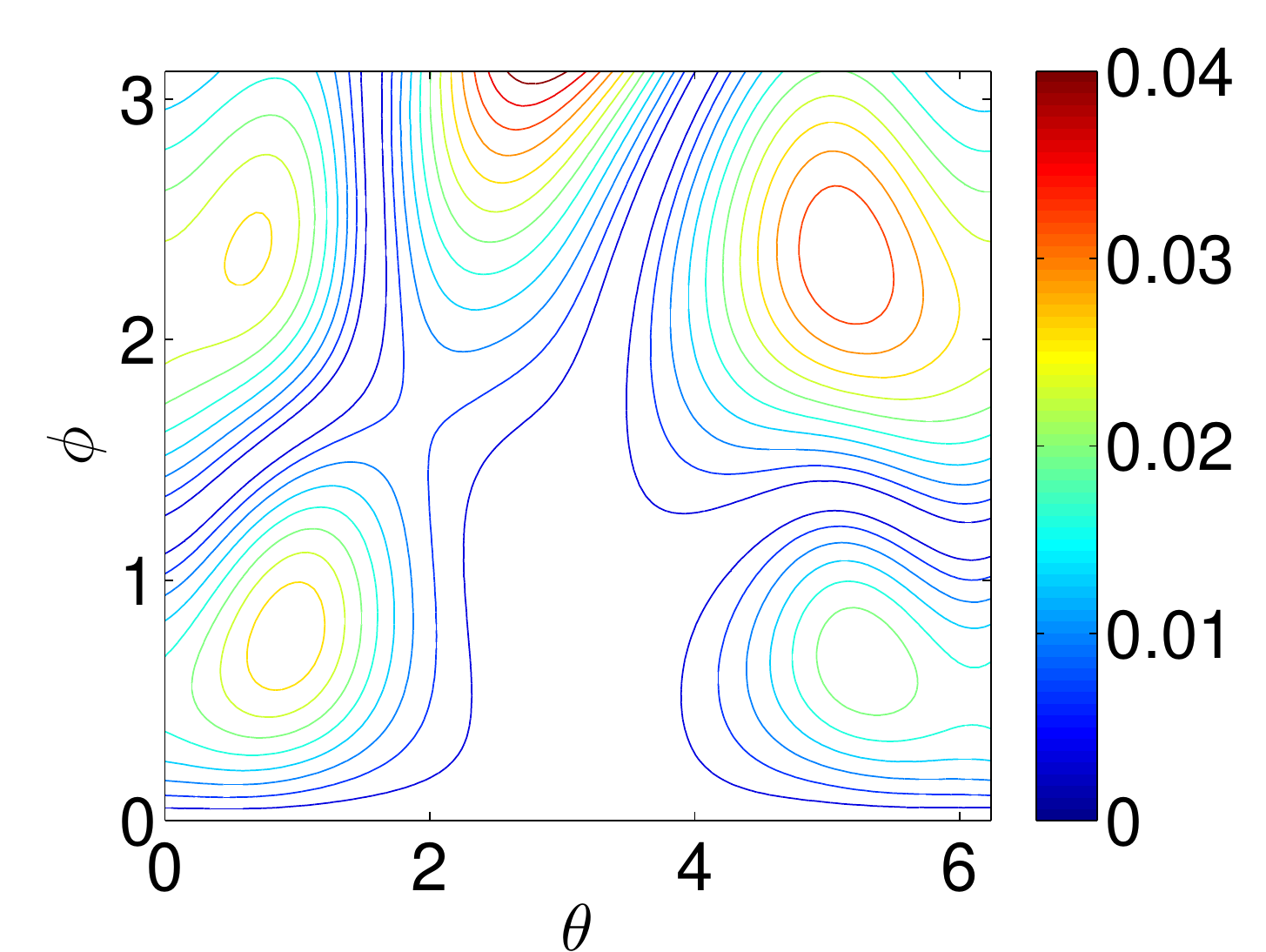}%
\end{tabular}%
\caption{(Color online) Absolute errors in the estimated solutions for the semi-torus example with well-sampled data: (a) DM for $%
N=64 \times 64$ with $\Vert\hat{u}^M -\vec{u}^M \Vert _\infty = 0.88$%
, (b) GPDM for $N=64 \times 64$ with $\Vert\hat{u}^M -\vec{u}^M \Vert _\infty
 = {0.095}$, (c) DM for $N=128 \times 128$ with $\Vert\hat{u}^M -\vec{u}^M \Vert _\infty
  = 0.91$, (d) GPDM for $N=128 \times 128$
with $\Vert\hat{u}^M -\vec{u}^M \Vert _\infty = {0.042}$. }
\label{Fig16_semitorus_IE3}
\end{figure}

\begin{figure*}[tbp]
{\scriptsize \centering

\begin{tabular}{ccc}
 {\normalsize(a) IE vs. $N$ } & {\normalsize(b) DM, IE vs. $\epsilon$} & {\normalsize(c) GPDM, IE vs. $\epsilon$} \\
\includegraphics[width=.32\textwidth]{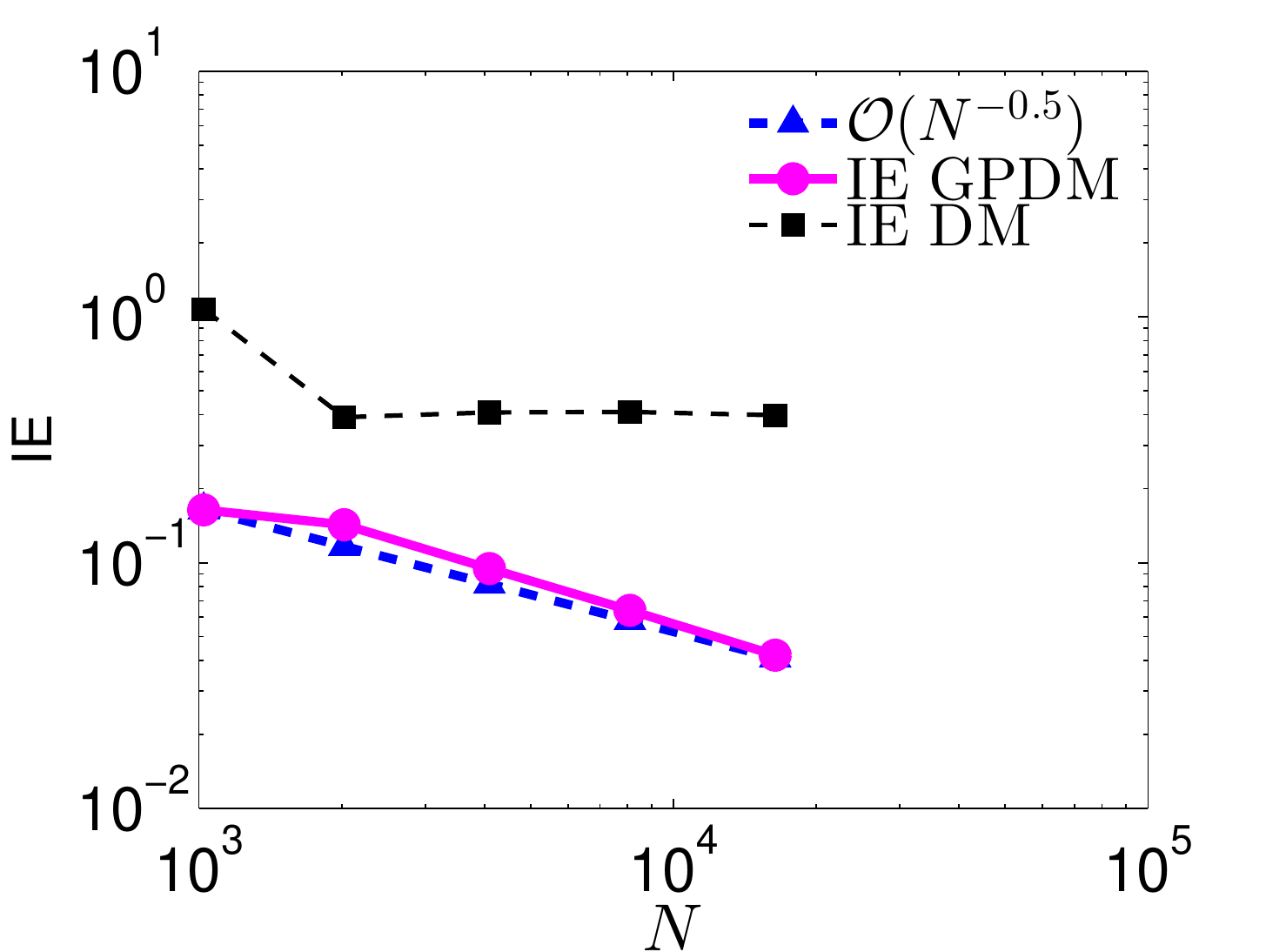} &
\includegraphics[width=.308\textwidth]{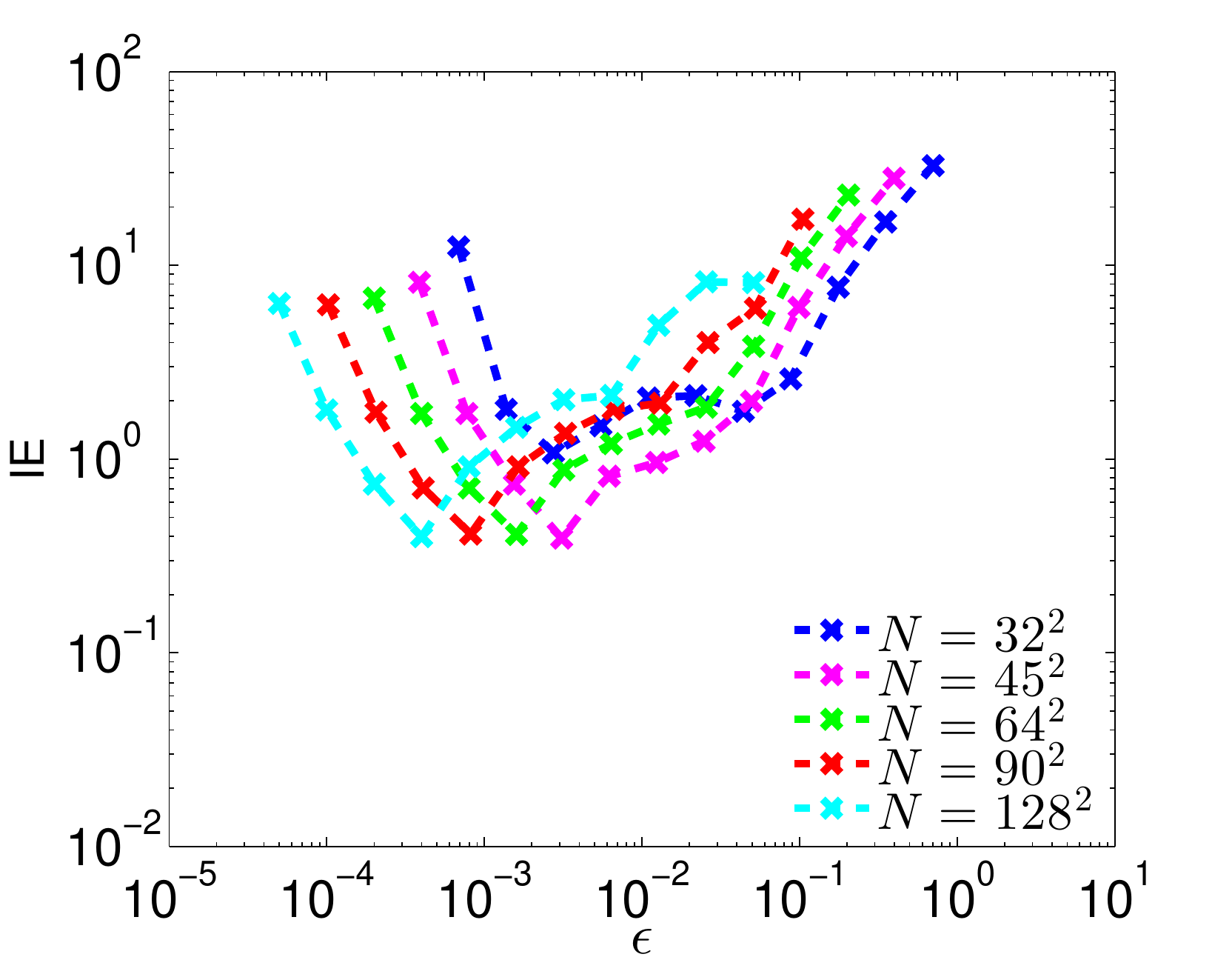} &
\includegraphics[width=.30\textwidth]{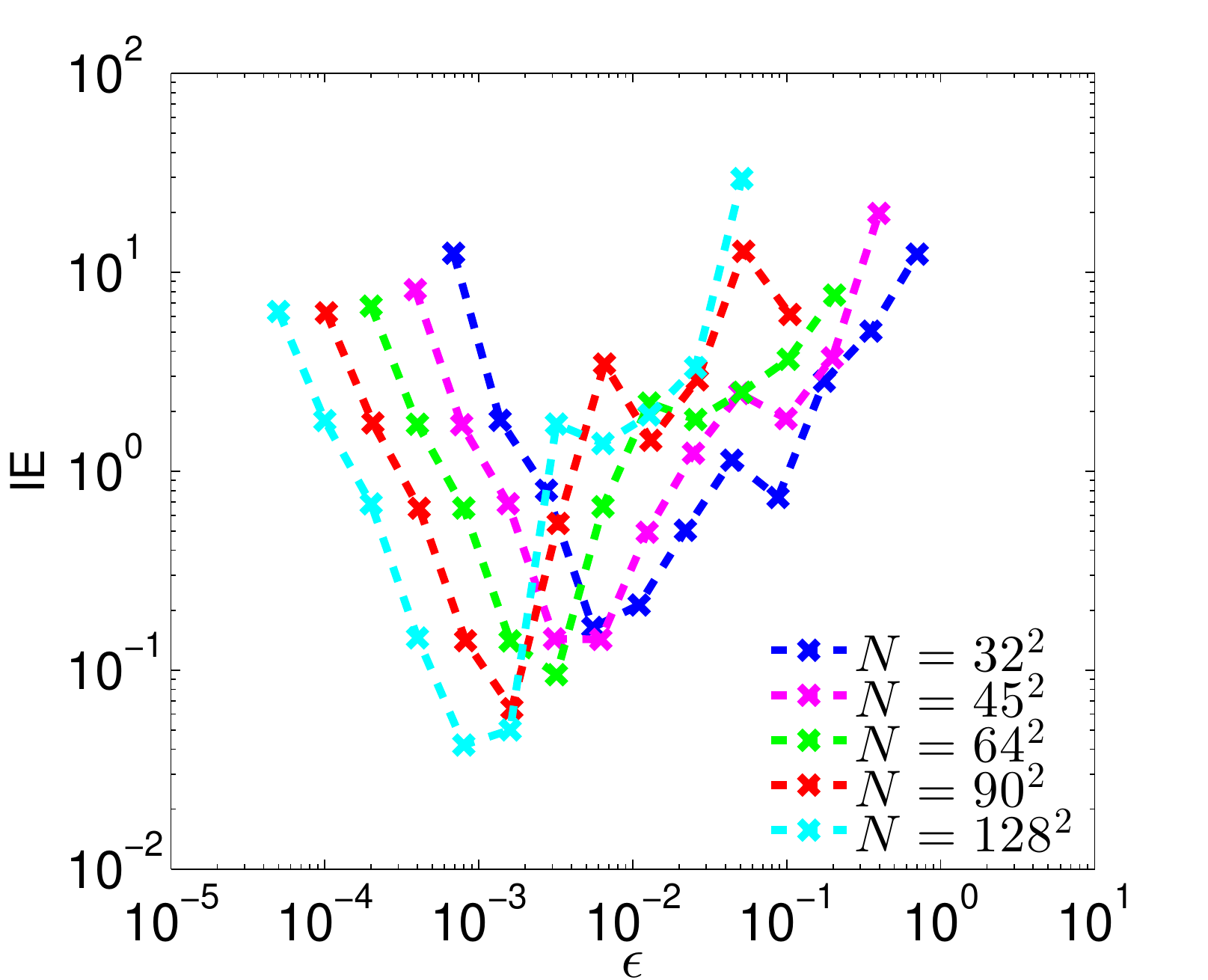}
\end{tabular}
}
\caption{(Color online) The semi-torus example with the well-sampled data in Section \ref{L3toruswell}. 
(a) IEs of DM and GPDM methods as functions of $N$. {For each $N$, the IE is obtained from the minimal inverse error for different $\epsilon$.} IEs of (b) DM and (c) GPDM methods as functions of bandwidth $\epsilon$ for different $N$. }
\label{Fig10_welltorus_IE}
\end{figure*}

In the next example, we consider solving $\mathcal{L}_{3}u=f$, with a mixed
Dirichlet-Neumann boundary conditions on a semi-torus $M\subset \mathbb{R}%
^{3}$. The parametrization of the torus is given in \eqref{Eqn:torus_g} and the
corresponding Riemannian metric is defined in \eqref{Eqn:torus_gg} with $(\theta,\phi)$ being the two intrinsic coordinates.
{\color{black}
The differential operator $\mathcal{L}_{3}$\ is defined as in \eqref{%
Eqn:L3} with%
\begin{eqnarray}
\left(
\begin{array}{c}
b^{1}(x) \\
b^{2}(x)%
\end{array}%
\right)
&:=&\left(
\begin{array}{c}
2+x_3 \\
({x_1^2+x_2^2})^{1/2}%
\end{array}%
\right)
=\left(
\begin{array}{c}
2+\sin \theta  \\
2+\cos \theta
\end{array}%
\right) ,  \notag\label{Eqn:bc_torus2} \\
\left(
\begin{array}{cc}
c^{11}(x) & c^{12}(x) \\
c^{21}(x) & c^{22}(x)%
\end{array}%
\right)
&:=&\left(
\begin{array}{cc}
3+x_1/(x_1^2+x_2^2)^{1/2}  & 1/10 \\
1/10 & 2%
\end{array}%
\right)
=\left(
\begin{array}{cc}
3+\cos \phi  & 1/10 \\
1/10 & 2%
\end{array}%
\right) .  \nonumber
\end{eqnarray}
The semi-torus is defined with the standard parameterization function as in \eqref{Eqn:torus_g} so that the induced Riemannian metric is given in \eqref{Eqn:torus_gg}. We set an analytic solution of this problem to be
\begin{equation}
u\left( x \right) =\left( \sin 2\phi -\frac{2\cos 2\phi }{2+\cos
\theta }\right) \cos \theta ,  \label{Eqn:usol2}
\end{equation}%
where
\begin{equation}
\cos \theta   =(x_1^2+x_2^2)^{1/2}-2, \ \ \
\sin \theta=x_3, \ \ \
\cos \phi = \frac{x_1}{(x_1^2+x_2^2)^{1/2}}, \ \ \
\sin \phi = \frac{x_2}{(x_1^2+x_2^2)^{1/2}},  \label{Eqn:thetox}
\end{equation}%
with $\sin 2\phi=2\sin\phi\cos\phi$ and $\cos 2\phi=2\cos^2\phi-1$.} Next, we calculate $f:=\mathcal{L}_{3}u$ and $g:=\beta _{1}\partial _{\boldsymbol{%
\nu }}u+\beta _{2}u$ at $\phi =0$ and $\phi =\pi $.  In this semi-torus example, the explicit
expression for $f$ is given by%
\[
f:=\mathcal{L}_{3}u=b\cdot \nabla u+\frac{1}{2}c^{ij}\nabla _{i}\nabla
_{j}u=b^{1}\frac{\partial u}{\partial \theta }+b^{2}\frac{\partial u}{%
\partial \phi }+\frac{1}{2}c^{11}\frac{\partial ^{2}u}{\partial \theta ^{2}}%
+c^{12}\left( \frac{\partial ^{2}u}{\partial \theta \partial \phi }-\Gamma
_{12}^{2}\frac{\partial u}{\partial \phi }\right) +\frac{1}{2}c^{22}\left(
\frac{\partial ^{2}u}{\partial \phi ^{2}}-\Gamma _{22}^{1}\frac{\partial u}{%
\partial \theta }\right) ,
\]%
where $\Gamma _{12}^{2}$ and $\Gamma _{22}^{1}$ are the only nontrivial
Christoffel symbols of the second kind%
\[
\Gamma _{12}^{2}=-\frac{\sin \theta }{2+\cos \theta },\text{ \ }\Gamma
_{22}^{1}=\sin \theta \left( 2+\cos \theta \right) ,
\]%
with the trigonometric functions defined in \eqref{Eqn:thetox}.
At one boundary $\phi =0$, the parameters are $\beta _{1}=0$ and $\beta
_{2}=1\ $(Dirichlet boundary condition) so that $g:=u(\phi =0)$, where $u$
is the analytic solution in \eqref{Eqn:usol2}. At the other boundary $\phi
=\pi $, the parameters are $\beta _{1}=1$ and $\beta _{2}=1\ $(Robin
boundary condition) so that the expression for $g$ at $\phi =\pi $ is
\[
g:=\beta _{1}\partial _{\boldsymbol{\nu }}u+\beta _{2}u=\left( \frac{1}{%
2+\cos \theta }\frac{\partial u}{\partial \phi }+u\right) \left( \phi =\pi
\right) =0,
\]%
where the analytic $u$ in \eqref{Eqn:usol2} and $\phi =\pi $ have been used.
Then, we approximate the
solution in \eqref{Eqn:usol2} for the PDE problem in \eqref{PDE}, subjected
to the manufactured $f$ and $g$.
Numerically, the grid points $\left\{ \theta _{i},\phi _{j}\right\} $ are
uniformly distributed on $\left[ 0,2\pi \right] \times \left[ 0,\pi \right] ,
$ with $i,j=1,\ldots ,64$ or $i,j=1,\ldots ,128$\ points in each direction,
resulting in a total of $N=4096$ or $N=16384$ grid points. To apply the
local kernel in \eqref{Eqn:localK}, we use $k=200$ nearest neighbors for all
$N$ and manually tune the kernel bandwidth as $\epsilon =0.0032$ for $N=4096$
and $\epsilon =8\times 10^{-4}$ for $N=16384$. {\color{black}We found that the auto-tuned
method discussed in Section~\ref{sec:dm} is not so robust for the estimation of $\mathcal{L}_3$, and we suspect that this is because the covariance in the Gaussian kernel is not constant such that the scaling used in \eqref{scalingS} may not be appropriate.

In Fig.~\ref{Fig16_semitorus_IE3}, we\ show the
absolute errors between the true and the estimated solutions obtained using
DM and GPDM for $N=64\times 64$ and $N=128\times 128$. For DM, the IE $%
\left\Vert \vec{u}^M-\hat{u}^M\right\Vert _{\infty }=0.9$ is relatively
large and IE does not decrease even as $N$ increases. On the other hand, the
inverse error (IE) of GPDM is one magnitude order smaller than the IE of DM
and decreases from $0.095$ to $0.042$ as $N$ is increased from $64\times 64$
to $128\times 128$.

{Fig.~\ref{Fig10_welltorus_IE}(a)} shows
the IEs as functions of $N$ for DM and GPDM methods. One can see that GPDM
solutions converge whereas DM solutions do not converge. {Fig.~\ref%
{Fig10_welltorus_IE}(b) and (c)} show IEs of DM and GPDM methods,
respectively, as functions of bandwidth $\epsilon $ for different $N$. One
can see that as $N$ increases, IE of GPDM decreases (at the rate of $%
\mathcal{O}(N^{-1/2})$) whereas IE of DM does not decrease.
}

\subsubsection{Anisotropic diffusion on a semi-torus with random data \label{torusrandom}}

\begin{figure}[tbp]
\flushleft
\begin{tabular}{cccc}
{\normalsize DM, $64 \times 64$ } & {\normalsize GPDM, $64 \times 64$} &
{\normalsize DM, $128 \times 128$} & {\normalsize GPDM, $128 \times 128$} \\
\includegraphics[scale=0.24]{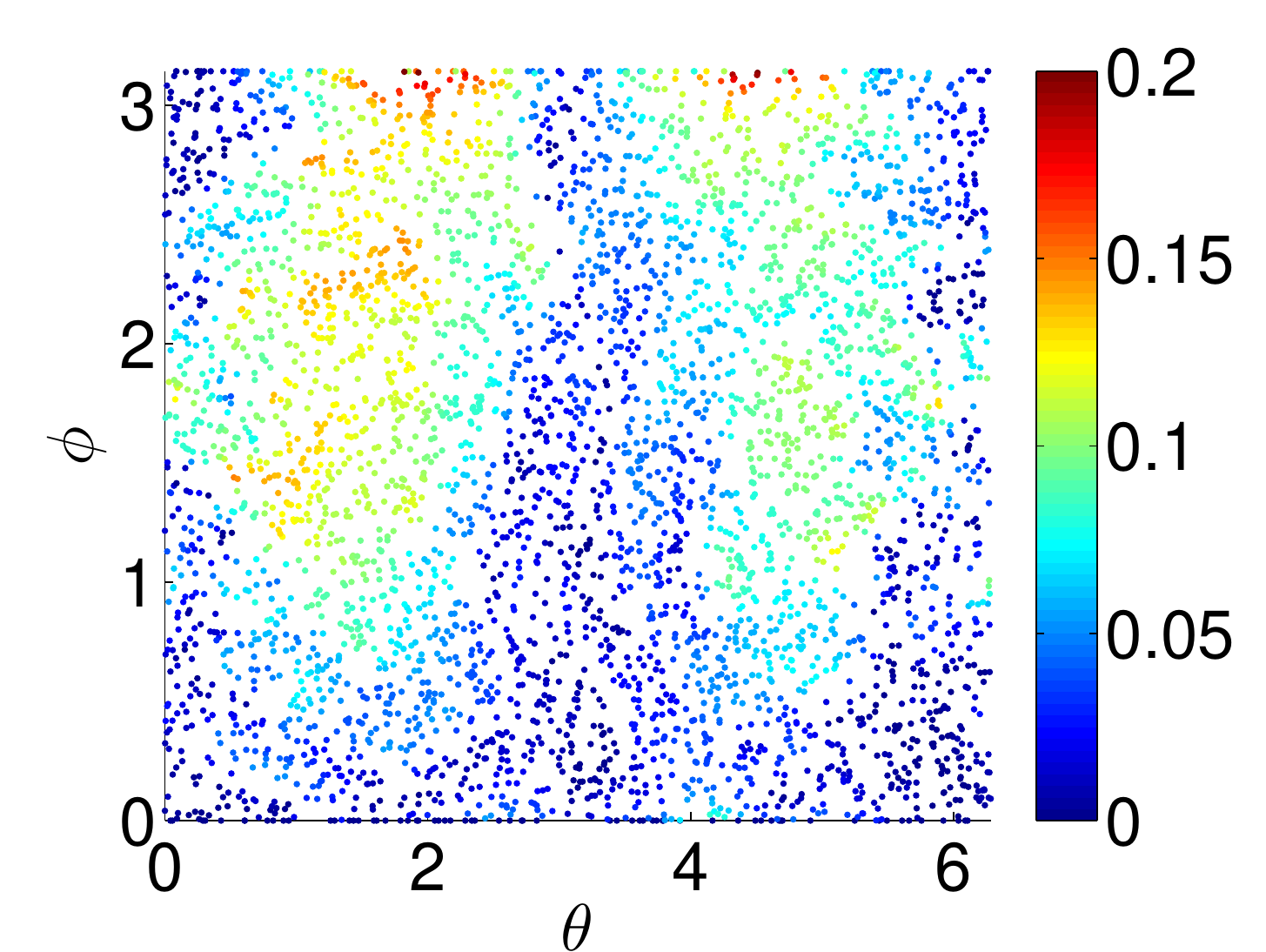}
& %
\includegraphics[scale=0.24]{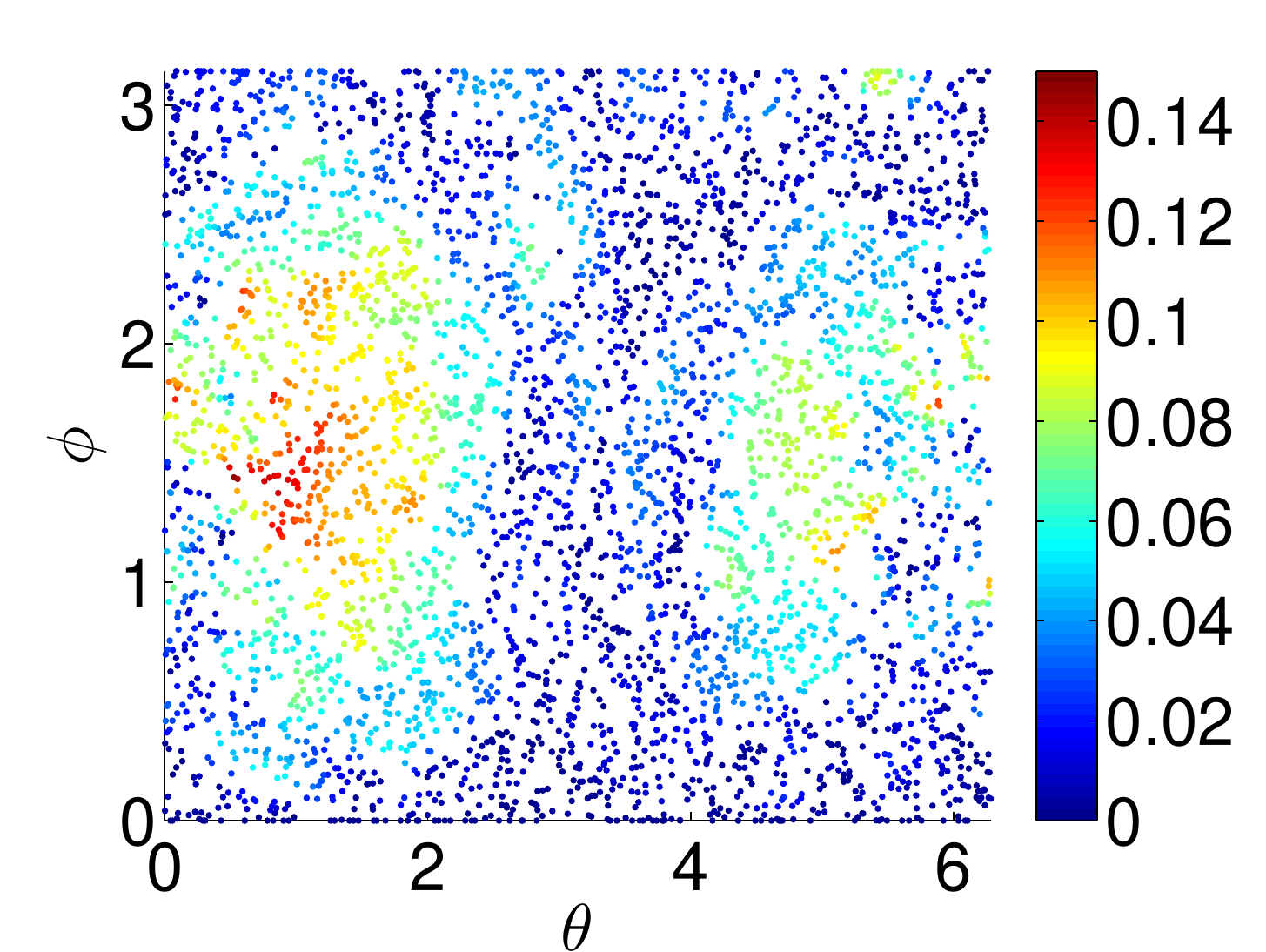}
& %
\includegraphics[scale=0.24]{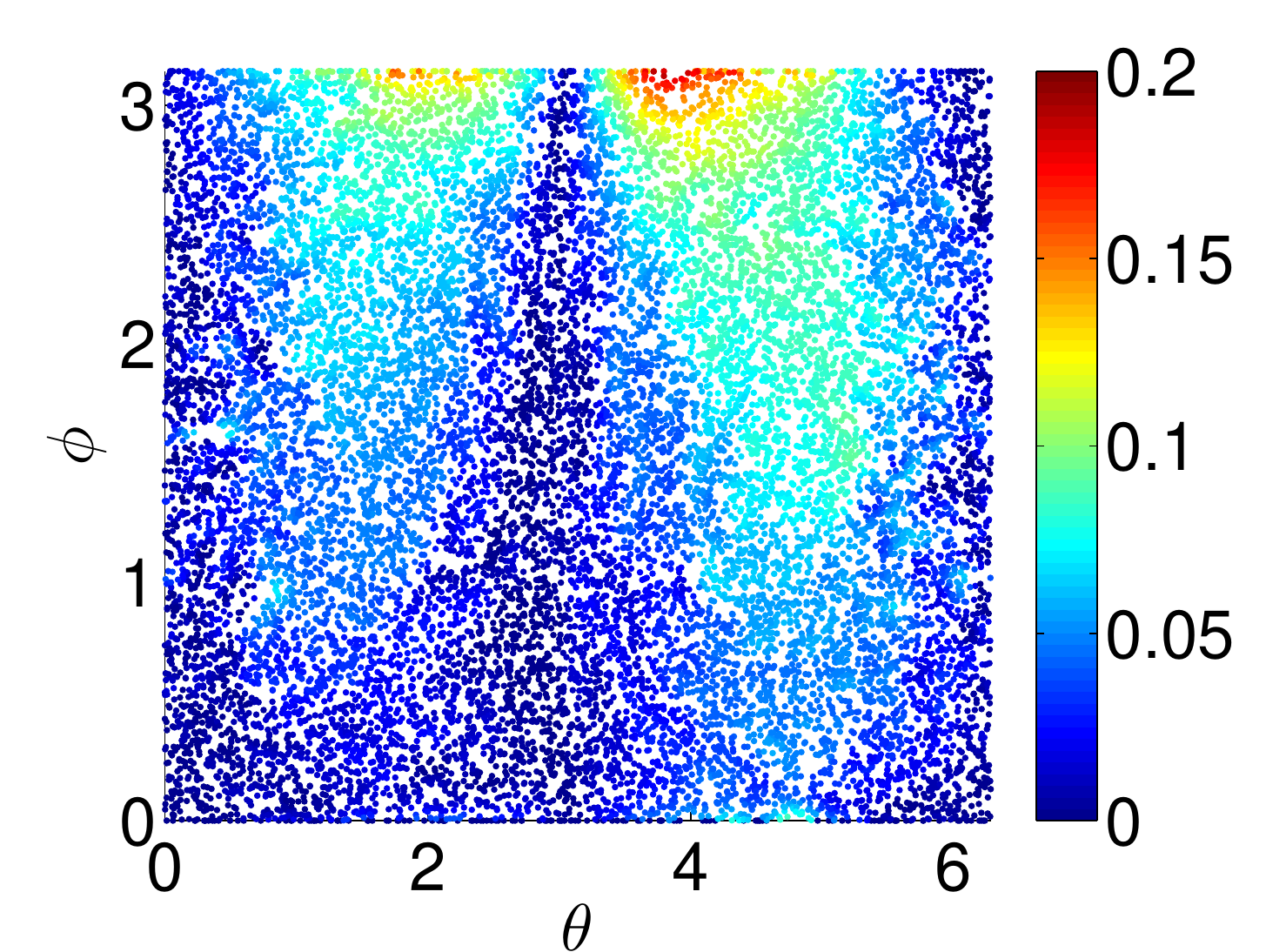}
& %
\includegraphics[scale=0.24]{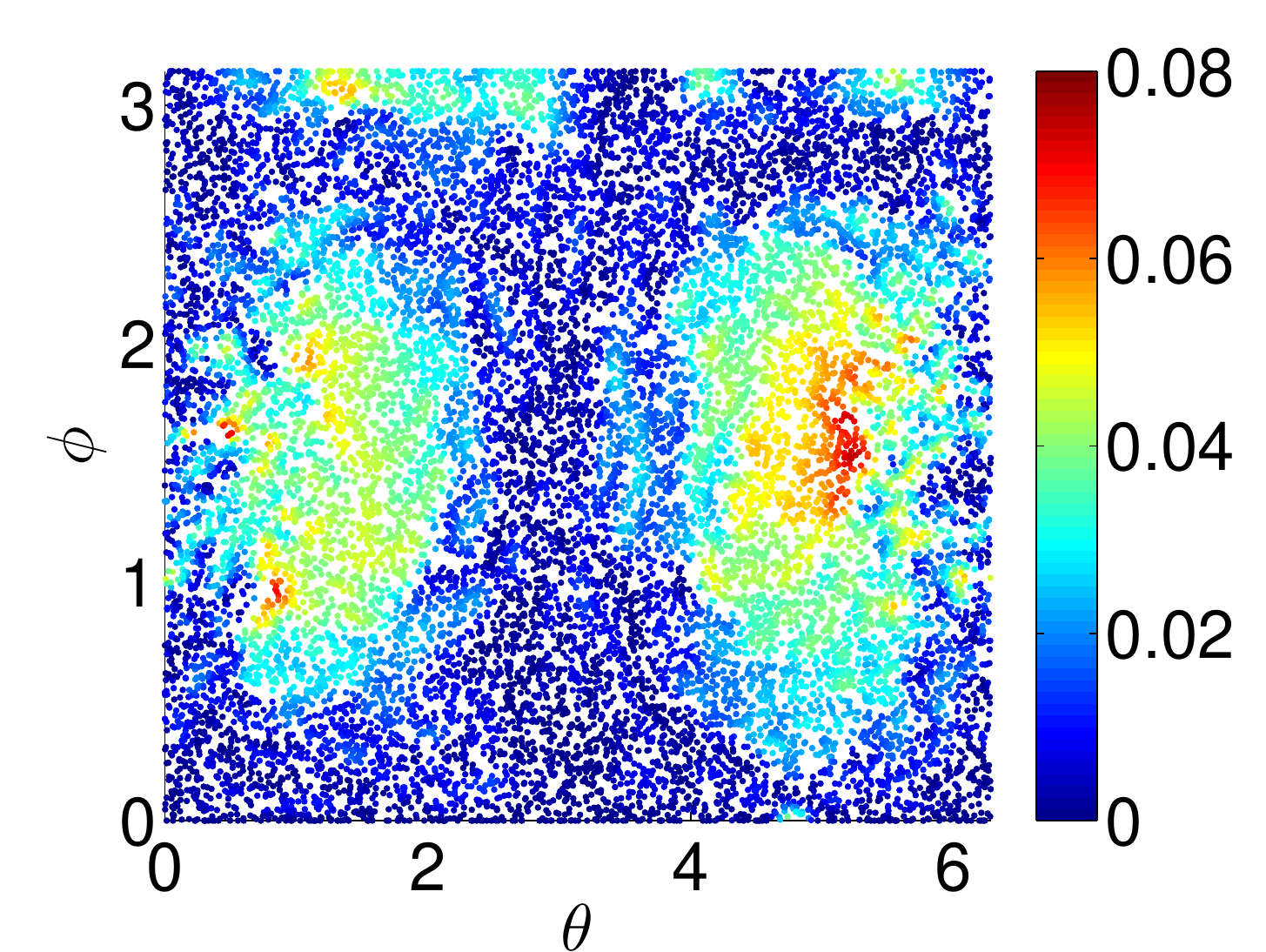}%
\end{tabular}%
\caption{(Color online) Absolute errors in the estimated solutions for the semi-torus example with random data: (a) DM for $%
N=64 \times 64$ with $\Vert \vec{u}^M-\hat{u}^M \Vert _\infty = 0.203$%
, (b) GPDM for $N=64 \times 64$ with $\Vert \vec{u}^M- \hat{u}^M
\Vert _\infty = {0.146}$, (c) DM for $N=128 \times 128$ with $\Vert\vec{u}^M-\hat{u}^M \Vert _\infty = 0.186$, (d) GPDM for $N=128 \times 128$
with $\Vert \vec{u}^M-\hat{u}^M \Vert _\infty = {0.074}$. }
\label{Fig11_semitorus_pcolor}
\end{figure}

\begin{figure*}[tbp]
{\scriptsize \centering
\begin{tabular}{cc}
{\normalsize(a) $\epsilon$ vs. $N$ } & {\normalsize(b) IE vs. $N$ }\\
\includegraphics[width=.45\textwidth]{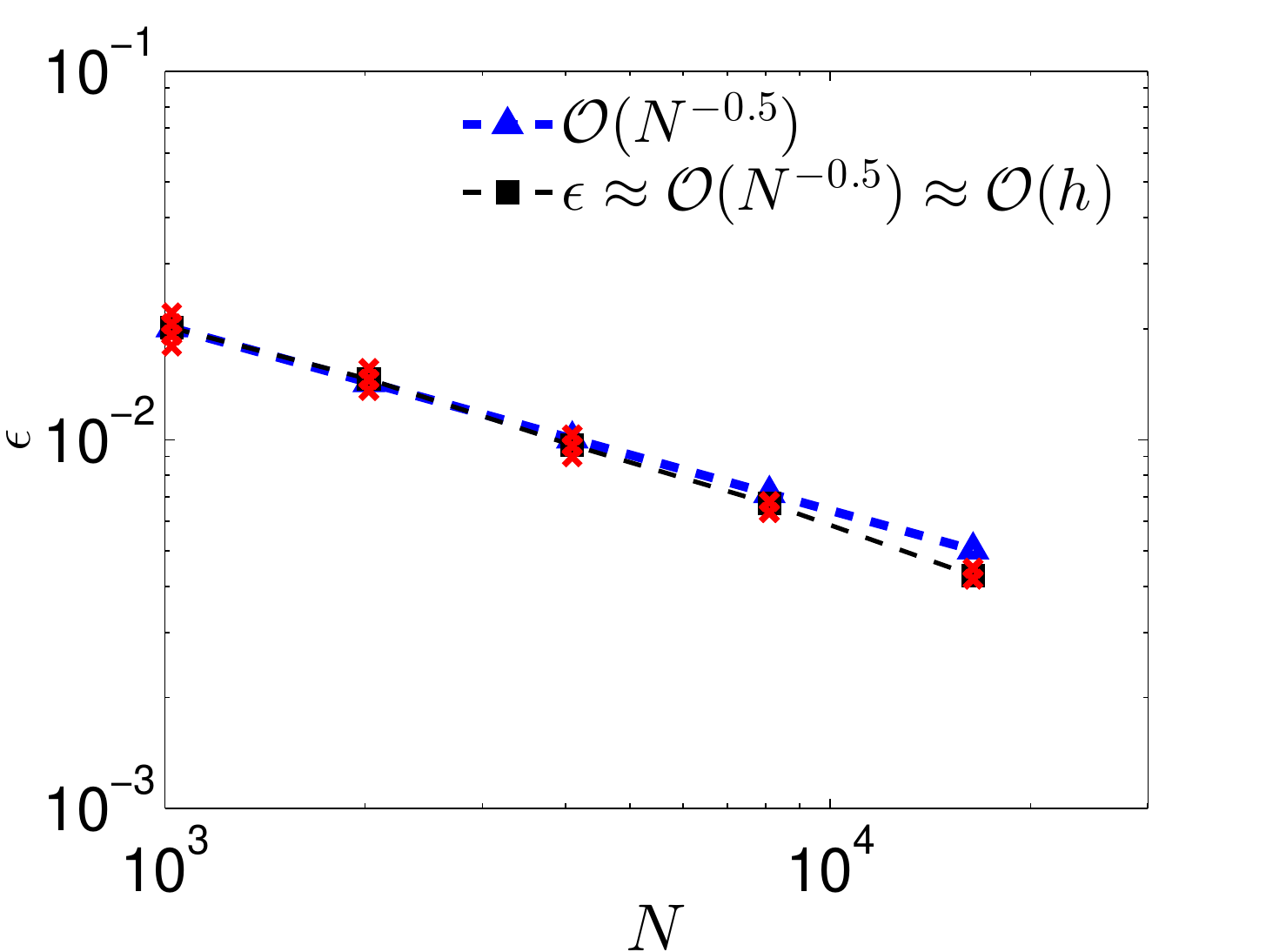} &
\includegraphics[width=.45\textwidth]{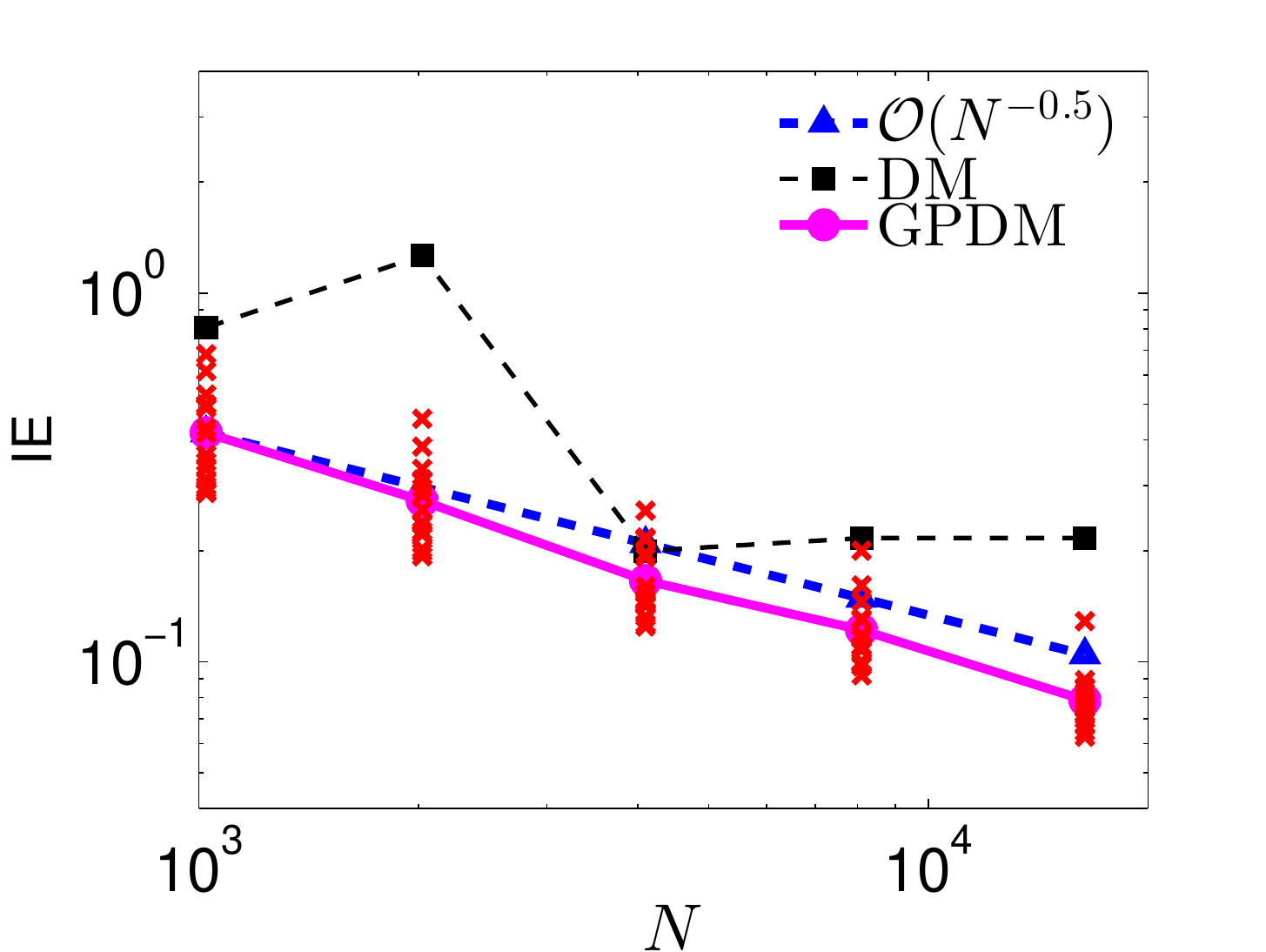}\\%
{\normalsize(c) DM, IE vs. $\epsilon$} & {\normalsize(d) GPDM, IE vs. $\epsilon$} \\
\includegraphics[width=.45\textwidth]{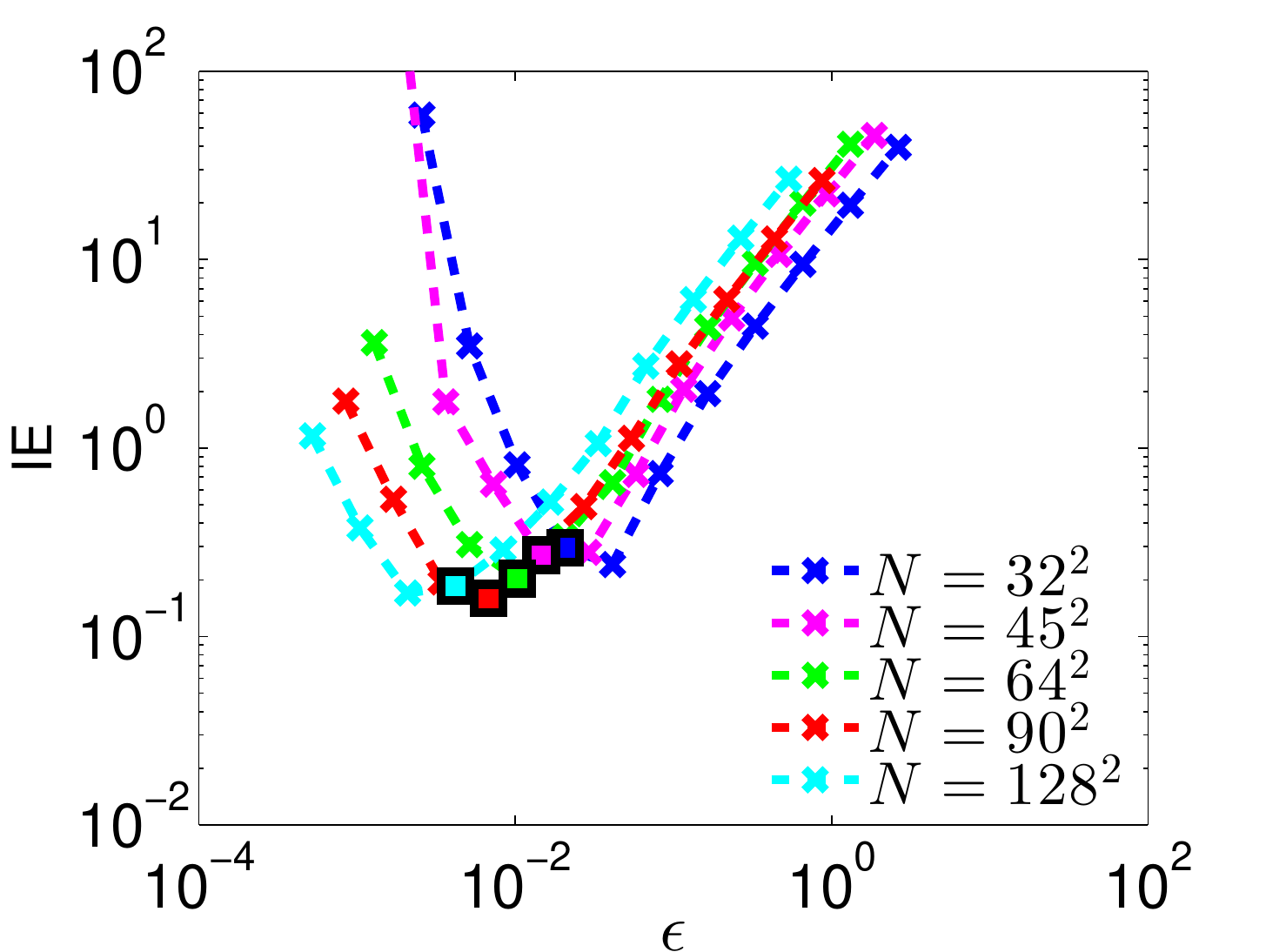} &
\includegraphics[width=.45\textwidth]{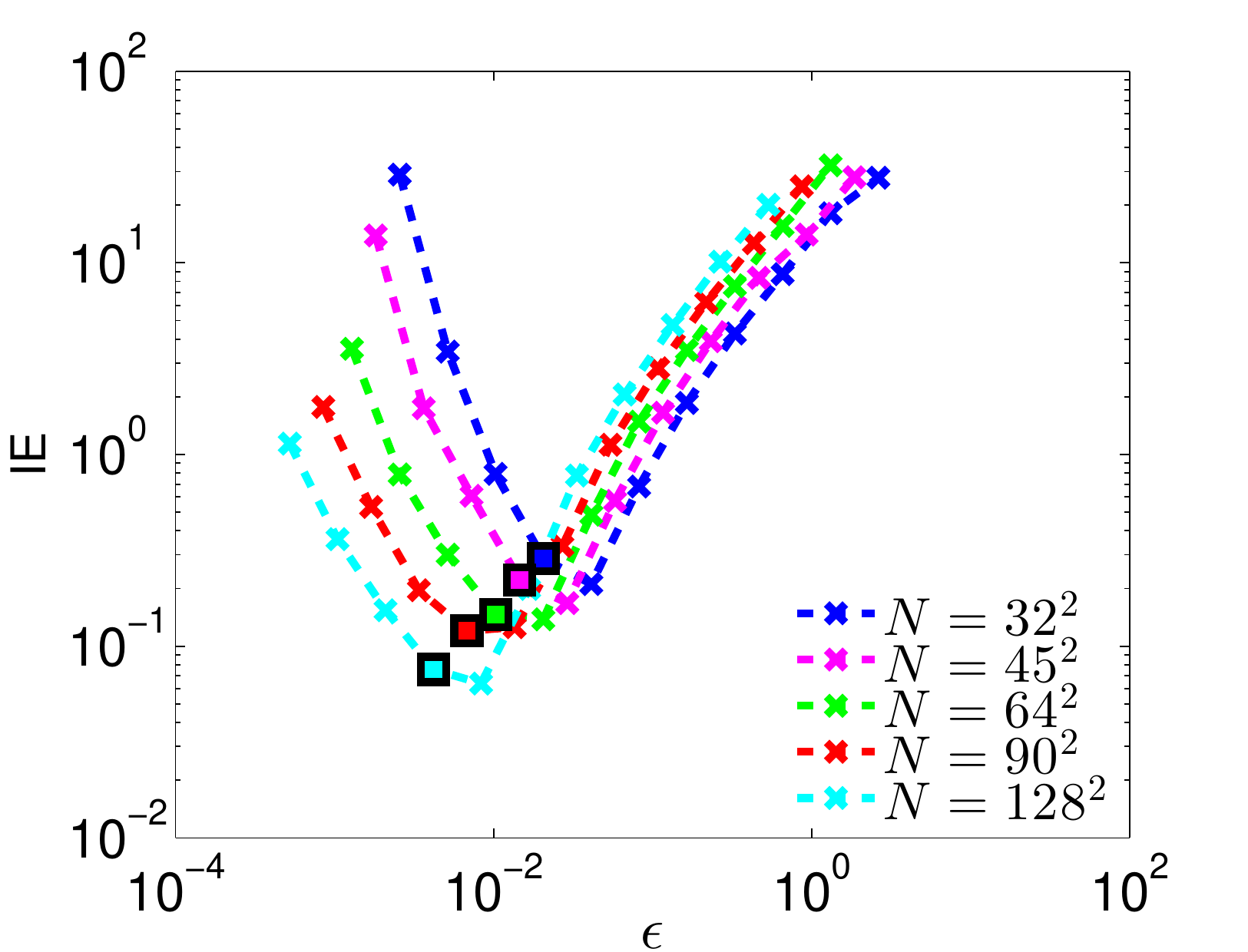}
\end{tabular}
}
\caption{(Color online) The semi-torus example with random data in Section~\ref{torusrandom}. Totally 16 independent trials are run. (a) The auto-tuned bandwidth $\epsilon$ as a function of number of points $N$. Each red cross corresponds to an auto tuned $\epsilon$ of one trial and black square corresponds to the mean of these auto-tuned $\epsilon$.
(b) IEs of DM and GPDM methods as functions of $N$. Each red cross is the IE for one trial. For one independent trial, plotted are (c) IEs of  DM and (d) IEs of  GPDM  as functions of  bandwidth $\epsilon$ for different $N$. Squares correspond to the auto-tuned $\epsilon$.}
\label{Fig12_randtorus_IE}
\end{figure*}

{\color{black}

In this example, we consider solving $\mathcal{L}_{2}u=f$, with a mixed
Dirichlet-Neumann boundary conditions on a semi-torus $M\subset \mathbb{R}%
^{3}$. The differential operator $\mathcal{L}_{2}$ is defined as in \eqref{Eqn:L3} with%
\begin{equation}
\kappa(x) =1.1+\sin ^{2}\theta \cos ^{2}\phi , \notag
\end{equation}%
where  the trigonometric functions for $(\theta,\phi)$ as  functions $x$ are still given in \eqref{Eqn:thetox}.
The semi-torus is still defined with the embedding function as in (\ref%
{Eqn:torus_g}). We set an analytic solution of this problem to be
\begin{equation}
u\left( x \right) =\sin \phi \sin \theta , \nonumber \label{Eqn:utori2}
\end{equation}%
and calculate
\begin{equation}
f:=\mathcal{L}_{2}u=\frac{1}{\sqrt{\left\vert g\right\vert }}\partial
_{i}\left( \kappa \sqrt{\left\vert g\right\vert }g^{ij}\partial _{j}u\right).\notag
\end{equation}%
The boundary conditions $g:=\beta _{1}\partial _{\boldsymbol{\nu }}u+\beta
_{2}u$ at $\phi =0$ and $\phi =\pi $ are given the same as those in Section %
\ref{L3toruswell}. Then, we approximate the solution for the PDE problem,
subjected to the manufactured $f$ and $g$.

\textbf{Randomly sampled data:} Numerically, the grid points $\left\{ \theta _{i},\phi
_{j}\right\} $ are randomly uniformly distributed on $\left[ 0,2\pi \right]
\times \left[ 0,\pi \right] $. For $N=32^{2},45^{2},64^{2},90^{2},128^{2}$
grid points, we set $k\sim\sqrt{N}$
and apply the $\epsilon$-auto tuning method discussed in Section~\ref{paraspec}%
. For each $N$, we show results for 16 independent trials. In Fig. \ref%
{Fig11_semitorus_pcolor}, we\ show the absolute errors in $\theta $ and $%
\phi $\ between the true and the estimated solutions obtained for DM and
GPDM methods for $N=64^{2}$ and $N=128^{2}$. For DM, the IEs are relatively
large and do not follow a clear decreasing pattern as $N$ increases ($\left\Vert \vec{u}%
^M-\hat{u}^M\right\Vert _{\infty }=0.203$ for $N=64^{2}$ and $\left\Vert
\vec{u}^M-\hat{u}^M\right\Vert _{\infty }=0.186$ for $N=128^{2}$). On the
other hand, the inverse error (IE) of GPDM is smaller than that of DM and
decreases from $0.146$ to $0.074$ as $N$ is increased from $64^{2}$ to $%
128^{2}$.

Fig.~\ref{Fig12_randtorus_IE}(a) shows the auto-tuned bandwidth $\epsilon $
as a function of $N$. Fig~\ref{Fig12_randtorus_IE}(b) shows IEs as
functions of $N$ for both DM and GPDM. One can see that GPDM solutions
converge whereas DM solutions do not converge. Fig.~\ref%
{Fig12_randtorus_IE}(c) and (d) show IEs of DM and GPDM methods,
respectively, as functions of bandwidth $\epsilon $ for different $N$ for
one independent trial. One can see that as $N$ increases, IE of GPDM
decreases whereas IE of DM does not decreases. For completeness,
we show the auto-tuned $\epsilon$ in square. Note that for the GPDM, the auto-tuned
$\epsilon$ seems to correspond to the lowest IE.

\begin{figure*}[tbp]
{\scriptsize \centering
\begin{tabular}{cc}
{\normalsize(a) $\epsilon$ vs. $N$ } & {\normalsize(b) IE vs. $N$ }\\
\includegraphics[width=.45\textwidth]{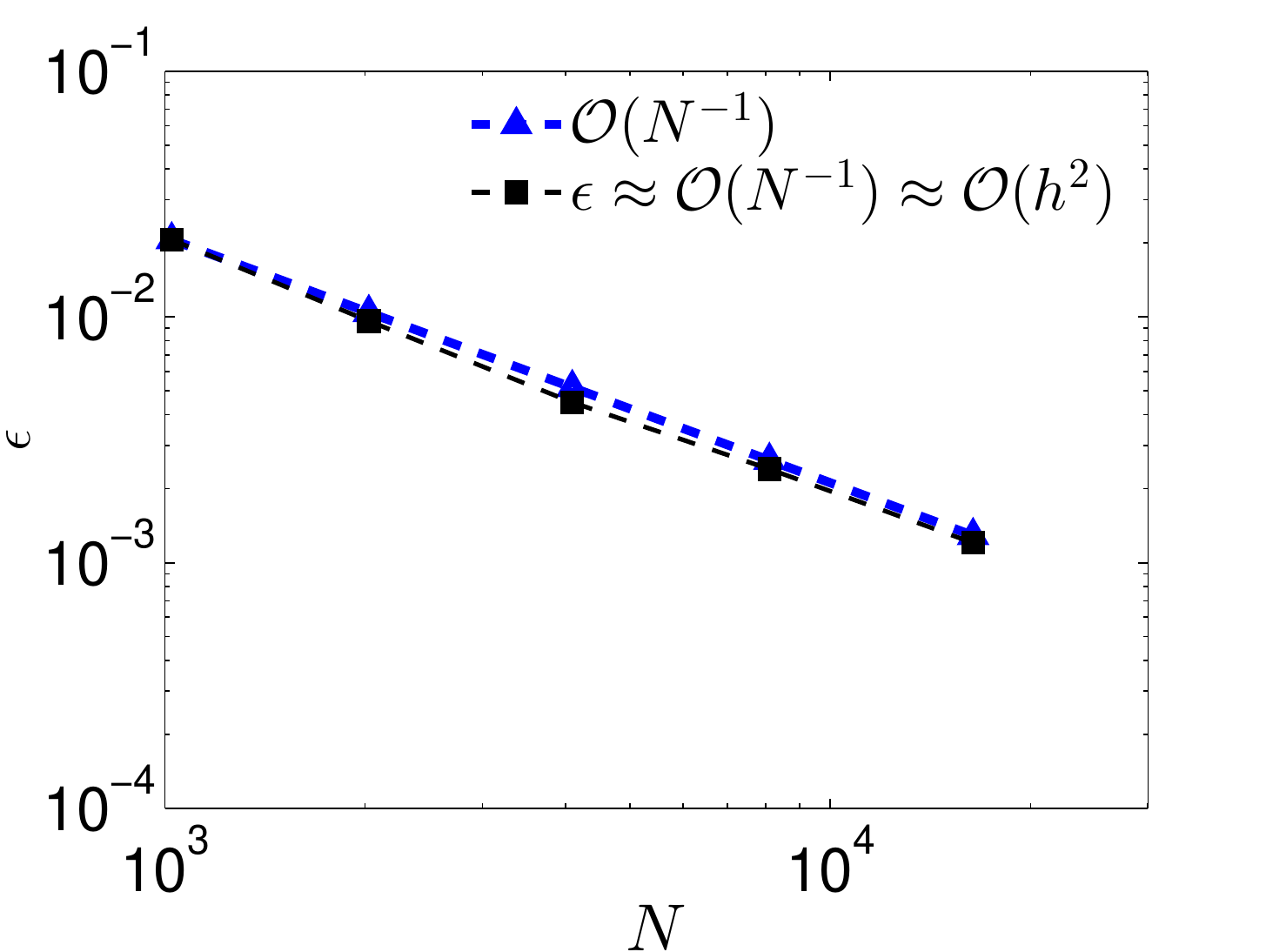} &
\includegraphics[width=.45\textwidth]{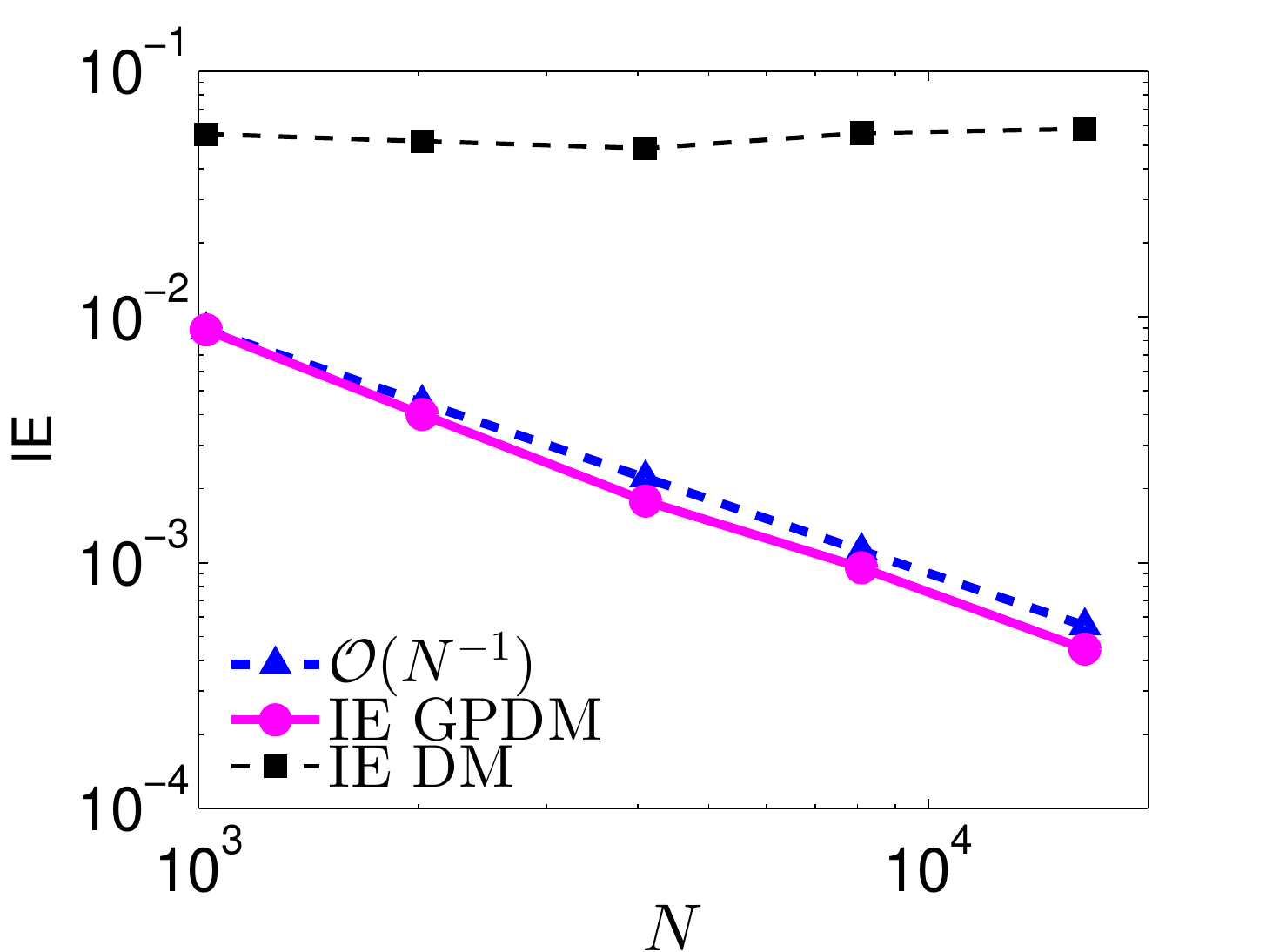}
\end{tabular}
}
\caption{(Color online) The semi-torus example with the well-sampled data in Section~\ref{torusrandom}. (a) The auto-tuned bandwidth $\epsilon$ as a function of number of points $N$.
(b) IEs of DM and GPDM methods as functions of $N$. }
\label{Fig12b_welltorus_IE}
\end{figure*}

For comparison, we also show numerical results with well-sampled data.

\textbf{Well-sampled data:} The grid points $\left\{ \theta _{i},\phi
_{j}\right\} $ are well uniformly distributed on $\left[ 0,2\pi \right]
\times \left[ 0,\pi \right] ,$ with $i,j$, both, equal to $32,45,64,90,128$\ points in
each direction, which is the same as those in Section~\ref{L3toruswell}. For
different $N$ grid points, we fixed $k=121$ nearest neighbors and then apply the $\epsilon$-auto
tuning method discussed in Section~\ref{paraspec}%
. One can see from Fig. \ref{Fig12b_welltorus_IE}(a) that the auto tuned
bandwidth $\epsilon $ is on order of $N^{-1}$. This rate for the well-sampled
data is faster than that for random data as shown in Fig. \ref%
{Fig12_randtorus_IE}(a). Fig.~\ref{Fig12b_welltorus_IE}(b) shows that IE
of GPDM decays on the order of $N^{-1}$ whereas IE of DM does not decay for
different $N$.

\textbf{Other choices of }$k$\textbf{\ nearest neighbors and auto-tuned }$%
\epsilon $:\ For well-sampled data, we also examined the auto-tuned $\epsilon $\
under variable $k$ nearest neighbors, that is, we choose {$k\sim \sqrt{N}$}.
We found that for well-sampled data, the rates preserve as in Fig. \ref%
{Fig12b_welltorus_IE}, that is,\  the bandwidth $\epsilon =\mathcal{O}(N^{-1})$ and IE
is $\mathcal{O}(N^{-1})$\ as well [not shown here].

For random data, we also examined the auto-tuned $\epsilon $\ under fixed $k=200$
nearest neighbors. However, we found that the results are different between
using fixed $k$ and variable $k$. For variable $k$, the bandwidth $%
\epsilon =\mathcal{O}(N^{-1/2})$ and IE is $\mathcal{O}(N^{-1/2})$ as shown in Fig. \ref%
{Fig12_randtorus_IE}. For fixed $k$, the bandwidth $\epsilon =\mathcal{O}(N^{-1})$ and
IE is $\mathcal{O}(N^{-1/4})$ [not shown here].

}

\subsection{\label{sec:face}Anisotropic diffusion on an unknown ``face" manifold}

In this section, we consider solving the boundary value problem in \eqref{PDEproblem2}
 with $\kappa =1.1+\sin ^{2}(10x_{1})$ and $f=\cos (10x_{2})$\ on an
unknown manifold example of a two-dimensional ``face" $x=(x_{1},x_{2},x_{3})\in
M\subset \mathbb{R}^{3}$. We consider the Robin boundary
condition on the one dimensional-closed boundary curve of the face. The
surface used in this section is from Keenan Crane's 3D repository \cite{crane}. Notice that we have no access to the analytic solution since we do not know the embedding of the face surface. For comparison, we numerically solve the
problem with finite element method (FEM) using the FELICITY FEM Matlab toolbox \cite{walker2018felicity}.

\begin{figure*}[tbp]
{\scriptsize \centering
\begin{tabular}{ccc}
{\normalsize(a) FEM Solution} & {\normalsize(b) Difference between FEM \& DM} & {\normalsize(c) Difference between
FEM \& GPDM }\\
\includegraphics[width=1.99
in, height=1.4 in]{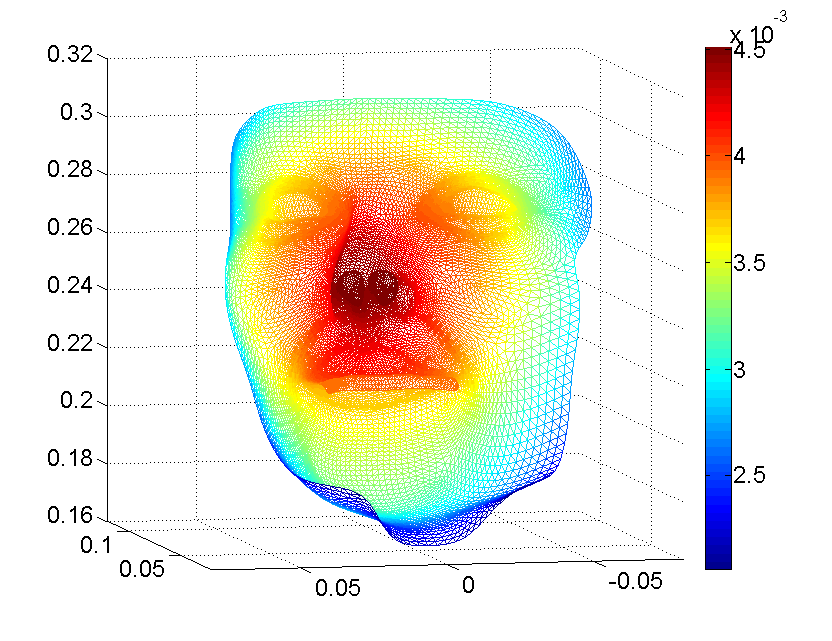} &
\includegraphics[width=1.99
in, height=1.4 in]{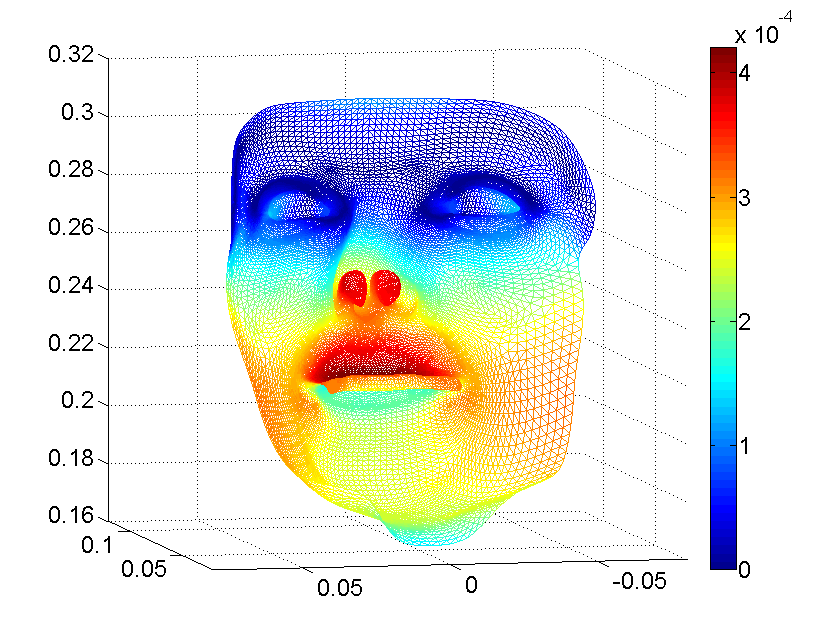} &
\includegraphics[width=1.99
in, height=1.4 in]{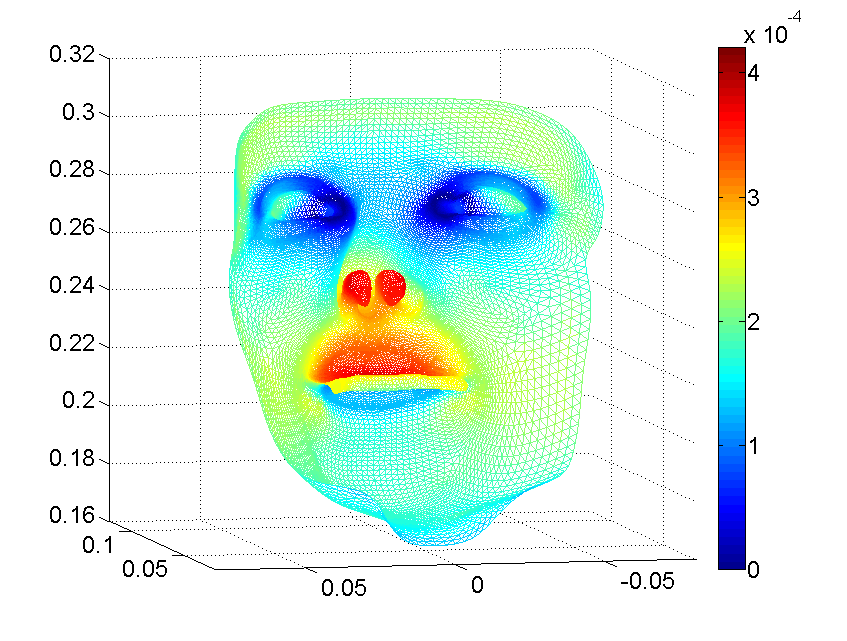}%
\end{tabular}
}
\caption{(Color online) Comparison of the PDE solutions among FEM, DM, and GPDM
on the ``face'' example with the Robin boundary condition. (a) FEM solution. (b)
{Absolute difference} between FEM and DM solutions. (c) {Absolute difference} between FEM and GPDM
solutions. }
\label{FigX3_face}
\end{figure*}

Fig.~\ref{FigX3_face} shows the comparison of the
solutions among FEM, DM, and GPDM methods corresponding the Robin boundary condition ($\partial _{\boldsymbol{\nu }%
}u+10u=0$ on $\partial M$). To compute the FEM solution as a
benchmark, we applied FELICITY toolbox in Matlab using the triangulated mesh of the
surface, which consisted of $17157$ points and a connectivity matrix for the
triangle elements. We use a linear finite element space in the FEM
algorithm. We used $k=512$ nearest neighbors and tuned the kernel bandwidth parameter as $\epsilon
=3\times 10^{-6}$. For GPDM, we used $K=6$ layers of ghost points for $168$
boundary points so that we used $168\times 6$ ghost points in total. In Fig. %
\ref{FigX3_face}, we found that the inverse error (IE) between GPDM and FEM
solutions (about $3.2\times 10^{-4}$) is smaller than that between DM and FEM
solutions (about $4.3\times 10^{-4}$); here the scaling of the true solution is on the order of $10^{-3}$. In this case, one can see that larger errors of GPDM are locally concentrated near the nose and the mouth whereas the larger errors for DM are evenly
distributed on the lower face. Thus, for the Robin boundary condition, one can
see that GPDM exhibits a better performance than the standard DM.

\section{Applications: Solving elliptic eigenvalue problems}\label{sec:EIGS}

In this section, we apply the GPDM algorithm for
solving the eigenvalue problem $\mathcal{L}\psi _{k}=\lambda _{k}\psi _{k}$\ on
manifold with boundary, where $\mathcal{L}$ is either the Laplace-Beltrami in \eqref{L1} or the weighted
Laplacian operator in \eqref{L2}.  Since there are no $f$ in this problem, we cannot use the quadratic extrapolation formula in \eqref{Eq:Uvvv}. Instead, we extrapolate $u$ using the linear extrapolation formula defined as follow:
\BEA
\begin{aligned}\label{Eqn:uvv_g2}
\tilde{u}_{\epsilon,j}^{G_{1}} - 2u(x_{j}^{B}) + u(\tilde{x}_{j}^{G_0}) &= 0,\\
\tilde{u}_{\epsilon ,j}^{G_{2}}-2\tilde{u}_{\epsilon ,j}^{G_{1}}+ u(x_j^{B})&=0, \\
\tilde{u}_{\epsilon ,j}^{G_{k}}-2\tilde{u}_{\epsilon ,j}^{G_{k-1}}+\tilde{u}_{\epsilon ,j}^{G_{k-2}}&=0,  \quad\quad k = 3,\ldots K, \\
\end{aligned}
\EEA
where $\tilde{u}_{\epsilon ,j}^{G_{k}}$ are the function values to be specified. It is worth noting that if we replace the quadratic extrapolation in \eqref{Eq:Uvvv} with \eqref{Eqn:uvv_g2}, one can deduce the error rate analogous to Proposition~\ref{prop:extrapolationofu}, except that the first error bound $h^3$ is replaced with $h^2$. With this linear extrapolation formula, we consider the following algorithm.

\begin{algm}\label{algm5_1}
GPDM algorithm for eigenvalue problems:
\begin{enumerate}

\item Supplement the ghost points as in Section~\ref{ghost_ptm} and
construct the augmented $\bar{N}\times \bar{N}$ matrix using DM based on all
points on manifold and ghost points.

\item Construct the GPDM estimator, an $(N-J)\times (N-J)$\ matrix, based on
the homogeneous extrapolation formula \eqref{Eqn:uvv_g2} for u at the ghost
points and the homogeneous boundary condition \eqref{BCdiscrete}. Here, $J$
is the number of boundary points. The homogeneous equations \eqref{Eqn:uvv_g2} and %
\eqref{BCdiscrete} have a unique solution that can be written in a compact
form as a the column vector,
\begin{equation*}
\left(\hat{u}^{B},\vec{u}_{\epsilon }^{G}\right)=%
\mathbf{C}\hat{u}^{I},
\end{equation*}%
where $\vec{u}_{\epsilon }^{G}$, $\hat{u}^{B}$, and $\hat{u}^I$  are column vectors with components consist of the estimated function values of $u$ at the estimated ghost points, boundary points, and interior points, {\color{black}respectively, as defined in \eqref{vectoru}-\eqref{truevectoru}. Here, $\mathbf{C}$
is a $(JK+J)\times (N-J)$ matrix. Denoting the column vector $\hat{u}:=(\hat{u}^I,\hat{u}^B,\vec{u}_{\epsilon }^{G})\in \mathbb{R}^{\bar{N}}$, the diffusion operator $\mathcal{L}$
is approximated with the following matrix,
\begin{equation*}
\mathbf{L}^h \hat{u}:=\mathbf{L}^{(1)}\hat{u}^{I}+%
\mathbf{L}^{(2)}\left( \hat{u}^{B},\vec{u}_{\epsilon }^{G}\right)=\mathbf{L}^{(1)}\hat{u}^{I}+\mathbf{L}^{(2)}%
\mathbf{C}\hat{u}^{I} = \big(\mathbf{L}^{(1)}+\mathbf{L}^{(2)}%
\mathbf{C}\big)\hat{u}^{I}.
\end{equation*}}
Here, we have defined the submatrices $\mathbf{L}^{(1)}\in\mathbb{R}^{(N-J)\times(N-J)}$ and $\mathbf{L}^{(2)}\in\mathbb{R}^{(N-J)\times(JK+J)}$ of the augmented ${(N-J)}\times\bar{N}$ matrix $\mathbf{L}^h\equiv (\mathbf{L}^{(1)},\mathbf{L}^{(2)})$, and we should point out that these submatrices are different than those defined in \eqref{GPDM}.

\item Solve the eigenvalue problem of the diffusion matrix $\mathbf{L}%
^{(1)}+\mathbf{L}^{(2)}\mathbf{C}$.
\end{enumerate}
\end{algm}

{For comparison, we also apply the standard DM algorithm for solving the eigenvalue problem $\mathcal{L}\psi _{k}=\lambda _{k}\psi _{k}$\ with the following modification to incorporate boundary conditions other than homogeneous Neumann.}

{
\begin{algm}
DM algorithm for eigenvalue problems with non-Neumann boundary conditions:

\begin{enumerate}
\item Construct the DM estimator, an $(N-J)\times (N-J)$\ matrix, based on
the homogeneous boundary condition \eqref{BCdiscrete}. Here, $J$ is the
number of boundary points. The homogeneous boundary condition %
\eqref{BCdiscrete} has a unique solution that can be written in a compact
form as,
\begin{equation*}
\hat{u}^{B}=\mathbf{C}_{\rm DM}\hat{u}^{I},
\end{equation*}%
where $\hat{u}^{B}$ and $\hat{u}^{I}$ are vectors with components consist of
the estimated function values of $u$ evaluated at the boundary points and interior points,
respectively. For boundary conditions that involve normal derivatives, we used the Algorithm in Appendix~\ref{App:A} to approximate the
normal derivatives without adding ghost points. Here, $\mathbf{C}_{\rm DM}$ is a $(J)\times (N-J)$ matrix. For the formula below, we define
column vector $\hat{u}=(\hat{u}^I,\hat{u}^B)$. Then, the diffusion operator $\mathcal{L}$ can be approximated with the following
matrix,
\begin{equation*}
\mathbf{L}_{\rm DM}\hat{u}=\mathbf{L}_{\rm DM}^{(1)}\hat{u}^{I}+\mathbf{L}_{\rm DM}%
^{(2)}\hat{u}^{B}=\mathbf{L}_{\rm DM}^{(1)}\hat{u}^{I}+\mathbf{L}_{\rm DM}^{(2)}\mathbf{C}_{\rm DM}%
\hat{u}^{I}\equiv \big(\mathbf{L}_{\rm DM}^{(1)}+\mathbf{L}_{\rm DM}^{(2)}\mathbf{C}_{\rm DM}\big)%
\hat{u}^{I}.
\end{equation*}%
Here, we have defined the submatrices $\mathbf{L}_{\rm DM}^{(1)}\in \mathbb{R}%
^{(N-J)\times (N-J)}$ and $\mathbf{L}_{\rm DM}^{(2)}\in \mathbb{R}^{(N-J)\times (J)}$
of the ${(N-J)}\times N$ DM matrix $\mathbf{L}_{\rm DM}\equiv (\mathbf{L}_{\rm DM}^{(1)},%
\mathbf{L}_{\rm DM}^{(2)})$.

\item Solve the eigenvalue problem of the diffusion matrix $\mathbf{L}_{\rm DM}^{(1)}+%
\mathbf{L}_{\rm DM}^{(2)}\mathbf{C}_{\rm DM}$.
\end{enumerate}
\end{algm}
}

Next, we compare the numerical performance of the DM and GPDM
in solving the eigenvalue problems $\mathcal{L}\psi _{k}=\lambda _{k}\psi
_{k}$\ on manifolds with boundary for various test examples. We begin with the singular Sturm-Liouville eigenvalue problem of Legendre polynomials on a flat domain $[-1,1]$. Next, we show numerical results of the Laplace-Beltrami operator on various embedded smooth manifolds, such as a 1D semi-circle in $\mathbb{R}^{2}$\ with Dirichlet and Robin boundary conditions, and a 2D semi-torus in $\mathbb{R}^{2}$ with mixed boundary conditions.

\subsection{A singular Sturm-Liouville problem}

First, we consider solving the Legendre differential equation on the flat
domain $[-1,1],$
\begin{equation}
\mathcal{L}\psi _{k}:=\frac{d}{dx}\left[ \left( 1-x^{2}\right) \frac{d\psi
_{k}}{dx}\right] =-k\left( k+1\right) \psi _{k},  \label{Eqn:Legend}
\end{equation}%
where the eigenvalues are $\lambda _{k}=-k\left( k+1\right) $ with $%
k=0,1,2,\ldots $, and the eigenfunctions $\psi _{k}$ are Legendre
polynomials. The Legendre polynomials are orthogonal with respect to a
uniformly distributed weight over the domain $[-1,1]$. The completeness of
the set of eigenfunctions follows from the framework of Sturm-Liouville
theory. It is well-known that the differential equation \eqref{Eqn:Legend}
has singular points at the boundary $x=\pm 1$, so that the eigenfunctions $%
\psi _{k}$ are required to be regular at $x=\pm 1$.

Numerically, the operator $\mathcal{L}$\ in \eqref{Eqn:Legend} is estimated
by choosing $\kappa =$ $1-x^{2}$ in the weighted Laplacian operator $%
\mathcal{L}_{2}$ in \eqref{L2} using the GPDM method. At the boundaries $x=\pm
1 $, $\mathcal{L}$ reduces to a first-order differential operator $\mathcal{L%
}\psi _{k}=-2x\frac{d\psi _{k}}{dx}$, so that it can be treated as a boundary
condition which is estimated using a finite-difference method. In particular,
we construct an $N\times N$ diffusion matrix on $N$ equally spaced discrete
grids $\left\{ x_{i}=2(i-1)/(N-1)-1\right\} _{i=1,\ldots ,N}$ on $[-1,1]$. For
efficient computation, the sparse diffusion matrix is represented using the
kernel generated from $k=50$ nearest neighbors based on the Euclidean
distance of $x_{i}$ \cite{harlim2018}. The bandwidth $\epsilon =1.5\times
10^{-5}$ is chosen for $N=400$ by the auto-tuning algorithm discussed in Section~\ref{sec:dm}.

\begin{figure*}[tbp]
\centering
\includegraphics[width=.9\textwidth]{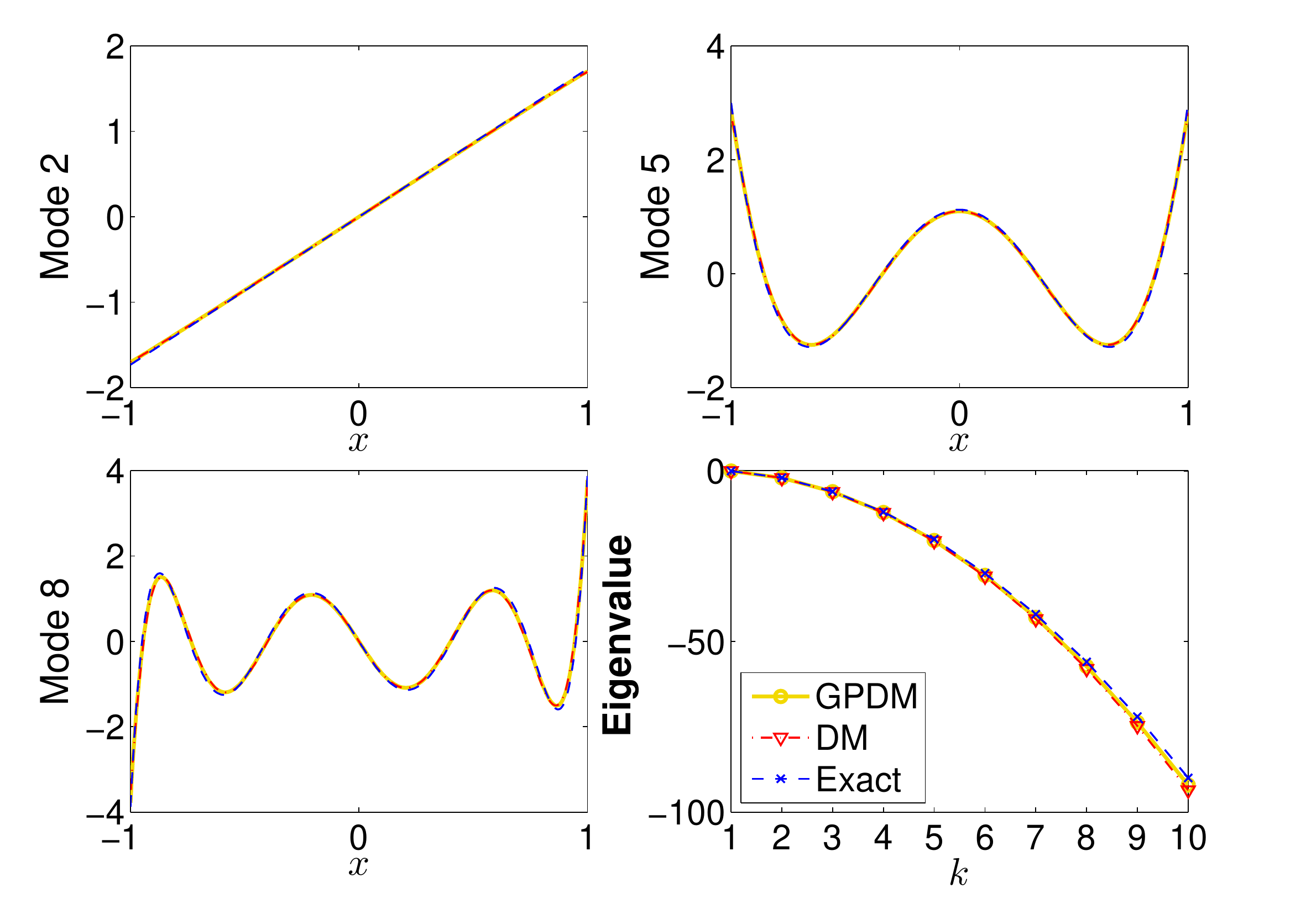}
\caption{(Color online) DM and GPDM estimation of eigenvalues and
eigenfunctions for the Legendre polynomials on flat domain $[-1,1]$.}
\label{Fig13_SL_eigs}
\end{figure*}

\begin{figure*}[tbp]
{\scriptsize \centering
\begin{tabular}{cc}
{\normalsize(a) Error of Eigenvalues} & {\normalsize(b) Error of Eigenfunctions} \\
\includegraphics[width=.45\textwidth]{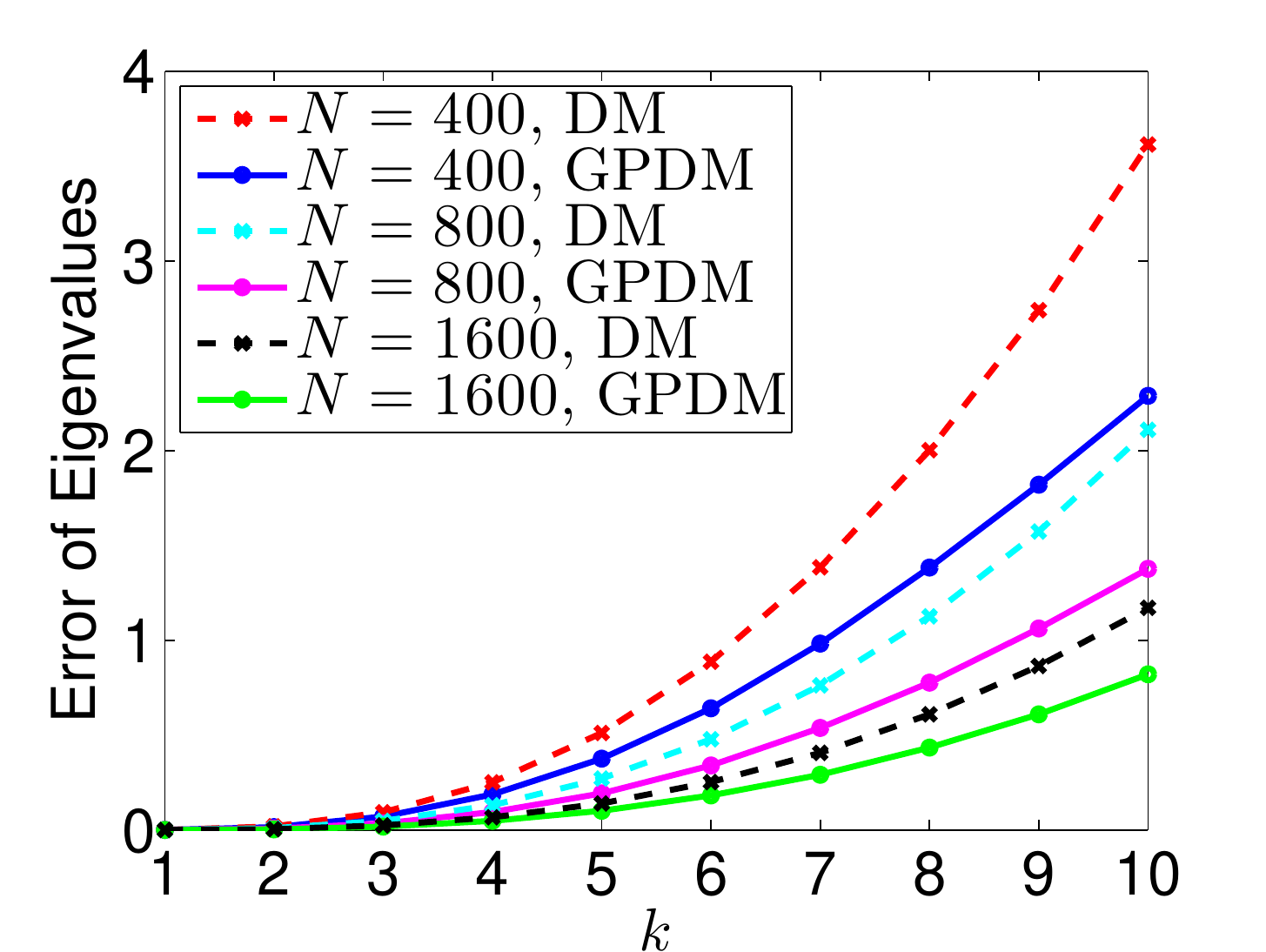}
&
\includegraphics[width=.45\textwidth]{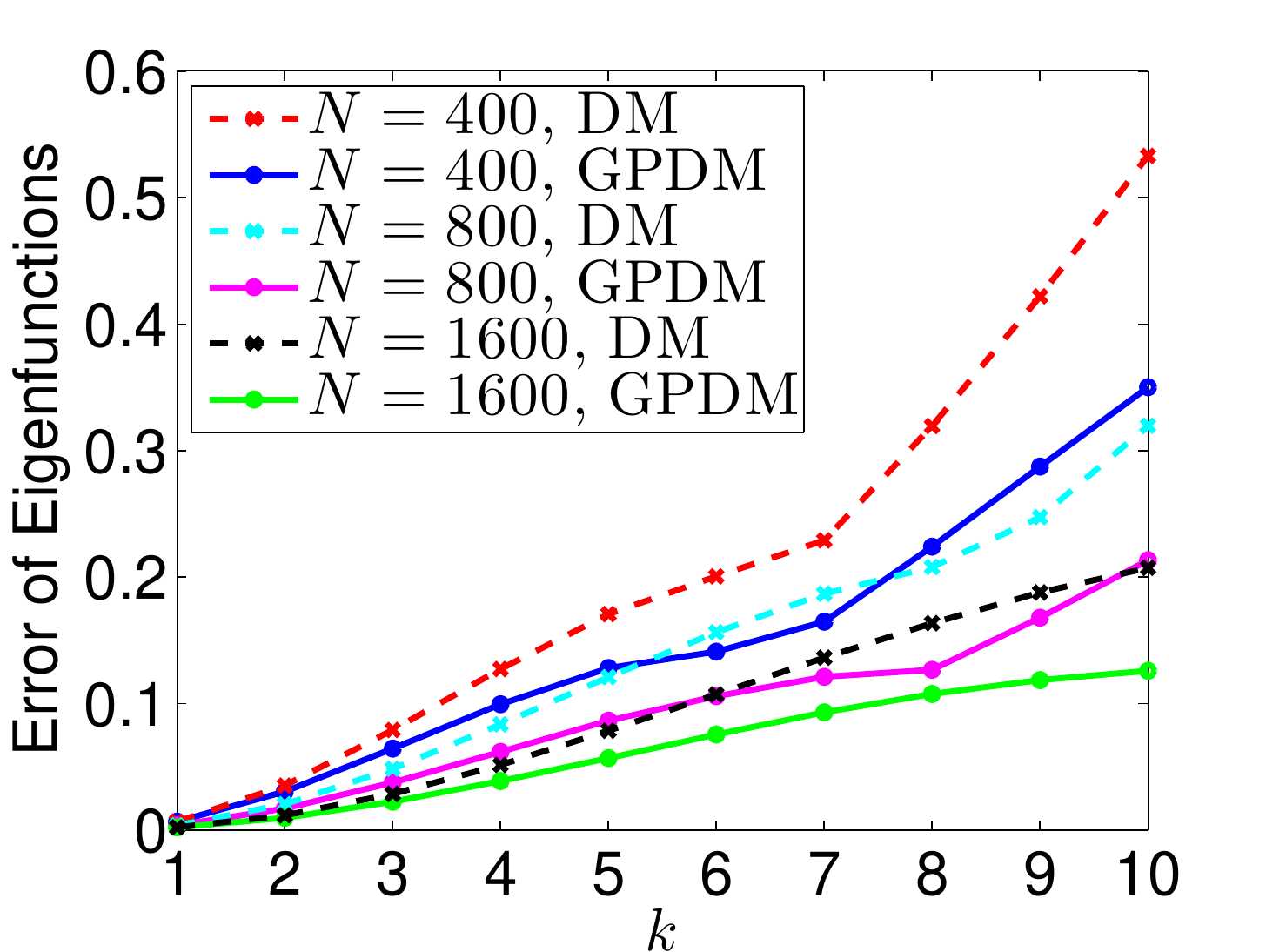}%
\end{tabular}
}
\caption{(Color online) Sturm-Liouville problem: Error of (a) eigenvalues and (b) eigenfunctions {as functions of the mode-$k$} for different number of points $N$%
. }
\label{Fig14_SL_eigs}
\end{figure*}

Fig.~\ref{Fig13_SL_eigs} shows the comparison of the eigenvalues and
eigenfunctions between the analytic Legendre polynomials and the numerical
results from DM and GPDM. It can be seen from Fig.~\ref{Fig13_SL_eigs}
that both eigenvalues and eigenfunctions can be well approximated within
numerical accuracy. For a detailed inspection, we show the errors of the
eigenvalues and the eigenfunctions as functions of the mode-$k$, respectively, for the different number of points $N$ in Fig.~\ref{Fig14_SL_eigs}. It can be seen that both DM and GPDM
provide convergent eigenvalues and eigenfunctions as $N$ increases. The errors of GPDM are slightly smaller than those of DM.

\subsection{Laplace-Beltrami operator on a semi-circle}

In this example, we consider solving the eigenvalue problem $\Delta \psi
_{k}=\lambda _{k}\psi _{k}$ on a 1D semi-circle with Dirichlet and
Robin boundary conditions. We neglect to show results with the Neumann boundary condition since
the performances of GPDM and DM are identical. The Riemannian metric of the semi-circle is given
by \eqref{Eqn:ellipse_metr} with $a=1$. For the Dirichlet boundary condition
$\psi _{k}=0$ at both ends $\theta =0$ and $\pi $, one can check that the
eigenvalues and eigenfunctions are
\begin{equation}
\lambda _{k}=-k^{2},\text{ \ \ }\psi _{k}=\sin \left( kx\right) ,\text{ \
for }k=1,2,3,\ldots . \notag 
\end{equation}%
For the Robin boundary condition $-\partial _{\nu }\psi _{k}+\psi _{k}=0$ at
$\theta =0$ and $\partial _{\nu }\psi _{k}+\psi _{k}=0$ at $\theta =\pi ,$
we can find the explicit expression of both the eigenvalues and eigenfunctions,
\begin{equation}
\lambda _{k}=\left\{
\begin{array}{c}
1 \\
-\left( k-1\right) ^{2}%
\end{array}%
\right. ,\text{ \ \ }\psi _{k}=\left\{
\begin{array}{l}
\exp \left( -x\right) ,\text{ \ \ \ \ \ \ \ \ \quad \quad \quad\ \ \ \ \ \ \
\ \ \ \ \ \ \ \ \ \ \ \ \ \ \ \ for\ }k=1 \\
\sin \left( \left( k-1\right) x\right) -\left( k-1\right) \cos \left( \left(
k-1\right) x\right) ,\text{ \ \ for\ }k=2,3,\ldots%
\end{array}%
\right.\notag
\end{equation}%
We should point out that the Robin boundary condition at $\theta =0$ corresponds to unphysical problems.

\begin{figure*}[tbp]
{\scriptsize \centering
\begin{tabular}{cc}
{\normalsize(a) Dirichlet} & {\normalsize(b) Robin} \\
\includegraphics[width=.48\textwidth]{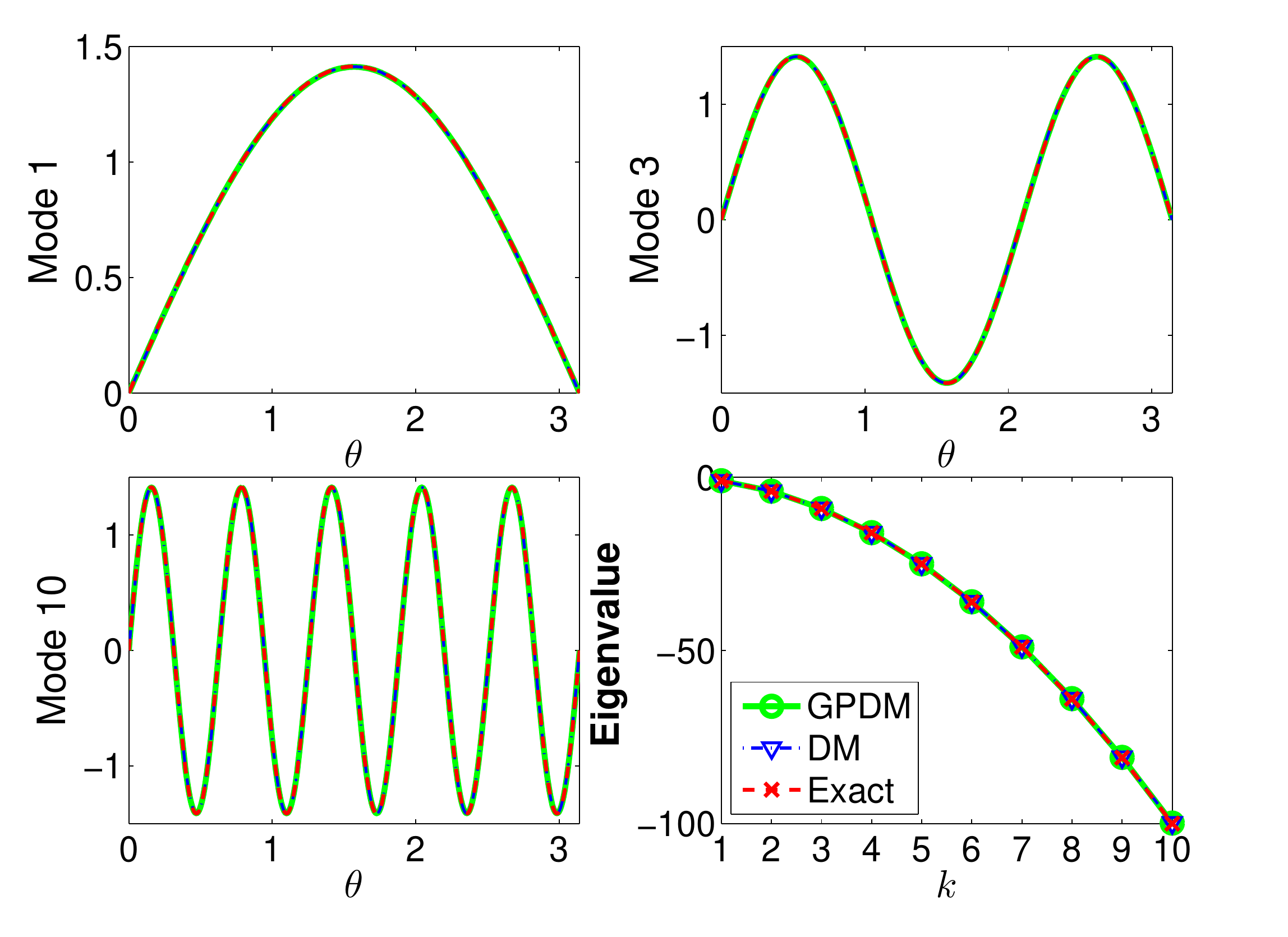}
&
\includegraphics[width=.48\textwidth]{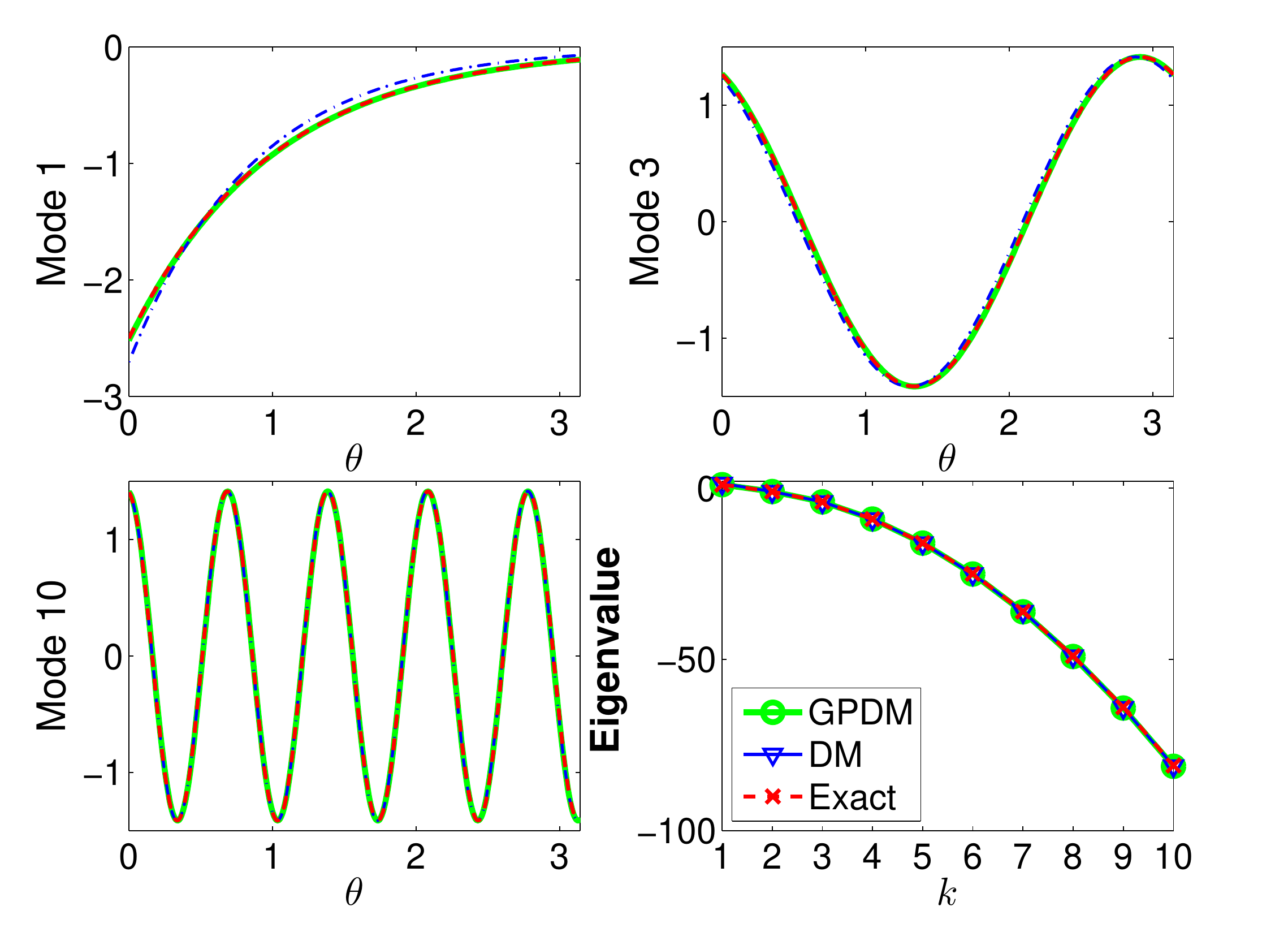}%
\end{tabular}
}
\caption{(Color online) Comparisons of eigenvalues and eigenfunctions
between DM and GPDM for semi-circle example with (a) Dirichlet and (b) Robin boundary conditions. The Riemannian metric is given by (\protect\ref%
{Eqn:ellipse_metr}) with $a=1$.}
\label{Fig5_eigs_semiellipse}
\end{figure*}

The Laplace-Beltrami operator $\mathcal{L}_{1}$\ is numerically estimated
using DM and GPDM from formula \eqref{L1}. We construct an $N\times N$
matrix on $N$ equally spaced discrete grids $\left\{ x_{i}=\left( \cos
\left( (i-1)\pi /(N-1)\right) ,\sin \left( (i-1)\pi /(N-1)\right) \right) \right\}
_{i=1,\ldots ,N}$. The kernel uses $k=50$ nearest neighbors and the
bandwidth $\epsilon =2.1\times 10^{-5}$, that is auto tuned using a fixed (for $N=400$) grid points for
all types of boundary conditions. The numerical results are shown in Fig. %
\ref{Fig5_eigs_semiellipse}. In these two problems, the eigenvalues and eigenfunctions can be well approximated by both
DM and GPDM, although DM is less accurate for the Robin boundary condition (as seen in the estimation of mode-1).

\begin{figure*}[tbp]
{\scriptsize \centering
\begin{tabular}{cc}
\normalsize (a) Error of eigenvalues vs. $k$ & \normalsize (b) Error of eigenfunctions vs. $k$
\\
\includegraphics[width=3.0
in, height=2.1 in]{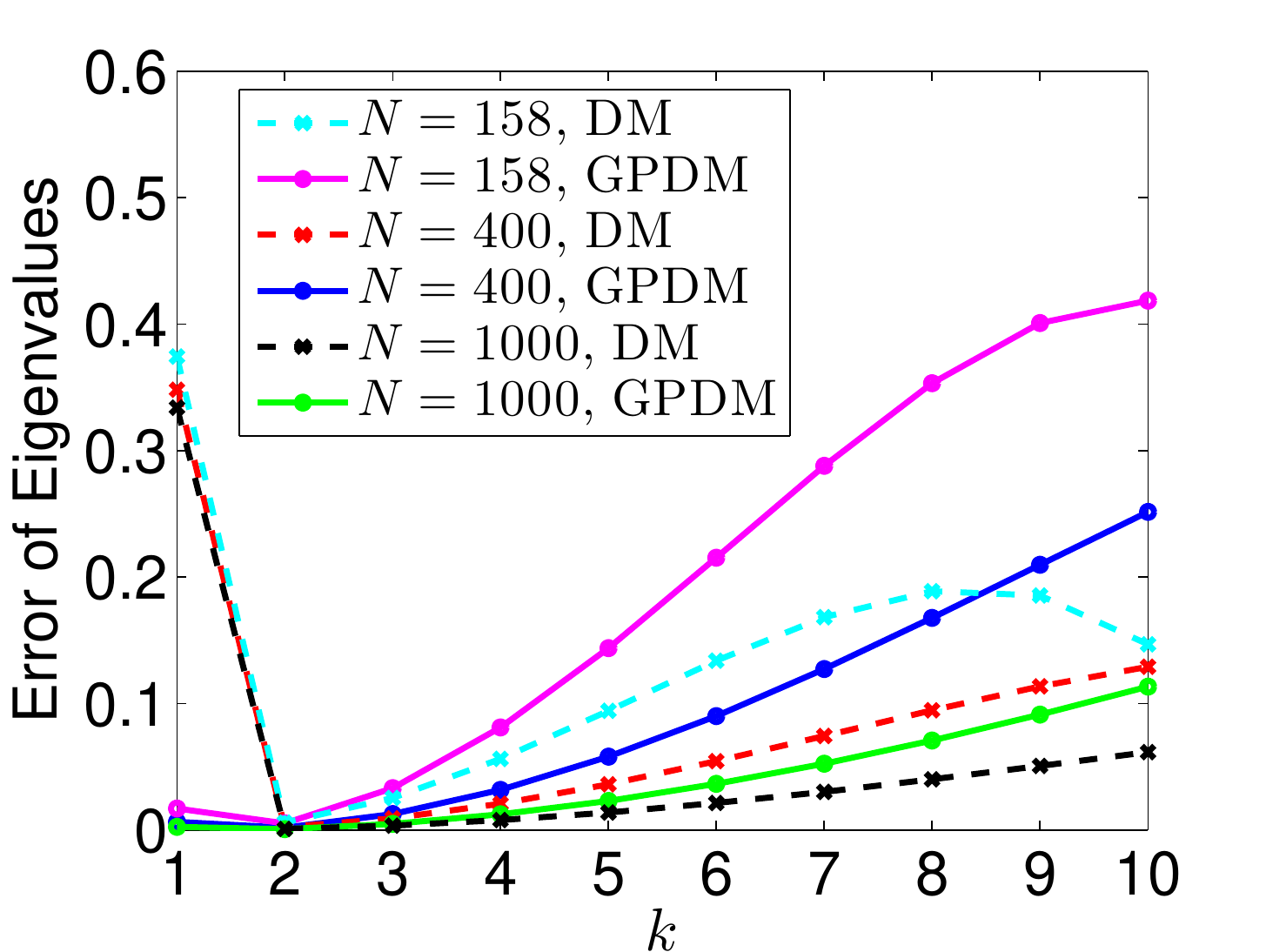}
&
\includegraphics[width=3.0
in, height=2.1 in]{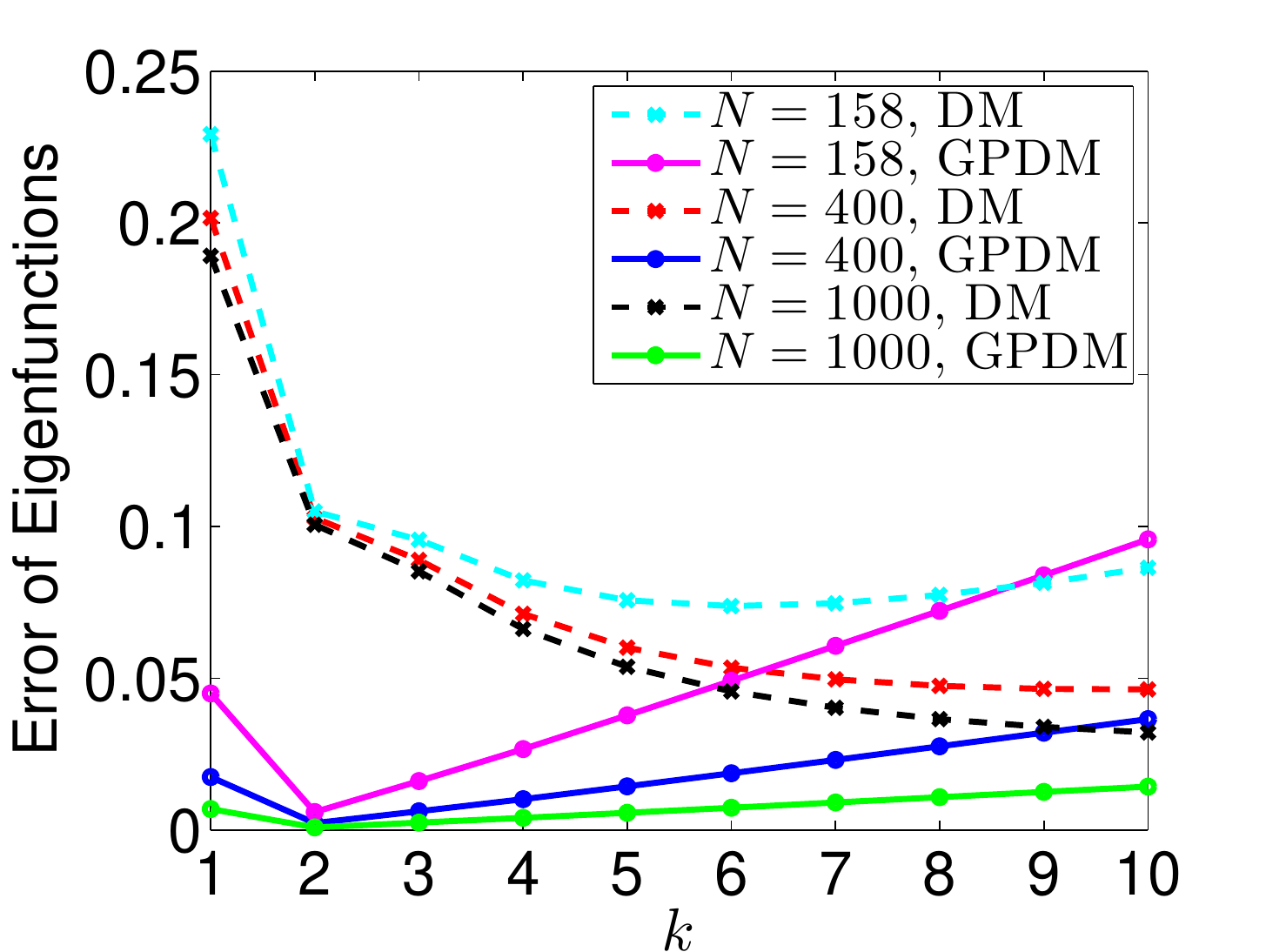}\\
\normalsize (c) Error of eigenvalues vs. $N$ & \normalsize (d) Error of eigenfunctions vs. $N$\\
\includegraphics[width=3
in, height=2.1 in]{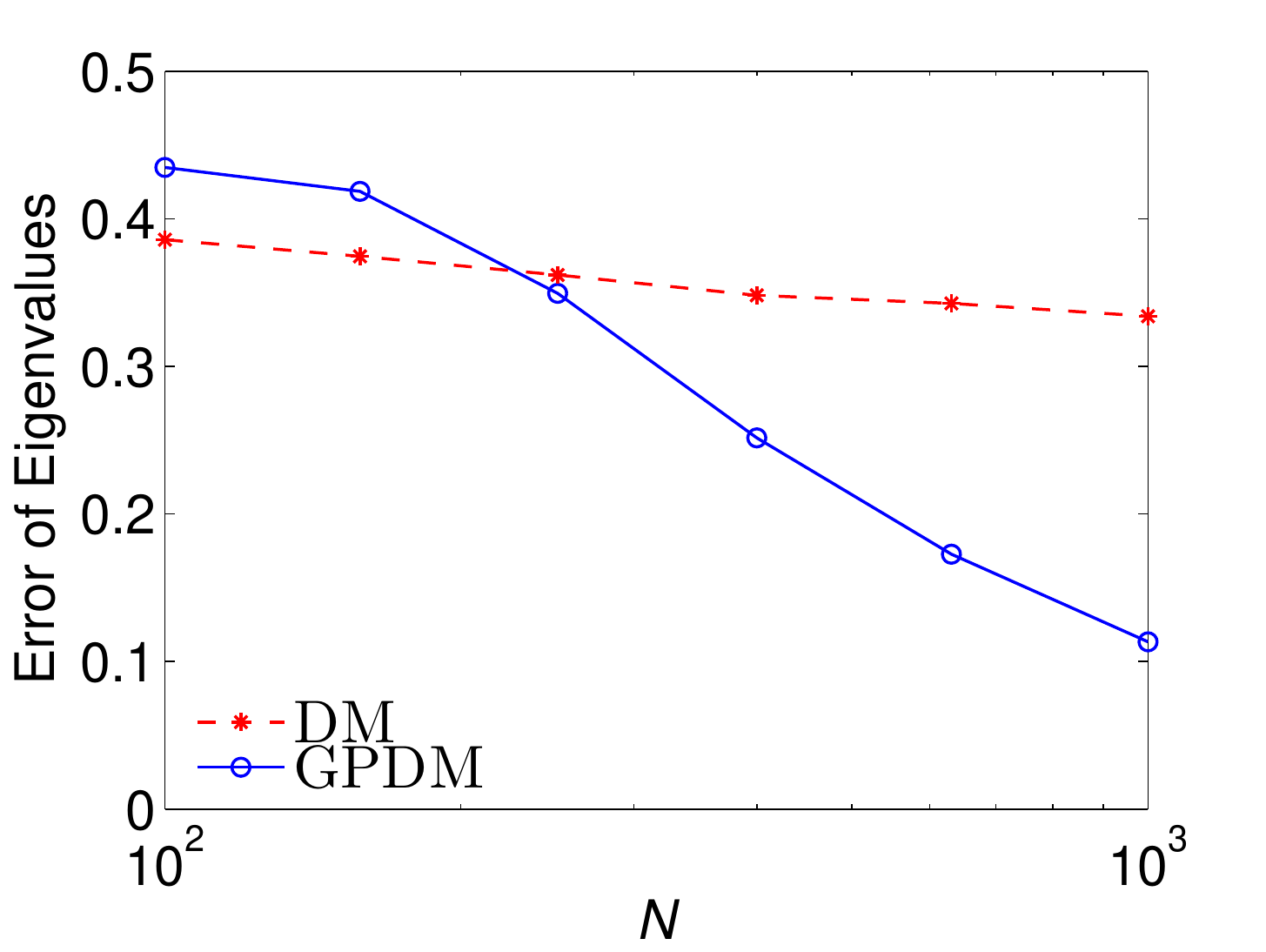}
&
\includegraphics[width=3
in, height=2.1 in]{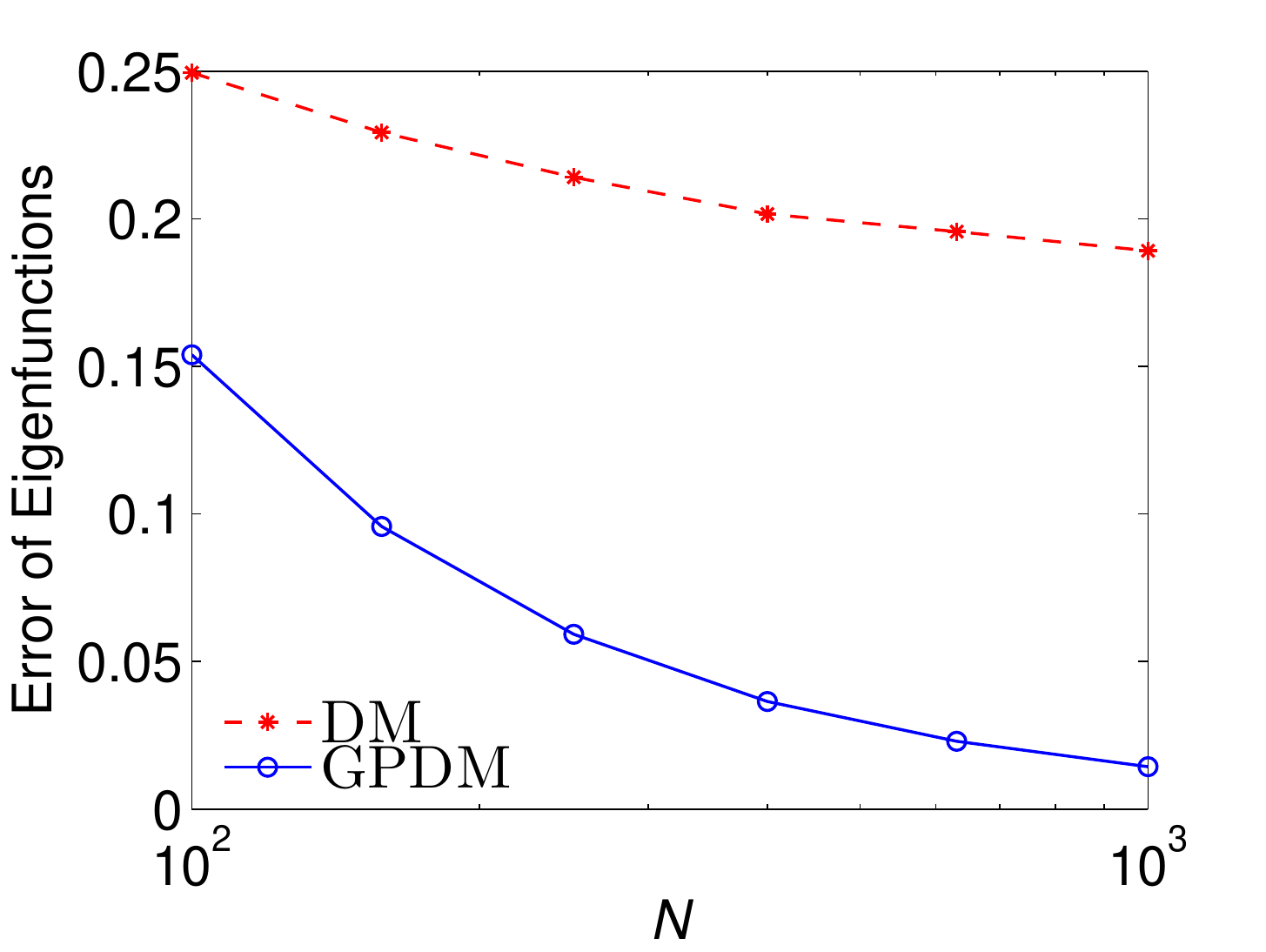}%
\end{tabular}
}
\caption{(Color online) Error of (a) eigenvalues and (b) eigenfunctions as functions of $k$ for
different number of points $N$ for the semi-circle example with the Robin boundary
condition. (c) Error of eigenvalues  and (d) error of eigenfunctions
 vs. $N$. Note that for DM, there is no convergence for the
leading eigenvalues and eigenfunctions. }
\label{Fig52_eigenlapvsk}
\end{figure*}

Figs.~\ref{Fig52_eigenlapvsk} (a) and (b) show errors of the eigenvalues and
eigenfunctions, respectively, as functions of mode-$k$ for different number of points $N$ on a
semi-circle example with Robin boundary condition. Figs.~\ref%
{Fig52_eigenlapvsk}(c) and (d) show the errors of the eigenvalues and eigenfunctions as functions of $N$, respectively. It can be seen that for DM, there is no convergence in the estimation of the leading eigenvalues and eigenfunctions as $N$
increases. In comparison, for GPDM, there is convergence in the estimation of the leading eigenvalues and
eigenfunctions.

\subsection{Laplace-Beltrami operator on a semi-torus}

In this example, we consider solving the eigenvalue problems $\Delta \psi
_{k}=\lambda _{k}\psi _{k}$ on a 2D semi-torus embedded in\ $\mathbb{R}^{3}$%
\ with Dirichlet and Dirichlet-Neumann mixed boundary conditions.
Here, the torus is defined with the standard embedding function \eqref%
{Eqn:torus_g} with the Riemannian metric \eqref{Eqn:torus_gg}, the
parameter $a=2$, and the intrinsic coordinates $\left( \theta ,\phi \right) $ on $%
\left[ 0,2\pi \right] \times \left[ 0,\pi \right] $. Then, we can check that
the Laplace-Beltrami operator in the intrinsic coordinates $\left( \theta ,\phi
\right) $ can be written as:%
\begin{equation}
\Delta \psi _{k}=\frac{1}{\left( a+\cos \theta \right) ^{2}}\frac{\partial
^{2}\psi _{k}}{\partial \phi ^{2}}+\frac{\partial ^{2}\psi _{k}}{\partial
\theta ^{2}}-\frac{\sin \theta }{a+\cos \theta }\frac{\partial \psi _{k}}{%
\partial \theta }=\lambda _{k}\psi _{k}.  \label{Eqn:lp_tor}
\end{equation}%
We can use the method of separation of variables to solve this eigenvalue problem \eqref%
{Eqn:lp_tor}, satisfying Dirichlet and the mixed boundary conditions.
That is, we set $\psi _{k}=\Phi _{k}\left( \phi \right) \Theta _{k}\left(
\theta \right) $ and substitute $\psi _{k}$ back into \eqref{Eqn:lp_tor} to deduce the eigenvalue problems for $\Phi _{k}$ and $\Theta _{k}$:
\begin{eqnarray}
\Phi _{k}^{\prime \prime }+m_{k}^{2}\Phi _{k} &=&0,  \label{Eqn:THE} \\
\Theta _{k}^{\prime \prime }-\frac{\sin \theta }{a+\cos \theta }\Theta
_{k}^{\prime }-\frac{m_{k}^{2}}{\left( a+\cos \theta \right) ^{2}}\Theta
_{k} &=&\lambda _{k}\Theta _{k},  \label{Eqn:PHI}
\end{eqnarray}%
where the derivatives in \eqref{Eqn:THE} and \eqref{Eqn:PHI} are taken with
respect to $\phi $ and $\theta $, respectively. The discrete values of $%
m_{k} $ are chosen such that $\Phi _{k}$ satisfies \eqref{Eqn:THE} with
two types of boundary conditions. In particular, type (a) is the Dirichlet
boundary condition at both sides ($\Phi _{k}\left( 0\right) =\Phi _{k}\left(
\pi \right) =0$) and type (b) is the Dirichlet-Neumann mixed boundary condition ($\Phi
_{k}\left( 0\right) =0$ and $\Phi _{k}^{\prime }\left( \pi \right) =0$).
Then, the eigenvalue problem \eqref{Eqn:PHI} can be numerically solved for $%
\lambda _{k}$\ with high-order accuracy. The eigenvalue $\lambda _{k}$
associated with the eigenfunction $\psi _{k}$\ obtained by the approach above are
treated as the exact solutions of the eigenvalue problem \eqref{Eqn:lp_tor}.

In our numerical implementation, the grid points $\left\{ \theta _{i},\phi
_{j}\right\} $ are uniformly distributed on $\left[ 0,2\pi \right] \times %
\left[ 0,\pi \right] ,$ with $i,j=1,\ldots ,64$ points in each direction
resulting in a total of $N=4096$ grid points. We assume that we do not know
the embedding function \eqref{Eqn:torus_g} when solving the eigenvalue
problem using DM and GPDM. For the GPDM method, we estimate normal direction
$\boldsymbol{\nu }$ to the boundary, add ghost points along $\boldsymbol{\nu
}$, construct an augmented matrix using standard DM, and finally construct
the $N\times N$ diffusion matrix based on the extrapolation formula and
boundary conditions. We {\color{black}use $k=200$ nearest neighbors} to construct a sparse
matrix $\mathbf{L}^h$\ for computational efficiency. The kernel bandwidth $%
\epsilon =0.004$ is auto tuned for all types of boundary conditions.

\begin{figure*}[tbp]
{\scriptsize \centering
\begin{tabular}{cc}
{\normalsize(a) Dirichlet} & {\normalsize(b) Dirichlet-Neumann mixed} \\
\includegraphics[width=.45\textwidth]{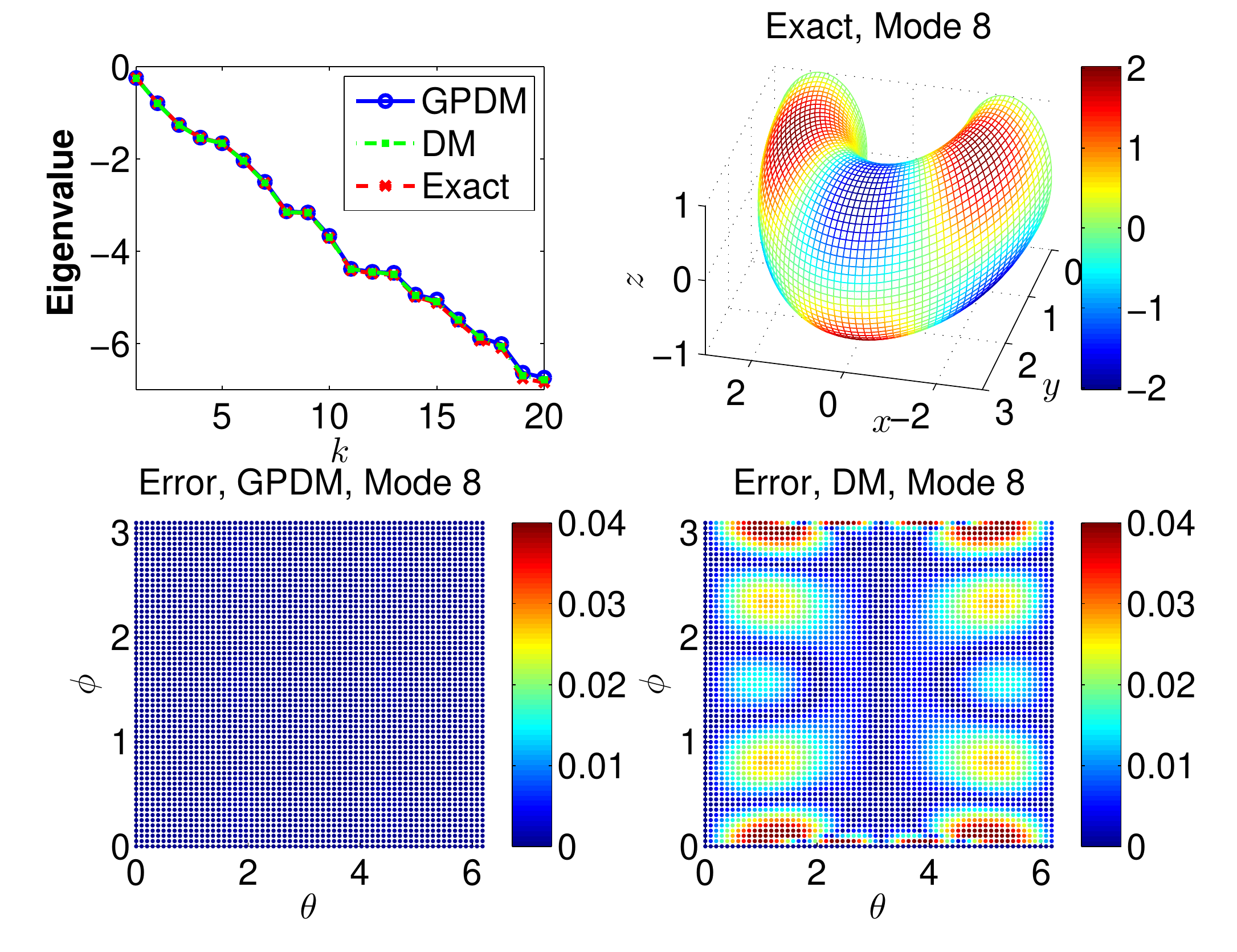}
&
\includegraphics[width=.45\textwidth]{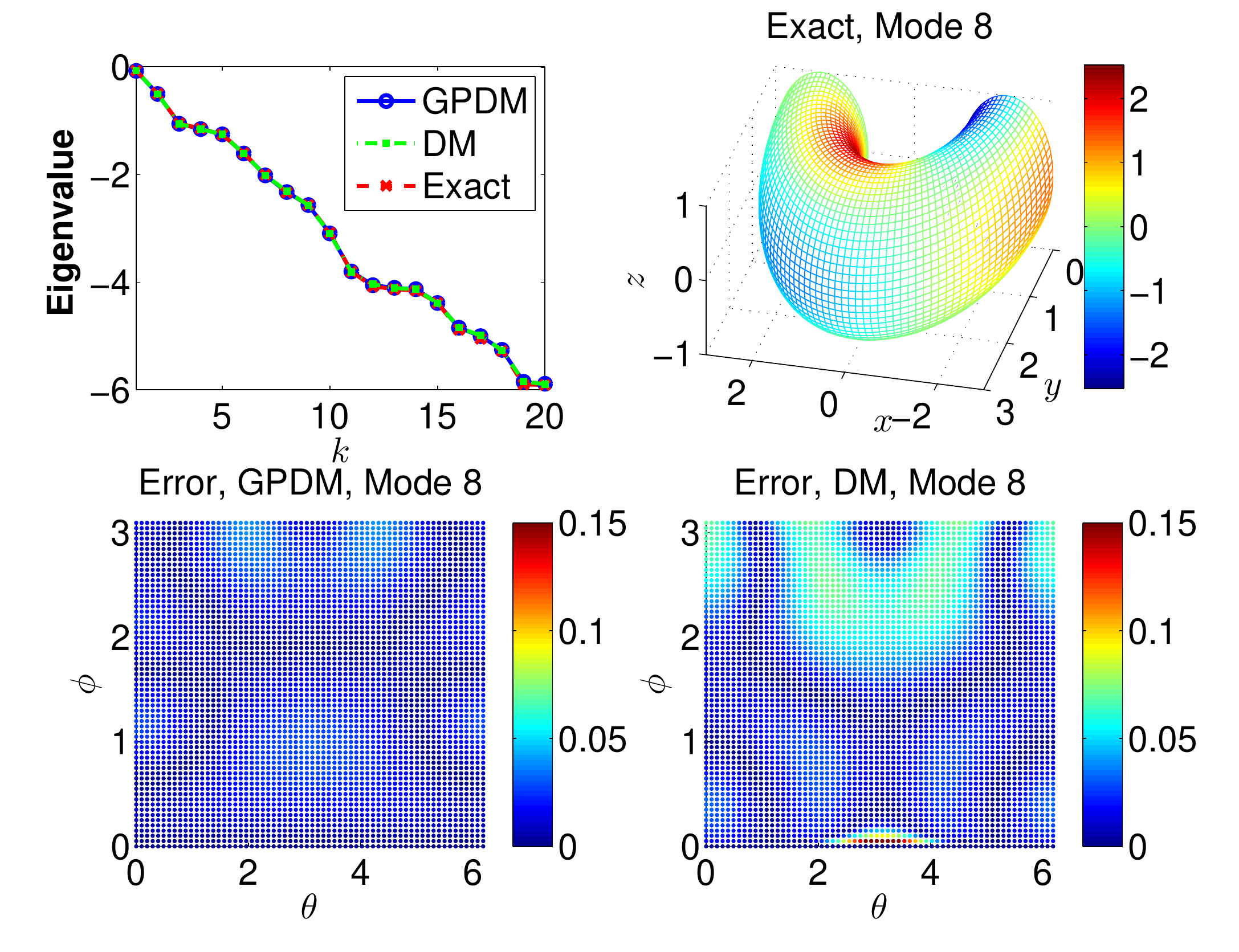}%
\end{tabular}
}
\caption{(Color online) Comparisons of eigenvalues and eigenfunctions with
(a) Dirichlet, (b) Dirichlet-Neumann mixed boundary conditions
on the semi-torus example with Riemannian metric (\protect\ref{Eqn:torus_gg})
with $a=2$.}
\label{Fig7_eigs_semitorus}
\end{figure*}

Fig.~\ref{Fig7_eigs_semitorus} shows the numerical estimates of the first
20 eigenvalues and the 8th eigenfunction for (a) Dirichlet and
(b) the mixed boundary conditions. One can see from Fig. \ref%
{Fig7_eigs_semitorus} that the eigenvalues and the 8th eigenfunction can be
approximated well by both DM and GPDM. For Dirichlet boundary condition, the largest errors of the first twenty eigenvalues are comparable as $%
0.08$ and $0.12$ using the standard DM and GPDM, respectively. The largest $\ell
^{\infty }$-norm error of the first twenty eigenfunctions using GPDM ($=0.01$%
) is much smaller than that using DM ($=0.31$).
For the mixed
boundary condition, the largest errors of the first twenty eigenvalues are comparable as $%
0.06$ and $0.04$ using the standard DM and GPDM, respectively. The largest $\ell ^{\infty }$-norm error of the first
twenty eigenfunctions using GPDM ($=0.99$) is  comparable to that using DM ($=1.03$). However, a close inspection, e.g. the 8th eigenfunctions, suggests that the GPDM errors occur on smaller regions of the domain compared to those of DM.

\section{Summary}\label{sec:summary}

In this paper, we introduced the Ghost Points Diffusion Maps (GPDM) to estimate second-order elliptic differential operators defined on smooth manifolds with boundaries. The proposed method overcomes the inconsistency of the diffusion maps (DM) algorithm in estimating these differential operators near the boundaries. We provided theoretical convergence study as well as numerical verification on test problems with tractable solutions and on the unknown ``face" manifold to validate our claim. The key idea of GPDM is motivated by the standard ghost points approach that is used to obtain a higher-order finite-difference approximation of Neumann/Robin type boundary conditions on the flat domain. Our key contribution is to realize this idea with a concrete numerical algorithm on unknown manifolds, identified only by the point clouds, that is guaranteed to be consistent.

We considered solving elliptic PDEs \eqref{PDE} with the GPDM operator estimation method. We showed that the PDE solver, which is a mesh-free technique, is a convergent method under the standard assumption of the well-posedness of the PDE problem. Numerically, we validated the solver on a series of 1D and 2D test examples with and without explicit solutions. On problems with unknown manifold where the explicit solution is unknown, we compare the result against the FEM solution. Overall, GPDM is much more accurate and robust relative to DM except on the Neumann boundary for which DM is expected to work well as shown in \cite{gh2019}. Numerically, we also found that GPDM is more accurate compared to DM in terms of solving eigenvalue problems associated with the operators \eqref{L1}-\eqref{L2}.

While the proposed approach is encouraging, it also poses many open questions, namely:
\begin{itemize}
\item The ghost points are constructed by extending points along the exterior normal direction from the boundary. Since the errors in estimating the directional derivatives and normal vectors depend on Hessian (second-order derivatives), the error can be large if the curvature is very large at the boundary. This suggests that the method can be improved by specifying ghost points that account for the curvature at the boundary. In our context, where the manifold is unknown, this requires an estimation of the boundary curvature from the point clouds, which is a problem that we are not currently familiar to.
\item In this work, we have verified the proposed method on 1D and 2D manifolds. For higher-dimensional manifolds, while the numerical method can be used, the conditions to achieve the conclusion in the Lemma~\ref{ghmanifold} requires further studies.

\item The proposed technique assumed that the boundary points are given. In the case of well-sampled data, the number of points at the boundary is specified explicitly, $J=N^{1/d}$. For the randomly sampled data, when we employ the local kernel, the auto-tuned $\epsilon$ yields error rates of $\epsilon_1 \sim N^{-1/2}$ and $\epsilon_2 \sim J^{-1}$. To have a balanced error, $\epsilon_1\sim \epsilon_2$, we require $J = N^{1/2}$, as we numerically verified in 2D examples. If this scaling is valid for arbitrary dimension, that is, $\epsilon_1\sim N^{-1/d}$ and $\epsilon_2\sim J^{-\frac{1}{d-1}}$, then the number of points at the boundary required to achieve balanced error rates of order $N^{-1/d}$ is $J = N^{\frac{d-1}{d}}$. While this estimate seems to indicate a severe limitation of this method, intuitively, this is consistent with the well-known fact that the distribution of high-dimensional random variables on a bounded domain tends to lie near the boundary. Further investigation is required to understand this thoroughly.

\item While the numerical demonstration showed convincing results in solving eigenvalue problems, spectral convergence, as well as the error estimate of the eigenfunctions, are not known. One possible avenue is to extend the result in \cite{trillos2019error,bs2019consistent,calder2019improved,dunson2019spectral,calder2020lipschitz} to manifolds with general boundary conditions. In this direction, the result for the Neumann boundary condition was recently reported in \cite{lu2020}.
\end{itemize}

\section*{Acknowledgment}

The research of JH was partially supported under the NSF grant DMS-1854299. This research was supported in part by a Seed Grant award from the Institute for Computational and Data Sciences at the Pennsylvania State University. The authors also thank Faheem Gilani for providing an initial sample code of FELICITY FEM, Ryan Vaughn and Tyrus Berry for the helpful discussion on various aspects of differential geometry.

\appendix

\section{Proof of Proposition~\ref{prop:extrapolationofu}}\label{App:C}

Before proving the main result (Proposition~\ref{prop:extrapolationofu}), we state the pointwise error estimates that are known from literature. Subsequently, we deduce several lemmas before proving the main result.

From previous results \cite{coifman2006diffusion,SingerEstimate,berry2016local,harlim2018,harlim2019kernel}, we have the
pointwise error estimation under these three situations: (1) on manifolds without
boundary, or (2) for the test function $u$ with Neumann boundary condition
on manifold with boundary, or (3) for any $u$ on manifold with boundary but
only for points away from the boundary with distance at least $\mathcal{O}%
\left( \epsilon ^{r}\right)$, $0< r< 1/2$. For reader's convenience, we quote the following error estimation based on the third situation.

\begin{lem}
\label{lem:old} (Pointwise forward error estimate) Let  $M\cup \Delta M$ be a smooth $d$-dimensional manifold embedded in $\mathbb{R}^n.$ Let the assumptions in Proposition~\ref{prop:extrapolationofu} for the extended manifold $M\cup\Delta M,$ and $x\in M$ hold. Let  $x_{i}\in M$ for $i=1,\ldots ,N$ and $x_j^{G_k}\in \Delta M$\ for $j=1,\ldots ,J, k=1,\ldots,K$ be i.i.d samples with
sampling density $q\in C^{3}\left( M\cup \Delta M\right) $ defined with
respect to the volume form inherited by the $d$-dimensional smooth augmented
manifold $M\cup \Delta M$ from the ambient space $\mathbb{R}^{n}$. For any $%
u\in C^{3}\left( M\cup \Delta M\right) $, define a vector $\vec{u} = (u(x_1),\ldots, u(x_N), u(x_{1}^{G_1}),\ldots, u(x_{J}^{G_K}))^\top \in \mathbb{R}^{\bar{N}}$. Then for $i=1,\ldots, N$,
\BEA
\left\vert  (\mathbf{L}_{j'}\vec{u})_i -\mathcal{L}_{j'} u(x_i)\right\vert  &=&O\left( \epsilon ,\frac{q\left( x_{i}\right) ^{1/2}}{%
\sqrt{\bar{N}}\epsilon ^{2+d/4}},\frac{\left\vert \nabla u\left( x_{i}\right)
\right\vert q\left( x_{i}\right) ^{-1/2}}{\sqrt{\bar{N}}\epsilon ^{1/2+d/4}}%
\right),\quad\quad j'=1,3,  \label{Eqn:err_nobound}\\
\left\vert  (\mathbf{L}_2\vec{u})_i -\mathcal{L}_2 u(x_i)\right\vert  &=&O\left( \epsilon ,\frac{q\left( x_{i}\right) ^{1/2}}{%
\sqrt{\bar{N}}\epsilon ^{2+d/4}},\frac{\left\vert \nabla (\sqrt{\kappa(x_i)}u( x_{i}))
\right\vert q\left( x_{i}\right) ^{-1/2}}{\sqrt{\bar{N}}\epsilon ^{1/2+d/4}}%
\right)\label{Eqn:err_L2}
\EEA%
in high probability as $\epsilon\to 0$ after $\bar{N}\to\infty$. For $\mathcal{L}_{1}$ and $\mathcal{L}_{2}$, the gradient operator is defined with respect to the Riemannian metric $g\left( u,v\right) $ for all $u,v\in T_{x}\left( M\cup \Delta
M\right) $, inherited by $M$\ from
the ambient space. For $\mathcal{L}_{3}$, the gradient operator is defined
with respect to a new metric, $\tilde{g}\left( u,v\right) :=g\left(
c^{-1/2}u,c^{-1/2}v\right) $ for all $u,v\in T_{x}\left( M\cup \Delta
M\right) $, where $c$ denotes the symmetric positive definite diffusion
tensor.
\end{lem}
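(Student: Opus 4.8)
The statement is a pointwise consistency estimate of the type established in \cite{coifman2006diffusion,SingerEstimate,berry2016local,harlim2018,harlim2019kernel}, so the plan is to reduce it to those results by checking that their hypotheses hold in the present setting and then recalling the three-step structure (bias, variance, concentration) of their proofs. The only structural observation needed is that, although the extended manifold $M\cup\Delta M$ has a boundary, each evaluation point $x_i\in M$ sits at Euclidean distance at least $\mathcal{O}(\epsilon^r)$ from $\partial(M\cup\Delta M)$ by the construction in Lemma~\ref{ghmanifold} with $R=\mathcal{O}(\epsilon^r)$; hence we are in ``situation (3)'' and the boundary plays no role in the asymptotics at $x_i$.

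First I would handle the \emph{bias}. Apply the asymptotic expansion \eqref{asymp} (resp.\ \eqref{Eqn:Gu} for the local kernel \eqref{Eqn:localK}) at $x_i$: since $\operatorname{dist}(x_i,\partial(M\cup\Delta M))\gtrsim\epsilon^r$, the expansion holds with an $\mathcal{O}(\epsilon^2)$ remainder and no $\mathcal{O}(\sqrt\epsilon)$ correction. Forming the right-normalized quotient exactly as in \eqref{L1}, \eqref{L2}, \eqref{L3} cancels the $m_0u$ (resp.\ $m(x)u$) term and cancels the geometry-and-density term $\omega u$ against the normalization by $G_\epsilon 1$ (resp.\ by the estimated density), leaving $\mathcal{L}_{j'}u(x_i)+\mathcal{O}(\epsilon)$ at the population level; this is the first term in \eqref{Eqn:err_nobound}--\eqref{Eqn:err_L2}. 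For $\mathcal{L}_2$ one also expands the importance-sampling weights $\sqrt{\kappa(x_j)}/\mathbf q_j$ and absorbs $\sqrt\kappa$ into the test function, which is why the variance term in \eqref{Eqn:err_L2} carries $\nabla(\sqrt{\kappa}u)$ rather than $\nabla u$; for $\mathcal{L}_3$ the change of variables $z=C(x)^{-1/2}(y-x-\epsilon B(x))$ reduces the local kernel to a standard Gaussian at the price of replacing $g$ by $\tilde g(u,v)=g(c^{-1/2}u,c^{-1/2}v)$ and $dV_g$ by $\det(c)^{1/2}\,dV_{\tilde g}$, after which the argument proceeds verbatim with $\tilde g$.

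Next I would treat the \emph{variance}: replace $G_\epsilon$, the density estimator, and (for $\mathcal{L}_2$) the weights by their empirical averages over the $\bar N=N+JK$ i.i.d.\ samples, write each average as its mean (which reproduces the bias analysis) plus a fluctuation, and estimate the fluctuation by computing, in geodesic normal coordinates at $x_i$, the second moment of a single summand $\epsilon^{-d/2}\exp(-|x_i-y|^2/4\epsilon)u(y)$. The leading contribution is of order $\epsilon^{-d/2}q(x_i)u(x_i)^2$, giving a standard deviation $\sim\bar N^{-1/2}\epsilon^{-d/4}q(x_i)^{1/2}$ for $G_\epsilon u$; dividing by $\epsilon$ yields the $\bar N^{-1/2}\epsilon^{-(2+d/4)}q(x_i)^{1/2}$ term. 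The $u(x_i)$-proportional part of this fluctuation cancels between the numerator $G_\epsilon u$ and the term $u(x_i)G_\epsilon 1$ in the quotient, so the surviving contribution is first order in the Taylor expansion of $u$ at $x_i$ and carries a factor $\sqrt\epsilon\,|\nabla u(x_i)|q(x_i)^{-1/2}$; dividing by $\epsilon$ produces the $\bar N^{-1/2}\epsilon^{-(1/2+d/4)}|\nabla u(x_i)|q(x_i)^{-1/2}$ term. Finally, each empirical average is a sum of $\bar N$ i.i.d.\ variables bounded by $\mathcal{O}(\epsilon^{-d/2})$, so Bernstein's inequality bounds the fluctuation by a constant multiple of its standard deviation except on an event of probability $\lesssim\exp(-c\,\bar N\epsilon^{d/2})$; a union bound over the $\mathcal{O}(\bar N)$ evaluation points preserves this provided $\bar N\epsilon^{d/2}/\log\bar N\to\infty$, which holds for the parameter scalings of Section~\ref{sec:dm}, giving the ``in high probability'' conclusion.

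The main obstacle is the variance step: carefully tracking how the powers of $\epsilon$ compound through the nested empirical averages --- the density estimator, the kernel sum in the numerator, and, for $\mathcal{L}_2$, the importance weights --- and, in particular, isolating the $|\nabla u|$-dependent contribution, which relies on the first-order Taylor cancellation between numerator and normalization described above. This bookkeeping is exactly the content of \cite{SingerEstimate} and its refinements, so in the write-up the proof reduces to verifying that those hypotheses are met on $M\cup\Delta M$ for the points $x_i\in M$ lying at distance $\mathcal{O}(\epsilon^r)$ from $\partial(M\cup\Delta M)$, and then invoking the cited estimates.
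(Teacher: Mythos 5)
Your proposal is correct and follows essentially the same route as the paper's proof, which simply invokes the cited pointwise-consistency literature and, for $j'=2$, points to the importance-sampling modification of the Appendix~A argument of \cite{harlim2019kernel}; your key structural observation --- that the collar construction of Lemma~\ref{ghmanifold} with $R=\mathcal{O}(\epsilon^r)$ keeps every $x_i\in M$ at Euclidean distance at least $R$ from $\partial(M\cup\Delta M)$, so the evaluation points lie in the interior regime of those results --- is exactly what the paper relies on. One small slip in your informal variance sketch: dividing the standard deviation $\bar N^{-1/2}\epsilon^{-d/4}q^{1/2}$ of $G_\epsilon u$ by $\epsilon$ gives $\epsilon^{-(1+d/4)}$, not $\epsilon^{-(2+d/4)}$ --- the paper's remark after the lemma attributes the second error term to the estimation of the sampling density rather than to the raw Monte-Carlo fluctuation of the numerator --- but since you ultimately defer that bookkeeping to \cite{SingerEstimate} and its refinements, this does not undermine your reduction.
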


In \eqref{Eqn:err_nobound}-\eqref{Eqn:err_L2}, the first error-term, which is valid as $\epsilon\to 0$, is due to the continuous asymptotic expansion in \eqref{L1}, \eqref{L2}, and \eqref{L3}. The second error term is due to the estimation of the sampling density through \eqref{qehat}, and the final error term is the bias induced by the discrete estimator, both of these are valid as $\bar{N}\to\infty$ and fixed $\epsilon>0$.

\begin{proof}
The proofs for the cases $j'=1$ and $3$ are readily available in  \cite{coifman2006diffusion,SingerEstimate,berry2016local,harlim2018,harlim2019kernel}.
For $j'=2$, the proof follows directly the steps in Appendix A of \cite{harlim2019kernel} with the following modification. Define a matrix  $\mathbf{K}_{ij} = K(\epsilon,x_i,x_j)=\exp\big(-\frac{|x_i-x_j|^2}{4\epsilon}\big)$. Let
\BEA
\hat{q}_\epsilon(x_j) := \frac{\epsilon^{d/2}}{\bar{N}}\sum_{i=1}^{\bar{N}} \mathbf{K}_{ji},\label{qehat}
\EEA
as an estimator to the sampling density of the data $q(x_j)$. With this definition, we define $F_i(x_j): = \frac{K(\epsilon,x_i,x_j)\sqrt{\kappa(x_j)}u(x_j)}{\hat{q}_\epsilon(x_j)}$ and $G_i(x_j): = \frac{K(\epsilon,x_i,x_j)\sqrt{\kappa(x_j)}}{\hat{q}_\epsilon(x_j)}$. Following exactly the steps in the proof in \cite{harlim2019kernel} with the asymptotic expansion in \eqref{asymp}, one obtains the error estimate in \eqref{Eqn:err_L2} for a discrete estimator that converges to $\kappa^{-1}\mathcal{L}_2$. Thus, the error for estimating $\mathcal{L}_2$ is no difference since the discrete estimator involves only a left multiplication by a diagonal matrix with diagonal components $\kappa(x_j)$ (which we denoted by $\mathbf{S}$ in \eqref{discreteL2}).
\end{proof}

Now, we will deduce several intermediate results that will simplify the proof of Proposition~\ref{prop:extrapolationofu}. {\color{black}For the discussion below, we define the matrix,
\BEA
\mathbf{L}^h=\frac{1}{\epsilon}(\mathbf{\tilde{D}}-\mathbf{I}) :=\frac{1}{\epsilon}((\mathbf{D}^h)^{-1}\mathbf{K}^h-\mathbf{I}), \label{Ltildeinlemma}
\EEA
obtained from the standard diffusion maps as a discrete approximation to one of the diffusion operators in \eqref{explicitLs} with the entries of $\mathbf{%
\tilde{D}}$ denoted by $\tilde{D}_{i,j}$. As we discussed in Section~\ref{artBC_ghost},
the matrix $\mathbf{L}^h$ as a discrete approximation to one of the diffusion operators in \eqref{L1}-\eqref{Eqn:L3} with the following important modification. We construct the matrix $\mathbf{L}^h$ by evaluating the kernel on  $\{x_i\}_{i=1}^N\cup \{\tilde{x}_j^{G_k}\}_{j,k=1}^{J,K}$,
where the interior ghost points are denoted as components of $\{x_i\}$, that is, $\{\tilde{x}_j^{G_0}\} \subset\{x_i\}$. To be consistent with the notation in Section~\ref{artBC_ghost}, we emphasize that
$\mathbf{L}^h \in \mathbb{R}^{N\times\bar{N}}$ is a non-square matrix with $\bar{N}=N+JK$, where the $N$-rows correspond to the kernel evaluation at $\{x_i\}_{i=1}^N$. {\color{black}Based on the discussion in the Remark~\ref{rem:ws}, we will only proof the next Lemma for randomly sampled data, which error is mainly due to:
\BEA
\begin{aligned}\label{distance}
|\gamma_j(h)- \tilde{x}_j^{G_0}| &\leq |\gamma_{j}(h)- {x}_j^{G_0}| + |x_j^{G_0}- \tilde{x}_j^{G_0}|=\mathcal{O}(h^2) + h|\boldsymbol{\nu}-\boldsymbol{\tilde{\nu}}| = \mathcal{O}(h\sqrt{\epsilon}). \\
 |x_j^{G_k}- \tilde{x}_j^{G_k}| &= kh |\boldsymbol{\nu}-\boldsymbol{\tilde{\nu}}| = \mathcal{O}(h\sqrt{\epsilon})
 \end{aligned}
\EEA
for $j=1,\ldots,J$ and $k=1,\ldots, K$, as pointed out in Remark~\ref{rem:rs}. In \eqref{distance}, we have used the fact that $|\boldsymbol{\nu}-\boldsymbol{\tilde{\nu}}|=\mathcal{O}(\sqrt{\epsilon})$ for randomly sampled data.} For convenience, we recall that $\gamma_j(h) := \exp_{\boldsymbol{x}_j^B}(-h\boldsymbol{\nu}_{x_j^B})\in M$ (see Fig.~\ref{fig2_normal_dire}(a) for a geometric illustration).

Let $\mathbf{L}$ be defined as in \eqref{Ltildeinlemma} such that the matrix is constructed by evaluating the kernel on $\{x_i\}\cup \{x^{G_k}_j\}$, where we replace the interior ghost points $\{\tilde{x}^{G_0}_j\}$ in the construction of $\mathbf{L}^h$ with the corresponding $\{\gamma_j(h)\}\in M$.
 Basically $\mathbf{L}$ is constructed based on points that lie on the extended manifold, $M\cup\Delta M$. 
 With this construction, we have:
\begin{lem}\label{lem:noisymatrix} Let $\mathbf{L}^h$ and $\mathbf{L}$ be constructed as in the discussion above. Suppose that $x_j:=\gamma_j(h)$ such that $|x_i -x_j| = \mathcal{O}(h)$ for all $i\neq j$, then for $u\in C(B_{\epsilon^r}(\partial M))$,
\BEA
\sum_{j=1}^{\tilde N} \mathbf{L}^h_{ij} \tilde{u}_j =\sum_{j=1}^{\tilde N}  \mathbf{L}_{ij}u_j + \mathcal{O}(h^2\epsilon^{-3/2},h\epsilon^{-1/2}),\label{noisymatrix}
\EEA
where $h$ depends on $\epsilon$ such that the first two terms vanishes as $\epsilon\to 0$.
\end{lem}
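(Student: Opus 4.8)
The plan is to compare the two discrete operators entrywise by tracking how the perturbation of the underlying point sets propagates through the diffusion-maps normalization. Recall that both $\mathbf{L}^h$ and $\mathbf{L}$ have the form $\frac{1}{\epsilon}((\mathbf{D})^{-1}\mathbf{K}-\mathbf{I})$, where $\mathbf{K}_{ij}=\exp(-|p_i-p_j|^2/4\epsilon)$ for the respective point clouds, and the row-index set is the same $\{x_i\}_{i=1}^N$ in both cases. Write $p_j$ for the $j$-th point used to build $\mathbf{L}$ (so $p_j=\gamma_j(h)\in M$ for the interior-ghost slots and $p_j=x_j^{G_k}\in\Delta M$ for the exterior-ghost slots) and $\tilde p_j$ for the corresponding point used to build $\mathbf{L}^h$ (so $\tilde p_j=\tilde x_j^{G_0}$ or $\tilde x_j^{G_k}$). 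By \eqref{distance}, $|p_j-\tilde p_j|=\mathcal{O}(h\sqrt{\epsilon})$ for every perturbed column, and these are the only columns that differ. First I would expand a single kernel entry: for a fixed row $i$ and a perturbed column $j$,
\begin{equation*}
\mathbf{K}^h_{ij}-\mathbf{K}_{ij}=\exp\!\Big(-\tfrac{|x_i-\tilde p_j|^2}{4\epsilon}\Big)-\exp\!\Big(-\tfrac{|x_i-p_j|^2}{4\epsilon}\Big)=\mathbf{K}_{ij}\cdot\Big(-\tfrac{1}{4\epsilon}\big(|x_i-\tilde p_j|^2-|x_i-p_j|^2\big)+\mathcal{O}\big(\epsilon^{-2}|p_j-\tilde p_j|^4\big)\Big),
\end{equation*}
and since $|x_i-\tilde p_j|^2-|x_i-p_j|^2 = -2\langle x_i-p_j, \tilde p_j - p_j\rangle + |\tilde p_j-p_j|^2 = \mathcal{O}(h\cdot h\sqrt{\epsilon}) + \mathcal{O}(h^2\epsilon)$ using that $|x_i-p_j|=\mathcal{O}(h)$, we get $\mathbf{K}^h_{ij}-\mathbf{K}_{ij}=\mathbf{K}_{ij}\,\mathcal{O}(h^2\epsilon^{-1/2})$ entrywise on the support of the kernel.

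Next I would push this through the normalization. The diagonal degree matrices satisfy $\mathbf{D}^h_{ii}=\sum_j\mathbf{K}^h_{ij}$ and $\mathbf{D}_{ii}=\sum_j\mathbf{K}_{ij}$, and since only $\mathcal{O}(JK)$ columns are perturbed while the leading-order magnitude of $\mathbf{D}_{ii}$ is governed by the full sum, one obtains $\mathbf{D}^h_{ii}=\mathbf{D}_{ii}(1+\mathcal{O}(h^2\epsilon^{-1/2}))$. Writing $\tilde D^h_{ij}=(\mathbf{D}^h_{ii})^{-1}\mathbf{K}^h_{ij}$ and $\tilde D_{ij}=(\mathbf{D}_{ii})^{-1}\mathbf{K}_{ij}$, the relative perturbation of each Markov-matrix entry is therefore also $\mathcal{O}(h^2\epsilon^{-1/2})$. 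Finally, for the function values: $u$ is continuous on $B_{\epsilon^r}(\partial M)$, and $\tilde u_j:=u(\tilde p_j)$ versus $u_j:=u(p_j)$ differ by $|u(\tilde p_j)-u(p_j)|=\mathcal{O}(|p_j-\tilde p_j|)=\mathcal{O}(h\sqrt{\epsilon})$ on the perturbed slots (using a modulus-of-continuity bound; if $u\in C^1$ this is a Lipschitz estimate). Now expand the difference of the two weighted sums as
\begin{equation*}
\sum_j \tilde D^h_{ij}\tilde u_j - \sum_j \tilde D_{ij} u_j = \sum_j (\tilde D^h_{ij}-\tilde D_{ij})\tilde u_j + \sum_j \tilde D_{ij}(\tilde u_j - u_j).
\end{equation*}
The second sum is bounded by $\|u\|_\infty$-weighted row sums times $\mathcal{O}(h\sqrt{\epsilon})$ restricted to the perturbed columns, giving $\mathcal{O}(h\sqrt{\epsilon})$; after dividing by $\epsilon$ in the definition of $\mathbf{L}^h,\mathbf{L}$ this is the $\mathcal{O}(h\epsilon^{-1/2})$ term. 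The first sum: each $\tilde D^h_{ij}-\tilde D_{ij}=\mathcal{O}(h^2\epsilon^{-1/2})\tilde D_{ij}$ (plus a global $\mathcal{O}(h^2\epsilon^{-1/2})$ factor from the degree perturbation distributed over all columns), so summing against the bounded $\tilde u_j$ and dividing by $\epsilon$ yields $\mathcal{O}(h^2\epsilon^{-3/2})$. Collecting the two contributions gives exactly \eqref{noisymatrix}.

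The main obstacle I anticipate is bookkeeping the interplay between the sparse set of perturbed columns (only $JK$ of them, all near $\partial M$) and the fact that the degree normalization $\mathbf{D}_{ii}$ is a sum over all $\bar N$ columns: one must be careful that the perturbation of the normalization constant, though it nominally affects every entry of row $i$, is itself only of relative order $h^2\epsilon^{-1/2}$ because the absolute change in $\mathbf{D}_{ii}$ comes only from the $\mathcal{O}(JK)$ perturbed kernel values, each of which changes by a relative $\mathcal{O}(h^2\epsilon^{-1/2})$; so the worry that "$\mathcal{O}(JK)$ small changes over a sum dominated by $\mathcal{O}(N)$ terms could wash out" actually works in our favor. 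A secondary subtlety is that the estimate $|x_i-p_j|=\mathcal{O}(h)$ needed in the kernel expansion only holds when $x_i$ is itself one of the perturbed/nearby points; for rows $x_i$ far from $\partial M$, $\mathbf{K}_{ij}$ is already exponentially small in $\epsilon^{-(1-2r)}$ for the perturbed columns $j$, so those rows contribute negligibly and the bound holds a fortiori. I would also note that, as flagged in Remark~\ref{rem:ws}, this whole argument is only needed for randomly sampled data, since for well-sampled data $\tilde p_j = p_j$ exactly for the interior ghosts and the residual error is absorbed into the diffusion-maps estimate on the extended domain.
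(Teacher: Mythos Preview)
Your proposal is correct and follows essentially the same approach as the paper's proof: both expand a single kernel entry to obtain $\mathbf{K}^h_{ij}=\mathbf{K}_{ij}(1+\mathcal{O}(h^2\epsilon^{-1/2}))$ via the same squared-distance calculation, propagate this through the row-sum normalization to get $\mathbf{D}^h_i=\mathbf{D}_i(1+\mathcal{O}(h^2\epsilon^{-1/2}))$, and then split the final error into a matrix-perturbation term yielding $\mathcal{O}(h^2\epsilon^{-3/2})$ and a function-value term $\tilde u_j-u_j=\mathcal{O}(h\sqrt{\epsilon})$ yielding $\mathcal{O}(h\epsilon^{-1/2})$. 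Your add-and-subtract decomposition at the Markov-matrix level is organizationally a bit cleaner than the paper's longer chain of substitutions, but the underlying estimates are identical; your remarks on the sparse perturbed column set and on the exponential decay for rows far from $\partial M$ are exactly the bookkeeping the paper leaves implicit (the paper absorbs the number of perturbed points into the big-oh constant with the comment that it is ``much smaller than $k$'').
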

where $\tilde{u}_j$ and $u_j$ are different only on the ghost points, particularly, if $\tilde{u}_j= u(\tilde{x}_j^{G_0})$, then $u_j= u(\gamma_j(h))=u(x_j)$. Also, if $\tilde{u}_j= u(\tilde{x}_j^{G_k})$, then $u_j=u(x_j^{G_k})$ for all $k=1,\ldots, K$.

\begin{proof}
Suppose we consider $x_j = \gamma_j(h)$ that satisfies \eqref{distance}. For $|x_i-x_j|=\mathcal{O}(h)$, one can show that,
\BEA
\mathbf{K}^h_{ij}&:=&\exp\Big(-\frac{|x_i- \tilde{x}_j^{G_0}|^{2}}{4\epsilon }\Big) = \exp\Big(-\frac{|x_i-x_j|^{2}}{4\epsilon }\Big) \exp\Big( -\frac{c|x_i- x_j|h\sqrt{\epsilon}+ \mathcal{O}(h^2\epsilon)}{4\epsilon}\Big)\nonumber\\
&=& \exp\Big(-\frac{|x_i- x_j|^{2}}{4\epsilon }\Big)  (1+\mathcal{O}(h^2\epsilon^{-1/2})).\nonumber
\EEA
Therefore,
\BEA
\mathbf{D}^h_i:=\sum_{j=1}^{\bar{N}} \mathbf{K}^h_{ij} = \sum_{j=1}^{\bar{N}} \mathbf{K}_{ij} + \mathcal{O}(h^2\epsilon^{-\frac{1}{2}}) := \mathbf{D}_i + \mathcal{O}(h^2\epsilon^{-\frac{1}{2}}),\nonumber
\EEA
where the constant in the big-oh notation absorbs the number perturbed points, which is much smaller than $k$ when $k$-nearest neighbor summand is used. This means,
\BEA
(\mathbf{D}^h_i)^{-1}  := \mathbf{D}_i^{-1} (1- \mathbf{D}_i^{-1}\mathcal{O}(h^2\epsilon^{-1/2})).\notag
\EEA
and
\BEA
\sum_{j=1}^{\bar{N}} \mathbf{L}_{ij}^h \tilde{u}_j  &:=& \epsilon^{-1}\Big( (\mathbf{D}^h_i)^{-1} \sum_{j=1}^{\bar{N}} \mathbf{K}^h_{ij} \tilde{u}_j - \tilde{u}_i\Big) \nonumber \\
&=& \epsilon^{-1}  \Big(  \mathbf{D}_i^{-1}(1 - \mathbf{D}_i^{-1}\mathcal{O}(h^2\epsilon^{-1/2})) \sum_{j=1}^{\bar{N}} \mathbf{K}^h_{ij} \tilde{u}_j   - \tilde{u}_i\Big) \nonumber \\
&=& \epsilon^{-1}  \Big(  \mathbf{D}_i^{-1} (1- \mathbf{D}_i^{-1}\mathcal{O}(h^2\epsilon^{-1/2})) \big(\sum_{j=1}^{\bar{N}} \mathbf{K}_{ij} \tilde{u}_j + \sum_{j=1}^{\bar{N}} \tilde{u}_j\mathcal{O}(h^2\epsilon^{-1/2})\big)  - \tilde{u}_i\Big) \nonumber \\
&=& \epsilon^{-1}  \Big(  \mathbf{D}_i^{-1} \sum_{j=1}^{\bar{N}} \mathbf{K}_{ij} \tilde{u}_j - \tilde{u}_i+  \mathbf{D}_i^{-1} \sum_{j=1}^{\bar{N}} \tilde{u}_j\mathcal{O}(h^2\epsilon^{-1/2}) -\mathbf{D}_i^{-1}\sum_{j=1}^{\bar{N}} \mathbf{K}_{ij} \tilde{u}_j \mathbf{D}_i^{-1}\mathcal{O}(h^2\epsilon^{-1/2}) \Big) \nonumber \\
&=& \sum_{j=1}^{\bar{N}} \mathbf{L}_{ij} \tilde{u}_j + \mathbf{D}_i^{-1}\big(\sum_{j=1}^{\bar{N}} (1-\mathbf{D}_i^{-1}\mathbf{K}_{ij})\tilde{u}_j \big) \mathcal{O}(h^2\epsilon^{-3/2})\big) \nonumber\\
&=& \sum_{j=1}^{\bar{N}} \mathbf{L}_{ij} u_j + \mathcal{O}(h^2\epsilon^{-3/2},h\epsilon^{-1/2}),\nonumber
\EEA
where in the last equality, we have used the fact $u_j-\tilde{u}_j = \mathcal{O}(h\epsilon^{1/2})$ on the estimated ghost points due to \eqref{distance}, $u\in C^{1}(B_{\epsilon^r}\partial M)$, and $1-\mathbf{D}_i^{-1}\mathbf{K}_{ij} \leq 1$ and $\mathbf{D}_i \geq 1$ such that
$\mathbf{D}_i^{-1}\big(\sum_{j=1}^{\bar{N}} (1-\mathbf{D}_i^{-1}\mathbf{K}_{ij})\tilde{u}_j\big) \leq | {\tilde{u}}|$, where $\tilde{u}\in\mathbb{R}^{\bar{N}}$ is defined below in \eqref{utildevec}.

\end{proof}

The assumption that $|x_i -x_j| = \mathcal{O}(h)$, where $x_j:=\gamma_j(h)$ is rather natural in the numerical implementation with $k$-nearest neighbor, even if there are many other points $x_i$ that are further away with distance of order-$\sqrt{\epsilon}$. In particular, our construction is such that the perturbed points $\{\tilde{x}^{G_0}_j\}$ are defined to be of order-$h$ away from each boundary point and the corresponding ghost points $\{\tilde{x}^{G_k}_j\}$ as defined in \eqref{approxghostpontsembedding}. Therefore, when $k$-nearest neighbor is used in constructing the matrix $\mathbf{L}^h$, then these estimated ghost points are either belong to the $k$-nearest sets of other points whose distance are of order-$h$ or they have $k$-nearest neighbor of mostly points of order-$h$ away.

Since $h=\mathcal{O}(\epsilon)$ for the randomly sample data case, the two error bounds are equivalent. Next, we define the column vector
\BEA
\tilde{u} := (\tilde{u}_1,\ldots,\tilde{u}_{\bar{N}}) = (u(x_1), \ldots, u(\tilde{x}_j^{G_0}), \ldots, u(x_N), u(\tilde{x}_1^{G_1}),\ldots,u(\tilde{x}_J^{G_K}) ) \in \mathbb{R}^{\bar{N}}.\label{utildevec}
\EEA
For $\vec{u}$ as defined in \eqref{truevectoru}, the error rate in \eqref{noisymatrix} can be written in a compact form as,
\BEA
\mathbf{L}^h \tilde{u}= \mathbf{L}\vec{u} + \mathcal{O}(h^2\epsilon^{-3/2}).\label{errorofLh1compact}
\EEA
This error also implies,
\BEA
\mathbf{L}^h \vec{u}= \mathbf{L}\vec{u} + \mathcal{O}(h^2\epsilon^{-3/2}),\label{errorofLh}
\EEA
since $\tilde{u}-\vec{u}=\mathcal{O}(h\epsilon^{1/2})$.

Next, we will deduce the consistency of the extrapolation formula  \eqref{Eq:Uvvv}. For this purpose, we define $\vec{u}_{\epsilon }=(u(x_{1}),\ldots, u(\tilde{x}_j^{G_0}) ,\ldots,u(x_{N}),\tilde{u}_{\epsilon,1 }^{G_{1}},\ldots ,
\tilde{u}_{\epsilon,J }^{G_{K}})^{\top }$ as in \eqref{longvectorue}. Here, $u(\tilde{x}_j^{G_0})$ replaces $u(\gamma_j(h))$ in the first $N-$terms of $\vec{u}$ as in \eqref{truevectoru}.

}

\begin{lem} (Consistency of the extrapolation formula)
\label{prop:exterpo_dis4} Under the assumptions of the Proposition~\ref{prop:extrapolationofu}, then for each boundary point $x_j^B\in\partial M$, the truncation error for the first equation in the extrapolation formula \eqref{Eq:Uvvv} is given by,
\begin{equation}
\left|\sum_{j',k=1}^{J,K}\tilde{D}_{B_j,\left( N+(j'-1)K+k\right)}\left(u(\tilde{x}^{G_{k}}_{j'}) -\tilde{u}_{\epsilon,j' }^{G_{k}}\right)\right|
{=\epsilon\left\vert \left(\mathbf{L}^h  \tilde{u}\right)_{B_j}-\left( \mathbf{L}^h\vec{u}_{\epsilon
}\right) _{B_j}\right\vert
=}\mathcal{O}%
\left( \epsilon \bigg(h^2\epsilon^{-3/2},\epsilon,
\bar{N}^{-1/2}\epsilon ^{-(2+d/4)},\bar{N}^{-1/2}\epsilon ^{-(1/2+d/4)}%
\bigg)\right) ,  \label{Eqn:DG4}
\end{equation}%
in high probability, where $h$ depends on $\epsilon$ such that the first two terms vanishes as $\epsilon\to 0$ after $\bar{N}\to\infty$. {\color{black}Here the sub-index $B_j$ corresponds to the boundary point $x^{B}_j$. For the last three equations in \eqref{Eq:Uvvv}, we have,
\BEA
\left|\left( u(\tilde{x}^{G_{2}}_j)-3u(\tilde{x}^{G_{1}}_j)\right) -\left(\tilde{u}%
_{\epsilon,j }^{G_{2}}-3\tilde{u}_{\epsilon,j }^{G_{1}}\right)\right| &=&%
\mathcal{O}\left( h^{3}\right) ,  \notag \\
\left|\left( u(\tilde{x}^{G_{3}}_j)-3u(\tilde{x}^{G_{2}}_j)+3u(\tilde{x}^{G_{1}}_j)\right) -\left(\tilde{u}%
_{\epsilon,j }^{G_{3}}-3\tilde{u}_{\epsilon,j }^{G_{2}}+3\tilde{u}_{\epsilon,j
}^{G_{1}})\right)\right| &=& \mathcal{O}
\left( h^{3}\right) ,  \label{Eqn:ext_f} \\
\left|\left( u(\tilde{x}^{G_{k}}_j)-3u(\tilde{x}^{G_{k-1}}_j)+3u(\tilde{x}^{G_{k-2}}_j)-u(\tilde{x}^{G_{k-3}}_j)\right) -\left(%
\tilde{u}_{\epsilon,j }^{G_{k}}-3\tilde{u}_{\epsilon,j }^{G_{k-1}}+3\tilde{u}%
_{\epsilon,j }^{G_{k-2}}-\tilde{u}_{\epsilon,j }^{G_{k-3}}\right)\right| &=&\mathcal{O}\left( h^{3}\right),  \notag
\EEA%
for $k=4,\ldots, K$.}
\end{lem}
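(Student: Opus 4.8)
The plan is to handle the two claims in \eqref{Eqn:DG4} and \eqref{Eqn:ext_f} separately, reducing each to the estimates already in hand (Lemmas~\ref{lem:old} and \ref{lem:noisymatrix}) together with the extrapolation equations \eqref{Eq:Uvvv} and, for \eqref{Eqn:ext_f}, an elementary divided-difference bound.

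For the identity and the estimate in \eqref{Eqn:DG4}, I would first observe that the vectors $\tilde u$ in \eqref{utildevec} and $\vec u_\epsilon$ in \eqref{longvectorue} coincide in their first $N$ components (the true values at points of $M$ and at the interior ghost points) and differ only in the last $JK$ components, where $(\tilde u - \vec u_\epsilon)_{N+(j'-1)K+k} = u(\tilde x^{G_k}_{j'}) - \tilde u^{G_k}_{\epsilon,j'}$. Since $\epsilon\mathbf{L}^h = \tilde{\mathbf D} - \mathbf{I}$ (cf. \eqref{Ltildeinlemma}) and the $B_j$-component of $\tilde u - \vec u_\epsilon$ vanishes (it sits among the first $N$ entries), the sum on the left of \eqref{Eqn:DG4} is precisely $\big(\tilde{\mathbf D}(\tilde u - \vec u_\epsilon)\big)_{B_j} = \epsilon\big((\mathbf{L}^h\tilde u)_{B_j} - (\mathbf{L}^h\vec u_\epsilon)_{B_j}\big)$, which is the first equality. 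For the bound, I would use the first equation of \eqref{Eq:Uvvv} to replace $(\mathbf{L}^h\vec u_\epsilon)_{B_j}$ by $f(x^B_j) = \mathcal{L}u(x^B_j)$, and then split $\epsilon|(\mathbf{L}^h\tilde u)_{B_j} - \mathcal{L}u(x^B_j)|$ via the triangle inequality through $(\mathbf{L}\vec u)_{B_j}$. The first piece $\epsilon|(\mathbf{L}^h\tilde u)_{B_j} - (\mathbf{L}\vec u)_{B_j}|$ is $\epsilon\,\mathcal{O}(h^2\epsilon^{-3/2})$ by \eqref{errorofLh1compact}; the second piece $\epsilon|(\mathbf{L}\vec u)_{B_j} - \mathcal{L}u(x^B_j)|$ is $\epsilon\,\mathcal{O}\big(\epsilon,\bar N^{-1/2}\epsilon^{-(2+d/4)},\bar N^{-1/2}\epsilon^{-(1/2+d/4)}\big)$ in high probability by Lemma~\ref{lem:old}, since $\mathbf{L}$ is the diffusion-maps matrix built on data lying on $M\cup\Delta M$. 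The point that makes Lemma~\ref{lem:old} applicable at $x^B_j$ is that, by the ghost-point construction of Section~\ref{ghost_ptm} with collar radius $R = \mathcal{O}(\epsilon^r)$, $0<r<1/2$, the boundary point $x^B_j$ is an interior point of the extended manifold whose Euclidean distance to $\partial(M\cup\Delta M)$ is of order $\epsilon^r$, so the asymptotic expansion \eqref{asymp} is valid there. Summing the two pieces produces exactly the claimed rate.

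For \eqref{Eqn:ext_f}, $\mathbf{L}^h$ plays no role. For each of the last three equations in \eqref{Eq:Uvvv} I would solve it for the combination of $\tilde u^{G}_{\epsilon,j}$ appearing on the left of \eqref{Eqn:ext_f}: the second equation yields $\tilde u^{G_2}_{\epsilon,j} - 3\tilde u^{G_1}_{\epsilon,j} = u(\tilde x^{G_0}_j) - 3u(x^B_j)$, the third yields $\tilde u^{G_3}_{\epsilon,j} - 3\tilde u^{G_2}_{\epsilon,j} + 3\tilde u^{G_1}_{\epsilon,j} = u(x^B_j)$, and the $k$-th equation (for $k\geq 4$) yields $\tilde u^{G_k}_{\epsilon,j} - 3\tilde u^{G_{k-1}}_{\epsilon,j} + 3\tilde u^{G_{k-2}}_{\epsilon,j} - \tilde u^{G_{k-3}}_{\epsilon,j} = 0$. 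Substituting these into the left sides of \eqref{Eqn:ext_f}, each expression becomes exactly a third forward finite difference, with uniform step $h$, of the scalar function $g(s) := u(x^B_j + s\,\boldsymbol{\tilde{\nu}}_j)$ evaluated at four consecutive nodes among $\{\tilde x^{G_0}_j, x^B_j, \tilde x^{G_1}_j,\ldots,\tilde x^{G_K}_j\}$, all of which lie in $B_{\epsilon^r}(x^B_j)\subset B_{\epsilon^r}(\partial M)$ because $Kh = \mathcal{O}(\epsilon^r)$. Since $u\in C^3(M\cup B_{\epsilon^r}(\partial M))$, the function $g$ is $C^3$ on the relevant segment, and by the mean-value theorem for divided differences the third difference equals $h^3 g'''(\xi) = h^3\,\partial_{\boldsymbol{\tilde{\nu}}_j}^3 u(\xi)$ for some node-interior $\xi$, hence $\mathcal{O}(h^3)$ with a constant controlled by the third normal derivative of $u$ near $\partial M$.

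The main obstacle is of bookkeeping type: carefully identifying the left side of \eqref{Eqn:DG4} with $\epsilon\big((\mathbf{L}^h\tilde u)_{B_j} - (\mathbf{L}^h\vec u_\epsilon)_{B_j}\big)$ through the block structure of $\tilde u$, $\vec u_\epsilon$, and $\mathbf{L}^h$, and justifying that $x^B_j$ really does sit deep enough inside $M\cup\Delta M$ for the diffusion-maps expansion (Lemma~\ref{lem:old}) to hold with the stated error terms — i.e. that the collar radius $R=\mathcal{O}(\epsilon^r)$ is chosen so that $x^B_j$ is at distance exceeding $\epsilon^{r'}$ from $\partial(M\cup\Delta M)$ for some $r<r'<1/2$. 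The finite-difference estimates feeding \eqref{Eqn:ext_f} are then routine.
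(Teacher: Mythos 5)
Your proposal is correct and follows essentially the same route as the paper's proof: the same triangle-inequality split of $\epsilon\bigl|(\mathbf{L}^h\tilde u)_{B_j} - (\mathbf{L}^h\vec u_\epsilon)_{B_j}\bigr|$ through $(\mathbf{L}\vec u)_{B_j}$, the same substitution of $(\mathbf{L}^h\vec u_\epsilon)_{B_j}=f(x^B_j)=\mathcal{L}u(x^B_j)$ from the first equation of \eqref{Eq:Uvvv}, and the same appeals to Lemma~\ref{lem:noisymatrix} (via \eqref{errorofLh1compact}) and Lemma~\ref{lem:old} for the two pieces; your treatment of \eqref{Eqn:ext_f} simply spells out the third-forward-difference identity that the paper dispatches as a ``standard Taylor's expansion'' along the equispaced straight segment.
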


\begin{proof} First, let us proof \eqref{Eqn:DG4}. For this case, we only consider the $B_j$th row corresponding to the boundary point $x^{B}_j$. {\color{black}In the case of randomly sampled data, we have
\BEA
\left\vert (\mathbf{L}^h\tilde{u})_{B_j}-( \mathbf{L}^h\vec{u}_{\epsilon
}) _{B_j}\right\vert  &\leq& \left\vert (\mathbf{L}^h\tilde{u})_{B_j}-( \mathbf{L}\vec{u}) _{B_j}\right\vert  + \left\vert (\mathbf{L}\vec{u})_{B_j}-( \mathbf{L}^h\vec{u}_{\epsilon
}) _{B_j}\right\vert \nonumber \\
&=&  \left\vert (\mathbf{L}^h\tilde{u})_{B_j}-( \mathbf{L}\vec{u}) _{B_j}\right\vert + \left\vert (\mathbf{L}\vec{u})_{B_j}- \mathcal{L}u(x_{j}^B)\right\vert  \nonumber \\
&=&
\mathcal{O}(h^2\epsilon^{-3/2}) + \mathcal{O}\left( \epsilon,
\bar{N}^{-1/2}\epsilon ^{-(2+d/4)},\bar{N}^{-1/2}\epsilon ^{-(1/2+d/4)}\right),\label{temp1}
\EEA
where we have used the fact that, $\mathcal{L}u(x_{j}^B)=f(x_{j}^B) $ and $\left( \mathbf{L}^h\vec{u}%
_{\epsilon }\right) _{B_j}=f(x_{j}^B)$, which is the first equation of the extrapolation formula  in \eqref{Eq:Uvvv}, in deducing the second line above. To obtain the third line, we directly used \eqref{errorofLh1compact} for the first bound and Lemma~\ref{lem:old} for the second error bound,
where we have suppressed the dependence on $q(x_j^B), \nabla u(x_j^B), \nabla (\kappa^{1/2}(x_j^B)u(x_j^B))$ in \eqref{Eqn:err_nobound} and \eqref{Eqn:err_L2} to simplify the discussion.

For the well-sampled data, based on the discussion in Remark~\ref{rem:ws}, the first error term in \eqref{temp1} is not applicable and we treat $\mathbf{L}^h$ as $\mathbf{L}$.} Since the first $N$ components of $\tilde{u}_i$ (see \eqref{utildevec}) are equal to the components of $\vec{u}_\epsilon^M$ defined in \eqref{vectoru} and the identity $%
\mathbf{I}$ only contributes to the coefficient of $u(x_{j}^B)=\tilde{u}_{\epsilon,j
}^{B_j} $ for the boundary point $x^{B}_j$, we can simplify the left-hand-side of \eqref{temp1} as,
\begin{eqnarray}
\left\vert \left(\mathbf{L}^h  {\tilde{u}}\right)_{B_j}-\left( \mathbf{L}^h\vec{u}_{\epsilon
}\right) _{B_j}\right\vert =\frac{1}{\epsilon }\left|\left( \mathbf{\tilde{D}}%
{\tilde{u}} \right)_{B_j}-\left( \mathbf{\tilde{D}}\vec{u}_{\epsilon }\right)
_{B_j}\right| =\frac{1}{\epsilon }\left|\sum_{j',k=1}^{J,K}\tilde{D}_{B_j,\left( N+(j'-1)K+k\right)}\left(u(\tilde{x}^{G_{k}}_{j'}) -\tilde{u}_{\epsilon,j' }^{G_{k}}\right)\right|.  \label{Eqn:Lui}
\end{eqnarray}%
Thus, from \eqref{temp1} and \eqref{Eqn:Lui}, we obtain the result in %
\eqref{Eqn:DG4}.

{\color{black}The proof for \eqref{Eqn:ext_f} is straightforward. In particular, for each $j=1,\ldots, J$, note that $\{\tilde{x}^{G_0}_j,x^B,\tilde{x}^{G_1}_j,\ldots,\tilde{x}^{G_K}_j \}$ are points that lie on a straight line in the direction of $\boldsymbol{\tilde{\nu}}$ where the distances between the consecutive points are identical, namely, $h$. For $u\in C^3(M\cup B_{\epsilon^r}(\partial M))$, where $B_{\epsilon^r}(\partial M)\supset \Delta M$ is as in Definition~\ref{epsball}, then one can can deduce  \eqref{Eqn:ext_f} by employing the standard Taylor's expansion on the interval $[\tilde{x}_j^{G_0},\tilde{x}_j^{G_K}]$.}

\end{proof}

{\bf Proof of Proposition~\ref{prop:extrapolationofu}.} Notice that we can write \eqref{Eqn:DG4} and \eqref{Eqn:ext_f} in Proposition \ref{prop:exterpo_dis4} in a matrix form,%
\begin{equation}
\mathbf{E}\delta \vec{u}_{\epsilon }^{G}=\mathcal{O}\left( h
^{3},h^2\epsilon^{-1/2},\epsilon^{2},\bar{N}^{-1/2}\epsilon ^{-(1+d/4)},\bar{N}^{-1/2}\epsilon ^{(1/2-d/4)}\right),\label{Eqn:uueps}
\end{equation}%
where $\delta \vec{u}_{\epsilon }^{G}=(|\tilde{u}_{\epsilon,1
}^{G_{1}}-u(\tilde{x}^{G_{1}}_1)|,\ldots ,|\tilde{u}_{\epsilon,J
}^{G_{K}}-u(\tilde{x}^{G_{K}}_J)|)^{\top }$ and the matrix $\mathbf{E}$ is of size $JK\times JK$. We first show the stability of $\mathbf{E}$, namely $\mathbf{E}$ is invertible with uniformly bounded inverse. For simplicity of the discussion, we set $J=1$ corresponds to a boundary point. One can use the same idea for the case of $J>1$.

In this case, the matrix $\mathbf{E}$ in \eqref{Eqn:uueps} is given by
\begin{equation*}
\mathbf{E}=\begin{pmatrix}
\tilde{D}_{B_1,\left( N+1\right) } & \tilde{D}_{B_1,\left( N+2\right) } & \tilde{D}%
_{B_1,\left( N+3\right) } & \tilde{D}_{B_1,\left( N+4\right) } & \cdots  & \tilde{D%
}_{B_1,\left( N+K\right) }  \\
-3 & 1 &  &  &  & 0 \\
3 & -3 & 1 &  &  & \vdots  \\
-1 & 3 & -3 & 1 &  & \vdots  \\
\vdots  &  &  &  & \ddots  & 0 \\
0 & \cdots  & -1 & 3 & -3 & 1%
\end{pmatrix}.
\end{equation*}%

We can obtain the uniform error between $\vec{u}$ and $\vec{u}_{\epsilon }$\
in \eqref{Eqn:uueps} once showing that $\left\Vert \mathbf{E}%
^{-1}\right\Vert _{\infty }<C$. We have the following decomposition for
matrix $\mathbf{E}$, $\mathbf{E=E}_{0}+\mathbf{v}_{1}\mathbf{v}_{2}^{\top }$,%
\begin{equation*}
\mathbf{E}_{0}=\begin{pmatrix}
1 & 0 & 0 & \cdots  & \cdots  & 0 \\
-3 & 1 &  &  &  & 0 \\
3 & -3 & 1 &  &  & \vdots  \\
-1 & 3 & -3 & 1 &  & \vdots  \\
\vdots  &  &  &  & \ddots  & 0 \\
0 & \cdots  & -1 & 3 & -3 & 1%
\end{pmatrix} ,\quad \mathbf{v}_{1}=\begin{pmatrix}
1 \\
0 \\
\vdots  \\
\vdots  \\
0%
\end{pmatrix},\quad \mathbf{v}_{2}=\begin{pmatrix}
\tilde{D}_{B_1,\left( N+1\right) }-1 \\
\tilde{D}_{B_1,\left( N+2\right) } \\
\tilde{D}_{B_1,\left( N+3\right) } \\
\vdots  \\
\tilde{D}_{B_1,\left( N+K\right) }%
\end{pmatrix}.
\end{equation*}%
By induction, one can show that
\begin{equation*}
\mathbf{E}_{0}^{-1}=\begin{pmatrix}
1 & 0 & 0 & \cdots  & \cdots  & 0 \\
3 & 1 &  &  &  & 0 \\
6 & 3 & 1 &  &  & \vdots  \\
10 & 6 & 3 & 1 &  & \vdots  \\
\vdots  & 10 & \ddots  & \ddots  & \ddots  & 0 \\
K(K+1)/2 & \cdots  & 10 & 6 & 3 & 1%
\end{pmatrix},
\end{equation*}%
so that $\left\Vert \mathbf{E}_{0}^{-1}\right\Vert _{\infty }=K(K+1)(K+2)/6<C
$ by noticing that $K$ is always fixed to be less than 10 even when $%
N\rightarrow \infty $. One can calculate that $\mathbf{E}_{0}^{-1}\mathbf{v}%
_{1}=$ $(1,3,6,\ldots ,K(K+1)/2)^{\top }$\ and $1+\mathbf{v}_{2}^{\top }%
\mathbf{E}_{0}^{-1}\mathbf{v}_{1}=\sum_{k=1}^{K}k(k+1)\tilde{D}_{B_1,\left(
N+k\right) }/2$ which is nonzero. Thus, according to Sherman-Morrison
formula, we have%
\begin{equation*}
\left\Vert \mathbf{E}^{-1}\right\Vert _{\infty }=\left\Vert \left( \mathbf{E}%
_{0}+\mathbf{v}_{1}\mathbf{v}_{2}^{\top }\right) ^{-1}\right\Vert _{\infty
}=\left\Vert \left( \mathbf{I}-\frac{\mathbf{E}_{0}^{-1}\mathbf{v}_{1}%
\mathbf{v}_{2}^{\top }}{1+\mathbf{v}_{2}^{\top }\mathbf{E}_{0}^{-1}\mathbf{v}%
_{1}}\right) \mathbf{E}_{0}^{-1}\right\Vert _{\infty }<C.
\end{equation*}

{\color{black}Inverting $\mathbf{E}$ in \eqref{Eqn:uueps}, for each $j=1,\ldots, J, k=1,\ldots, K$, we have,
\BEA
\Big|\tilde{u}_{\epsilon,1}^{G_{k}}-u(\tilde{x}^{G_{k}}_j)\Big| = \Big(\delta \vec{u}_{\epsilon }^{G}\Big)_{j,k} = \mathcal{O}\left( h
^{3},h^2\epsilon^{-1/2},\epsilon^{2},\bar{N}^{-1/2}\epsilon ^{-(1+d/4)},\bar{N}^{-1/2}\epsilon ^{(1/2-d/4)}\right),
\EEA
and the proof is complete by comparing these error rates with $|u(x^{G_k}_j) - u(\tilde{x}^{G_k}_j)|= \mathcal{O}(h\sqrt{\epsilon})$. We will neglect this error term since it is equivalent to the error rate of order-$h^2\epsilon^{-1/2}$, anticipating that $h=\mathcal{O}(\epsilon)$. In Proposition~\ref{prop:extrapolationofu}, we also neglect the order-$\epsilon^2$ error term since it is dominated by the first two error bounds in \eqref{Eqn:uueps}.}

\section{Proof of Theorem~\ref{theorem2}} \label{app:D}

The proof here follows the standard approach for proving the convergence of the finite-difference method presented in many numerical PDE texts (e.g., see \cite{larsson2008partial}). That is, we will show that $\mathbf{L}^I$ in \eqref{Dirichletsystem} satisfies a discrete maximum principle. Subsequently, a comparison function is chosen using the maximum principle of the Dirichlet PDE problem to establish the stability condition. The convergence is achieved with the consistency of the GPDM estimator in Theorem~\ref{theorem1}. Before we proceed with these steps, let us first analyze the resulting GPDM estimator, $\mathbf{L}^g(\vec{u}^M):=(\mathbf{L}^{(1)} + \mathbf{L}^{(2)}\mathbf{A})\vec{u}^M + \mathbf{L}^{(2)}\vec{b}$, as defined in \eqref{GPDM}.

To simplify the discussion, we present the 1D case with $J=2$ boundary points, denoted by $x_{1}^B = x_1$ and $x_{2}^B = x_N$ (see Fig.~\ref{fig22_1dcurve}(a)). The corresponding ghost points are $\tilde{x}^{G_0}_1=x_2$ and $\tilde{x}^{G_0}_2=x_{N-1}$, using the secant line approximation. Otherwise, the same analysis can be carried by relabelling $\tilde{x}^{G_0}_j$ by other arbitrary $x_i$.
The last three equations in \eqref{Eq:Uvvv} can be written as,
\BEA
\begin{aligned}\label{UGK}
\tilde{u}^{G_k}_{\epsilon,1} &= \frac{k(k+1)}{2} \tilde{u}^{G_1}_{\epsilon,1} - (k^2-1) u_1 + \frac{k(k-1)}{2}u_2,  \\
\tilde{u}^{G_k}_{\epsilon,2} &= \frac{k(k+1)}{2} \tilde{u}^{G_1}_{\epsilon,2} - (k^2-1) u_N + \frac{k(k-1)}{2}u_{N-1},
\end{aligned}\quad k=2,\ldots, K.
\EEA
Using the same notation as in \eqref{Ltildeinlemma}, we let $\mathbf{L}^h=(\mathbf{\tilde{D}}-%
\mathbf{I})/\epsilon :=((\mathbf{D}^h)^{-1}\mathbf{K}^h-\mathbf{I})/\epsilon $ be
the $N\times\bar{N}$ matrix, where $\bar{N}=N+2K$, obtained from the standard diffusion maps as an discrete approximation to one of the diffusion operators in \eqref{L1}-\eqref{Eqn:L3} with the entries of $\mathbf{%
\tilde{D}}$ denoted by $\tilde{D}_{i,j}$. To be consistent with the notation in Section~\ref{artBC_ghost}, we emphasize that
$\mathbf{L}^h \in \mathbb{R}^{N\times\bar{N}}$ is a non-square matrix with $\bar{N}=N+2K$, where the $N$-rows correspond to the kernel evaluation at the points $\{x_i\in M\}_{i=1}^N$.

Then, the $i$th component of $(\mathbf{\tilde{D}}-\mathbf{I})\vec{u}_\epsilon$ is given by,
\BEA
\sum_{j=1}^{N+2K} \tilde{D}_{i,j} u_j - u_i &=& \sum_{j=3}^{N-2} \tilde{D}_{i,j} u_j - u_i + \tilde{D}_{i,1} u_1 + \tilde{D}_{i,2} u_2 + \tilde{D}_{i,N-1} u_{N-1} + \tilde{D}_{i,N} u_N + \sum_{j,k=1}^{2,K} \tilde{D}_{i,N+(j-1)K+k}\tilde{u}^{G_k}_{\epsilon,j}\nonumber \\
&=& \sum_{j=3}^{N-2} \tilde{D}_{i,j} u_j - u_i + \left( \tilde{D}_{i,1} - \sum_{k=2}^{K}(k^2-1)\tilde{D}_{i,N+k} \right) u_1 + \left(\tilde{D}_{i,2}+\sum_{k=2}^{K}\frac{k(k-1)}{2}\tilde{D}_{i,N+k} \right) u_2 \nonumber\\
&& + \left(\tilde{D}_{i,N-1}+\sum_{k=2}^{K}\frac{k(k-1)}{2}\tilde{D}_{i,N+K+k} \right) u_{N-1} + \left( \tilde{D}_{i,N} - \sum_{k=2}^{K}(k^2-1)\tilde{D}_{i,N+K+k} \right) u_N \nonumber \\ && +\left(\sum_{k=1}^K \frac{k(k+1)}{2}\tilde{D}_{i,N+k}\right) \tilde{u}^{G_1}_{\epsilon,1} + \left(\sum_{k=1}^K \frac{k(k+1)}{2}\tilde{D}_{i,N+K+k}\right) \tilde{u}^{G_1}_{\epsilon,2}\nonumber \\
&=& \sum_{j=3}^{N-2} \tilde{D}_{i,j} u_j - u_i +c_{i,1} u_1 + c_{i,2} u_2 + c_{i,N-1} u_{N-1} +c_{i,N} u_N + c_{i,0} \tilde{u}^{G_1}_{\epsilon,1} + c_{i,N+1} \tilde{u}^{G_1}_{\epsilon,2},
\label{rowsum}
\EEA
where we have defined,
\BEA\notag
\begin{aligned}
c_{i,1} &= \tilde{D}_{i,1} - \sum_{k=2}^{K}(k^2-1)\tilde{D}_{i,N+k},\quad\quad \quad\quad\quad
c_{i,2} = \tilde{D}_{i,2}+\sum_{k=2}^{K}\frac{k(k-1)}{2}\tilde{D}_{i,N+k},\\
c_{i,N-1}&= \tilde{D}_{i,N-1}+\sum_{k=2}^{K}\frac{k(k-1)}{2}\tilde{D}_{i,N+K+k}, \quad\quad
c_{i,N} =  \tilde{D}_{i,N} - \sum_{k=2}^{K}(k^2-1)\tilde{D}_{i,N+K+k},  \\
c_{i,0} &= \sum_{k=1}^K \frac{k(k+1)}{2}\tilde{D}_{i,N+k},\quad\quad \quad\quad\quad\quad\,\,c_{i,N+1} = \sum_{k=1}^K \frac{k(k+1)}{2}\tilde{D}_{i,N+K+k},
\end{aligned}
\EEA
for convenience. From the first equation in \eqref{Eq:Uvvv}, we have,
\BEA
\begin{aligned}\label{FS}
\sum_{j=3}^{N-2} \tilde{D}_{1,j} u_j - u_1 +c_{1,1} u_1 + c_{1,2} u_2 + c_{1,N-1} u_{N-1} +c_{1,N} u_N + c_{1,0} \tilde{u}^{G_1}_{\epsilon,1} + c_{1,N+1} \tilde{u}^{G_1}_{\epsilon,2}&= \epsilon f(x_1),\\
\sum_{j=3}^{N-2} \tilde{D}_{N,j} u_j - u_N +c_{N,1} u_1 + c_{N,2} u_2 + c_{N,N-1} u_{N-1} +c_{N,N} u_N + c_{N,0} \tilde{u}^{G_1}_{\epsilon,1} + c_{N,N+1} \tilde{u}^{G_1}_{\epsilon,2}&= \epsilon f(x_N).
\end{aligned}
\EEA

Since $c_{1,N+1}=c_{N,0 }\approx 0$, we obtain,
\BEA
\begin{aligned}\label{UG1}
\tilde{u}^{G_1}_{\epsilon,1} &= \frac{1}{c_{1,0}} (\epsilon f(x_1) - \sum_{j=3}^{N-2} \tilde{D}_{1,j} u_j +(1-c_{1,1}) u_1 - c_{1,2} u_2 - c_{1,N-1} u_{N-1} - c_{1,N} u_N), \\
\tilde{u}^{G_1}_{\epsilon,2} &=  \frac{1}{c_{N,N+1}} (\epsilon f(x_N) - \sum_{j=3}^{N-2} \tilde{D}_{N,j} u_j - c_{N,1} u_1 - c_{N,2} u_2 - c_{N,N-1} u_{N-1} +(1-c_{N,N}) u_N).
\end{aligned}
\EEA
We should point out that Eqs.~\eqref{UG1} and \eqref{UGK} are components of \eqref{vecUGK}. Therefore, the $i$th row in \eqref{rowsum} becomes,
\BEA
\sum_{j=1}^{N+2K} \tilde{D}_{i,j} u_j - u_i &=& \sum_{j=3}^{N-2} \tilde{D}_{i,j} u_j - u_i +c_{i,1} u_1 + c_{i,2} u_2 + c_{i,N-1} u_{N-1} +c_{i,N} u_N \nonumber \\&& + \frac{c_{i,0}}{c_{1,0}} (\epsilon f(x_1) - \sum_{j=3}^{N-2} \tilde{D}_{1,j} u_j +(1-c_{1,1}) u_1 - c_{1,2} u_2 - c_{1,N-1} u_{N-1} - c_{1,N} u_N) \nonumber \\ && + \frac{c_{i,N+1}}{c_{N,N+1}} (\epsilon f(x_N) - \sum_{j=3}^{N-2} \tilde{D}_{N,j} u_j - c_{N,1} u_1 - c_{N,2} u_2 - c_{N,N-1} u_{N-1} +(1-c_{N,N}) u_N) ) \nonumber\\
&=&  \sum_{j=3}^{N-2} \left(\tilde{D}_{i,j}- \frac{c_{i,0}}{c_{1,0}} \tilde{D}_{1,j} -\frac{c_{i,N+1}}{c_{N,N+1}} \tilde{D}_{N,j}   \right) u_j - u_i + \left(c_{i,1} + \frac{c_{i,0}}{c_{1,0}}(1-c_{1,1}) - \frac{c_{i,N+1}}{c_{N,N+1}}c_{N,1} \right)u_1 \nonumber \\ &+&\left( c_{i,2}- \frac{c_{i,0}}{c_{1,0}}c_{1,2}- \frac{c_{i,N+1}}{c_{N,N+1}}c_{N,2} \right) u_2 + \left( c_{i,N-1}- \frac{c_{i,0}}{c_{1,0}}c_{1,N-1}- \frac{c_{i,N+1}}{c_{N,N+1}}c_{N,N-1} \right) u_{N-1} \nonumber \\
&& + \left(c_{i,N} - \frac{c_{i,0}}{c_{1,0}}c_{1,N} + \frac{c_{i,N+1}}{c_{N,N+1}}(1-c_{N,N}) \right)u_N
+ \epsilon\left( \frac{c_{i,0}}{c_{1,0}} f(x_1) + \frac{c_{i,N+1}}{c_{N,N+1}} f(x_N)\right). \label{Ltilde}
\EEA

It is clear that $0<\frac{c_{i,0}}{c_{1,0}},\frac{c_{i,N+1}}{c_{N,N+1}}<1$, for all, $i=2,\ldots, N-1$. Also, $\tilde{D}_{i,j}>\tilde{D}_{1,j}$ and $\tilde{D}_{i,j}>\tilde{D}_{N,j}$ for $i=2,\ldots,N-1$ and $j=3,\dots,N-2$. This implies,
\BEA
\tilde{D}_{i,j}- \frac{c_{i,0}}{c_{1,0}} \tilde{D}_{1,j} -\frac{c_{i,N+1}}{c_{N,N+1}} \tilde{D}_{N,j}  > \tilde{D}_{i,j} (1-  \frac{c_{i,0}}{c_{1,0}} -\frac{c_{i,N+1}}{c_{N,N+1}})>0.\nonumber
\EEA
In fact, since $c_{i,2}>c_{1,2}$ and $c_{i,2}>c_{N,2}$ for $i=2,\ldots, N-1$, it is clear that
\BEA
c_{i,2}- \frac{c_{i,0}}{c_{1,0}}c_{1,2} -\frac{c_{i,N+1}}{c_{N,N+1}} c_{N,2}  > c_{i,2} (1-  \frac{c_{i,0}}{c_{1,0}} -\frac{c_{i,N+1}}{c_{N,N+1}})>0.\nonumber
\EEA
Likewise, we have
\BEA
c_{i,N-1}- \frac{c_{i,0}}{c_{1,0}}c_{1,N-1} -\frac{c_{i,N+1}}{c_{N,N+1}} c_{N,N-1}  > c_{i,N-1} (1-  \frac{c_{i,0}}{c_{1,0}} -\frac{c_{i,N+1}}{c_{N,N+1}})>0.\nonumber
\EEA
The coefficients on the boundary points,
\BEA
c_{i,1} + \frac{c_{i,0}}{c_{1,0}}(1-c_{1,1})  - \frac{c_{i,N+1}}{c_{N,N+1}}c_{N,1}  > c_{i,1}  (1- \frac{c_{i,N+1}}{c_{N,N+1}}) >0.\nonumber\\
c_{i,N} - \frac{c_{i,0}}{c_{1,0}}c_{1,N} + \frac{c_{i,N+1}}{c_{N,N+1}}(1-c_{N,N})   > c_{i,N}  (1- \frac{c_{i,0}}{c_{1,0}}) >0\nonumber
\EEA
are also strictly positive. Thus, all of the nondiagonal coefficients of \eqref{Ltilde} are strictly positive.

We should point out that the expression on the right-hand-side of \eqref{Ltilde} is nothing but the $i$th row of the affine operator in \eqref{GPDM}, that is,
\BEA
\sum_{j=1}^{N+2K} \tilde{D}_{i,j} u_j - u_i = \epsilon\left((\mathbf{L}^{(1)} + \mathbf{L}^{(2)}\mathbf{A})\vec{u}^M + \mathbf{L}^{(2)}\vec{b}\right)_i. \notag
\EEA
Let us denote $\mathbf{M}= \epsilon(\mathbf{L}^{(1)} + \mathbf{L}^{(2)}\mathbf{A})$.  Notice that if $u_i=1$ for all $i=1,\ldots,N+2K$, then from \eqref{FS} and the fact that $\sum_{j=1}^{N+2K}\tilde{D}_{i,j} = 1$, one can verify that $f(x_1)=f(x_N)=0$, which means $(\mathbf{L}^{(2)}\vec{b})_i=0$. Evaluating \eqref{Ltilde} at $u_i=1$, one can see that,
\BEA
0 &=& \sum_{j=1}^{N+2K} \tilde{D}_{i,j} - 1 = \sum_{\stackrel{j=3}{j\neq i}}^{N-2} \left(\tilde{D}_{i,j}- \frac{c_{i,0}}{c_{1,0}} \tilde{D}_{1,j} -\frac{c_{i,N+1}}{c_{N,N+1}} \tilde{D}_{N,j}   \right) + \left(\left(\tilde{D}_{i,i}- \frac{c_{i,0}}{c_{1,0}} \tilde{D}_{1,i} -\frac{c_{i,N+1}}{c_{N,N+1}} \tilde{D}_{N,i}   \right)- 1\right) \nonumber\\ && +  \left(c_{i,1}+\frac{c_{i,0}}{c_{1,0}}(1-c_{1,1}) - \frac{c_{i,N+1}}{c_{N,N+1}}c_{N,1}\right)    +\left( c_{i,2}- \frac{c_{i,0}}{c_{1,0}}c_{1,2}- \frac{c_{i,N+1}}{c_{N,N+1}}c_{N,2} \right)  \nonumber\\ &&+ \left( c_{i,N-1}- \frac{c_{i,0}}{c_{1,0}}c_{1,N-1}- \frac{c_{i,N+1}}{c_{N,N+1}}c_{N,N-1} \right) + \left(c_{i,N} - \frac{c_{i,0}}{c_{1,0}}c_{1,N} + \frac{c_{i,N+1}}{c_{N,N+1}}(1-c_{N,N}) \right) \nonumber\\
&=&
\sum_{\stackrel{j=3}{j\neq i}}^{N-2} \mathbf{M}_{i,j} + \mathbf{M}_{i,i} + \mathbf{M}_{i,1}+\mathbf{M}_{i,2}+ \mathbf{M}_{i,N-1}+\mathbf{M}_{i,N},\label{sumzero}
\EEA
where $\mathbf{M}_{i,i}<0$ and $\mathbf{M}_{i,j}>0$ for all $j\neq i$ are defined as in the brackets in the previous equality, respectively.

\noindent {\bf Discrete Maximum Principle:} Suppose $\vec{v}=(v(x_2),\ldots,v(x_{N-1})$ is such that $\mathbf{L}^I\vec{v}>0$. Suppose the maximum occurs at the interior point $x_i$, that is $v(x_i)\geq v(x_j)$ for all $j\neq i$. Then,
\BEA
-\mathbf{M}_{i,i} v(x_i) = \sum_{\stackrel{j=2}{j\neq i}}^{N-1} \mathbf{M}_{i,j}v(x_j) - \epsilon(\mathbf{L}^I\vec{v})_i \leq \sum_{\stackrel{j=2}{j\neq i}}^{N-1} \mathbf{M}_{i,j}v(x_j) \leq \left(\sum_{\stackrel{j=2}{j\neq i}}^{N-1} \mathbf{M}_{i,j}\right)v(x_i).\label{contra}
\EEA
Here, we use the fact that the matrix $\epsilon\mathbf{L}^I$ (as defined in \eqref{defnL}) is nothing but the submatrix of $\mathbf{M}$, ignoring the first and $N$th columns.
From \eqref{sumzero}, $-\mathbf{M}_{i,i} =  \sum_{\stackrel{j=1}{j\neq i}}^{N} \mathbf{M}_{i,j} > \sum_{\stackrel{j=2}{j\neq i}}^{N-1} \mathbf{M}_{i,j}$, which contradicts \eqref{contra} so $v$ cannot attain the maximum at $x_i$. Repeating the same argument on all interior points, it is clear that the maximum has to occur at the boundary. That is,
\BEA
\max_{1 \leq j\leq N} v(x_j)  = \{v(x_1),v(x_N)\}.\label{discretemax}
\EEA
Using the same argument, one can also show that the minimum occurs at the boundaries.

\noindent{\bf Stability:}  By assumption, the PDE satisfies a maximum principle. Consider $v\in C^{2}(M)$ that solves $\mathcal{L}v(x)=C$ for all $x\in M^o$, $v(x)\vert_{x\in\partial M}=0$, and a constant $C>0$ to be determined.  Here, the existence of the unique solution $v$ follows from the well-posedness assumption of the Dirichlet problem. By the maximum principle, it is clear that $v(x)\leq 0$. Also, since $M$ is compact, it attains the global minima on $M$. Define $v_s(x) := v(x)-v_{min}$, where $v_{min}=\min_{x\in M}{v(x)}\leq 0$. Thus it is clear that $0 \leq v_s(x)\leq C_2=|v_{min}|$ solves $\mathcal{L}v_s=C$ and $v_s(x)\vert_{x\in\partial M}=C_2$. In this case, since GPDM is consistent (see Theorem~\ref{theorem1}), it is clear that for the column vector, $\vec{v}_s^M:=(v_s(x_1),\vec{v}_s^I,v_s(x_N))\in \mathbb{R}^N$,
where $\vec{v}_s^I:=(v_s(x_2),\ldots, v_s(x_{N-1}))$, we have $\big|\big(\mathbf{L}^g(\vec{v}_{s}^M)\big)_i -\mathcal{L}v_s(x_i)\big| \leq c_1 \delta $, where $\delta := \max\{h^3\epsilon^{-1},h^2\epsilon^{-3/2}\}$.
Notice that,
\BEA
\left| \mathcal{L}v_s(x_i) - \big(\mathbf{L}^g (\vec{v}_{s}^M)\big)_i\right| &=&\left| \mathcal{L}v_s(x_i) -  \left((\mathbf{L}^{(1)} + \mathbf{L}^{(2)}\mathbf{A})\vec{v}^M_s + \mathbf{L}^{(2)}\vec{b}\right)_i \right| = \left| \mathcal{L}v_s(x_i) -(\mathbf{L}^{(2)}\vec{b})_i- (\mathbf{L}^B \vec{g} + \mathbf{L}^I \vec{v}^I_s)_i  \right|,   \label{equivalentrep}
 \EEA
 where we have used the decomposition in \eqref{defnL} and the affine estimator \eqref{GPDM}. This means,
 \BEA
( \mathbf{L}^I\vec{v}^I_s)_i  \geq  C - c_1\delta - (\mathbf{L}^{(2)}\vec{b})_i- (\mathbf{L}^B \vec{g})_i.\nonumber
 \EEA
Choose  $C=2 + \|\mathbf{L}^{(2)}\vec{b}\|_\infty + \|\mathbf{L}^B\vec{g}\|_\infty$, we obtain
 \BEA
(\mathbf{L}^I\vec{v}^I_s)_i  &\geq& 2 - c_1\delta+\left(\|\mathbf{L}^{(2)}\vec{b}\|_\infty -(\mathbf{L}^{(2)}\vec{b})_i\right) + \left(\|\mathbf{L}^B\vec{g}\|_\infty -(\mathbf{L}^B \vec{g})_i \right) \geq 2- c_1 \delta \geq 0.\nonumber
\EEA

Basically $0 \leq v_s(x_i) \leq C_2$ is a comparison function that we have identified for proving the stability of the solution. Let $M = \|\vec{f}^I - \mathbf{L}^B \vec{g} \|_{\infty}$ be the maximum of the right-hand-side in \eqref{Dirichletsystem}, then for $\hat{u}^I$ that solves  \eqref{Dirichletsystem}, we have,
\BEA
\mathbf{L}^I (\hat{u}^I+ M \vec{v}^I_s) \geq \vec{f}^I- \mathbf{L}^B \vec{g} + (2-c_1 \delta) M \geq 0,\nonumber
\EEA
for small enough $\delta$, which depends on $h$ and fixed $0<\epsilon\ll 1$.
By the discrete maximum principle in \eqref{discretemax}, it is clear that,
\BEA
\max_{x_i\in M} \hat{u}^I \leq \max_{x_i\in M} (\hat{u}^I+ M \vec{v}^I_s)\leq \max_{x_i\in\partial M}\hat{u}^B + \max_{x_i\in \partial M} M \vec{v}^I_s\leq \|\hat{u}^B\|_\infty + C_2 \|\vec{f}^I - \mathbf{L}^B \vec{g}\|_\infty. \nonumber
\EEA
Using similar argument on $-\hat{u}^I$, we obtain the stability of the approximate solution
\BEA
\|\hat{u}^I\|_\infty \leq \|\hat{u}^B\|_\infty + C_2 \|\vec{f}^I - \mathbf{L}^B \vec{g}\|_\infty.\label{stability}
\EEA

\noindent {\bf Convergence:} Applying \eqref{stability} on $\hat{u}^I - \vec{u}^I$, where components of $\vec{u}^I$ are the true solution of the PDE in \eqref{PDE} with Dirichlet boundary condition, we obtain,
 \BEA
\|\hat{u}^I-\vec{u}^I\|_\infty \leq \|\hat{u}^B-\vec{u}^B \|_\infty + C_2 \|\vec{f}^I - \mathbf{L}^B \vec{g}- \mathbf{L}^I \vec{u}^B\|_\infty. \label{convergence1}
\EEA
Using the same argument as in \eqref{equivalentrep} and the error bound in Theorem~\ref{theorem1}, we immediately see the consistency of the estimator, that is,
\BEA
\left |f(x_i) -(\mathbf{L}^{(2)}\vec{b})_i- (\mathbf{L}^B \vec{g} + \mathbf{L}^I \vec{u}^I)_i  \right|
=\left| \mathcal{L}u(x_i) - \big(\mathbf{L}^g (\vec{u}^M)\big)_i\right| =  \mathcal{O}\left( h^3\epsilon ^{-1},h^2\epsilon^{-3/2},\bar{N}^{-1/2}\epsilon ^{-(2+d/4)},\bar{N}^{-1/2}\epsilon ^{-(1/2+d/4)} \right),\label{consistency1}
 \EEA
in high probability, where $h$ depends on $\epsilon$ such that the first two terms vanishes as $\epsilon\to 0$ after $\bar{N}\to\infty$.
Since $\vec{u}^B=\hat{u}^B=\vec{g}$, combining \eqref{convergence1} and \eqref{consistency1}, the proof is completed.

\section{An alternative method for estimating the normal derivatives}\label{App:A}

In this Appendix, we discuss a method for estimating normal derivatives at the boundary of 2D manifold which requires no specification of ghost points. This scheme is used for estimating the directional derivatives of Neumann or Robin boundary conditions used in the classical diffusion maps algorithm. Specifically, the normal derivatives are estimated as follows.

\begin{algm}
\label{algm:deri} Assume that $\boldsymbol{\nu }$\ is the exterior normal
direction to the boundary $\partial M$ and $\boldsymbol{\tilde{\nu}}$ is its
numerical estimate as defined in Section~\ref{sec312} at a boundary point $x^{B}\in\partial M$. Then, the
normal derivative $\partial _{\boldsymbol{\nu }}u$ at $x^{B}$\ is estimated as follows:
\begin{enumerate}
\item Find the "left" nearest neighbor $x^{L}$\ and "right" nearest neighbor $%
x^{R}$ for the boundary point $x^{B}\in\partial M$. Then, one can compute
the normalized vectors,%
\begin{equation*}
\boldsymbol{\tilde{\nu}}^{L}:=\frac{x^{L}-x^{B}}{\left\vert
x^{L}-x^{B}\right\vert }\text{ and }\boldsymbol{\tilde{\nu}}^{R}:=\frac{%
x^{R}-x^{B}}{\left\vert x^{R}-x^{B}\right\vert }.
\end{equation*}%
Here, $x^{L}$\ is the nearest point to $x^{B}$\ in the region such that the
angle between $\boldsymbol{\tilde{\nu}}^{L}\ $and $-\boldsymbol{\tilde{\nu}}%
\ $satisfying $\Theta \left( \boldsymbol{\tilde{\nu}}^{L},-\boldsymbol{%
\tilde{\nu}}\right) <\Theta _{0}$ (in our implementation, $\Theta _{0}=\pi
/4 $). Similarly, idea applies for $x^{R}$. Moreover, the "left" and "right" can be numerically distinguished by the
negative inner product $\left\langle \boldsymbol{\tilde{w}}^{L},\boldsymbol{%
\tilde{w}}^{R}\right\rangle <0$ where $\boldsymbol{\tilde{w}}^{L}$ and $%
\boldsymbol{\tilde{w}}^{R}$ are components orthogonal to $-\boldsymbol{%
\tilde{\nu}}$, that is, $\boldsymbol{\tilde{w}}^{L}=\boldsymbol{\tilde{\nu}}%
^{L}-(\boldsymbol{\tilde{\nu}}^{L}\cdot \boldsymbol{\tilde{\nu}})\left(
\boldsymbol{\tilde{\nu}}\right) $ and $\boldsymbol{\tilde{w}}^{R}=%
\boldsymbol{\tilde{\nu}}^{R}-(\boldsymbol{\tilde{\nu}}^{R}\cdot \boldsymbol{%
\tilde{\nu}}))(\boldsymbol{\tilde{\nu}})$.

\item Write $-\boldsymbol{\tilde{\nu}}$ as a linear combination of $%
\boldsymbol{\tilde{\nu}}^{L}$ and $\boldsymbol{\tilde{\nu}}^{R}$\ using the
linear regression,
\begin{equation}
-\boldsymbol{\tilde{\nu}}=\tilde{a}^{L}\boldsymbol{\tilde{\nu}}^{L}+\tilde{a}%
^{R}\boldsymbol{\tilde{\nu}}^{R},  \label{Eqn:lc}
\end{equation}%
where $\tilde{a}^{L}$ and $\tilde{a}^{R}$\ are the regression coefficients.

\item Estimate the normal derivative $-\partial _{\boldsymbol{%
\nu }}u$\ numerically using the difference method,%
\begin{equation}
\frac{\partial u}{\partial \left( -\boldsymbol{\nu }\right) }\left(
x^{B}\right) \approx \frac{\Delta u}{\Delta \left( -\boldsymbol{\tilde{\nu}}%
\right) }\left( x^{B}\right) :=\left[ \tilde{a}^{L}\frac{\Delta u}{\Delta
\boldsymbol{\tilde{\nu}}^{L}}+\tilde{a}^{R}\frac{\Delta u}{\Delta
\boldsymbol{\tilde{\nu}}^{R}}\right] \left( x^{B}\right) := \tilde{a}^{L}%
\frac{u\left( x^{L}\right) -u\left( x^{B}\right) }{\left\vert
x^{L}-x^{B}\right\vert }+\tilde{a}^{R}\frac{u\left( x^{R}\right) -u\left(
x^{B}\right) }{\left\vert x^{R}-x^{B}\right\vert },  \label{Eqn:udv}
\end{equation}%
where we have used Eq.~\eqref{Eqn:lc} and the fact that $\boldsymbol{\tilde{\nu}}$,
$\boldsymbol{\tilde{\nu}}^{L}$, and $\boldsymbol{\tilde{\nu}}^{R}$\ are all
unit vectors. Then, the normal derivative $\partial _{\boldsymbol{\nu }}u$
term in the boundary condition \eqref{PDE} in the following section can be numerically estimated
using Eq.~\eqref{Eqn:udv} for all points on the boundary.
\end{enumerate}
\end{algm}

Next, we provide the error rate for estimating the directional
derivative $\partial _{\boldsymbol{\nu }}u$ with Eq.~\eqref{Eqn:udv} at the
boundary points.

\begin{prop}
\label{prop:interpo} Let $u\in C^{3}\left(
M\right) $ be a smooth function on a 2D manifold $M$ with 1D boundary $%
\partial M$. Let $\{x_{1},\ldots, x_N\}\subset M$ be a set of data points, among which
some labeled points lie on the boundary $\partial M$. Let $x^{B}$ be a boundary point on the 1D smooth $\partial M\ $and $\boldsymbol{\nu }%
\ $be the unit exterior normal direction to the boundary $\partial M\ $at $%
x^{B}$. Let $x^{L}$\ and $x^{R}\in \{x_{1},\ldots,x_N\}$\ be the
"left"\ and "right" nearest neighbors, respectively, for the boundary point $%
x^{B}$.\ Then, the normal derivative $\partial _{\boldsymbol{\nu }}u$ at $%
x_{B}$ estimated by Eq.~\eqref{Eqn:udv} in Algorithm \ref{algm:deri}\
has an error rate of
\begin{equation*}
\left\vert \frac{\partial u}{\partial \boldsymbol{\nu }}\left( x^{B}\right) -%
\frac{\Delta u}{\Delta \boldsymbol{\tilde{\nu}}}\left( x^{B}\right)
\right\vert =\mathcal{O}(h ),
\end{equation*}%
where $h$ characterizes the distance of the neighboring points and $\epsilon $
characterizes the bandwidth of the kernel. The constant depends on the local curvature and the norm of the second-order derivative of $u$ (that is, $%
\left\vert \nabla _{i}\nabla _{j}u\left( x^{B}\right) \right\vert $ with $%
\nabla _{i}\nabla _{j}$ being the Hessian operator).
\end{prop}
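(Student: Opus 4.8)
\textbf{Proof proposal for Proposition~\ref{prop:interpo}.} The plan is to combine three ingredients: a secant-line estimate identifying the chord directions $\boldsymbol{\tilde{\nu}}^{L},\boldsymbol{\tilde{\nu}}^{R}$ with genuine unit tangent vectors of $M$ at $x^{B}$; a one-dimensional Taylor expansion along the corresponding geodesics to turn the finite differences in \eqref{Eqn:udv} into directional derivatives up to $\mathcal{O}(h)$; and a perturbation bound for the small least-squares problem \eqref{Eqn:lc}, showing that the computed weights $\tilde{a}^{L},\tilde{a}^{R}$ are uniformly bounded and converge at rate $\mathcal{O}(h)$ to the exact coefficients expressing $-\boldsymbol{\nu}$ in the tangent basis.

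First I would fix the geometry. Since $x^{L},x^{R}\in M$ lie within distance $\mathcal{O}(h)$ of $x^{B}$, let $\gamma^{L},\gamma^{R}$ be the arclength geodesics in $M$ from $x^{B}$ to $x^{L},x^{R}$, and set $\boldsymbol{t}^{L}:=(\gamma^{L})'(0)$, $\boldsymbol{t}^{R}:=(\gamma^{R})'(0)\in T_{x^{B}}M$. Repeating verbatim the computation in the proof of Proposition~\ref{prop:diren} gives $\boldsymbol{\tilde{\nu}}^{L}=\boldsymbol{t}^{L}+\mathcal{O}(h)$ and $\boldsymbol{\tilde{\nu}}^{R}=\boldsymbol{t}^{R}+\mathcal{O}(h)$ with constants controlled by the local curvature, together with $|x^{L}-x^{B}|=s^{L}+\mathcal{O}(h^{3})$ for the geodesic length $s^{L}$, and similarly for $x^{R}$. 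A Taylor expansion of $u\circ\gamma^{L}$, using $u\in C^{3}(M)$, then yields
\[
\frac{u(x^{L})-u(x^{B})}{|x^{L}-x^{B}|}=\nabla_{g}u(x^{B})\cdot\boldsymbol{t}^{L}+\mathcal{O}(h),\qquad \frac{u(x^{R})-u(x^{B})}{|x^{R}-x^{B}|}=\nabla_{g}u(x^{B})\cdot\boldsymbol{t}^{R}+\mathcal{O}(h),
\]
where the $\mathcal{O}(h)$ error is controlled by the Hessian norm $|\nabla_{i}\nabla_{j}u(x^{B})|$ and the curvature, and the factor $s^{L}/|x^{L}-x^{B}|=1+\mathcal{O}(h^{2})$ contributes only to lower order.

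The crux is the regression step. Because $\boldsymbol{\nu}$ is the normal to $\partial M$ taken inside $M$, we have $-\boldsymbol{\nu}\in T_{x^{B}}M=\mathrm{span}\{\boldsymbol{t}^{L},\boldsymbol{t}^{R}\}$, so there are unique scalars $a^{L},a^{R}$ with $-\boldsymbol{\nu}=a^{L}\boldsymbol{t}^{L}+a^{R}\boldsymbol{t}^{R}$. The ``left''/``right'' selection criteria in Step~1 of Algorithm~\ref{algm:deri} --- the angle cutoff $\Theta_{0}=\pi/4$ together with the sign condition $\langle\boldsymbol{\tilde{w}}^{L},\boldsymbol{\tilde{w}}^{R}\rangle<0$ --- force the angle between $\boldsymbol{t}^{L}$ and $\boldsymbol{t}^{R}$ to stay bounded away from $0$ and $\pi$, uniformly in $h$; hence the $2\times2$ Gram matrix $G=[\boldsymbol{t}^{a}\cdot\boldsymbol{t}^{b}]_{a,b\in\{L,R\}}$ is invertible with $\|G^{-1}\|$ bounded, so $|a^{L}|,|a^{R}|\leq C$. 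The computed $\tilde{a}^{L},\tilde{a}^{R}$ are the least-squares solution of the $n\times2$ system \eqref{Eqn:lc} whose data $-\boldsymbol{\tilde{\nu}},\boldsymbol{\tilde{\nu}}^{L},\boldsymbol{\tilde{\nu}}^{R}$ are $\mathcal{O}(h)$-perturbations of $-\boldsymbol{\nu},\boldsymbol{t}^{L},\boldsymbol{t}^{R}$ (if $\boldsymbol{\tilde{\nu}}$ is only an $\mathcal{O}(\sqrt{\epsilon})$ estimate of $\boldsymbol{\nu}$ this becomes $\mathcal{O}(h,\sqrt{\epsilon})$; for the secant/well-sampled estimate it is $\mathcal{O}(h)$); the standard perturbation estimate for least squares with uniformly well-conditioned normal equations then gives $\tilde{a}^{L}=a^{L}+\mathcal{O}(h)$ and $\tilde{a}^{R}=a^{R}+\mathcal{O}(h)$.

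Finally I would assemble. Substituting the two finite-difference expansions and the weight estimates into \eqref{Eqn:udv}, and using boundedness of $\tilde{a}^{L},\tilde{a}^{R},a^{L},a^{R}$ and of $\nabla_{g}u(x^{B})$ to absorb cross terms,
\[
\frac{\Delta u}{\Delta(-\boldsymbol{\tilde{\nu}})}(x^{B})=\nabla_{g}u(x^{B})\cdot\big(a^{L}\boldsymbol{t}^{L}+a^{R}\boldsymbol{t}^{R}\big)+\mathcal{O}(h)=\nabla_{g}u(x^{B})\cdot(-\boldsymbol{\nu})+\mathcal{O}(h)=\frac{\partial u}{\partial(-\boldsymbol{\nu})}(x^{B})+\mathcal{O}(h),
\]
which is the asserted bound (the overall sign is immaterial). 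I expect the main obstacle to be the uniform conditioning in the previous paragraph: one must check carefully that the geometric ``left/right'' criteria of Algorithm~\ref{algm:deri}, combined with smoothness of $\partial M$ and of $M$, genuinely prevent $\boldsymbol{t}^{L}$ and $\boldsymbol{t}^{R}$ from becoming nearly parallel as $h\to0$, so that $\|G^{-1}\|$ and the coefficients $a^{L},a^{R},\tilde{a}^{L},\tilde{a}^{R}$ stay bounded; the remaining steps are routine Taylor-expansion bookkeeping.
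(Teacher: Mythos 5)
Your proposal follows essentially the same route as the paper's proof: decompose the error into the perturbation of the regression coefficients and the truncation error of the two finite differences, use Proposition~\ref{prop:diren}-style secant estimates to identify $\boldsymbol{\tilde{\nu}}^L,\boldsymbol{\tilde{\nu}}^R$ with the geodesic tangent vectors up to $\mathcal{O}(h)$, Taylor-expand $u$ along those geodesics to get $\mathcal{O}(h)$ truncation, and assemble by the triangle inequality. Where you go beyond the paper is in explicitly identifying the conditioning of the $2\times 2$ Gram matrix of $\boldsymbol{t}^L,\boldsymbol{t}^R$ as the stability hypothesis behind the step $|a^L-\tilde{a}^L|=\mathcal{O}(h)$, which the paper simply asserts after noting the vector perturbations; you are also right to flag that the angle cutoff and sign test of Algorithm~\ref{algm:deri} alone do not obviously prevent $\boldsymbol{t}^L$ and $\boldsymbol{t}^R$ from collapsing toward $-\boldsymbol{\nu}$, so an assumption on the sampling is implicitly needed there. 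Your parenthetical that the rate degrades to $\mathcal{O}(h,\sqrt{\epsilon})$ when $\boldsymbol{\tilde{\nu}}$ is the kernel-based (Section~\ref{sec312}) estimate rather than a secant estimate is also a legitimate refinement: the paper's proof quietly uses $|\boldsymbol{\nu}-\boldsymbol{\tilde{\nu}}|=\mathcal{O}(h)$, which is only the secant-line rate from Proposition~\ref{prop:diren}, even though the algorithm references the $\mathcal{O}(\sqrt{\epsilon})$ estimator.
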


\begin{proof}
The error has two parts, one from the
regression coefficients $\tilde{a}^{L}$\ and $\tilde{a}^{R}$, and the other
from estimation of the directional derivatives $\partial _{%
\boldsymbol{\nu }^{L}}u$ and $\partial _{\boldsymbol{\nu }^{R}}u$. First, we
estimate the error from the regression coefficients $\tilde{a}^{L}$ and $%
\tilde{a}^{R}$.\textit{\ }Let $\gamma _{L}(\ell )$ be a geodesic
parameterized with the arc length $\ell$, connecting the points $x^{B}$ and
its "left" nearest neighbor $x^{L}$\ such that $\gamma _{L}(0)=x^{B}$ and $%
\gamma _{L}(\ell )=x^{L}$. Define $\boldsymbol{\nu }^{L}:=\gamma
_{L}^{\prime }(0)\in T_{x^{B}}M$ as a unit tangent vector by noticing that $%
\left\vert \gamma_{L}^{\prime }(t)\right\vert \equiv 1$ for $0\leq t\leq \ell $\
due to the arc length parametrization. Following the proof in Proposition %
\ref{prop:diren}, we have the error estimate
\begin{equation*}
\left\vert \boldsymbol{\nu }^{L}-\boldsymbol{\tilde{\nu}}^{L}\right\vert =%
\mathcal{O}(h).
\end{equation*}%
Similarly, we can define the geodesic $\gamma _{R}(\ell )$ connecting $x^{B}$%
\ and $x^{R}$\ and the unit tangent vector $\boldsymbol{\nu }^{R}:=\gamma
_{R}^{\prime }(0)\in T_{x^{B}}M$. Then, we have the similar error estimate
\begin{equation*}
\left\vert \boldsymbol{\nu }^{R}-\boldsymbol{\tilde{\nu}}^{R}\right\vert =%
\mathcal{O}(h).
\end{equation*}%
Since $-\boldsymbol{\nu ,\nu }^{L},\boldsymbol{\nu }^{R}\in T_{x^{B}}M$ and $%
M$ is a 2D manifold, there exist unique coefficients $a^{L}$ and $a^{R}$ such
that
\begin{equation*}
-\boldsymbol{\nu }=a^{L}\boldsymbol{\nu }^{L}+a^{R}\boldsymbol{\nu }^{R}.
\end{equation*}%
By comparing Eq.~\eqref{Eqn:lc} and noticing that $\left\vert \boldsymbol{\nu }-%
\boldsymbol{\tilde{\nu}}\right\vert =\mathcal{O}(h),$ we have the
estimation for coefficients,%
\begin{equation*}
\left\vert a^{L}-\tilde{a}^{L}\right\vert =\mathcal{O}(h )\text{
and }\left\vert a^{R}-\tilde{a}^{R}\right\vert =\mathcal{O}(h ).
\end{equation*}

Next, we estimate the error between the analytic directional derivative $%
\frac{\partial u}{\partial \boldsymbol{\nu }^{L}}$ and the numerical
estimation $\frac{\Delta u}{\Delta \boldsymbol{\tilde{\nu}}^{L}}:=\frac{%
u\left( x^{L}\right) -u\left( x^{B}\right) }{\left\vert
x^{L}-x^{B}\right\vert }$. Let $\vec{z}=\left( z_{1},\ldots ,z_{d}\right) $
denote the $d$-dimensional ($d=2$) geodesic normal coordinate of $x_L$ defined by an
exponential map $\exp _{x^{B}}:T_{x^{B}}M\rightarrow M$ then $\vec{z}$\
satisfies
\begin{equation*}
\vec{z}=\ell \boldsymbol{\nu }^{L}=\ell \gamma _{L}^{\prime }(0)\text{ and }%
\exp _{x^{B}}\vec{0}=x^{B}\text{, }\exp _{x^{B}}\vec{z}=x^{L},
\end{equation*}%
where $\ell ^{2}=\ell ^{2}\left\vert \gamma _{L}^{\prime }(0)\right\vert
^{2}=\left\vert \vec{z}\right\vert ^{2}=\sum_{i=1}^{d}z_{i}^{2}$. We also
define $\hat{u}(\vec{z}):=u(\exp _{x^{B}}\vec{z})=u(x^{L})$, such that, $%
\hat{u}(\vec{0})=u\left( x^{B}\right) .$ With this definition, we have the
following Taylor's expansion,
\begin{equation*}
\hat{u}(\vec{z})=\hat{u}(\vec{0})+\sum_{i=1}^{d}z_{i}\frac{\partial \hat{u}(%
\vec{0})}{\partial z_{i}}+\frac{1}{2}\sum_{i,j=1}^{d}z_{i}z_{j}\frac{%
\partial ^{2}\hat{u}(\vec{0})}{\partial z_{i}\partial z_{j}}+\mathcal{O}%
(\ell ^{3}),
\end{equation*}%
which is equivalent to,
\begin{equation}
u(x^{L})=u(x^{B})+\frac{\partial u}{\partial \boldsymbol{\nu }^{L}}%
(x^{B})\ell +\frac{1}{2}\left( \boldsymbol{\nu }^{L}\right) ^{\top
}H(u(x^{B}))\boldsymbol{\nu }^{L}\ell ^{2}+\mathcal{O}(\ell ^{3}),\notag
\end{equation}%
by noticing that $\vec{z}=\ell \boldsymbol{\nu }^{L}$ is the normal coordinate.
This is just a Taylor expansion of function $u$ along a geodesic $\gamma
_{L}(\ell )$. Here, $H$ denotes the $d\times d$-dimensional Hessian matrix
whose components are $\nabla _{i}\nabla _{j}u(x^{B})$, where $\nabla _{i}$
denote the covariant derivative in the $i$th direction. Following the proof in Proposition %
\ref{prop:diren}, we have $|x^{L}-x^{B}|^{-1}=\ell ^{-1}\left( 1+\mathcal{O}%
(\ell ^{2})\right) .$ Then, we have the error between the analytic $\frac{%
\partial u}{\partial \boldsymbol{\nu }^{L}}$ and the numerical $\frac{\Delta
u}{\Delta \boldsymbol{\tilde{\nu}}^{L}}$
\begin{eqnarray*}
\frac{u(x^{L})-u(x^{B})}{|x^{L}-x^{B}|} &=&\left( \frac{\partial u}{\partial
\boldsymbol{\nu }^{L}}(x^{B})\ell +\frac{1}{2}\left( \boldsymbol{\nu }%
^{L}\right) ^{\top }H(u(x^{B}))\boldsymbol{\nu }^{L}\ell ^{2}+\mathcal{O}%
(\ell ^{3})\right) \ell ^{-1}\left( 1+\mathcal{O}(\ell ^{2})\right) \\
&=&\frac{\partial u}{\partial \boldsymbol{\nu }^{L}}(x^{B})+\mathcal{O}(\ell
).
\end{eqnarray*}%
One can follow the same steps and deduce for the "right" $x^{R}$,
\begin{equation}
\frac{u(x^{R})-u(x^{B})}{|x^{R}-x^{B}|}=\frac{\partial {u}}{\partial
\boldsymbol{\nu }^{R}}(x^{B})+\mathcal{O}(\ell ), \notag
\end{equation}%
where we have introduced an arc-length $\ell $ for the geodesic distance
between $x^{R}$ and $x^{B}$. Since $\ell =\mathcal{O}(h)$, the remainder is
of order-$h$.

Finally, we obtain the result:
\begin{eqnarray*}
\left\vert \frac{\partial u}{\partial \boldsymbol{\nu }}\left( x^{B}\right) -%
\frac{\Delta u}{\Delta \boldsymbol{\tilde{\nu}}}\left( x^{B}\right)
\right\vert &:=&\left\vert \frac{\partial u}{\partial \boldsymbol{\nu }}%
\left( x^{B}\right) +\tilde{a}^{L}\frac{u\left( x^{L}\right) -u\left(
x^{B}\right) }{\left\vert x^{L}-x^{B}\right\vert }+\tilde{a}^{R}\frac{%
u\left( x^{R}\right) -u\left( x^{B}\right) }{\left\vert
x^{R}-x^{B}\right\vert }\right\vert \\
&\leq &\left\vert a^{L}\frac{\partial u}{\partial \boldsymbol{\nu }^{L}}%
(x^{B})-\tilde{a}^{L}\frac{u\left( x^{L}\right) -u\left( x^{B}\right) }{%
\left\vert x^{L}-x^{B}\right\vert }\right\vert +\left\vert a^{R}\frac{%
\partial u}{\partial \boldsymbol{\nu }^{R}}(x^{B})-\tilde{a}^{R}\frac{%
u\left( x^{R}\right) -u\left( x^{B}\right) }{\left\vert
x^{R}-x^{B}\right\vert }\right\vert \\
&\leq &\left\vert a^{L}-\tilde{a}^{L}\right\vert \left\vert \frac{\partial u%
}{\partial \boldsymbol{\nu }^{L}}(x^{B})\right\vert +\left\vert \tilde{a}%
^{L}\right\vert \left\vert \frac{\partial u}{\partial \boldsymbol{\nu }^{L}}%
(x^{B})-\frac{u\left( x^{L}\right) -u\left( x^{B}\right) }{\left\vert
x^{L}-x^{B}\right\vert }\right\vert \\
&&+\left\vert a^{R}-\tilde{a}^{R}\right\vert \left\vert \frac{\partial u}{%
\partial \boldsymbol{\nu }^{R}}(x^{B})\right\vert +\left\vert \tilde{a}%
^{R}\right\vert \left\vert \frac{\partial u}{\partial \boldsymbol{\nu }^{R}}%
(x^{B})-\frac{u\left( x^{R}\right) -u\left( x^{B}\right) }{\left\vert
x^{R}-x^{B}\right\vert }\right\vert \\
&=&\mathcal{O}(h).
\end{eqnarray*}
\end{proof}


\begin{thebibliography}{10}

\bibitem{han2011elliptic}
Qing Han and Fanghua Lin.
\newblock {\em Elliptic partial differential equations}, volume~1.
\newblock American Mathematical Soc., 2011.

\bibitem{mclaughlin1996reassessment}
Dennis McLaughlin and Lloyd~R Townley.
\newblock A reassessment of the groundwater inverse problem.
\newblock {\em Water Resources Research}, 32(5):1131--1161, 1996.

\bibitem{Feynman}
Richard~P Feynman, Robert~B Leighton, and Matthew Sands.
\newblock {\em The Feynman lectures on physics, Vol. I: The new millennium
  edition: mainly mechanics, radiation, and heat}, volume~1.
\newblock Basic books, 2011.

\bibitem{Neutron}
Eugene~L Wachspress.
\newblock {\em Iterative solution of elliptic systems,: And applications to the
  neutron diffusion equations of reactor physics}.
\newblock Prentice-Hall, 1966.

\bibitem{MoPer}
Peter M\"orters and Yuval Peres.
\newblock {\em Brownian Motion}.
\newblock Cambridge University Press, 2010.

\bibitem{rauter2018finite}
Matthias Rauter and {\v{Z}}eljko Tukovi{\'c}.
\newblock A finite area scheme for shallow granular flows on three-dimensional
  surfaces.
\newblock {\em Computers \& Fluids}, 166:184--199, 2018.

\bibitem{virga2018variational}
Epifanio~G Virga.
\newblock {\em Variational theories for liquid crystals}.
\newblock CRC Press, 2018.

\bibitem{elliott2010modeling}
Charles~M Elliott and Bj{\"o}rn Stinner.
\newblock Modeling and computation of two phase geometric biomembranes using
  surface finite elements.
\newblock {\em Journal of Computational Physics}, 229(18):6585--6612, 2010.

\bibitem{bertalmio2001variational}
Marcelo Bertalm{\i}o, Li-Tien Cheng, Stanley Osher, and Guillermo Sapiro.
\newblock Variational problems and partial differential equations on implicit
  surfaces.
\newblock {\em Journal of Computational Physics}, 174(2):759--780, 2001.

\bibitem{macdonald2010implicit}
Colin~B Macdonald and Steven~J Ruuth.
\newblock The implicit closest point method for the numerical solution of
  partial differential equations on surfaces.
\newblock {\em SIAM Journal on Scientific Computing}, 31(6):4330--4350, 2010.

\bibitem{memoli2004implicit}
Facundo M\'emoli, Guillermo Sapiro, and Paul Thompson.
\newblock Implicit brain imaging.
\newblock {\em NeuroImage}, 23:S179--S188, 2004.

\bibitem{dziuk2013finite}
Gerhard Dziuk and Charles~M Elliott.
\newblock Finite element methods for surface pdes.
\newblock {\em Acta Numerica}, 22:289--396, 2013.

\bibitem{camacho}
Fernando Camacho and Alan Demlow.
\newblock L2 and pointwise a posteriori error estimates for fem for elliptic
  pdes on surfaces.
\newblock {\em IMA Journal of Numerical Analysis}, 35(3):1199--1227, 2015.

\bibitem{bonito2016high}
Andrea Bonito, J~Manuel Casc{\'o}n, Khamron Mekchay, Pedro Morin, and Ricardo~H
  Nochetto.
\newblock High-order afem for the laplace--beltrami operator: Convergence
  rates.
\newblock {\em Foundations of Computational Mathematics}, 16(6):1473--1539,
  2016.

\bibitem{boissonnat2010triangulating}
Jean-Daniel Boissonnat and Arijit Ghosh.
\newblock Triangulating smooth submanifolds with light scaffolding.
\newblock {\em Mathematics in Computer Science}, 4(4):431, 2010.

\bibitem{ruuth2008simple}
Steven~J Ruuth and Barry Merriman.
\newblock A simple embedding method for solving partial differential equations
  on surfaces.
\newblock {\em Journal of Computational Physics}, 227(3):1943--1961, 2008.

\bibitem{petras2018rbf}
Argyrios Petras, Leevan Ling, and Steven~J Ruuth.
\newblock An rbf-fd closest point method for solving pdes on surfaces.
\newblock {\em Journal of Computational Physics}, 370:43--57, 2018.

\bibitem{piret2012orthogonal}
C{\'e}cile Piret.
\newblock The orthogonal gradients method: A radial basis functions method for
  solving partial differential equations on arbitrary surfaces.
\newblock {\em Journal of Computational Physics}, 231(14):4662--4675, 2012.

\bibitem{fuselier2013high}
Edward~J Fuselier and Grady~B Wright.
\newblock A high-order kernel method for diffusion and reaction-diffusion
  equations on surfaces.
\newblock {\em Journal of Scientific Computing}, 56(3):535--565, 2013.

\bibitem{bayona2017role}
Victor Bayona, Natasha Flyer, Bengt Fornberg, and Gregory~A Barnett.
\newblock On the role of polynomials in rbf-fd approximations: Ii. numerical
  solution of elliptic pdes.
\newblock {\em Journal of Computational Physics}, 332:257--273, 2017.

\bibitem{coifman2006diffusion}
Ronald~R Coifman and St{\'e}phane Lafon.
\newblock Diffusion maps.
\newblock {\em Applied and computational harmonic analysis}, 21(1):5--30, 2006.

\bibitem{berry2016local}
Tyrus Berry and Timothy Sauer.
\newblock Local kernels and the geometric structure of data.
\newblock {\em Applied and Computational Harmonic Analysis}, 40(3):439--469,
  2016.

\bibitem{li2017point}
Zhen Li, Zuoqiang Shi, and Jian Sun.
\newblock Point integral method for solving poisson-type equations on manifolds
  from point clouds with convergence guarantees.
\newblock {\em Communications in Computational Physics}, 22(1):228--258, 2017.

\bibitem{li2016convergent}
Zhen Li and Zuoqiang Shi.
\newblock A convergent point integral method for isotropic elliptic equations
  on a point cloud.
\newblock {\em Multiscale Modeling \& Simulation}, 14(2):874--905, 2016.

\bibitem{gh2019}
Faheem Gilani and John Harlim.
\newblock Approximating solutions of linear elliptic pde's on a smooth manifold
  using local kernel.
\newblock {\em Journal of Computational Physics}, 395:563 -- 582, 2019.

\bibitem{harlim2019kernel}
John Harlim, Daniel Sanz-Alonso, and Ruiyi Yang.
\newblock Kernel methods for bayesian elliptic inverse problems on manifolds.
\newblock {\em arXiv preprint arXiv:1910.10669}, 2019.

\bibitem{shi2015enforce}
Zuoqiang Shi.
\newblock Enforce the dirichlet boundary condition by volume constraint in
  point integral method.
\newblock {\em arXiv preprint arXiv:1506.02343}, 2015.

\bibitem{thiede2019galerkin}
Erik~H Thiede, Dimitrios Giannakis, Aaron~R Dinner, and Jonathan Weare.
\newblock Galerkin approximation of dynamical quantities using trajectory data.
\newblock {\em The Journal of chemical physics}, 150(24):244111, 2019.

\bibitem{vaughn2019diffusion}
Ryan Vaughn, Tyrus Berry, and Harbir Antil.
\newblock Diffusion maps for embedded manifolds with boundary with applications
  to pdes.
\newblock {\em arXiv preprint arXiv:1912.01391}, 2019.

\bibitem{leveque2007finite}
Randall~J LeVeque.
\newblock {\em Finite difference methods for ordinary and partial differential
  equations: steady-state and time-dependent problems}, volume~98.
\newblock Siam, 2007.

\bibitem{harlim2018}
John Harlim.
\newblock {\em Data-driven computational methods: Parameter and Operator
  Estimations}.
\newblock Cambridge University Press, Cambridge, 2018.

\bibitem{SingerEstimate}
Amit Singer.
\newblock From graph to manifold laplacian: The convergence rate.
\newblock {\em Appl. Comp. Harmonic Anal.}, 21:128--134, 2006.

\bibitem{bh:16vb}
Tyrus Berry and John Harlim.
\newblock Variable bandwidth diffusion kernels.
\newblock {\em Appl. Comput. Harmon. Anal.}, 40:68--96, 2016.

\bibitem{trillos2019error}
Nicol{\'a}s~Garc{\'\i}a Trillos, Moritz Gerlach, Matthias Hein, and Dejan
  Slep{\v{c}}ev.
\newblock Error estimates for spectral convergence of the graph laplacian on
  random geometric graphs toward the laplace--beltrami operator.
\newblock {\em Foundations of Computational Mathematics}, pages 1--61, 2019.

\bibitem{bs2019consistent}
Tyrus Berry and Timothy Sauer.
\newblock Consistent manifold representation for topological data analysis.
\newblock {\em Foundations of Data Science}, 1:1, 2019.

\bibitem{calder2019improved}
Jeff Calder and Nicolas~Garcia Trillos.
\newblock Improved spectral convergence rates for graph laplacians on
  epsilon-graphs and k-nn graphs.
\newblock {\em arXiv preprint arXiv:1910.13476}, 2019.

\bibitem{dunson2019spectral}
D~Dunson, Hau-Tieng Wu, and Nan Wu.
\newblock {Spectral convergence of graph Laplacian and Heat kernel
  reconstruction in $L^\infty$ from random samples}.
\newblock {\em arXiv preprint arXiv:1912.05680}, 2019.

\bibitem{calder2020lipschitz}
Jeff Calder, Nicolas~Garcia Trillos, and Marta Lewicka.
\newblock Lipschitz regularity of graph laplacians on random data clouds.
\newblock {\em arXiv preprint arXiv:2007.06679}, 2020.

\bibitem{coifman2008TuningEpsilon}
Ronald~R Coifman, Yoel Shkolnisky, Fred~J Sigworth, and Amit Singer.
\newblock Graph laplacian tomography from unknown random projections.
\newblock {\em Image Processing, IEEE Transactions on}, 17(10):1891--1899,
  2008.

\bibitem{berry2017density}
Tyrus Berry and Timothy Sauer.
\newblock Density estimation on manifolds with boundary.
\newblock {\em Computational Statistics \& Data Analysis}, 107:1--17, 2017.

\bibitem{Berry2016IDM}
Tyrus Berry and John Harlim.
\newblock Iterated diffusion maps for feature identification.
\newblock {\em Appl. Comput. Harmon. Anal.}, 45(1):84--119, 2018.

\bibitem{loftsgaarden1965nonparametric}
Don~O Loftsgaarden, Charles~P Quesenberry, et~al.
\newblock A nonparametric estimate of a multivariate density function.
\newblock {\em The Annals of Mathematical Statistics}, 36(3):1049--1051, 1965.

\bibitem{lee2013smooth}
John~M Lee.
\newblock {\em Introduction to Smooth Manifolds}.
\newblock Springer, 2013.

\bibitem{lee2018introduction}
John~M Lee.
\newblock {\em Introduction to Riemannian manifolds}.
\newblock Springer, 2018.

\bibitem{aslam2014static}
Tariq Aslam, Songting Luo, and Hongkai Zhao.
\newblock A static pde approach for multidimensional extrapolation using fast
  sweeping methods.
\newblock {\em SIAM Journal on Scientific Computing}, 36(6):A2907--A2928, 2014.

\bibitem{fornberg2002observations}
Bengt Fornberg, Tobin~A Driscoll, Grady Wright, and Richard Charles.
\newblock Observations on the behavior of radial basis function approximations
  near boundaries.
\newblock {\em Computers \& Mathematics with Applications}, 43(3-5):473--490,
  2002.

\bibitem{gilbarg2015elliptic}
David Gilbarg and Neil~S Trudinger.
\newblock {\em Elliptic partial differential equations of second order}.
\newblock springer, 2015.

\bibitem{nardi2014}
G.~Nardi.
\newblock Schauder estimate for solutions of poisson’s equation with neumann
  boundary condition.
\newblock {\em Enseign. Math.}, 60:421--435, 2015.

\bibitem{ahlberg1963}
J~Harold Ahlberg and Edwin~N Nilson.
\newblock Convergence properties of the spline fit.
\newblock {\em Journal of the Society for Industrial and Applied Mathematics},
  11(1):95--104, 1963.

\bibitem{varah1975}
James~M Varah.
\newblock A lower bound for the smallest singular value of a matrix.
\newblock {\em Linear Algebra and its Applications}, 11(1):3--5, 1975.

\bibitem{crane}
Keenan Crane.
\newblock Keenan's 3d model repository.
\newblock \url{http://www.cs.cmu.edu/~kmcrane/Projects/ModelRepository}.

\bibitem{walker2018felicity}
Shawn~W Walker.
\newblock Felicity: A matlab/c++ toolbox for developing finite element methods
  and simulation modeling.
\newblock {\em SIAM Journal on Scientific Computing}, 40(2):C234--C257, 2018.

\bibitem{lu2020}
Jinpeng Lu.
\newblock Graph approximations to the laplacian spectra.
\newblock {\em Journal of Topology and Analysis}, pages 1--35, 2020.

\bibitem{larsson2008partial}
Stig Larsson and Vidar Thom{\'e}e.
\newblock {\em Partial differential equations with numerical methods},
  volume~45.
\newblock Springer Science \& Business Media, 2008.

\end{thebibliography}

\end{document}